\newtheorem{theorem}{Theorem}[section]
\newtheorem{proposition}[theorem]{Proposition}
\newtheorem{corollary}[theorem]{Corollary}
\newtheorem{lemma}[theorem]{Lemma}
\theoremstyle{definition}
\newtheorem{definition}[theorem]{Definition}
\newtheorem{example}[theorem]{Example}
\newtheorem{remark*}[theorem]{}
\theoremstyle{remark}
\newtheorem{remark}[theorem]{Remark}
\newcommand{\R}{\mathbb R}
\newcommand{\C}{\mathbb C}
\newcommand{\tr}{\mathrm{tr}}
\newcommand{\ms}[1]{\mathscr{#1}}
\newcommand{\mc}[1]{\mathcal{#1}}
\newcommand{\mb}[1]{\mathbb{#1}}
\newcommand{\wh}[1]{\widehat{#1}}
\DeclareMathOperator{\ev}{ev}
\newcommand{\jw}{\mathbin{
\begin{tikzpicture}[scale=.208,baseline=0.21pt]
 \draw[line width=0.45] (0,1) --++(0.15,0) arc(90:45:0.25)--(0.6,0.5);
 \draw[line width=0.45] (1.2,0) --++(-0.15,0) arc(270:225:0.25)--(0.6,0.5);
 \draw[line width=0.45] (1.2,1) --++(-0.15,0) arc(90:135:0.25)--(0.6,0.5);
 \draw[line width=0.45] (0,0) --++(0.15,0) arc(-90:-45:0.25)--(0.6,0.5);
 \end{tikzpicture}}
}
\newcommand{\twostrings}{\begin{tikzpicture}[scale=.208,baseline=0.21pt]
 \draw[line width=0.45] (0,0) --++(1,0);
 \draw[line width=0.45] (0,1)   --++(1,0);
 \end{tikzpicture}}
\newcommand{\jwcup}{\mathbin{
\begin{tikzpicture}[scale=.208,baseline=0.21pt]
 \draw[line width=0.45] (0,0) --++(0.25,0) arc(-90:-45:0.25) --++(.75,.75) arc(-45:0:.25) --++(0,.25);
  \draw[line width=0.45,xscale=-1] (-1.75,0) --++(0.25,0) arc(-90:-45:0.25) --++(.75,.75) arc(-45:0:.25) --++(0,.25);

 \end{tikzpicture}}
 }
\newcommand{\sidecup}{\mathbin{
\begin{tikzpicture}[scale=.208,baseline=0.21pt]
 \draw[line width=0.45] (1.2,1) --++(-0.25,0) arc(90:270:0.25 and 0.5) --++(0.25,0);
 \end{tikzpicture}}
 }
 \newcommand{\jones}{
 {\mathbin{
 \begin{tikzpicture}[scale=.208,baseline=0.21pt,xscale=-1]
 \draw[line width=0.45] (1.2,1) --++(-0.25,0) arc(90:270:0.25 and 0.5) --++(0.25,0);
 \end{tikzpicture} \hskip 1pt
\begin{tikzpicture}[scale=.208,baseline=0.21pt]
 \draw[line width=0.45] (1.2,1) --++(-0.25,0) arc(90:270:0.25 and 0.5) --++(0.25,0);
 \end{tikzpicture}}}}
\numberwithin{equation}{section}
\tikzset{Box/.style={very thick, rounded corners}}
\tikzset{marked/.style={star, star point height = .75mm, star points =5, fill=black,minimum size=2mm, inner sep=0mm} }
\tikzset{verythickline/.style = {line width=7pt}}
\tikzset{thickline/.style = {line width=5pt}}
\tikzset{medthick/.style = {line width=3pt}}
\tikzset{med/.style = {line width=2pt}}
\tikzset{count/.style = {fill=white,circle,draw,thin, inner sep=2pt}}
\tikzset{rcount/.style = {fill=white,rectangle,draw,thin,inner sep=2pt, rounded corners}}
\tikzset{cpr/.style = {draw,fill=white,rectangle,thin, rounded corners}}
\newcommand{\dcup}{\tikz[scale=.5]{\draw (0,.5) -- ++(0,-.25) arc(180:360:.25) -- ++(0,.25); \draw (.1,.5) -- ++(0,-.25) arc(180:360:.15) -- ++(0,.25);}}
\newcommand{\corner}{\tikz{\draw (0,0) arc(-90:0:.15 and .25); \draw (.5,0) arc(270:180:.15 and .25);}}
\begin{document}

\title{Free analysis and planar algebras}
\author[S. Curran]{S. Curran$^\star$}\thanks{ $\star$: Research supported by an NSF postdoctoral fellowship and DARPA Award HR0011-12-1-0009.}
\address{S.C.: D.E. Shaw Group, 1166 Avenue of the Americas, 9th Floor, New York NY 10036}
\email{\href{mailto:steve.curran218@gmail.com}{steve.curran218@gmail.com}}
\author[Y. Dabrowski]{Y. Dabrowski$^\dagger$}\thanks{\noindent $\dagger$: Partially supported by ANR grant NEUMANN}\address{Y.D.: Universit\'e de Lyon, Universit\'e Lyon 1, Institut Camille Jordan UMR 5208, 43 blvd. du 11 novembre 1918, F-69622 Villeurbanne cedex, France.}
\email{\href{mailto:dabrowski@math.univ-lyon1.fr}{dabrowski@math.univ-lyon1.fr}}
\author[D. Shlyakhtenko]{D. Shlyakhtenko$^\ddagger$}\thanks{\noindent $\ddagger$: Research supported by NSF grant DMS-1161411  and DARPA Award HR0011-12-1-0009.}
\address{D.S.: Department of Mathematics, UCLA, Los Angeles, CA 90095.}
\email{\href{mailto:shlyakht@math.ucla.edu}{shlyakht@math.ucla.edu}}

\begin{abstract}
We study 2-cabled analogs of Voiculescu's trace and free Gibbs states on Jones planar algebras.  These states are traces on a tower of graded algebras associated to a Jones planar algebra. Among our results is that, with a suitable definition, finiteness of free Fisher information for planar algebra traces implies that the associated tower of von Neumann algebras consists of factors, and that the standard invariant of the associated inclusion is exactly the original planar algebra.  We also give conditions that imply that the associated von Neumann algebras are non-$\Gamma$ non-$L^2$ rigid factors.
\end{abstract}

\maketitle

\section*{Introduction}
A recent series of papers \cite{gjs1,gjs2,cjs,bro1} investigated towers of von Neumann algebras associated to a Jones planar algebra. To such a planar algebra, one first associates a sequence of algebras $Gr_k$.  Next, a special trace (the Voiculescu trace $\tau$) is constructed on each of these algebras.  It turns out that $W^*(Gr_k,\tau)$ are factors (in fact, interpolated free group factors), and the standard invariant  of their Jones tower is the original planar algebra $\mc P$ (compare with Popa's results \cite{popa}).
A question dating back to \cite{gjs1} is whether there are other ``nice'' choices of traces on $Gr_k$; in particular, one is interested in questions such as factoriality of $W^*(Gr_k,\tau)$ for various choices of $\tau$, as well as the computation of the standard invariant of the resulting tower of algebras.

A step in studying more general traces was taken in \cite{gjsz}, where it was shown that certain random matrix models lead to ``free Gibbs states'' on planar algebras.  Such a free Gibbs state corresponds to a certain trace $\tau_k$ on each $Gr_k$, and these traces have interesting combinatorial properties related to questions of enumeration of planar maps.

The question of the isomorphism class of algebras associated to some of these free Gibbs states has been recently settled by B. Nelson \cite{nelson} using his non-tracial extension \cite{nelson:qf} of free monotone transport introduced in \cite{gsh}.  He showed that if the potential of the free Gibbs state is sufficiently close to the Voiculescu trace (in the sense that the potential is sufficiently close to the quadratic one), then the von Neumann algebras generated by $Gr_k$ under the free Gibbs state and the Voiculescu trace are isomorphic (and are thus interpolated free group factors).

In this paper we investigate a different collection of traces, among which a special role is played by a trace which turns out to be as canonical as Voiculescu's trace.  We call the trace the 2-cabled Voiculescu trace.  The main difference from the Voiculescu trace is the replacement of summation over all Templerley-Lieb diagrams with summations over 2-cabled Temperley-Lieb diagrams.  
The relationship between the 2-cabled Voiculescu trace and the usual Voiculescu is akin to the difference between circular and semicircular systems.  Not surprisingly, many of the results from \cite{gjs1,gjs2} can be reproved for the 2-cabled version.  We sketch the proofs of several of these. Among these results is the existence of 2-cabled versions of free Gibbs states.

It turns out that the 2-cabled setup is nicely amenable to using tools from free probability theory.  Voiculescu's free analysis differential calculus has a nice diagrammatic expression adapted to this situation.  The diagrammatic calculus is technically better behaved than the one used in the 1-cabled situation \cite{gjsz,nelson}.   Using ideas of \cite{dabrowski:Fisher} (which relied on some techniques of J. Peterson \cite{peterson:ell2rigidity}), we are able to prove that if the free Fisher information of a planar algebra trace is finite, then the von Neumann algebras generated by $Gr_k$ are factors, and the standard invariant of the resulting Jones tower is again the planar algebra $P$.  This is in particular the case for any (two-cabled) free Gibbs state (whether or not the potential is close to quadratic).  A consequence of our work is that possible phase transitions phenomena arising when one changes the potential of a free Gibbs state away from quadratic potentials cannot be captured by a change in the standard invariant. 

The outline of the paper is as follows.  In the following section we recall some background material on graded algebras associated to planar algebras from \cite{gjs1,gjs2,cjs}.  In Section \ref{sec:freeprob} we develop some general theory for traces on the graded algebra of a planar algebra.  In particular we introduce planar algebra cumulants, which generalize the free cumulants of Speicher, and construct some new examples of planar algebra traces which generalize the Voiculescu trace studied in \cite{gjs1,gjs2,cjs}.  In \S\ref{sec:freeAnalysis}, we adapt Voiculescu's free differential calculus to the planar algebra setting.  Our main result states that if the planar algebra version of Voiculescu's free Fisher information of a planar algebra trace is finite, then the tower of algebras associated to this trace has the same standard invariant as the original planar algebra.  This is in particular the case when the trace we consider is the free Gibbs state associated to a potential.   In this case, we also show that the associated von Neumann algebras are non-$\Gamma$, non $L^2$-rigid and prime.

In \S\ref{sec:2cabletrace} we further analyze the 2-cabled Voiculescu trace, and establish the isomorphism classes of the associated von Neumann algebras.  In \S\ref{sec:gibbs} we consider free Gibbs states on planar algebras, which are 2-cabled versions of those studied in \cite{gjsz}.  Finally, \S\ref{sec:transport} constructs a  free monotone transport between the 2-cabled Voiculescu trace and free Gibbs states with potential sufficiently close to the quadratic potential $\frac{1}{2}\dcup$.  The proof is essentially the same as that of B. Nelson \cite{nelson}.  In particular, we show that $Gr_k$ under such free Gibbs states still generate free group factors. 

\section{Graded algebras associated to planar algebras} 

\subsection{Background}
\label{sec:background}

We begin by briefly recalling some constructions from \cite{gjs1}, \cite{cjs}.

Let $\mc P = (P_n)_{n \geq 0}$ be a subfactor planar algebra.  For $n,k \geq 0$ let $P_{n,k}$ be a copy of $P_{n+k}$.  Elements of $P_{n,k}$ will be represented by diagrams
\begin{equation*}
 \begin{tikzpicture}[scale=.75]
 \draw[Box](0,0) rectangle (2,1);
  \draw[verythickline](0,.5) --node[rcount,scale=.75]{$k$}++ (-1,0); 
  \draw[verythickline](2,.5) --node[rcount,scale=.75]{$k$}++ (1,0);
  \draw[verythickline](1,1)  --node[rcount,scale=.75]{$2n$}++ (0,.75);
 \node at (1,.5) {$x$}; \node[marked] at (-.1,1.05) {};
 \end{tikzpicture}.\end{equation*}
In this diagrams and in diagrams below thick lines represent several parallel strings; sometimes, we add a numerical label to indicate their number (if the numeral is absent, the number of lines is presumed to be arbitrary, or understood from context).  Thus in this diagram, the thick lines to the left and right each represent $k$ strings, and the thick line at top represents $2n$ strings.  We will typically suppress the marked point $\star$, and take the convention that it occurs at the top-left corner which is adjacent to an unshaded region.

Define a product $\wedge_k:P_{n,k} \times P_{m,k} \to P_{n+m,k}$ by
\begin{equation*}
\begin{tikzpicture}[thick,scale=.75]
\draw[Box](0,0) rectangle (2,1); \node[marked,above left,scale=.9] at (0,1) {};
  \draw[verythickline](0,.5) -- (-.5,.5); \draw[verythickline](2,.5) -- (2.5,.5);
 \draw[verythickline](1,1) -- (1,1.25);
 \node at (1,.5) {$x$}; \node[left] at (-.7,.5) {$x \wedge_k y =$};
\begin{scope}[xshift=2.9cm]
\draw[Box](0,0) rectangle (2,1); \node[marked,above left,scale=.9] at (0,1) {};
  \draw[verythickline](0,.5) -- (-.5,.5); \draw[verythickline](2,.5) -- (2.5,.5);
 \draw[verythickline](1,1) -- (1,1.25);
 \node at (1,.5) {$y$};
\end{scope}
\end{tikzpicture}
\end{equation*}
The involution $\dagger:P_{n,k} \to P_{n,k}$ is given by
\begin{equation*}
\begin{tikzpicture}[thick,scale=.75]
\draw[Box](0,0) rectangle (2,1);
  \draw[verythickline](0,.5) -- (-.5,.5); \draw[verythickline](2,.5) -- (2.5,.5);
 \draw[verythickline](1,1) -- (1,1.25);
 \node at (1,.5) {$x^\dagger$}; \node at (3,.5) {$=$};
\begin{scope}[xshift=4cm]
\draw[Box](0,0) rectangle (2,1);
  \draw[verythickline](0,.5) -- (-.5,.5); \draw[verythickline](2,.5) -- (2.5,.5);
 \draw[verythickline](1,1) -- (1,1.25);
 \node at (1,.5) {$x^*$};
 \draw[Box] (-.5,-.25) rectangle (2.5,1.25); \node[marked] at (2.1,1.05) {}; \node[marked] at (-.6,1.3) {};
\end{scope}
\end{tikzpicture}
\end{equation*}
We then define the $*$-algebra $(Gr_k(\mc P), \wedge_k, \dagger)$,
\begin{equation*}
 Gr_k(\mc P) = \bigoplus_{n \geq 0} P_{n,k}.
\end{equation*}
The unit of $Gr_k(\mc P)$ is the element of $P_{0,k}$ consisting of $k$ parallel lines. We have unital inclusions $Gr_k(\mc P) \hookrightarrow Gr_{k+1}(\mc P)$ determined by
\begin{equation*}
 \begin{tikzpicture}[scale=.75]
 \draw[Box](0,0) rectangle (2,1);  \node[marked,above left,scale=.9] at (0,1) {};
  \draw[verythickline](0,.5) -- (-.5,.5); \draw[verythickline](2,.5) -- (2.5,.5);
\draw[verythickline](1,1) -- (1,1.25);
 \node at (1,.5) {$x$};
\begin{scope}[xshift=4.2cm]
 \draw[Box](0,0) rectangle (2,1);   \node[marked,above left,scale=.9] at (0,1) {};
  \draw[verythickline](0,.5) -- (-.5,.5); \draw[verythickline](2,.5) -- (2.5,.5);
\draw[verythickline](1,1) -- (1,1.25);
\draw[thick] (-.5,-.25) -- (2.5,-.25);
 \node at (1,.5) {$x$};
\node[left=1mm,scale=1.2] at (-.5,.5) {$\mapsto$};
\end{scope}
 \end{tikzpicture}
\end{equation*}

\subsubsection{Popa's symmetric enveloping algebra}
For integers $k,s,t$ with $s+t + 2k = n$, let $V_{k}(s,t)$ be a copy of $P_{n}$.  Elements of $V_{k}(s,t)$ will be represented by diagrams of the form
\begin{equation*}
 \begin{tikzpicture}[scale=.75]
 \draw[Box](0,0) rectangle (2,1);
  \draw[verythickline](0,.5) -- (-.5,.5); \draw[verythickline](2,.5) -- (2.5,.5);
 \draw[verythickline](1,0) -- (1,-.25); \draw[verythickline](1,1) -- (1,1.25);
 \node at (1,.5) {$x$}; \node[marked, above left] at (0,1) {};
 \end{tikzpicture}
\end{equation*}
where there are $2s$ parallel strings at the top, $2t$ strings at the bottom and $2k$ at either side.   As above, we will use the convention that the marked point occurs at the upper left corner, which is adjacent to a unshaded region. 

Define a product $\wedge:V_{k}(s,t) \times V_{k}(s',t') \to V_{k}(s+s',t+t')$ by
\begin{equation*}
\begin{tikzpicture}[thick,scale=.75]
\draw[Box](0,0) rectangle (2,1);\node[marked, above left] at (0,1) {};
  \draw[verythickline](0,.5) -- (-.5,.5); \draw[verythickline](2,.5) -- (2.5,.5);
 \draw[verythickline](1,0) -- (1,-.25); \draw[verythickline](1,1) -- (1,1.25);
 \node at (1,.5) {$x$}; \node[left] at (-.7,.5) {$x \wedge y =$};
\begin{scope}[xshift=2.9cm]
\draw[Box](0,0) rectangle (2,1);\node[marked, above left] at (0,1) {};
  \draw[verythickline](0,.5) -- (-.5,.5); \draw[verythickline](2,.5) -- (2.5,.5);
 \draw[verythickline](1,0) -- (1,-.25); \draw[verythickline](1,1) -- (1,1.25);
 \node at (1,.5) {$y$};
\end{scope}
\end{tikzpicture}
\end{equation*}
The adjoint $\dagger:V_{k}(s,t) \to V_{k}(s,t)$ is defined by
\begin{equation*}
\begin{tikzpicture}[thick,scale=.75]
\draw[Box](0,0) rectangle (2,1);\node[marked, above left] at (0,1) {};
  \draw[verythickline](0,.5) -- (-.5,.5); \draw[verythickline](2,.5) -- (2.5,.5);
 \draw[verythickline](1,0) -- (1,-.25); \draw[verythickline](1,1) -- (1,1.25);
 \node at (1,.5) {$x^\dagger$};
\node at (3,.5) {$=$};
\begin{scope}[xshift=4cm]
\draw[Box](0,0) rectangle (2,1); 
  \draw[verythickline](0,.5) -- (-.5,.5); \draw[verythickline](2,.5) -- (2.5,.5);
 \draw[verythickline](1,0) -- (1,-.25); \draw[verythickline](1,1) -- (1,1.25);
 \node at (1,.5) {$x^*$}; \node[marked] at (2.1,1.05) {};
\draw[Box](-.5,-.25) rectangle (2.5,1.25); \node[marked] at (-.6,1.3) {};
\end{scope}
\end{tikzpicture} 
\end{equation*}
Popa's symmetric enveloping graded algebra $(Gr_k(\mc P) \boxtimes Gr_k(\mc P)^{op}, \wedge, \dagger)$ is defined by
\begin{equation*}
 Gr_k(\mc P) \boxtimes Gr_k(\mc P)^{op} = \bigoplus_{s,t \geq 0} V_{k}(s,t).
\end{equation*}
There is a natural inclusion $Gr_k(\mc P) \otimes Gr_k(\mc P)^{op} \hookrightarrow Gr_k(\mc P) \boxtimes Gr_k(\mc P)^{op}$ determined by
\begin{equation*}
 \begin{tikzpicture}[scale=.75]
 \draw[Box] (0,0) rectangle (2,1); \node at (1,.5) {$x$};
 \draw[verythickline] (-.5,.5) -- (0,.5); \draw[verythickline] (2,.5) -- (2.5,.5);
 \draw[verythickline] (1,1) -- (1,1.25); \node[marked] at (-.1,1.05) {};
\begin{scope}[rotate around={180:(1,.5)}, yshift=1.5cm]
 \draw[Box] (0,0) rectangle (2,1); \node at (1,.5) {$y$}; \node[marked] at (-.1,1.05) {};
 \draw[verythickline] (-.5,.5) -- (0,.5); \draw[verythickline] (2,.5) -- (2.5,.5);
 \draw[verythickline] (1,1) -- (1,1.25);
\end{scope}
\draw[Box] (-.5,-1.75) rectangle (2.5,1.25); \node[marked] at (-.6,1.35) {};
\node[left=2mm,scale=1.1] at (-.5,-.25) {$x \otimes y^{op}  \mapsto$};
 \end{tikzpicture}
\end{equation*}


\subsection{An embedding of graph planar algebras.}  Let $\Gamma = \Gamma_+ \cup \Gamma_-$ be a (unoriented) connected bipartite graph.  For an edge $e$ let $s(e)$ (resp. $t(e)$) denote the initial (resp. terminal) vertex of $e$, let $e^o$ denote the ``opposite edge'' from $t(e)$ to $s(e)$, and let $E_+$ denote the set of edges with $s(e) \in \Gamma_+$.  Let $\mu = (\mu_v)_{v \in \Gamma}$ be a positive eigenvector of the adjacency matrix of $\Gamma$ with eigenvalue $\delta > 0$ (e.g. the unique Perron-Frobenius eigenvector if $\Gamma$ is finite).  With this data Jones \cite{grpalg} has constructed a planar algebra $\mc P^\Gamma = (P_m)_{m \geq 0}$ with $P_0 = \C\Gamma_+$ and $P_m$ the vector space with basis given by loops $(e_1,\dotsc,e_{2m})$ with $s(e_1) =t(e_{2m}) \in \Gamma_+$.  See \cite{gjs1} for the description of the action of planar tangles, note that we use the convention on ``spin factors'' from there, which differs slightly from the original definition in \cite{grpalg}.  Any subfactor planar algebra $\mc P$ 
can be embedded 
as a planar subalgebra of $\mc P^\Gamma$ for some $\Gamma$ (in particular one may take $\Gamma$ to be the principal graph of $\mc P$, see \cite{jpen}).

The graded algebras associated to $\mc P^\Gamma$ can be embedded into some natural algebras arising from the graph, as we will now discuss.

\begin{definition} Define $\ms A_\Gamma$ to be the ``even part'' of the path algebra of $\Gamma$, i.e. the $*$-algebra with generators $\{X_{e,f^o}: e,f \in E_+, \; t(e) = t(f)\} \cup \{p_v:v \in \Gamma_+\}$ subject to the relations
 \begin{itemize}
 \item $p_v$ are mutually orthogonal projections.
 \item $p_v X_{e,f^o} p_w = \delta_{v=s(e)}\delta_{w=s(f)} \cdot X_{e,f^o}$.
 \item $X_{e,f^o}^* = X_{f,e^o}$.
 \end{itemize}

\end{definition}

\begin{proposition}\label{graph-ids}
We have the following identifications:
\begin{enumerate}
 \item  $Gr_0(\mc P^\Gamma)$ can be identified with the subalgebra of $\ms A_\Gamma$ spanned by ``loops'', i.e.
\begin{equation*}
 Gr_0(\mc P^\Gamma) \simeq \sum_{v \in \Gamma_+} p_v\ms A_\Gamma p_v = \C \Gamma_+ \oplus \bigoplus_{m \geq 1} \mathrm{span}\langle X_{e_1,e_2}\dotsb X_{e_{2m-1},e_{2m}}: s(e_1) = t(e_{2m})\rangle.
\end{equation*}
For loops $(e_1,f_1^o,\dotsc,e_m,f_m^o)$ in $P_m \subset Gr_0(\mc P^\Gamma)$, the identification is given by
\begin{equation*}
(e_1,f_1^o,\dotsc,e_m,f_m^o) \Leftrightarrow (\mu_{s(e_1)}\mu_{s(f_1)}\dotsb \mu_{s(e_m)}\mu_{s(f_m)})^{-1/4} \cdot X_{e_1,f_1^o} \dotsb X_{e_m,f_m^o}.
\end{equation*}

\item Let $N = \{(e,f^o): e,f \in E_+, \; t(e) = t(f)\}$, then there is a linear identification of $Gr_1(\mc P^\Gamma)$ with the subspace of $(\ms A_\Gamma)^N$ consisting of $Y = (Y_{e,f^o})$ such that $Y_{e,f^o} = p_{s(e)}Y_{e,f^o}p_{s(f)}$.  This is determined by identifying a loop $(e_1,f_1^o,\dotsc,e_{m+1},f_{m+1}^o) \in P_{m,1} = P_{m+1}$ with the vector $Y = (Y_{e,f^o})$ given by 
\begin{equation*}
Y_{f_{m+1},e_{m+1}^o} = \frac{\mu_{s(f_{m+1})}}{(\mu_{s(e_1)}\mu_{s(f_1)}\dotsb \mu_{s(e_{m+1})}\mu_{s(f_{m+1})})^{1/4}}\cdot X_{e_1,f_1^o}\dotsb X_{e_m,f_m^o}
\end{equation*}
and $Y_{e,f^o} = 0$ unless $e = f_{m+1}$ and $f=e_{m+1}^o$.
\item $Gr_1(\mc P^\Gamma) \boxtimes Gr_1(\mc P^\Gamma)^{op}$ can be identified with the compression $p\bigl[M_N(\ms A_\Gamma \otimes \ms A_{\Gamma}^{op})\bigr]p$, where $p$ is the diagonal matrix with $(e,f^o),(e,f^o)$-entry equal to $p_{s(e)}\otimes p_{s(f)}$.   This is determined by identifying a loop $(e_1,f_1^o,\dotsc,e_{l+m+2},f_{l+m+2}^o) \in V_2(l,m) = P_{l + m + 2}$ with
\begin{multline*}
\frac{\mu_{s(f_{l+m+2})}}{(\mu_{s(e_1)}\mu_{s(f_1)}\dotsb \mu_{s(e_{l+m+2})}\mu_{s(f_{l+m+2})})^{1/4}}\\
\times p_{s(f_{m+l+2})}X_{e_1,f_1^o}\dotsb X_{e_{l},f_l^o} \otimes (p_{s(f_{l+1})}X_{e_{l+2},f_{l+2}^o}\dotsb X_{e_{l+m+1},f_{l+m+1}^o})^{op} \otimes V_{(f_{l+m+2}^o,e_{l+m+2}),(e_{l+1},f_{l+1}^o)}
\end{multline*}
where $(V_{(e,f^o),(g,h^o)})$ is a system of matrix units for $M_N(\C)$.
\end{enumerate}

\end{proposition}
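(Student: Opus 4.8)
The plan is to verify each of the three identifications directly on the natural bases, using the state-sum description of the action of planar tangles on the graph planar algebra $\mc P^\Gamma$ from \cite{grpalg, gjs1}. Recall that $P_m$ has as a basis the loops $(g_1,\dots,g_{2m})$ of length $2m$ in $\Gamma$ based at a vertex of $\Gamma_+$, and that a planar tangle acts on such loops by a state sum: one sums over the labellings of the regions of the tangle by vertices of $\Gamma$ that are compatible along the strings and restrict to the prescribed boundary loops, weighting each term by a product of spin factors $\mu_v^{\pm 1/2}$, one for each local maximum or minimum of a string, in the convention of \cite{gjs1}. All three maps of the proposition send a basis loop to a normalized monomial in the $X_{e,f^o}$ (tensored, in part (3), with a matrix unit for $M_N(\C)$), so in each case the task is: (i) to show the map is well defined and carries the basis of loops bijectively onto the indicated spanning set; and (ii) to show it intertwines the relevant structure, so that in (1) and (3) it is a $*$-isomorphism and in (2) a linear isomorphism onto the stated subspace.

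For (1), write a loop $\ell=(e_1,f_1^o,\dots,e_m,f_m^o)\in P_m\subset Gr_0(\mc P^\Gamma)$ with vertex sequence $v_0=s(e_1),v_1=t(e_1),v_2=s(f_1)=s(e_2),\dots$. One checks that the prescribed coefficient $\bigl(\prod_{i=1}^m\mu_{s(e_i)}\mu_{s(f_i)}\bigr)^{-1/4}$ telescopes to $\prod_{j=0}^{m-1}\mu_{v_{2j}}^{-1/2}$, a product over the unshaded ($\Gamma_+$) regions of the box. Bijectivity onto $\sum_v p_v\ms A_\Gamma p_v$ is then clear once one observes that $\ms A_\Gamma$ is the path $*$-algebra of the quiver with vertex set $\Gamma_+$ and one arrow $X_{e,f^o}$ for each ordered pair $(e,f)$ of edges in $E_+$ with $t(e)=t(f)$: the monomials $X_{e_1,f_1^o}\cdots X_{e_m,f_m^o}$ that are nonzero are precisely those whose edge data forms a loop, and these are linearly independent. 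For multiplicativity, $\wedge_0$ is the tangle placing two boxes side by side and joining no strings; it creates no new extrema, so the state sum returns the concatenated loop exactly when the two outer $\Gamma_+$-labels coincide and $0$ otherwise — whereas $X_{e_1,f_1^o}\cdots X_{e_m,f_m^o}\cdot X_{g_1,h_1^o}\cdots X_{g_{m'},h_{m'}^o}$ is the concatenated monomial when $s(f_m)=s(g_1)$ and $0$ otherwise, and $s(f_m)$, respectively $s(g_1)$, is exactly that outer label of the first, respectively second, box; since the normalization is plainly multiplicative under concatenation of loops, the two images agree. Finally $\dagger$ sends $\ell$ to the block-reversed loop $(f_m,e_m^o,\dots,f_1,e_1^o)$ with unchanged (real) coefficient, matching $(X_{e_1,f_1^o}\cdots X_{e_m,f_m^o})^{*}=X_{f_m,e_m^o}\cdots X_{f_1,e_1^o}$ via $X_{e,f^o}^{*}=X_{f,e^o}$, and the empty diagram, the unit of $Gr_0(\mc P^\Gamma)$, goes to $\sum_v p_v$.

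Parts (2) and (3) run the same kind of argument for boxes with side strings, the strings along the sides now carrying one (in (2)), respectively two (in (3)), distinguished blocks of the loop. In (2), a loop $(e_1,f_1^o,\dots,e_{m+1},f_{m+1}^o)\in P_{m,1}=P_{m+1}$ splits into its $m$ inner blocks, assembled into $X_{e_1,f_1^o}\cdots X_{e_m,f_m^o}$ as in (1), and the single outer block $(e_{m+1},f_{m+1}^o)$, which runs through the side channel and records the coordinate $(f_{m+1},e_{m+1}^o)\in N$ in which that monomial is placed, with the prescribed normalization; closure of the loop forces $s(e_1)=s(f_{m+1})$ and $s(f_m)=s(e_{m+1})$, which is exactly the support condition $Y_{e,f^o}=p_{s(e)}Y_{e,f^o}p_{s(f)}$, so the map is the asserted linear bijection. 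In (3), a loop of length $2(l+m+2)$, viewed as an element of $P_{l+m+2}$ inside $Gr_1(\mc P^\Gamma)\boxtimes Gr_1(\mc P^\Gamma)^{op}$, is cut into four consecutive runs: the first $l$ blocks form an $\ms A_\Gamma$-monomial, the block $(e_{l+1},f_{l+1}^o)$ together with the last block $(e_{l+m+2},f_{l+m+2}^o)$ determine the matrix unit in $M_N(\C)$, and the intermediate $m$ blocks form an $\ms A_\Gamma^{op}$-monomial; the closed-loop conditions on the vertices are precisely those that land the result in the compression $p\bigl[M_N(\ms A_\Gamma\otimes\ms A_\Gamma^{op})\bigr]p$. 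One then checks bijectivity onto that compression, and that $\wedge$ (stacking the two boxes and joining the side channels) becomes the product of $M_N(\ms A_\Gamma\otimes\ms A_\Gamma^{op})$ and $\dagger$ becomes its $*$-operation, after the same normalization bookkeeping as in (1) and with the matrix and tensor factors recombining by the matrix-unit multiplication rule.

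The only genuinely delicate point is this normalization bookkeeping: one must verify in each case that the $\mu$-powers produced by the state sum — a product of $\pm 1/2$ powers, one per extremum of the relevant tangle — are exactly matched by the change in the prescribed normalizations of the loops involved, and, in (2) and (3), one must keep careful track of which block of a loop is routed through which side channel, so that the coordinate in $(\ms A_\Gamma)^N$, respectively the matrix unit in $M_N(\C)$ and the compression by $p$, come out correctly. Once the $\wedge_0$, respectively $\wedge$, computation is in hand, compatibility of $\dagger$ with $*$, and of the inclusions $Gr_k(\mc P^\Gamma)\hookrightarrow Gr_{k+1}(\mc P^\Gamma)$ with the obvious maps on the path-algebra side, follows directly.
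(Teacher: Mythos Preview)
Your proposal is correct and follows essentially the same approach as the paper: the paper's proof consists of the single sentence ``These identifications follow directly from the definition of $\mc P^\Gamma$, see \cite{gjs1},'' and what you have written is precisely a careful unpacking of that claim using the state-sum description of graph planar algebras. Your verification of the normalization (that $\bigl(\prod_i\mu_{s(e_i)}\mu_{s(f_i)}\bigr)^{-1/4}$ reduces to a product of $\mu_v^{-1/2}$ over the unshaded vertices of the loop, since $s(f_i)=s(e_{i+1})$), of multiplicativity under $\wedge_0$ and $\wedge$, and of the $*$-compatibility is sound, and the bookkeeping for the side channels in parts (2) and (3) is exactly what is needed.
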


\begin{proof}
These identifications follow directly from the definition of $\mc P^\Gamma$, see \cite{gjs1}.
\end{proof}

\begin{remark}\label{normalize}
For the identification (1) above one can think of the formula
\begin{equation*}
 \begin{tikzpicture}
  \draw[Box] (0,.5) rectangle (1,1); \node[below] at (.3,.9) {$e$}; \node[below] at (.7,1) {$f^o$};\node[marked,above left,scale=.9] at (0,1) {};
  \draw[thick] (.3,1) --++(0,.25); \draw[thick] (.7,1) -- ++(0,.25);
  \node[left] at (-.3,.75) {$ = (\mu_{s(e)}\mu_{s(f)})^{1/4} \cdot$};
    \node[right] at (1.3,.75) {$\in P_1$};
  \begin{scope}[xshift=-4.5cm]
   \draw[Box] (0,.5) rectangle (1,1); \node[scale=.9] at (.5,.75) {$X_{e,f^o}$}; \node[marked,above left,scale=.9] at (0,1) {};
  \draw[thick] (.3,1) --++(0,.25); \draw[thick] (.7,1) -- ++(0,.25);
  \end{scope}
 \end{tikzpicture}
\end{equation*}
Note however that $(e,f^o)$ will not be a true element of $P_1$ unless $s(e) = s(f)$.  However $X_{e,f^o}X_{f,e^o}$ is always an element of $P_2$, and the normalization is chosen such that
\begin{equation*}
\begin{tikzpicture}
\draw[Box] (0,.5) rectangle (1,1); \node[scale=.9] at (.5,.75) {$X_{e,f^o}$}; \node[marked,above left,scale=.9] at (0,1) {};
\draw[Box] (1.5,.5) rectangle (2.5,1); \node[scale=.9] at (2,.75) {$X_{f,e^o}$}; \node[marked,above left,scale=.9] at (1.5,1) {};
\draw[thick] (.7,1) arc(180:90:.2) -- (1.6,1.2) arc(90:0:.2);
\draw[thick] (.3,1) arc(180:90:.4) -- (1.8,1.4) arc(90:0:.4);
\node[right] at (2.8,.75) {$= \mu_{s(f)} \cdot p_{s(e)} \in P_0 = \C\Gamma_+.$};
\end{tikzpicture}
\end{equation*}

\end{remark}

%
%

\section{Free probability and planar algebras}\label{sec:freeprob}

Let $\mc P$ be a subfactor planar algebra, and let $\tau_0$ be a linear functional on $Gr_0(\mc P)$.  By duality, there are elements $T_m \in P_m$ such that
\begin{equation*}
  \begin{tikzpicture}[scale=.5]
   \draw[Box] (0,0) rectangle (2,1); \node at (1,.5) {$x$}; \node[marked,scale=.8,above left] at (0,1) {};
   \draw[Box] (0,1.5) rectangle (2,2.5); \node at (1,2) {$T_m$}; \node[marked,scale=.8,below right] at (2,1.5) {};
   \draw[thickline] (1,1) -- ++(0,.5);
   \node[left] at (-.5,1.25) {$\tau_0(x) = $};
   \node[right] at (3,1.25) {($x \in P_m$)};
  \end{tikzpicture}\end{equation*}
We can then extend $\tau_0$ to $\tau_k:Gr_k(\mc P) \to \C$ by 
\begin{equation*}
 \begin{tikzpicture}[scale=.5]
   \draw[Box] (0,0) rectangle (2,1); \node at (1,.5) {$x$}; \node[marked,scale=.8,above left] at (0,1) {};
   \draw[Box] (0,1.5) rectangle (2,2.5); \node at (1,2) {$T_m$}; \node[marked,scale=.8,below right] at (2,1.5) {};
   \draw[thickline] (1,1) -- ++(0,.5);
   \draw[thickline] (0,.5) arc(90:270:.5) -- ++(2,0) arc(-90:90:.5);
   \node[left] at (-.75,1) {$\tau_k(x) = \delta^{-k} \cdot $};
   \node[right] at (3.25,1) {($x \in P_{m,k}$)};
  \end{tikzpicture}
\end{equation*}

View $P_k$ as a subalgebra of $Gr_k(\mc P)$ by viewing an element $z\in P_k$ as an element of $Gr_k(\mc P)$ (with no vertical strings). Then $\tau_0$ also defines a conditional expectation ${\mc E}_k : Gr_k(\mc P) \to P_k$ by the formula
\begin{equation*}
  \begin{tikzpicture}[scale=.5]
   \draw[Box] (0,0) rectangle (2,1); \node at (1,.5) {$x$}; \node[marked,scale=.8,above left] at (0,1) {};
   \draw[Box] (0,1.5) rectangle (2,2.5); \node at (1,2) {$T_m$}; \node[marked,scale=.8,below right] at (2,1.5) {};
   \draw[thickline] (1,1) -- ++(0,.5);
   \draw[thickline] (0,0.5) -- ++(-0.5, 0);
      \draw[thickline] (2,0.5) -- ++(0.5, 0);
   \node[left] at (-.5,1.25) {${\mc E}_k(x) = $};
   \node[right] at (3,1.25) {($x \in P_{m,k}$)};
  \end{tikzpicture}\end{equation*}
It is not hard to see that $$\tau_k(x) = \tau_k({\mc E}_k(x)),\qquad x\in Gr_k(\mc P).$$

We also define $\tau_k \boxtimes \tau_k^{op}: Gr_k \boxtimes Gr_k^{op} \to \C$ by
\begin{equation*}
\begin{tikzpicture}[scale=.5]
  \draw[Box] (0,0) rectangle (2,1); \node at (1,.5) {$x$}; \node[marked,scale=.8,above left] at (0,1) {};
  \draw[thickline] (0,.5) arc(90:180:.5cm) --++(0,-1.25) arc(180:270:.5) --++(2,0) arc(-90:0:.5) --++(0,1.25) arc(0:90:.5);
  \draw[Box] (0,-.25) rectangle (2,-1.25); \node at (1,-.75) {$T_s$}; \node[marked,scale=.8,above left] at (0,-.25) {};
  \draw[Box] (0,1.25) rectangle (2,2.25); \node at (1,1.75) {$T_t$}; \node[marked,scale=.8,below right] at (2,1.25) {};
  \draw[verythickline](1,0) -- (1,-.25); \draw[verythickline] (1,1) -- (1,1.25);
  \node[left] at (-.7,.5) {$(\tau_k \boxtimes \tau_k^{op})(x) = \delta^{-2k} \cdot$};
  \node[right] at (4,.5) {$\bigl(x \in V_k(s,t)\bigr)$};
 \end{tikzpicture}
\end{equation*}

Note that the various inclusions ($Gr_k \subset Gr_{k+1}$, $Gr_k\otimes Gr_k^{op}\subset Gr_k\boxtimes Gr_k^{op}$ and so on) are trace-preserving with these definitions of traces.

We will sometimes identify $\tau = (\tau_k)_{k \geq 0}$ with $(T_m)_{m \geq 0}$.  

\begin{definition} We say that $\tau$ is a (faithful) positive $\mc P$-trace if $\tau_k$ is a (faithful) positive tracial state for each $k$.  We say that $\tau$ is bounded if  $Gr_k(\mc P)$ acts by bounded operators on the GNS Hilbert space. 
\end{definition}

\begin{proposition} Let $k,l$ be arbitrary and $n\geq \max(k,l)$.  Define $$
 R(n,\tau)_{k,l}  = \delta^{-(n-k)-(n-l)}
  \begin{array}{c}
\begin{tikzpicture}[scale=.5]
    \draw[thickline] (0,00) -- node[rcount, scale=0.75] {$k$} ++(1.25,0) arc(90:180:-.5cm) --++(0,0.25); 
    \draw[thickline] (0,-3) -- node[rcount, scale=0.75] {$k$} ++(1.25,0) --++(0.5,0)  arc(90:180:-.5cm) --++(0,3.25); 
   \draw[thickline] (0,-2.5) arc(90:180:-0.75) -- node[rcount, scale=0.75] {$n-k$} ++  (0,0.5) arc(180:270:-0.75);

    \draw[thickline] (5,00) -- node[rcount, scale=0.75] {$\ l\ $} ++(-1.25,0) arc(90:0:-.5cm) --++(0,0.25); 
    \draw[thickline] (5,-3) -- node[rcount, scale=0.75] {$\ l\ $} ++(-1.25,0) --++(-0.5,0)  arc(90:0:-.5cm) --++(0,3.25); 
      \draw[thickline] (5,-2.5) arc(90:0:-0.75) -- node[rcount, scale=0.75] {$n-l$} ++  (0,0.5) arc(180:90:.75);
    \draw[Box](1,2.25) rectangle (4,0.75); \node at (2.5,1.5){$T_{2k+2l}$};

 \end{tikzpicture}
  \end{array} \in P_{2n}\subset Gr_{2n}(\mc P).
  $$
 Then the $\mc P$-trace $\tau$ is positive iff for each $n$, $m$ and arbitrary $k_1,\dots,k_m \leq n$, the matrix $$R(\tau) = (R(n,\tau)_{k_i,k_j})_{ij}\in M_{m\times m}((Gr_{2n}(\mc P)) $$ is positive.
\end{proposition}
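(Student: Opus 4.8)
The result is a planar-algebra incarnation of the classical fact that a linear functional is a (tracial) state precisely when the attached Hankel/Gram matrices are positive semidefinite, the scalars now being replaced by the finite-dimensional $C^*$-algebras $P_{2n}$; my plan is to isolate a single diagrammatic identity from which both implications drop out. The first step is a reduction to Gram forms. Since traciality is part of the notion of a $\mc P$-trace, ``$\tau$ positive'' says exactly that $\tau_k(x^\dagger\wedge_k x)\ge 0$ for every $k$ and every $x\in Gr_k(\mc P)$; decomposing $x=\sum_m x_m$ with $x_m\in P_{m,k}$, this is positive semidefiniteness of the Gram matrices $\bigl(\tau_k(x_{m'}^\dagger\wedge_k x_m)\bigr)_{m,m'}$. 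Because $\mc E_k$ is a $\tau$-preserving conditional expectation of $Gr_k(\mc P)$ onto the finite-dimensional $C^*$-algebra $P_k$, on which $\tau_k$ restricts to a multiple of the (faithful, positive) Markov trace independently of $T$, this is in turn equivalent to positivity of the $P_k$-valued Gram matrices $\bigl(\mc E_k(x_{m'}^\dagger\wedge_k x_m)\bigr)_{m,m'}$ in $M(P_k)$.

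The heart of the proof is the following diagrammatic identity. Fix a level $K$, a degree cutoff and a large $n$, and let $c=(c_i)$ be a tuple with $c_i\in P_{2n}$. The assertion is that the picture defining $R(n,\tau)_{k,l}$ — the unique copy of $T$ with index $2k+2l$, flanked by the $(n-k)$- and $(n-l)$-cups and the through-strands, and scaled by $\delta^{-(n-k)-(n-l)}$ — is rigged exactly so that
\[
  \sum_{i,j}\tr_{P_{2n}}\!\bigl(c_i^{\,*}\,R(n,\tau)_{k_i,k_j}\,c_j\bigr)=\tau_K\bigl(y^\dagger\wedge_K y\bigr),
\]
where $\tr_{P_{2n}}$ is a faithful trace on $P_{2n}$ (say the Markov trace), $y=\sum_i\check c_i\in Gr_K(\mc P)$ is obtained from the $c_i$ by capping the relevant strands against the $T$'s, re-routing the vertical strands, and closing up as $\mc E_K$ prescribes, and each index $k_i\le n$ records the homogeneous degree of $\check c_i$ (together with $K$). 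The inverse move is available too: every homogeneous $y\in Gr_K(\mc P)$, for any $K$ and any cutoff, is $\check c_i$ for some $c_i$, and — since $R(n,\tau)_{k,l}$ is supported on the image of $P_{2k+2l}$ inside $P_{2n}$ — the patterns realized this way suffice to detect positivity of the full matrix in $M_m(P_{2n})$; thus pairs $(k,x)$ on one side correspond to tuples $(n;k_1,\dots,k_m;c)$ on the other.

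Granting this identity, both directions are immediate, and — crucially — the whole argument stays inside the finite-dimensional $C^*$-algebras, so no circular appeal to positivity of $\tau$ on $Gr_{2n}(\mc P)$ is made. If $\tau$ is positive the right-hand side above is $\ge 0$ for all $y$, so $\sum_{i,j}\tr_{P_{2n}}(c_i^{\,*}R(n,\tau)_{k_i,k_j}c_j)\ge 0$ for every tuple $c$; faithfulness of $\tr_{P_{2n}}$ together with self-duality of the positive cone of the finite-dimensional algebra $M_m(P_{2n})$ then force $\bigl(R(n,\tau)_{k_i,k_j}\bigr)_{ij}\ge 0$. Conversely, if all these matrices are positive in $M_m(P_{2n})$, then for every $c$ the scalar $\sum_{i,j}\tr_{P_{2n}}(c_i^{\,*}R(n,\tau)_{k_i,k_j}c_j)$ is $\tr_{P_{2n}}$ applied to $\vec c^{\,*}\bigl(R(n,\tau)_{k_i,k_j}\bigr)\vec c\ge 0$, hence $\ge 0$; so $\tau_K(y^\dagger\wedge_K y)\ge 0$ for all $K$ and all $y$, and with traciality $\tau$ is positive.

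The main obstacle is the verification of the displayed identity. One draws $\tau_K(y^\dagger\wedge_K y)$ as a planar picture, inserts a resolution of the identity (a sum over a system of matrix units of $P_{2n}$) along the strands meeting the $T$-caps and along the vertical strands, recognizes the resulting three-part picture as $c_i^{\,*}$ times the $R$-gadget times $c_j$ with the trace taken around it, and keeps careful book of the closed loops thereby created: the powers of $\delta$ they contribute must assemble to exactly $\delta^{-(n-k)-(n-l)}$ and absorb the $\delta^{-K}$ coming from $\tau_K$, while the relabelling of boundary strands must produce exactly the index $2k+2l$ on $T$ and the $(n-k)$, $(n-l)$ padding cups. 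The marked-point and adjoint ($\dagger$-flip) conventions must be tracked throughout. This is routine but genuinely fiddly, and it is precisely here that the specific formula for $R(n,\tau)$ is used; everything else is the standard interplay between Gram matrices and complete positivity of conditional expectations onto finite-dimensional $C^*$-algebras.
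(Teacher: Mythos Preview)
Your proposal is correct and follows essentially the same strategy as the paper's proof: both reduce the equivalence to a single diagrammatic identity expressing $\tau_s(x^\dagger\wedge_s x)$ as a quadratic form $\sum_{i,j} c_i^{\,*} R(n,\tau)_{k_i,k_j} c_j$ evaluated under a fixed faithful trace on the finite-dimensional $C^*$-algebra $P_{2n}$, and then read off both implications from positivity/faithfulness there. The paper carries out explicitly what you label ``routine but genuinely fiddly'': for the forward direction it builds concrete auxiliary elements $y_i\in Gr_{n+2s}$ and a cap element $c$ so that $\sum_{i,j} c\wedge y_i\wedge R_{k_ik_j}\wedge y_j^{\dagger}\wedge c=\tau_s(x\wedge_s x^{\dagger})\,c$, and for the converse it unwinds $\tau_{2n}(z_i R_{i,j} z_j^{\,*})$ into $\tau_n(z'_i(z'_j)^{\,*})$ for explicitly drawn $z'_i$; your abstract description of ``capping strands, re-routing, and bookkeeping the $\delta$-powers'' matches these constructions.
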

\begin{proof}
Assume that we are given an element $x\in Gr_s(\mc P)$, so that $x=\sum x_i$ with $x_i \in P_{k_i+s}$.
Consider the following elements: $$
y_i = \delta^{-3s/2} \begin{array}{c}
\begin{tikzpicture}[scale=.5]
   \draw[Box](0,0) rectangle (2,1); \node at (1,0.5){$x_i$};
   \node[marked,scale=.8,above left] at (0,1) {};
   
   \draw[thickline](0.75,1) --++(0,1.25) arc(180:90:.25)  --node[rcount, scale=0.75] {$k_i$} ++(3.5,0);
   \draw[thickline](1.25,1) --++(0,.25) arc(180:90:.25)  --node[rcount, scale=0.75] {$k_i$} ++(1,0)
   	arc(90:0:.25) --++(0,-2) arc(180:90:-.25) --++(-3.75,0);
	
  \draw[thickline](2,0.5) arc(90:-90:0.25 and 0.5) --node[rcount, scale=0.75] {$\ s\ $}++(-3.25,0);
  \draw[thickline](0,0.5) --node[rcount, scale=0.75] {$\ s\ $}++(-1.25,0);
   \draw[thickline](-1.25,-1.75) -- node[rcount, scale=0.75] {$n-k_i$} ++(4.25,0) arc(-90:0:0.25) 
     --++(0,3) arc(180:90:.25)--++(1,0);
   \draw[thickline] (4.5,0.25) arc(90:180:.5) --node[rcount, scale=0.75] {$\ s\ $}++(0,-1) arc(0:90:-0.5);
 \end{tikzpicture}
 \end{array}\in Gr_{n+2s},\qquad
 c = 
 \begin{array}{c}
 \begin{tikzpicture}[scale=.5]
 \draw[thickline] (-3,-1.5) arc(90:180:-.5)  --node[rcount, scale=0.75] {$n+2s$}++(0,.5)  arc(180:270:-0.5);
 \draw[thickline] (0,0) arc(90:180:.5) --node[rcount, scale=0.75] {$n+2s$}++(0,-.5) arc(0:90:-0.5);
    \end{tikzpicture}
\end{array}\in Gr_{2n+4s}.
$$
Then if we denote by $\iota_{u,u+v}: Gr_{u} \to Gr_{u+v} $ the embedding map that adds $v$ horizontal strings at the bottom of the diagram, we see that 
\begin{multline*}
\sum_{ij} c \wedge \iota_{n+2s,2n+4s}(y_i) 
 \wedge  \iota_{2n+2s,2n+4s}(R(n+s,\tau)_{k_ik_j})\wedge \iota_{n+2s,2n+4s} (y_j^*) \wedge  c  = \tau_s ( x \wedge_s x^* ) c .
\end{multline*}
  Thus if the matrix $R = (R(n+s,\tau)_{k_i,k_j})_{ij}$ is positive, we deduce that $\tau_s (xx^*)\geq 0$ (since $\iota_{u,u+v}$, being an embedding of $C^*$-algebras, is completely-positive).  

Conversely, let $z_i \in P_{2n} \subset Gr_{2n}$ be arbitrary.  Since the restriction of $\tau_{2n}$ to $P_{2n}\subset Gr_{2n}$ is positive (this restriction does not depend on $\tau$),  it follows that $R(\tau)$ is positive iff $\tau_{2n} ( z_i R_{i,j} z_j^* )$ is a positive matrix for all choices of $z_i$.  
 
On the other hand, $$\delta^{2n} \tau_{2n}(z_i R_{i,j} z_j^*) = \delta^{-(n-k_i)-(n-k_j)}
 \begin{array}{c}
\begin{tikzpicture}[scale=.5]
\draw[Box](-1.5,-4) rectangle (-0.5,1);\node at (-1,-1.5){$z_i$};
	 \node[marked,scale=.8,above left] at (-1.5,1) {};

    \draw[thickline] (-.5,00) -- node[rcount, scale=0.75] {$k_i$} ++(1.75,0) arc(90:180:-.5cm) --++(0,0.25); 
    \draw[thickline] (-0.50,-3) -- node[rcount, scale=0.75] {$k_i$} ++(1.75,0) --++(0.5,0)  arc(90:180:-.5cm) --++(0,3.25); 
   \draw[thickline] (-0.5,-2.5) --++(0.5,0) arc(90:180:-0.75) -- node[rcount, scale=0.75] {$2n-2k_i$} ++  (0,0.5) arc(180:270:-0.75) --++(-.5,0) ;

    \draw[thickline] (5.5,00) -- node[rcount, scale=0.75] {$k_j$} ++(-1.75,0) arc(90:0:-.5cm) --++(0,0.25); 
    \draw[thickline] (5.5,-3) -- node[rcount, scale=0.75] {$k_j$} ++(-1.75,0) --++(-0.5,0)  arc(90:0:-.5cm) --++(0,3.25); 
      \draw[thickline] (5.5,-2.5)  --++(-0.5,0) arc(90:0:-0.75) -- node[rcount, scale=0.75] {$2n-2k_j$} ++  (0,0.5) arc(180:90:.75) --++(0.5,0);
\draw[Box](5.5,-4) rectangle (6.5,1); \node[xscale=-1] at (6,-1.5){$z_j$}  ;
	 \node[marked,scale=.8,above right] at (6.5,1) {};
    \draw[Box](1,2.25) rectangle (4,0.75); \node at (2.5,1.5){$T_{2k+2l}$};
    \draw[verythickline] (6.5,-0.25) --node[rcount, scale=1] {$n$}++ (1.5,0) arc (90:240:-1) --++(-3,1)
    	arc(-100:-90:-2) --++(-5.25,0) arc(90:100:2) --++(-3,-1) arc(120:270:1) --node[rcount, scale=1] {$n$}++(1.5,0);
    \draw[verythickline] (6.5,-2.75) --node[rcount, scale=1] {$n$}++ (1.5,0) arc (-90:-270:-1) --++(-11,0) arc(-90:-270:1) --node[rcount, scale=1] {$n$}++(1.5,0);
 \end{tikzpicture}
  \end{array}.
  $$ This can be redrawn as:
  $$
  \begin{array}{c}
  \begin{tikzpicture}[scale=.5]
  \draw[Box](0,0) rectangle (5.5,1);\node[rotate=90] at(2.75,0.5){$z_i$}; 
  \node[marked,scale=.8,above left] at (0,1) {};
  
  \draw[Box] (8,0) rectangle (13.5,1);\node  [rotate=-90,xscale=-1] at (10.75,0.5){$z_j$};
  \node[marked,scale=.8,above right] at (13.5,1) {};
  
  \draw[verythickline] (5.5,0.5)--node[rcount, scale=1] {$n$}++(2.5,0);
  \draw[verythickline] (0,0.5)  arc(-90:90:-1) --node[rcount, scale=1] {$n$}++(13.5,0) arc(-90:90:1);
  \draw[thickline] (1,1) arc(180:90:0.5 and 1) --node[rcount,scale=0.75]{$2n-2k_i$}++(2.5,0) arc(90:-0:0.5 and 1);
   \draw[thickline] (9,1) arc(180:90:0.5 and 1) --node[rcount,scale=0.75]{$2n-2k_j$}++(2.5,0) arc(90:-0:0.5 and 1);
   \draw[thickline](0.5,1) --node[rcount,scale=0.75]{$k_i$}++(0,2);
   \draw[thickline](5,1) --node[rcount,scale=0.75]{$k_i$}++(0,2);
   \draw[thickline](8.5,1) --node[rcount,scale=0.75]{$k_j$}++(0,2);
   \draw[thickline](13,1) --node[rcount,scale=0.75]{$k_j$}++(0,2);

   \draw[Box](0,3) rectangle (13.5,4.5);\node at(6.75,3.75){$T_{2k+2l}$};
   
   \end{tikzpicture}
   \end{array} = \delta^n \tau_n (z'_i (z'_j)^*)
$$
  where 
$$
z'_i =  \delta^{-(n-k_i)} \begin{array}{c}
  \begin{tikzpicture}[scale=.5]
  \draw[Box](0,0) rectangle (5.5,1);\node[rotate=90] at(2.75,0.5){$z_i$}; 
  \node[marked,scale=.8,above left] at (0,1) {};

  \draw[verythickline] (5.5,0.5)--node[rcount, scale=1] {$n$}++(1.5,0);
  \draw[verythickline] (0,0.5)  --node[rcount, scale=1] {$n$}++(-1.5,0);
  
  \draw[thickline] (1,1) arc(180:90:0.5 and 1) --node[rcount,scale=0.75]{$2n-2k_i$}++(2.5,0) arc(90:-0:0.5 and 1);

   \draw[thickline](0.5,1) --node[rcount,scale=0.75]{$k_i$}++(0,2);
   \draw[thickline](5,1) --node[rcount,scale=0.75]{$k_i$}++(0,2);
   
   \end{tikzpicture}
   \end{array}.
 $$

Thus if $\tau_n$ is positive for all $n$, it follows that the matrix $[\tau_n(z'_i (z'_j)^*)]_{ij}$ is positive, which in turn implies that $R(\tau)$ is positive.
\end{proof}

\begin{corollary}  \label{cor:embeddingTracePositivity} Assume that $\mc P\subset \mc P'$ is an embedding of planar algebras. Let $\tau$ be a positive $\mc P$-trace.  Extend $\tau$ (by the same diagrams) to a trace $\tau'$ on  $\mc P'$.  Then $\tau'$ is a positive $\mc P'$-trace.
\end{corollary}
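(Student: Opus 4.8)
The plan is to reduce everything to the preceding Proposition, which characterizes positivity of a $\mc P$-trace $\tau$ by positivity of the matrices $R(\tau)=(R(n,\tau)_{k_i,k_j})_{ij}\in M_{m\times m}(Gr_{2n}(\mc P))$ for all $n$, $m$ and all $k_1,\dots,k_m\le n$. The key observation is that each $R(n,\tau)_{k,l}$ is manufactured from the tensor $T_{2k+2l}$ together with Temperley--Lieb cups and caps, purely by the planar operad action, and that $\tau'$ is by hypothesis the functional determined by the very same tensors $T_m$, now viewed inside $P'_m\supset P_m$.

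First I would record the relevant functoriality: an embedding of planar algebras $\mc P\hookrightarrow\mc P'$ induces, for each $r$, a unital injective $*$-algebra homomorphism $\iota_r\colon Gr_r(\mc P)\hookrightarrow Gr_r(\mc P')$ which on the homogeneous component $P_{n,r}\cong P_{n+r}$ is the given inclusion $P_{n+r}\hookrightarrow P'_{n+r}$; it respects $\wedge_r$ and $\dagger$ because these operations are defined by planar tangles, which the embedding intertwines. (The analogous statement holds for the enveloping algebras $Gr_r\boxtimes Gr_r^{op}$, though it is not needed here.) Since $R(n,\tau)_{k,l}$ and $R(n,\tau')_{k,l}$ are the result of applying the same planar tangle to $T_{2k+2l}$, equivariance of $\iota_{2n}$ gives $R(n,\tau')_{k,l}=\iota_{2n}(R(n,\tau)_{k,l})$ in $Gr_{2n}(\mc P')$; applying $\iota_{2n}$ entrywise, $R(\tau')$ is exactly the image of $R(\tau)$ under the unital $*$-homomorphism $M_{m\times m}(\iota_{2n})\colon M_{m\times m}(Gr_{2n}(\mc P))\to M_{m\times m}(Gr_{2n}(\mc P'))$.

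Now I would apply the preceding Proposition to $\mc P$: since $\tau$ is a positive $\mc P$-trace, every such $R(\tau)$ is positive. A $*$-homomorphism of $*$-algebras carries positive elements to positive elements---whichever form of positivity the Proposition intends (a finite sum of elements $b^*b$, or positivity in every $*$-representation, equivalently in the enveloping $C^*$-algebra), it is transported by $M_{m\times m}(\iota_{2n})$ with no analytic input. Hence $R(\tau')$ is positive for all $n,m,k_i$, and the preceding Proposition applied to $\mc P'$ yields that $\tau'$ is a positive $\mc P'$-trace. Traciality of each $\tau'_k$ and the normalization $\tau'_k(1)=1$ are immediate, since the diagrammatic identities expressing them for $\tau$ involve only the $T_m$ and planar tangles and therefore pass verbatim to $\mc P'$.

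The one point deserving care is the implication ``$R(\tau)$ positive $\Rightarrow R(\tau')$ positive'', i.e. pinning down the notion of positivity for elements of the $*$-algebra $Gr_{2n}(\mc P)$---which is not a priori a $C^*$-algebra---and verifying it is preserved under $\iota_{2n}$. But this is precisely the observation already used inside the proof of the preceding Proposition (that the strand-adding embeddings $\iota_{u,u+v}$ are completely positive because they are $*$-homomorphisms), so no genuinely new difficulty arises; everything else is formal functoriality of the $Gr_r$ construction under embeddings of planar algebras.
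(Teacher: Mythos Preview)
Your proof is correct and follows the same route as the paper: both invoke the preceding Proposition, observe that $R(\tau')$ is the image of $R(\tau)$ under the induced inclusion, and conclude positivity is preserved. The only refinement worth noting is that the paper sidesteps your ``one point deserving care'' by observing that each entry $R(n,\tau)_{k,l}$ actually lies in $P_{2n}$, not merely in $Gr_{2n}(\mc P)$; since $P_{2n}\subset P'_{2n}$ is an inclusion of genuine (finite-dimensional) $C^*$-algebras, positivity of $R(\tau)\in M_m(P_{2n})$ is the ordinary $C^*$-algebraic notion and is automatically preserved under the $C^*$-embedding, so no discussion of what positivity means in the non-$C^*$ algebra $Gr_{2n}$ is needed.
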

\begin{proof}
Since $\mc P\subset \mc P'$ is a planar algebra embedding, it follows that $P_k \subset P'_k$ (regarded as subalgebras of $Gr_k(\mc P)$ and $Gr_k(\mc P')$) is an embedding of $C^*$-algebras.  Since $\tau$ is positive, the associated matrix $R$ is positive.  But then the matrix $R'$ associated to $\tau'$ must also be positive (being the image of the matrix $R$ under an inclusion of $C^*$-algebras).  Thus $\tau'$ is also positive.
\end{proof}
\subsection{Graph planar algebras}

Now let $\Gamma$ be the principal graph of $\mc P$, and recall that we have a planar algebra embedding $\mc P \hookrightarrow \mc P^\Gamma$. We can extend $\tau_k$ to a linear map $E_k: Gr_k(\mc P^\Gamma) \to l^\infty(\Gamma_+)$ by using the same diagrammatic formula.  Moreover, if $x \in Gr_k(\mc P)$ then we have $E_k(x) = \tau_k(x) \cdot 1$, where $1$ is the identity function in $l^\infty(\Gamma_+)$ (see \cite{gjs1}).  Since $\mc P\subset \mc P^\Gamma$ is a planar algebra embedding, we see that $\tau_k$ are positive  for all $k$ iff $E$ are positive for all $k$.

As an application, we record the following:

\begin{lemma}
Assume that $\tau_k$ are positive for all $k$.  Then $\tau_k\boxtimes\tau_k$ is also positive for all $k$.
\end{lemma}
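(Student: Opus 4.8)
The plan is to reduce to a graph planar algebra, where Proposition~\ref{graph-ids} provides a concrete operator-algebraic model in which the positivity becomes transparent, and then to exhibit $\tau_k\boxtimes\tau_k^{op}$ as built out of completely positive maps.

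\emph{Step 1: reduction to $\mc P^\Gamma$.} Let $\Gamma$ be the principal graph of $\mc P$ and recall the planar algebra embedding $\mc P\hookrightarrow\mc P^\Gamma$; by the discussion preceding the lemma the hypothesis is equivalent to positivity of all the maps $E_k:Gr_k(\mc P^\Gamma)\to l^\infty(\Gamma_+)$. This embedding induces a $*$-algebra embedding $Gr_k(\mc P)\boxtimes Gr_k(\mc P)^{op}\hookrightarrow Gr_k(\mc P^\Gamma)\boxtimes Gr_k(\mc P^\Gamma)^{op}$ that carries $\tau_k\boxtimes\tau_k^{op}$ to (the restriction of) the $l^\infty(\Gamma_+)$-valued functional $E_k\boxtimes E_k^{op}$ given by the same closing-up tangle; moreover, exactly as in the proof that $E_k(x)=\tau_k(x)\cdot 1$ for $x\in Gr_k(\mc P)$, this functional is constant-valued on the subalgebra $Gr_k(\mc P)\boxtimes Gr_k(\mc P)^{op}$ and recovers $\tau_k\boxtimes\tau_k^{op}$ upon evaluation at any $v\in\Gamma_+$. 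So it suffices to show that $E_k\boxtimes E_k^{op}$ is a positive (indeed completely positive) $l^\infty(\Gamma_+)$-valued map on $Gr_k(\mc P^\Gamma)\boxtimes Gr_k(\mc P^\Gamma)^{op}$.

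\emph{Step 2: completely positive decomposition.} By Proposition~\ref{graph-ids}(3) and the analogous identification for general $k$, $Gr_k(\mc P^\Gamma)\boxtimes Gr_k(\mc P^\Gamma)^{op}$ is a corner $p_k M_{N_k}(\ms A_\Gamma\otimes\ms A_\Gamma^{op})p_k$. Under this identification I would check that $E_k\boxtimes E_k^{op}$ is a composition (in the appropriate order) of the following manifestly completely positive maps: the corner compression $x\mapsto p_k x p_k$; on the matrix factor, the ``close the $2k$ side strands'' operation, which is a trace on $M_{N_k}$ weighted by the strictly positive numbers coming from $\mu$; on each path-algebra factor $\ms A_\Gamma$, the canonical conditional expectation onto the loop subalgebra $Gr_0(\mc P^\Gamma)=\sum_{v\in\Gamma_+}p_v\ms A_\Gamma p_v$ (a pinching by the $p_v$ followed by discarding non-loops); and finally $E_0\otimes E_0^{op}$, with $E_0$ completely positive precisely because its target $l^\infty(\Gamma_+)$ is commutative and $E_0$ is positive by hypothesis. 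Since compositions and tensor products of completely positive maps are completely positive, $E_k\boxtimes E_k^{op}$ is completely positive, so $(E_k\boxtimes E_k^{op})(\xi^\dagger\wedge\xi)\geq 0$ for all $\xi$. Combined with Step~1 this gives $(\tau_k\boxtimes\tau_k^{op})(\xi^\dagger\wedge\xi)\geq 0$ on $Gr_k(\mc P)\boxtimes Gr_k(\mc P)^{op}$, the trace property being immediate from the rotational symmetry of the defining tangle; hence $\tau_k\boxtimes\tau_k^{op}$ is a positive trace.

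\emph{Main obstacle.} The delicate point is Step~2: verifying, by a careful diagram chase through the identification of Proposition~\ref{graph-ids}(3), that capping the $2s$ top strands with $T_s$ and the $2t$ bottom strands with $T_t$ genuinely factors through the loop conditional expectations on the two $\ms A_\Gamma$-factors followed by $E_0\otimes E_0^{op}$, with no mixing between the two factors, and that the side-strand closure is exactly the asserted weighted trace on $M_{N_k}$ (including the bookkeeping of how the two outputs are identified to land in a single copy of $l^\infty(\Gamma_+)$). A secondary technical issue is that when $\tau$ is not assumed bounded, $E_0$ need not extend continuously to a $C^*$-completion of $\ms A_\Gamma$; the ``composition of completely positive maps'' argument should then be carried out at the level of the GNS representations of the positive functionals $v\mapsto E_0(\cdot)(v)$, realizing each $\xi\mapsto (E_k\boxtimes E_k^{op})(\xi)(v)$ as a vector functional of a $*$-representation of $Gr_k(\mc P^\Gamma)\boxtimes Gr_k(\mc P^\Gamma)^{op}$ on a Hilbert space, after which positivity is automatic.
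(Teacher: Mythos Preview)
Your approach is viable and takes a genuinely different route from the paper's. Both begin by passing to a graph planar algebra, but then diverge. The paper does not use the matrix-corner identification of Proposition~\ref{graph-ids}(3). Instead, given finitely many basis elements $x_i\in Gr_k\boxtimes Gr_k^{op}$, each represented by a closed path $\gamma_i$ in $\Gamma$, it chooses for every $i$ an auxiliary path $w_i$ joining the ``top'' portion of $\gamma_i$ to its ``bottom'' portion, thereby splitting $x_i$ into a pure tensor $T_i\otimes S_i$ with $T_i\in Gr_{k+2d}$ and $S_i\in Gr_{k+2d}^{op}$ (the extra $2d$ side strands carry the $w_i$). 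A direct diagrammatic identity then shows that the Gram matrix $\bigl((\tau_k\boxtimes\tau_k^{op})(x_i\wedge x_j^{\dagger})\bigr)_{ij}$ agrees, after a positive diagonal rescaling, with one computed from $\tau_{k+2d}\otimes\tau_{k+2d}^{op}$ applied to the tensors $T_i\otimes S_i$, and positivity follows from positivity of the ordinary tensor product of positive traces. Conceptually, the auxiliary paths $w_i$ are exactly what turn the non-loop top and bottom $\ms A_\Gamma$-words in your matrix picture into honest loops so that $E_0$ can be applied to each tensor leg separately; your factorization through $E_0\otimes E_0^{op}$ is the structural explanation of why the paper's trick succeeds. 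Once Proposition~\ref{graph-ids}(3) is extended to general $k$ and the side-strand closure is matched with the weighted matrix trace (the obstacle you flag), your argument is cleaner and delivers complete positivity; the paper's argument is more elementary in that it stays entirely at the level of diagrams and needs no operator-algebraic identification, at the cost of the somewhat ad hoc path-splitting construction.
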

\begin{proof}
By Corollary~\ref{cor:embeddingTracePositivity}, we may replace $\mc P$ by a graph planar algebra containing it.  We thus assume that $\mc P$ is the graph planar algebra of some graph $\Gamma$.  

Let $\gamma_i = (g^i_1,\cdots,g^i_{2s_i})$, $i=1,\dots,N$ be a finite collection of closed paths in $\Gamma$, and let $k$ be fixed. Let $t_i$ be fixed, and let us write $\gamma_i$ as the 
concatenation of six  paths: $$
\gamma_i = u_i \circ l_i  \circ l'_i \circ b_i \circ r_i \circ r'_i,$$  with
$u_1 = (g^i_1,\dots,g^i_{t_i})$, $l_i = (g^i_{t_i+1},\dots,g^i_{t_i+k})$,
$l'_i = (g^i_{t_i+k+1},\dots,g^i_{t_i+2k})$,
 $b_i = (g^i_{t_i+2k+1},\dots,g^i_{2s_i-2k})$, $r_i = (g^i_{2s_i-2k+1},\dots,g^i_{2s_i-k})$ and $r_i' = (g^i_{2s_i-k+1},\dots,g^i_{2s_i})$. 
 Consider the element
$$
x_i =\begin{array}{c}
 \begin{tikzpicture}[scale=.5]
 \draw[Box](0,0) rectangle (4,4);
\node[below] at (2,4) {$u_i$};
\node[below,rotate=-90] at (4,1){$l'_i$};\node[below,rotate=-90] at (4,3){$l_i$};
\node[below,rotate=180] at (2,0) {$b_i$};
\node[below,rotate=90] at (0,3) {$r'_i$};\node[below,rotate=90] at (0,1) {$r_i$};

\draw[thickline](2,4)-- node[rcount,scale=.75]{$t_i$}++(0,2);
\draw[thickline](2,0)-- node[rcount,scale=.75]{$2s_i-t_i-4k$}++(0,-2);

\draw[thickline](4,3)-- node[rcount,scale=.75]{$k$}++(2,0);
\draw[thickline](4,1)-- node[rcount,scale=.75]{$k$}++(2,0);

\draw[thickline](0,3)-- node[rcount,scale=.75]{$k$}++(-2,0);
\draw[thickline](0,1)-- node[rcount,scale=.75]{$k$}++(-2,0);

\node[marked,above left] at (0,4){};
\end{tikzpicture}\end{array} \in Gr_k\boxtimes Gr_k^{op}.
$$
Choose a path $w_i$ from the end of $l_i$ to the start of $r_i'$; because the graph is bipartite, $w_i$ has even length, say $2d_i$.  Let $d=\max_i d_i$.  
Let
$$
T_i = \begin{array}{c} 
\begin{tikzpicture}[scale=.5]
 \draw[Box](0,2) rectangle (4,4);
 \node[below] at (2,4) {$u_i$};
\node[below,rotate=-90] at (4,3){$l_i$};
\node[below,rotate=90] at (0,3) {$r'_i$};
\node[marked,above left] at (0,4){};

\draw[thickline](2,4)-- node[rcount,scale=.75]{$t_i$}++(0,2);

\draw[thickline](4,3)-- node[rcount,scale=.75]{$k$}++(2,0);

\draw[thickline](0,3)-- node[rcount,scale=.75]{$k$}++(-2,0);

\node[below,rotate=180] at (2,2) {$w_i$};
\draw[thickline] (2,2) arc(0:-90:0.5) -- node[rcount,scale=0.75]{$2d_i$} ++ (-3.5,0);
\draw[thickline] (6,2) --node[rcount,scale=0.75]{$d$} ++ (-1,0) arc (90:270:0.5 and 1.25) --++(1,0);
\draw[thickline] (-2,0.5) --node[rcount,scale=0.75]{$d-d_i$} ++(3,0) arc(90:-90:0.5) --++(-3,0);

\end{tikzpicture}\end{array}\in Gr_{k+2d},\qquad S_i = \begin{array}{c} 
\begin{tikzpicture}[scale=.5]
 \draw[Box](0,0) rectangle (4,2);
\node[below,rotate=180] at (2,0) {$b_i$};
\node[below,rotate=-90] at (4,1){$l'_i$};
\node[below,rotate=90] at (0,1) {$r_i$};
\node[marked,below right] at (4,0){};

\draw[thickline](2,0)-- node[rcount,scale=.75]{$t_i$}++(0,-2);

\draw[thickline](4,1)-- node[rcount,scale=.75]{$k$}++(2,0);

\draw[thickline](0,1)-- node[rcount,scale=.75]{$k$}++(-2,0);

\node[below] at (2,2) {$w_i$};
\draw[thickline] (2,2) arc(0:90:0.5) -- node[rcount,scale=0.75]{$2d_i$} ++ (-3.5,0);
\draw[thickline] (6,4.5) --node[rcount,scale=0.75]{$d$} ++ (-1,0) arc (90:270:0.5 and 1.25) --++(1,0);
\draw[thickline] (-2,4.5) --node[rcount,scale=0.75]{$d-d_i$} ++(3,0) arc(90:-90:0.5) --++(-3,0);

\end{tikzpicture}\end{array} \in Gr_{k+2d}^{op}.
$$

Finally, let $$Q = 
\begin{array}{c} 
\begin{tikzpicture}[scale=.5]
 
\draw[thickline] (0,0) --node[rcount,scale=0.75]{$k$} ++(7,0);
\draw[thickline] (0,-1) --node[rcount,scale=0.75]{$2d$} ++(2,0) arc(90:-90:0.5) --++(-2,0);
\draw[thickline] (7,-1) --node[rcount,scale=0.75]{$2d$} ++(-2,0) arc(90:270:0.5) --++(2,0);
\draw[thickline] (0,-3) --node[rcount,scale=0.75]{$k$} ++(7,0);

\end{tikzpicture}\end{array}\in Gr_{k+2d}\boxtimes Gr_{k+2d}^{op},\qquad Q'=
\begin{array}{c} 
\begin{tikzpicture}[scale=.5]
\draw[thickline] (0,-1) --node[rcount,scale=0.75]{$2d$} ++(2,0) arc(90:-90:0.5) --++(-2,0);
\draw[thickline] (6,-1) --node[rcount,scale=0.75]{$2d$} ++(-2,0) arc(90:270:0.5) --++(2,0);
\end{tikzpicture}\end{array}\in Gr_{4d}.
$$

Then there exist nonzero real constants $\lambda_i$ so that $$ (\tau_{k+2d}\boxtimes \tau_{k+2d}^{op})\left( Q \wedge ( T_i \otimes S_i) \wedge (T_j \otimes S_j)^\dagger \wedge Q \right) = \lambda_i \lambda_j (\tau_k\boxtimes \tau_k^{op}) (x_i \wedge x_j^\dagger).$$

Since $\tau_k$ is positive, we conclude that $\tau_k\otimes \tau_k^{op}$ is also positive, and therefore the matix whose $i,j$-th entry is given by
\begin{align*}
 \lambda_i^{-1}\lambda_j^{-1} 
( \tau_{k+2d}\boxtimes \tau_{k+2d}^{op}) & \left( Q \wedge ( T_i \otimes S_i) \wedge (T_j \otimes S_j)^\dagger \wedge Q \right) 
 \\ & = \lambda_i^{-1}\lambda_j^{-1} Tr_{4d} \left( Q' \wedge \left[ (\tau_k\otimes 1\otimes \tau_k^{op})\left( (T_i\otimes S_j )\wedge (T_j \otimes S_j)^\dagger \right) \right] \wedge Q'\right)
\end{align*}  
is also positive.  We conclude that $\tau_k\boxtimes\tau_k^{op}$ is positive.
\end{proof}

\begin{lemma}
If there is a $l^\infty(\Gamma_+)$-valued von Neumann probability space $(M,E:M \to l^\infty(\Gamma_+))$ with an embedding $\Phi:Gr_0(\mc P^\Gamma) \to M$ such that $E_0 = E \circ \Phi$, then $\tau$ is a positive and bounded $\mc P$-trace.
\end{lemma}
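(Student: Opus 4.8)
The plan is to bootstrap from the $k=0$ hypothesis to positivity of $\tau_k$ for all $k$, and then invoke the material already developed. First I would observe that the existence of the embedding $\Phi\colon Gr_0(\mc P^\Gamma)\to M$ with $E_0=E\circ\Phi$ immediately gives that $E_0$ is a positive $l^\infty(\Gamma_+)$-valued conditional expectation on the image, since $E$ is a genuine conditional expectation on a von Neumann probability space; composing with the state $v\mapsto \sum_v \mu_v^2 v(\cdot)/(\sum \mu_v^2)$ (or more precisely using the $\delta$-normalized trace that makes the inclusions trace-preserving) shows $\tau_0$ is a positive trace on $Gr_0(\mc P)$. The real content is to pass from positivity of $E_0$ to positivity of all the $E_k$ (equivalently all the $\tau_k$).

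The key step is to realize $Gr_k(\mc P^\Gamma)$ inside an amplification/cabling of $(M,E)$. Concretely, using the identifications of Proposition~\ref{graph-ids}(2) (and its evident $k$-fold analogue), $Gr_k(\mc P^\Gamma)$ embeds into a matrix-amplified version $M_{N_k}(M)$-type algebra over $l^\infty(\Gamma_+)$, with the expectation $E_k$ corresponding to the obvious amalgamated expectation built from $E$ and a normalized trace on the matrix part. Because matrix amplifications of $l^\infty(\Gamma_+)$-valued von Neumann probability spaces are again $l^\infty(\Gamma_+)$-valued von Neumann probability spaces, and $E$ is completely positive, the induced map $E_k$ is positive; restricting to $Gr_k(\mc P)\subset Gr_k(\mc P^\Gamma)$ and noting $E_k(x)=\tau_k(x)\cdot 1$ there, we conclude $\tau_k$ is a positive trace on $Gr_k(\mc P)$. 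Boundedness of $\tau$ comes for free: the generators of $Gr_k(\mc P^\Gamma)$ map into the von Neumann algebra $M_{N_k}(M)$, hence act boundedly on the corresponding GNS space, so $Gr_k(\mc P)$ does as well.

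An alternative, perhaps cleaner, route avoids writing out the cabled embedding explicitly: use the positivity criterion of the Proposition together with Corollary~\ref{cor:embeddingTracePositivity}. Since $\mc P\subset\mc P^\Gamma$ is a planar algebra embedding, it suffices to prove positivity for $\mc P^\Gamma$ itself. The matrices $R(n,\tau)$ from the Proposition live in $P_{2n}\subset Gr_{2n}(\mc P^\Gamma)$, which by Proposition~\ref{graph-ids}(1) sits inside $Gr_0(\mc P^\Gamma)$ (the span of loops); applying $\Phi$ sends these $R$-matrices into $M_{m\times m}(M)$, and their positivity there — which follows because the diagrammatic formula defining $R$ is manifestly a sum of terms of the form $aa^*$ once one pushes the $T$-box through, and $\Phi$ is a $*$-homomorphism so preserves such positivity as soon as $E_0\circ\Phi=E$ is positive — yields positivity of $R(\tau)$, hence positivity of $\tau$ by the Proposition.

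I expect the main obstacle to be bookkeeping: setting up the $k$-fold cabled version of the identification in Proposition~\ref{graph-ids}(2) with enough care that the expectation $E_k$ on $Gr_k(\mc P^\Gamma)$ really does match the amalgamated-over-$l^\infty(\Gamma_+)$ expectation on the matrix amplification of $M$, including getting the $\delta$-powers and the $\mu_v$-normalizations to line up. The structural facts — that amplifications of $l^\infty(\Gamma_+)$-valued von Neumann probability spaces are again such, that completely positive maps compose, and that positivity of $E_k$ on $Gr_k(\mc P^\Gamma)$ restricts to positivity of $\tau_k$ on $Gr_k(\mc P)$ by the "$E_k(x)=\tau_k(x)\cdot 1$" identity from \cite{gjs1} — are all routine once the identification is in place. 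Boundedness, as noted, is then immediate.
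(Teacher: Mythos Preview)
Your first approach is correct and is exactly what the paper does: extend $\Phi$ to an embedding $\Phi_k:Gr_k(\mc P^\Gamma)\to M\otimes M_{n_k}(\C)$ with $E_k=(E\otimes\tr)\circ\Phi_k$, and then compose with any positive trace $\varphi$ on $l^\infty(\Gamma_+)$ to obtain positivity of $\tau_k$; boundedness follows because everything sits inside a von Neumann algebra. The bookkeeping you flag as the main obstacle is dispatched in the paper by simply citing \cite{gjs1}.

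Your alternative route, however, has a gap. The matrices $R(n,\tau)$ are \emph{not} ``manifestly a sum of terms of the form $aa^*$'' from their diagrammatic definition: each entry contains the box $T_{2k+2l}$, which encodes the very trace whose positivity is in question, and there is no way to ``push the $T$-box through'' into an $aa^*$ form without already knowing something about $\tau$. Indeed, the Proposition you invoke establishes that positivity of $R(\tau)$ is \emph{equivalent} to positivity of all the $\tau_n$, so asserting positivity of $R$ directly is begging the question. One could try to salvage this by arguing that, since $E_0=E\circ\Phi$ is positive by hypothesis, the entries of $R$ (which are computed via $E_0$) land in the positive cone of $M_{m\times m}(P_{2n})$; but making that precise amounts to re-doing the cabled embedding argument of your first approach, just in disguise. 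Stick with the first route.
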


\begin{proof}
For $k \geq 0$ we can extend $\Phi$ to an embedding $\Phi_k:Gr_k(\mc P^\Gamma) \to M \otimes M_{n_k}(\C)$ for some $n_k$, such that $E_k = (E \otimes \tr) \circ \Phi_k$ (this follows from the description of $\mc P^\Gamma$, see \cite{gjs1}).  Fix any vertex $v \in \Gamma_+$ and let $\varphi$ be any positive (semi-definite) trace on $l^\infty(\Gamma)$.  As remarked above we have $\tau_k = \varphi \circ E_k$, and it follows that $\tau_k$ is positive. 
\end{proof}

\subsection{ Planar algebra cumulants:}

Let $\mc P = (P_m)_{m \geq 0}$ be a planar algebra and let $\rho = (\rho_m)_{m \geq 1}$ be a sequence with $\rho_m \in P_m$.  Define $\rho_\pi \in P_m$, for $\pi$ a non-crossing partition in $NC(m)$, inductively as follows: if $\pi = 1_m$ is the partition with one block, then $\rho_{\pi} = \rho_m$.  Otherwise, let $V = \{l+1,\dotsc,l+s\}$ be an interval of $\pi$ and define
\begin{equation*}
\begin{tikzpicture}[scale=.7]
 \draw[Box] (0,0) rectangle (2,1); \node at (1,.5) {$\rho_\pi$}; \node[marked,scale=.8,below right] at (2,0) {};
 \draw[thickline] (1,0) --++(0,-.5);
 \begin{scope}[xshift=4cm,yshift=-.25cm]
  \draw[Box] (-.25,1) rectangle (2.25,2); \node at (1,1.5) {$\rho_{\pi \setminus V}$}; \node[marked,scale=.8,below right] at (2.25,1) {};
  \draw[Box] (.5,0) rectangle (1.5,.75); \node at (1,.375) {$\rho_s$}; \node[marked,scale=.7,below right] at (1.5,0) {};
  \draw[thickline] (.25,1) --node[rcount, near end] {$2l$} ++(0,-2); \draw[thickline] (1.75,1) --++(0,-2);
  \draw[thickline] (1,0) --++(0,-1);
  \node[left] at (-.5,.25) {$=$};
 \end{scope}
 \end{tikzpicture}
\end{equation*}
where $\pi \setminus V$ is the partition obtained by removing the block $V$ from $\pi$.  

\begin{example}
For $\pi = \{\{1,4,6\},\{2,3\},\{5\} \in NC_5$ we have
\begin{equation*}
\begin{tikzpicture}[thick]
\begin{scope}[xscale=2]
\draw (0,.75) --++(1.25,0);
\draw (.25,.5) --++(.25,0);
\foreach \x in {0,.75,1.25} {\draw (\x,0) --++(0,.75);}
\foreach \x in {.25,.5,1} {\draw (\x,0) --++(0,.5);}
\node[left] at (-.25,.375) {$\pi = $};
\end{scope}
\begin{scope}[xshift=5.25cm,yshift=-.5cm]
\draw[Box] (0,1.25) rectangle (4.75,1.75); \node at (2.375,1.5) {$\rho_3$}; \node[marked,scale=.8,below right] at (4.75, 1.25) {};
\draw[Box] (.75,.5) rectangle (2,1); \node at (1.375,.75) {$\rho_2$}; \node[marked, scale=.8, below right] at (2,.5) {};
\draw[Box] (3,.5) rectangle (3.75,1); \node at (3.375,.75) {$\rho_1$}; \node[marked, scale=.8, below right] at (3.75,.5) {};

\foreach \x in {1,1.25,1.5,1.75,3.25,3.5} {\draw (\x,.5) --++(0,-.5);}
\foreach \x in {.25,.5,2.25,2.5,4.25,4.5} {\draw (\x,1.25) --++(0,-1.25);}
\node[left] at (-.5,.875) {$\mapsto \; \; \rho_\pi = $};
\end{scope}
\end{tikzpicture}
\end{equation*}
\end{example}

Given $x \in P_m,k$ we define
\begin{equation*}
 \begin{tikzpicture}[scale=.5]
  \draw[Box] (0,0) rectangle (1,1); \node at (.5,.5) {$x$}; \node[marked,scale=.6, above left] at (0,1) {};
  \draw[Box] (0,1.5) rectangle (1,2.5); \node at (.5,2) {$\rho_\pi$}; \node[marked,scale=.6,below right] at (1,1.5) {};
  \draw[thickline] (.5,1) --++(0,.5);
  \draw[thickline] (1,.5)--++(0.25,0);
  \draw[thickline] (0,.5)--++(-0.25,0);

  \node[left] at (-.5,1.25) {$\rho_\pi[x] = $};
 \end{tikzpicture}
\end{equation*}

\begin{definition}\label{planar_cumulants}
Let $\tau = (T_m)_{m \geq 1}$ be a $\mc P$-trace.  The \textit{planar algebra free cumulants} $\kappa_m^\mc P \in P_m$ are determined by the requirement
\begin{equation*}
 T_m = \sum_{\pi \in NC(m)} \kappa_\pi^\mc P.
\end{equation*}
\end{definition}

As with the usual free cumulants, we can solve for $\kappa_m^\mc P$ using M\"{o}bius inversion:
\begin{equation*}
 \kappa_m^\mc P = \sum_{\sigma \in NC(m)} \mu(\sigma,1_m) \cdot T_\sigma,
\end{equation*}
where $\mu(\sigma,\pi)$ is the M\"{o}bius function on the lattice $NC(m)$.

We also have the usual free cumulants $(\kappa^{Gr_k}_m)_{m \geq 1}$ associated to the noncommutative probability space $(Gr_k(\mc P),E_k : Gr_k(\mc P)\to P_k)$ (see e.g. \cite{nsp}).  If $x_1,\dotsc,x_m \in P_k$ then it is not too hard to see that
\begin{equation*}
\kappa_m^{\mc P}[x_1 \wedge_k x_2 \wedge_k \dotsb \wedge_k x_m] = \kappa^{Gr_k}_m(x_1,x_2,\dotsc,x_m).
\end{equation*}
In general such products don't span $P_m$, however we can still recover the usual free cumulants from the planar algebra cumulants by adapting the product formula (see \cite[Theorem 11.12]{nsp}).  We need the following notation: given $m_1 + \dotsb + m_n = m$, and $\pi \in NC(n)$, define $\wh \pi \in NC(m)$ by partitioning the $n$ blocks $\{1,\dotsc,m_1\}, \{m_1+1,\dotsc,m_1 + m_2\},\dotsc,\{m_1 + \dotsb + m_{n-1} + 1,\dotsc,m\}$ according to $\pi$.

\begin{proposition} \label{product_cumulants}
Let $x_1,\dotsc,x_n \in Gr_k(\mc P)$ with $x_i \in P_{m_i}$, and let $m = m_1 + \dotsb + m_n$.  Then we have
\begin{equation*}
 \kappa^{Gr_k}_n(x_1, x_2,\dotsc, x_ n) = \sum_{\substack{\pi \in NC(m)\\ \pi \vee \wh{0}_m = 1_m}} \kappa_\pi^{\mc P}[x_1 \wedge_k x_2 \wedge_k \dotsb \wedge_k x_n].
\end{equation*}
\end{proposition}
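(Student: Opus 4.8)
The plan is to transcribe the proof of the classical product formula for free cumulants \cite[Theorem 11.12]{nsp} into the present diagrammatic setting, taking $(Gr_k(\mc P),\mc E_k)$ as the relevant $P_k$-valued noncommutative probability space and using the pairing $\rho_\pi[\,\cdot\,]$ in place of scalar moments. (Throughout I write $\mc E_k$ for the conditional expectation $Gr_k(\mc P)\to P_k$ defined in \S\ref{sec:freeprob}, and $y=x_1\wedge_k\cdots\wedge_k x_n\in P_{m,k}$.)

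I would begin with the moment--cumulant identity for the planar cumulants: for any $z\in P_{m,k}\subset Gr_k(\mc P)$, directly from the definition of $\mc E_k$ and from $T_m=\sum_{\sigma\in NC(m)}\kappa^{\mc P}_\sigma$ (Definition \ref{planar_cumulants}) one reads off
\begin{equation*}
\mc E_k(z)=\sum_{\sigma\in NC(m)}\kappa^{\mc P}_\sigma[z],
\end{equation*}
since both sides are obtained by capping the $2m$ through-strings of $z$ with $T_m$ and leaving the $k$ side strings untouched. On the other hand, since $(Gr_k(\mc P),\mc E_k)$ is an operator-valued probability space, the operator-valued moment--cumulant formula gives $\mc E_k(x_1\wedge_k\cdots\wedge_k x_n)=\sum_{\pi\in NC(n)}\kappa^{Gr_k}_\pi(x_1,\dots,x_n)$, with $\kappa^{Gr_k}_\pi$ the nested cumulant over $P_k$. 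Next I would organize the sum $\sum_{\sigma\in NC(m)}\kappa^{\mc P}_\sigma[y]$ by the value of $\sigma\vee\wh 0_m$: because $\wh 0_m$ is exactly the interval partition whose $n$ blocks are the consecutive strand-groups coming from $x_1,\dots,x_n$, the element $\sigma\vee\wh 0_m$ always lies in $[\wh 0_m,1_m]\cong NC(n)$ (via $\pi\mapsto\wh\pi$), whence
\begin{equation*}
\mc E_k(x_1\wedge_k\cdots\wedge_k x_n)=\sum_{\pi\in NC(n)}\Psi_\pi,\qquad \Psi_\pi:=\sum_{\substack{\sigma\in NC(m)\\ \sigma\vee\wh 0_m=\wh\pi}}\kappa^{\mc P}_\sigma[x_1\wedge_k\cdots\wedge_k x_n].
\end{equation*}

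Comparing the two expansions and applying M\"obius inversion over $NC(n)$, the proposition (the case $\pi=1_n$) will follow once $\Psi_\pi=\kappa^{Gr_k}_\pi(x_1,\dots,x_n)$ is established for every $\pi\in NC(n)$, which I would prove by induction on $n$ and, within that, by peeling off blocks. For $n=1$ one has $\wh 0_{m_1}=1_{m_1}$, so $\Psi_{1_1}[x_1]=\sum_{\sigma\in NC(m_1)}\kappa^{\mc P}_\sigma[x_1]=\mc E_k(x_1)=\kappa^{Gr_k}_1(x_1)$. For $\pi\neq 1_n$, choose an interval block $V=\{l+1,\dots,l+s\}$ of $\pi$; then $\wh\pi$ disconnects along the strand-block $W$ attached to $V$, any $\sigma$ with $\sigma\vee\wh 0_m=\wh\pi$ satisfies $\sigma\le\wh\pi$ and so splits as $\sigma_W\sqcup\sigma_{W^c}$ with $\sigma_W\vee\wh 0_W=1_W$ and $\sigma_{W^c}\vee\wh 0_{W^c}=\wh{(\pi\setminus V)}$. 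Unwinding the recursive definition of $\rho_\pi[\,\cdot\,]$ (which juxtaposes $\rho_{\pi\setminus V}$ with $\rho_s$ at the slot of $V$), the factor coming from $W$ caps the through-strings of $x_{l+1}\wedge_k\cdots\wedge_k x_{l+s}$ and equals $\sum_{\sigma_W\vee\wh 0_W=1_W}\kappa^{\mc P}_{\sigma_W}[x_{l+1}\wedge_k\cdots\wedge_k x_{l+s}]$, which by the induction hypothesis (the case of $s<n$ entries) is the $P_k$-element $c:=\kappa^{Gr_k}_s(x_{l+1},\dots,x_{l+s})$; folding $c$ into a neighbouring factor exactly as in the operator-valued cumulant recursion, the remaining sum is $\Psi_{\pi\setminus V}$ for the $n-s$ entries $x_1,\dots,x_l,\,c\wedge_k x_{l+s+1},\dots,x_n$ (with the standard convention when $V$ abuts an endpoint), which by induction equals $\kappa^{Gr_k}_{\pi\setminus V}(x_1,\dots,x_l,c\wedge_k x_{l+s+1},\dots)=\kappa^{Gr_k}_\pi(x_1,\dots,x_n)$. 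Subtracting the contributions of all $\pi\neq 1_n$ from the two expansions above then yields $\Psi_{1_n}=\kappa^{Gr_k}_n(x_1,\dots,x_n)$, which is the assertion.

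I expect the genuine work to be the step ``unwinding the recursive definition of $\rho_\pi[\,\cdot\,]$'': one must check that the purely diagrammatic block-recursion defining $\kappa^{\mc P}_\pi$ and its pairing with $x_1\wedge_k\cdots\wedge_k x_n$ reproduces, term by term, the nesting of $P_k$-valued cumulants in $(Gr_k(\mc P),\mc E_k)$ --- in particular that capping the $W$-strands with the $\kappa^{\mc P}_{\sigma_W}$'s produces the cumulant $\kappa^{Gr_k}_s(x_{l+1},\dots,x_{l+s})$ in the correct slot, and that the $P_k$-bimodularity of both $\mc E_k$ and the pairings $\kappa^{\mc P}_\sigma[\,\cdot\,]$ permits moving it there. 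This is precisely the bookkeeping of \cite[Theorem 11.12]{nsp}, the only extra ingredient being that, when all $m_i=1$, the statement reduces to the already-recorded identity $\kappa^{\mc P}_p[z_1\wedge_k\cdots\wedge_k z_p]=\kappa^{Gr_k}_p(z_1,\dots,z_p)$, applied after collapsing each interval block to a single capped element.
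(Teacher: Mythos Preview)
Your argument is correct, but it takes a longer path than the paper's. You prove the stronger multiplicative-family identity $\Psi_\pi=\kappa^{Gr_k}_\pi(x_1,\dots,x_n)$ for \emph{every} $\pi\in NC(n)$ by inductively peeling off interval blocks, and only then read off the case $\pi=1_n$. The paper instead goes directly: it starts from the M\"obius inversion formula $\kappa^{Gr_k}_n=\sum_{\pi\in NC(n)}\mu(\pi,1_n)\,\tau_\pi[x_1,\dots,x_n]$, observes that the nested $P_k$-valued moment $\tau_\pi[x_1,\dots,x_n]$ is exactly $T_{\wh\pi}[x_1\wedge_k\cdots\wedge_k x_n]$ (immediate from the definition of $\mc E_k$ and the block-recursion for $T_\sigma$), and then uses the lattice isomorphism $NC(n)\cong[\wh 0_m,1_m]$ to rewrite this as $\sum_{\wh 0_m\le\pi}\mu(\pi,1_m)\,T_\pi[y]$. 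Expanding $T_\pi=\sum_{\sigma\le\pi}\kappa^{\mc P}_\sigma$ and interchanging sums, the inner sum $\sum_{\pi\ge\sigma\vee\wh 0_m}\mu(\pi,1_m)$ collapses to the indicator of $\sigma\vee\wh 0_m=1_m$, and the formula drops out in three lines.

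The point is that the ``genuine work'' you flag --- checking that the diagrammatic block-recursion for $\kappa^{\mc P}_\pi$ matches the operator-valued cumulant nesting --- is exactly what the paper sidesteps by working with \emph{moments} rather than cumulants: the identity $\tau_\pi=T_{\wh\pi}[\,\cdot\,]$ requires no bookkeeping beyond the definition of $\mc E_k$, whereas your cumulant-level version requires the full inductive verification. Your route does buy you the multiplicative identity $\Psi_\pi=\kappa^{Gr_k}_\pi$ for all $\pi$, which is a slightly sharper statement, but for the proposition as stated the paper's pure M\"obius-inversion argument is considerably more economical.
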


\begin{proof}
We have
\begin{align*}
\kappa^{Gr_k}_n[x_1,\dotsc,x_n] &= \sum_{\pi \in NC(n)} \mu(\pi,1_n) \tau_\pi[x_1,\dotsc,x_n] \\
&= \sum_{\pi \in NC(n)} \mu(\pi,1_n) T_{\wh \pi}(x_1 \wedge_0 x_2 \wedge_0 \dotsb \wedge_0 x_n)\\
&= \sum_{\substack{\pi \in NC(m)\\ \wh 0_m \leq \pi \leq 1_m}} \mu(\pi,1_m) T_{\pi}(x_1 \wedge_0 x_2 \wedge_0 \dotsb \wedge_0 x_n)
\end{align*}
and the result then follows by M\"{o}bius inversion in the lattice $NC(m)$.
\end{proof}

As a corollary we have the following analogue of Speicher's characterization of freeness by vanishing of mixed cumulants.
\begin{corollary}\label{cumulant_freeness}
Suppose that $(\mc A_i)_{i \in I}$ are subalgebras of $Gr_k(\mc P)$, and suppose that for any $a_1,a_2$ with $a_j \in \mc A_{i_j}$, $i_1 \neq i_2$, and for any $m$ we have
\begin{equation*}
\begin{tikzpicture}[thick,scale=.5]
\draw[Box] (-0.5,0) rectangle (1.5,1); \node at (.75,.5) {$a_1$}; \node[marked,scale=.8,left] at (-0.5,1) {};
\draw[Box] (3.5,0) rectangle (5.5,1); \node at (4.5,.5) {$a_2$}; \node[marked,scale=.8,left] at (3.5,1) {};
\draw[Box] (1.25,2) rectangle (3.75,3); \node at (2.5,2.5) {$\kappa_m^\mc P$}; \node[marked,scale=.8,above] at (2.5,3.15) {};

\draw (.35,1) .. controls ++(0,.65) and ++(0,-.65) .. (2.05,2);
\draw (.65,1) .. controls ++(0,.5) and ++(0,-.75) .. (2.35,2);
\draw (4.35,1) ..controls ++(0,.5) and ++(0,-.75) .. (2.65,2);
\draw (4.65,1) ..controls ++(0,.65) and ++(0,-.65) .. (2.95,2);

\draw[medthick] (0,1) arc(0:180:.6 and .25 ) --++(0,-1.75);  
\draw[medthick] (1.2,1) arc(180:0:.55 and .25) --++(0,-1.75);
\draw[medthick] (3.8,1) arc(0:180:.55 and .25) --++(0,-1.75);
\draw[medthick] (5,1) arc(180:0:.6 and .25) --++(0,-1.75);

\draw[medthick] (5.5,0.5) arc(90:0:.20 and .25) --node[rcount, scale=.75] {$k$} ++(0,-1);  
\draw[medthick] (-0.5,0.5) arc(90:180:.20 and .25) --node[rcount, scale=.75] {$k$} ++(0,-1);

\draw[medthick] (1.5,0.5) arc(90:0:.20 and .25) --node[rcount, scale=.75] {$k$} ++(0,-1);\draw[medthick] (3.5,0.5) arc(90:180:.20 and .25) --node[rcount, scale=.75] {$k$} ++(0,-1);

\draw[medthick] (1.75,2) arc(0:-180:.375 and .25) --++(0,1.25); 
\draw[medthick] (3.25,2) arc(180:360:.375 and .25) --++(0,1.25);

\node[right] at (6.5,1.5) {$= 0,$};
\end{tikzpicture}
\end{equation*}
where each thick line represents an arbitrary number of parallel strings (except for strings labeled with $k$, which represent exactly $k$ strings).  Then $(\mc A_i)_{i \in I}$ are free with amalgamation over $P_k$.  In particular, if $k=0$, they are free with respect to $\tau_0$.
\end{corollary}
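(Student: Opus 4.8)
The plan is to deduce this from the standard characterization of freeness with amalgamation by vanishing of mixed cumulants (see \cite{nsp}), using Proposition~\ref{product_cumulants} as the bridge between the planar algebra cumulants $\kappa^{\mc P}$ and the ordinary $P_k$-valued free cumulants $\kappa^{Gr_k}_n$ of the space $(Gr_k(\mc P), E_k)$.

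First I would reduce the problem to showing that $\kappa^{Gr_k}_n(x_1,\dots,x_n) = 0$ whenever $n \ge 2$, $x_j \in \mc A_{i_j}$, and the colors $i_1,\dots,i_n$ are not all equal. Since the $\mc A_i$ need not contain $P_k$, one replaces them by the algebras they generate together with $P_k$; expanding a product $b_0 a_1 b_1 \cdots a_r b_r$ (with $b_j \in P_k$, $a_j \in \mc A_i$) by the product formula for cumulants \cite[Theorem 11.12]{nsp}, and using that $P_k$ is the algebra of scalars for $E_k$ (so that cumulants of length $\ge 2$ with an entry in $P_k$ vanish), this reduces to mixed cumulants of elements of $\bigcup_i \mc A_i$; by multilinearity we may take each $x_j$ homogeneous, $x_j \in P_{m_j,k}$. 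Vanishing of all such mixed cumulants is then equivalent, by the Nica--Speicher characterization, to freeness of the $\mc A_i$ with amalgamation over $P_k$; when $k=0$ we have $P_0 = \C$ and $E_0 = \tau_0$, giving ordinary freeness.

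For such $x_1,\dots,x_n$, Proposition~\ref{product_cumulants} gives $\kappa^{Gr_k}_n(x_1,\dots,x_n) = \sum_\pi \kappa_\pi^{\mc P}[x_1 \wedge_k \cdots \wedge_k x_n]$, the sum over $\pi \in NC(m)$ (with $m = m_1 + \cdots + m_n$) satisfying $\pi \vee \wh{0}_m = 1_m$. The condition $\pi \vee \wh{0}_m = 1_m$ says precisely that the graph on the intervals $G_1,\dots,G_n$ of $\wh{0}_m$ --- with an edge between $G_p$ and $G_q$ whenever some block of $\pi$ meets both --- is connected; since the coloring is non-constant, this graph then has an edge joining two differently-colored intervals, so $\pi$ has a block $V$ meeting some $G_p$ and $G_q$ with $i_p \ne i_q$. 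Reading off the inductive definition of $\kappa_\pi^{\mc P}$, such a $V$ produces a box $\kappa^{\mc P}_{|V|}$ into which the strands coming from (processed copies of) $x_p$ and $x_q$ feed through neighboring inputs. I would then group the terms of the sum by the block $V$ together with the blocks of $\pi$ nested inside it, and collapse the contribution of the ``outer'' blocks using the defining relation $T_r = \sum_{\sigma \in NC(r)} \kappa_\sigma^{\mc P}$; this rewrites $\kappa^{Gr_k}_n(x_1,\dots,x_n)$ as a linear combination of the very diagrams appearing in the hypothesis, with $a_1,a_2$ assembled out of $x_p,x_q$ --- hence equal to $0$. To handle blocks $V$ touching three or more intervals it is cleanest to phrase the whole argument as an induction on $n$ in which adjacent like-colored arguments are absorbed into a single $\wedge_k$-product, so that only a genuine two-color configuration survives at the base case.

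I expect the crux to be this regrouping step: one must verify that, after summing over all $\pi$ with a prescribed straddling block $V$ and applying $T_r = \sum_\sigma \kappa_\sigma^{\mc P}$, the ``outside'' structure of each diagram closes up the remaining strands of $\kappa^{\mc P}_{|V|}$ and of the $x_j$ in exactly the planar pattern drawn in the statement --- matching the shading, the placement of the $k$ through-strings, and the position of the marked point. Granting that identification, the corollary follows formally from Proposition~\ref{product_cumulants} together with the cumulant machinery of \cite{nsp}.
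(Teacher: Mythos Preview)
Your overall strategy---reduce to Speicher's vanishing-of-mixed-cumulants criterion and invoke Proposition~\ref{product_cumulants}---is exactly what the paper does. The paper's proof is two sentences: apply the proposition, then observe that the connectedness condition $\pi\vee\wh 0_m=1_m$ together with non-constant coloring forces some block $V$ of $\pi$ to connect two differently-colored intervals, so each individual term $\kappa_\pi^{\mc P}[x_1\wedge_k\cdots\wedge_k x_n]$ contains the hypothesis configuration as a sub-tangle and is therefore zero.

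Where you diverge is in the last step. You propose to group the $\pi$'s by the straddling block $V$, sum over the ``outer'' blocks to collapse them into moments $T_r$, and then match the hypothesis picture. This regrouping is unnecessary and in fact does not quite produce the diagram in the statement (the outer blocks attach to the $x_j$'s, not to $\kappa_{|V|}^{\mc P}$, so summing them yields moments glued onto the $x_j$'s rather than the bare open-strand configuration). The much simpler observation, which the paper uses, is that the hypothesis asserts vanishing of an element of some $P_\ell$ with \emph{free} boundary strands; hence any tangle containing that element as a sub-tangle is automatically zero. Concretely: pick two consecutive elements $l_j<l_{j+1}$ of $V$ lying in differently-colored intervals $G_p,G_q$ (such a pair exists since the colors hit by $V$ are not all equal); then the portion of the diagram consisting of $\kappa_{|V|}^{\mc P}$, $x_p$, $x_q$, with everything else regarded as dangling strands, is exactly the hypothesis picture with $a_1=x_p$, $a_2=x_q$, $m=|V|$. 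No summation, no induction on $n$, no separate treatment of blocks meeting three or more intervals is needed. Your remark about adjoining $P_k$ to the $\mc A_i$ is a legitimate bookkeeping point that the paper leaves implicit.
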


\begin{proof}
Let $a_j \in \mc A_{i_j}$ for $j = 1,\dotsc,m$, and suppose that not all $i_j$ are equal.  By the proposition we have
\begin{equation*}
 \kappa_m(a_1,\dotsc,a_m) = \sum_{\substack{\pi \in NC(m)\\ \pi \vee \wh{0}_m = 1_m}} \kappa_\pi^{\mc P}[a_1 \wedge_k a_2 \wedge_k \dotsb \wedge_k a_m].
\end{equation*}
Now since $i_j$ are not all equal, the condition $\pi \vee \wh{0}_m = 1_m$ implies that $\kappa_\pi^{\mc P}$ will connect two elements from different algebras and will therefore be zero by assumption.  So the mixed cumulants vanish, which is Speicher's condition for freeness.
\end{proof}

\subsection{ Examples of \texorpdfstring{$\mc P$-traces}{P-traces}}  
We will now give some examples of planar algebra traces, which are defined combinatorially using the free cumulants of a given compactly supported probability measure $\nu$ on $\R$.  
In the case that $\nu$ is the free Poisson distribution we recover the \textit{Voiculescu trace} on $\mc P$ from \cite{gjs1}.  
In the rest of the paper we will be especially interested in the trace obtained by taking $\nu$ to be the semicircle law, which we will call the \textit{2-cabled Voiculescu trace}.

First we require some combinatorial preliminaries.  Given a non-crossing partitition $\pi \in NC(k)$, define its ``fattening'' $\widetilde \pi \in TL(k)$ as follows:  For each block $V = (i_1,\dotsc,i_s)$ of $\pi$, we add to $\widetilde \pi$ the pairings $(2i_1-1,2i_s), (2i_1,2i_2-1),\dotsc$, $(2i_{s-1},2i_s-1)$.  It is not hard to see that
\begin{equation*}
 \widetilde \pi \vee \cup \cup \dotsb \cup = \widehat \pi,
\end{equation*}
where $\widehat \pi \in NC_2(2k)$ is obtained by partitioning the pairs $(1,2),(3,4),\dotsc,(2k-1,2k)$ according to $\pi$.  

\begin{example} The fattening of $\pi = \{\{1,4,5\},\{2,3\},\{6\}\} \in NC_6$ is given by
%
\begin{equation*}
\begin{array}{ccc}
 \begin{tikzpicture}[scale=.5]
   \draw[line width=2](0,0) -- ++(0,2) -- ++(3,0) -- ++(0,-2) --++(0,2) --++(1,0)--++(0,-2);
   \draw[line width=2](1,0) -- ++(0,1) --++(1,0) --++(0,-1);
   \draw[line width=2](5,0)--++(0,2);
  \end{tikzpicture}\end{array} \mapsto \begin{array}{c}  \begin{tikzpicture}[scale=.5]
   \draw[line width=2,double,double distance=2pt](0,0) -- ++(0,2) -- ++(3,0) -- ++(0,-2) --++(0,2) --++(1,0)--++(0,-2);
   \draw[line width=2,double,double distance=2pt](1,0) -- ++(0,1) --++(1,0) --++(0,-1);
      \draw[line width=2,double distance=2pt](5,0)--++(0,2.1);
      \draw[line width=2](4.8,2.1)--(5.2,2.1);
  \end{tikzpicture}\end{array}
\end{equation*} 

\end{example}

The following relationship between rotation in $TL(k)$ and the Kreweras complement $Kr$ on $NC(k)$ was proved in \cite{cs1}.

\begin{proposition} \label{kreweras}
If $\pi \in NC(k)$ then
\begin{equation*}
 \widetilde{K(\pi)} = \rho(\widetilde{\pi}),
\end{equation*}
where $\rho$ is the counter-clockwise rotation on $TL(k)$. 
\end{proposition}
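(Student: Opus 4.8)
The plan is to recast both sides of the identity in terms of the standard ``non-crossing permutation'' encoding of $NC(k)$, after which the statement becomes a one-line computation once the conventions are fixed.

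First I would recall that $\pi\in NC(k)$ is recorded faithfully by the permutation $\sigma_\pi\in S_k$ whose cycles are the blocks of $\pi$, each written in cyclically increasing order (so on a block $V=\{i_1<\dots<i_s\}$ one has $\sigma_\pi(i_j)=i_{j+1}$ with indices mod $s$, and a singleton is a fixed point). The first step is to unwind the definition of the fattening and check that, viewing $\widetilde\pi$ as a non-crossing pairing of the $2k$ boundary points $\{1,\dots,2k\}$ in cyclic order, with the convention that $i\in[k]$ doubles to the pair of positions $2i-1,\,2i$,
\[
\widetilde\pi=\bigl\{\,(2\sigma_\pi(b)-1,\ 2b)\ :\ b\in[k]\,\bigr\},
\]
arithmetic mod $2k$. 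This is immediate block by block: for $V=\{i_1<\dots<i_s\}$ the fattening arcs $(2i_1-1,2i_s)$ and $(2i_j,2i_{j+1}-1)$ are precisely the pairs $(2\sigma_\pi(b)-1,2b)$ with $b$ running over $V$, the ``closing arc'' being the case $b=i_s$ (so $\sigma_\pi(b)=i_1$), and a singleton reproducing the elementary cup $(2i-1,2i)$.

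Second, I would invoke the standard relation between the Kreweras complement and this permutation encoding (Biane; see also \cite{nsp}): with the conventions above one has $\sigma_\pi\,\sigma_{Kr(\pi)}=c$, where $c=(1\,2\,\cdots\,k)$ is the long cycle, equivalently $\sigma_{Kr(\pi)}=\sigma_\pi^{-1}c$, so that $\sigma_{Kr(\pi)}(b)=\sigma_\pi^{-1}(b+1)$. Substituting into the Step~1 formula and setting $a:=\sigma_\pi^{-1}(b+1)$ (so $b=\sigma_\pi(a)-1$, and $a$ runs over $[k]$ as $b$ does) gives
\[
\widetilde{Kr(\pi)}=\bigl\{\,(2\sigma_{Kr(\pi)}(b)-1,\ 2b)\,:\,b\in[k]\bigr\}=\bigl\{\,(2a-1,\ 2\sigma_\pi(a)-2)\,:\,a\in[k]\bigr\}.
\]
On the other hand $\rho$ acts on the $2k$ boundary points by $j\mapsto j-1\ (\mathrm{mod}\ 2k)$, so from Step~1, $\rho(\widetilde\pi)=\{(2\sigma_\pi(b)-2,\ 2b-1):b\}=\{(2a-1,\ 2\sigma_\pi(a)-2):a\}$ after renaming $a=b$. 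The two displayed sets of arcs coincide, which is the assertion; that both are genuine non-crossing pairings needs no separate argument, since one is a fattening and the other a rotation of a fattening.

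The only real work — and the place that deserves care — is the bookkeeping of conventions: the orientation of the boundary circle, the precise sense of ``counter-clockwise'' rotation of $TL(k)$, and which of the two mirror-image conventions for $Kr$ is in force. These must be pinned down compatibly so that $\sigma_\pi\sigma_{Kr(\pi)}=c$ holds with $\rho\colon j\mapsto j-1$; the opposite choices would instead yield $\widetilde{Kr^{-1}(\pi)}=\rho^{-1}(\widetilde\pi)$, the same statement read in a mirror. I would therefore begin the proof by fixing these conventions explicitly (consistently with \cite{gjs1,cs1}) and then run the computation above, spending an extra line on the verification of the Step~1 formula — in particular on the singleton and closing-arc cases.
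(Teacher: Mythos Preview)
Your argument is correct. Encoding $\pi$ by the non-crossing permutation $\sigma_\pi$, writing the fattening as $\widetilde\pi=\{(2\sigma_\pi(b)-1,2b):b\in[k]\}$, and then using Biane's relation $\sigma_{Kr(\pi)}=\sigma_\pi^{-1}c$ reduces the claim to the identity $\{(2a-1,2\sigma_\pi(a)-2):a\}=\rho(\widetilde\pi)$, which you verify directly. The one delicate point is exactly the one you flag: the conventions for the orientation of the boundary, the direction of $\rho$, and the chirality of $Kr$ must be fixed consistently, and you are right that the mirror choice gives the same statement with $Kr^{-1}$ and $\rho^{-1}$.

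As for comparison with the paper: there is no proof to compare against. The paper does not prove this proposition at all; it simply imports it from \cite{cs1} (see the sentence immediately preceding the statement). So what you have written is not an alternative argument but an actual self-contained proof where the paper offers only a citation. Your approach via the permutation model is essentially the standard one and is presumably close in spirit to what appears in \cite{cs1}; it has the advantage of making transparent why a single rotation (rather than, say, two) is what corresponds to $Kr$, since $Kr$ shifts by one in the long cycle $c$ on $k$ points, and each point of $[k]$ has been doubled.
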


%

Now let $\nu$ be a compactly supported distribution on $\mathbb R$, and let $\mathbb \kappa_n$ be its free cumulants.  Recall that we have
\begin{equation*}
 \nu(x^n) = \sum_{\pi \in NC(n)} \kappa_\pi,
\end{equation*}
where
\begin{equation*}
 \kappa_\pi = \prod_{V \in \pi} \kappa_{|V|}.
\end{equation*}
The free convolution power $\nu^{\boxplus t}$ is determined by $\kappa_n^{\nu^{\boxplus t}} = t \cdot \kappa_n^{\nu}$.  In terms of moments this is defined for all $t > 0$, and it is known that $\nu^{\boxplus t}$ is a compactly supported measure on $\mb R$ for $t \geq 1$.  However for $t > 1$ it is not always true that $\nu^{\boxplus (1/t)}$ corresponds to a measure, when it does we say that $\nu$ is \textit{$t$-times freely divisible}.  $\nu$ is called \textit{infinitely freely divisible} if it is $t$-times freely divisible for all $t > 1$.

Now define $T_m \in TL(m)$ by
\begin{equation*}
 T_m = \sum_{\pi \in NC(m)} \kappa_\pi \cdot \widetilde \pi,
\end{equation*}
or equivalently in terms of cumulants:
\begin{equation*}
 \begin{tikzpicture}[scale=.5]
\draw[thick] (.75,1) arc(180:0:.5cm);
\draw[thick] (2.25,1) arc(180:0:.5cm);
\draw[thick] (5.25,1) arc(0:180:.5cm);
\draw[thick] (.25,1) arc(180:90:.75cm) -- (5,1.75) arc(90:0:.75cm);
\node[scale=.75] at (3.75,1.25) {$\dotsb$};

\node[left] at (-.25,1.375) {$\kappa_m^\mc P = \kappa_m(\nu)\; \cdot$};
\end{tikzpicture}
\end{equation*}
Let $\tau^\nu = (T_m)_{m \geq 0}$ be the associated $\mc P$-functional.

\begin{example}\hfill
\begin{enumerate}
 \item If $\nu$ is the free Poisson(1) distribution then $\kappa_\pi = 1$ for all $\pi$, and so $T_m = \sum TL(m)$ and $\tau^\nu$ is the \textit{Voiculescu trace} of \cite{gjs1}.
 \item If $\nu$ is the semicircle law $S(0,1)$, then $\kappa_\pi$ is $1$ or $0$ depending on whether $\pi \in NC_2(m) \sim TL(m/2)$.  So $T_m$ is equal to the sum over all doubled $TL(m/2)$ diagrams, and in particular is zero if $m$ is odd.  We will refer to $\tau = \tau^\nu$ as the \textit{2-cabled Voiculescu trace}.  
\end{enumerate}
\end{example}

\begin{remark}\label{2cable-remark}
The two examples above are closely related.  Let $\mc P$ be a planar algebra and let $\mc P^c$ be its 2-cabling \cite{palg}.  Then the 2-cabled Voiculescu trace on $Gr_{2k}(\mc P)$ restricts to the standard Voiculescu trace on $Gr_k(\mc P^c)$ under the obvious inclusion $Gr_k(\mc P^c) \subset Gr_{2k}(\mc P)$.  This extends to an index 2 inclusion of the associated von Neumann algebras.  

Note also that if one takes the unshaded 2-cabling $\mc P^{uc}$, then there is a natural trace preserving isomorphism between the 2-cabled Voiculescu trace on $Gr_{2k}(\mc P)$ and the standard (unshaded) Voiculescu trace on $Gr_k(\mc P^{uc})$.  The unshaded Voiculescu trace has been considered by Brothier in \cite{bro1}.
\end{remark}

\begin{proposition} \hfill
\begin{itemize}
\item The distribution of $\cup$ with respect to $\tau^\nu$ is $\nu^{\boxplus \delta}$.
\item The distribution of $\delta^{-1}\cdot \corner \in Gr_1(\mc P)$ with respect to $\tau^\nu$ is $\nu^{\boxplus (1/\delta)}$.
\end{itemize}
\end{proposition}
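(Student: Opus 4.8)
The plan is to compute the moments of each element directly from the combinatorial formula $T_m = \sum_{\pi \in NC(m)} \kappa_\pi \cdot \widetilde\pi$ and compare with the moment-cumulant formula for free convolution powers, $\nu^{\boxplus t}(x^n) = \sum_{\pi \in NC(n)} t^{|\pi|}\kappa_\pi$, where $|\pi|$ is the number of blocks. First I would treat the element $\cup \in Gr_1(\mc P)$ (the Temperley-Lieb cup, i.e.\ the generator sitting in $P_{1,1}=P_2$). Its $n$-th moment under $\tau^\nu$ is $\tau_1(\cup \wedge_1 \cdots \wedge_1 \cup)$ ($n$ factors), which by the definition of $\tau_1$ via pairing against $T_n$ amounts to the count, weighted by $\kappa_\pi$, of how the fattened diagram $\widetilde\pi$ closes up against the string diagram obtained by stacking $n$ cups in $Gr_1$ and then capping with the trace annulus. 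The key point is that stacking $n$ cups and closing the $Gr_1$-strings produces exactly the ``all cups'' pattern $\cup\cup\cdots\cup$, so the relevant count is the number of closed loops in $\widetilde\pi \vee \cup\cup\cdots\cup$; each such loop contributes a factor $\delta$, and there is an overall $\delta^{-1}$ from the definition of $\tau_1$. By the identity recorded just before the statement, $\widetilde\pi \vee \cup\cup\cdots\cup = \widehat\pi$, and the number of blocks of $\widehat\pi$ equals the number of blocks $|\pi|$ of $\pi$. Hence the $n$-th moment is $\sum_{\pi\in NC(n)} \kappa_\pi \cdot \delta^{|\pi|}$, which is precisely $\nu^{\boxplus\delta}(x^n)$.

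Next I would do the element $\delta^{-1}\cdot\corner \in Gr_1(\mc P)$. The difference from the previous case is that $\corner$ carries its strings vertically through the $Gr_1$-box rather than forming a cup, so when we stack $n$ copies and apply $\tau_1$ (which caps off the vertical $Gr_1$-strings on the outside, at cost $\delta^{-1}$), the resulting closed diagram pairs $\widetilde\pi$ against the ``rotated'' pattern $\rho(\cup\cup\cdots\cup)$ rather than $\cup\cup\cdots\cup$ itself. Using Proposition~\ref{kreweras} ($\widetilde{K(\pi)} = \rho(\widetilde\pi)$, equivalently $\widetilde\pi \vee \rho(\cup\cdots\cup)$ has block structure governed by the Kreweras complement), the number of closed loops becomes $|K(\pi)| = n+1-|\pi|$. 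Taking into account the overall normalization $\delta^{-n}$ from the $n$-fold product of $\delta^{-1}\corner$ and the $\delta^{-1}$ from $\tau_1$, the $n$-th moment works out to $\delta^{-n}\sum_{\pi\in NC(n)}\kappa_\pi\,\delta^{\,n+1-|\pi|-1} = \sum_{\pi}\kappa_\pi\,\delta^{-|\pi|}$, matching $\nu^{\boxplus(1/\delta)}(x^n)$.

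The main obstacle is bookkeeping: getting the diagrammatic reduction exactly right — identifying precisely which Temperley-Lieb pattern the fattened $\widetilde\pi$ is glued to after stacking $n$ copies of $\cup$ (resp.\ $\corner$) and applying $\tau_1$, and counting closed loops (hence powers of $\delta$) without off-by-one errors in the normalization constants $\delta^{-k}$ built into the definitions of $\tau_k$. In particular one must check that for $\corner$ the vertical strings genuinely produce the rotation $\rho$ on the Temperley-Lieb side, so that Proposition~\ref{kreweras} applies and the Kreweras complement appears; and one must verify the block-count identities $|\widehat\pi| = |\pi|$ and $|\widetilde\pi \vee \rho(\text{cups})\text{-loops}| = |K(\pi)|$ carefully. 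Once these are pinned down, the comparison with the moment series of $\nu^{\boxplus t}$ is immediate, and faithfulness/compact support of $\nu$ guarantees these moments determine the distributions.
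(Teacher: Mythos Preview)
Your moment computation is correct in outline, but it takes a different route from the paper, and there is one small misidentification worth flagging.

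The paper does not compute moments at all; it computes the \emph{free cumulants} of $\cup$ and of $\delta^{-1}\corner$ directly, using Proposition~\ref{product_cumulants}. Since each of these elements lies in the degree-one piece (so $m_i=1$ for all $i$ in that proposition and the condition $\pi\vee\widehat{0}_m=1_m$ forces $\pi=1_m$), one gets immediately $\kappa_m(\cup,\dots,\cup)=\kappa_m^{\mc P}[\cup\wedge_0\cdots\wedge_0\cup]$, and then a single diagram evaluation gives $\delta\cdot\kappa_m(\nu)$ for $\cup$ and $\delta^{-1}\cdot\kappa_m(\nu)$ for $\delta^{-1}\corner$. The identification with $\nu^{\boxplus\delta}$ and $\nu^{\boxplus 1/\delta}$ is then immediate from the definition of free convolution powers. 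This avoids entirely the loop-counting and the Kreweras/rotation identity; Proposition~\ref{kreweras} is not used in the paper's proof of this proposition.

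Your approach instead expands $T_n=\sum_\pi\kappa_\pi\widetilde\pi$ and counts closed loops, which is a legitimate and more elementary alternative that does not rely on the product formula for cumulants. The trade-off is exactly what you identify: the bookkeeping is delicate, and for $\corner$ you must invoke the rotation/Kreweras correspondence to get the exponent $n+1-|\pi|$. One point to clean up: $\cup$ sits in $P_{1,0}=P_1\subset Gr_0(\mc P)$, not in $P_{1,1}\subset Gr_1(\mc P)$ (the paper's proof uses $\wedge_0$); placing it in $Gr_0$ removes the spurious $\delta^{-1}$ and the need to account for an extra loop from the embedding string, so the loop count $|\widehat\pi|=|\pi|$ gives the moment $\sum_\pi\kappa_\pi\delta^{|\pi|}$ directly.
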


\begin{proof}
By Proposition \ref{product_cumulants} we have
\begin{equation*}
 \begin{tikzpicture}[scale=.5]
  \draw[thick] (.75,1) arc(180:0:.5cm);
\draw[thick] (2.25,1) arc(180:0:.5cm);
\draw[thick] (5.25,1) arc(0:180:.5cm);
\draw[thick] (.25,1) arc(180:90:.75cm) -- (5,1.75) arc(90:0:.75cm);
\node[scale=.75] at (3.75,1.25) {$\dotsb$};
\draw[thick] (.25,1) arc(180:360:.25);
\draw[thick] (1.75,1) arc(180:360:.25);
\draw[thick] (5.25,1) arc(180:360:.25);
\node[left] at (-.25,1.25) {$\kappa_m[\cup,\dotsc,\cup] = \kappa_m^\mc P[\cup \wedge_0 \dotsb \wedge_0 \cup] = \kappa_m(\nu) \; \cdot $};
\node[right] at (6,1.25) {$ = \delta \cdot \kappa_m(\nu),$};
 \end{tikzpicture}
\end{equation*}
which shows that the distribution of $\cup$ is $\nu^{\boxplus \delta}$.  Likewise we have
\begin{equation*}
\begin{tikzpicture}[scale=.5]
  \draw[thick] (.75,1) arc(180:0:.5cm);
\draw[thick] (2.25,1) arc(180:0:.5cm);
\draw[thick] (5.25,1) arc(0:180:.5cm);
\draw[thick] (.25,1) arc(180:90:.75cm) -- (5,1.75) arc(90:0:.75cm);
\node[scale=.75] at (3.75,1.25) {$\dotsb$};
\draw[thick] (.25,1) arc(0:-90:.25) arc(90:270:.25) -- (6,.25) arc(-90:90:.25) arc(270:180:.25);
\draw[thick] (.75,1) arc(180:360:.5);
\draw[thick] (2.25,1) arc(180:360:.5);
\draw[thick] (4.25,1) arc(180:360:.5);
\node[left] at (-.35,1) {$ \kappa_m[\delta^{-1}\corner,\dotsc,\delta^{-1}\corner] = \delta^{-1} \delta^{-m} \kappa_m(\nu) \; \cdot $};
\node[right] at (6.25,1) {$ = \delta^{-1} \cdot \kappa_m(\nu),$};
\end{tikzpicture}
\end{equation*}
so that $\delta^{-1} \cdot \corner$ has distribution $\nu^{\boxplus (1/\delta)}$.
\end{proof}

It follows from the result above that for $\tau^\nu$ to extend to a positive $\mc P$-trace it is necessary that $\nu$ is $\delta$-times freely divisible.  We will now show that this condition is also sufficient.  First we need a lemma.

\begin{lemma}\label{proj-expectations}
Let $p_1,\dotsc,p_m$ be mutually orthogonal projections in a noncommutative probability space $(A,\phi)$.  Then for $\pi \in NC(n)$ we have
\begin{equation*}
 \phi_{Kr(\pi)}[p_{i_1},\dotsc,p_{i_n}] = \begin{cases} \phi(p_{i_n}) \displaystyle\prod_{V = (l_1 < \dotsb < l_s) \in \pi} \phi(p_{i_{l_1}}) \dotsb \phi(p_{i_{l_{s-1}}}), & Kr(\pi) \leq \ker \bf i\\ 0, & \text{otherwise}\end{cases}.
\end{equation*}

\end{lemma}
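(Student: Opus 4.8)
The plan is to prove the identity by induction on the structure of the non-crossing partition $\pi$, peeling off an interval block and using the multiplicativity of both sides of the desired formula together with the recursive definition of $\phi_\sigma$. First I would recall that for a non-crossing partition $\sigma \in NC(n)$ and a choice of an interval block $V$ of $\sigma$, the functional $\phi_\sigma[a_1,\dots,a_n]$ factors as $\phi_{|V|}(\text{product of the }a_i\text{ over }V)$ times $\phi_{\sigma\setminus V}$ applied to the remaining entries; so it suffices to analyze how an interval block of $Kr(\pi)$ corresponds to structure of $\pi$. The key combinatorial observation is that the Kreweras complement $Kr(\pi)$ has a very explicit description: its blocks are indexed by the blocks of $\pi$ together with one extra ``outer'' block, and an interval block of $Kr(\pi)$ arises precisely from a block $V = (l_1 < \dots < l_s)$ of $\pi$ that is \emph{innermost} (contains no nested block between consecutive elements), contributing the singletons/intervals $\{l_1\},\dots,\{l_{s-1}\}$ of $Kr(\pi)$ sitting ``between'' the legs of $V$, plus the interaction with the point $l_s$.

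The main structural step is then the following: given $\pi \in NC(n)$ with $\pi \neq \hat 1_n$, pick an innermost block $V = (l_1 < \dots < l_s)$ of $\pi$. In $Kr(\pi)$ this produces blocks that are singletons $\{l_j\}$ for certain positions and one block containing $l_s$ linked to the rest. I would carefully identify these, apply the interval-peeling formula for $\phi_{Kr(\pi)}$, and show that the singleton blocks $\{l_1\},\dots,\{l_{s-1}\}$ contribute exactly the factors $\phi(p_{i_{l_1}})\cdots\phi(p_{i_{l_{s-1}}})$ (since a singleton block gives $\phi$ of a single projection, and $\phi(p)=\phi(p^2)$ so nothing is lost), while the reduced partition $\pi \setminus V$ still satisfies $Kr(\pi\setminus V)$ being the appropriate reduction of $Kr(\pi)$, letting the induction hypothesis supply the remaining product $\phi(p_{i_n})\prod_{W\in\pi\setminus V}\phi(p_{i_{w_1}})\cdots\phi(p_{i_{w_{|W|-1}}})$. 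One must also track the ``$Kr(\pi)\le \ker\mathbf i$'' hypothesis: since the $p_i$ are mutually orthogonal, whenever a block of $Kr(\pi)$ connects two indices with $i_a\neq i_b$ the corresponding product $p_{i_a}\cdots p_{i_b}$ vanishes, forcing $\phi_{Kr(\pi)}=0$; conversely if $Kr(\pi)\le\ker\mathbf i$ every block is monochromatic, and on each block the product of equal projections collapses to a single projection, which is where the clean product formula comes from. The base case $\pi = \hat 1_n$ is direct: $Kr(\hat 1_n)$ consists of $n$ singletons, so $\phi_{Kr(\hat 1_n)}[p_{i_1},\dots,p_{i_n}] = \phi(p_{i_1})\cdots\phi(p_{i_n})$, matching the right-hand side since $\hat 1_n$ has one block $(1<\dots<n)$ and the formula reads $\phi(p_{i_n})\phi(p_{i_1})\cdots\phi(p_{i_{n-1}})$.

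I expect the main obstacle to be bookkeeping in the inductive step: precisely matching the block structure of $Kr(\pi)$ near an innermost block $V$ of $\pi$ with the statement's product over $V = (l_1<\dots<l_s)$, in particular verifying that it is exactly $\phi(p_{i_{l_1}})\cdots\phi(p_{i_{l_{s-1}}})$ that appears (omitting $l_s$) and that the omitted leg $l_s$ is the one that merges with the next outer block of $Kr(\pi)$, eventually accounting for the single global factor $\phi(p_{i_n})$. This is the kind of claim that is ``clear from a picture'' but needs care to state correctly; I would handle it by choosing $V$ to be the innermost block containing the last element $n$ whenever possible, or otherwise by an explicit local computation on the cyclic interval spanned by $V$, reducing to the known action of $Kr$ on a single nested chain. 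Once the interval-block correspondence is nailed down, the rest is a routine application of multiplicativity of $\phi_\sigma$ and the induction hypothesis.
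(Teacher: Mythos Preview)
Your approach is essentially identical to the paper's: induct on the number of blocks of $\pi$, peel off an interval block $V=\{l+1,\dots,l+s\}$, and use that $Kr(\pi)$ is obtained from $Kr(\pi\setminus V)$ by adding singletons $l+1,\dots,l+s-1$ and adjoining $l+s$ to the block of $Kr(\pi\setminus V)$ containing $l$, which produces exactly the factor $\delta_{i_{l+s}=i_l}\,\phi(p_{i_{l+1}})\cdots\phi(p_{i_{l+s-1}})$ and lets the induction run. One small correction: your aside that ``the blocks of $Kr(\pi)$ are indexed by the blocks of $\pi$ together with one extra outer block'' is false in general (recall $|\pi|+|Kr(\pi)|=n+1$), but you never actually use it, and the local description you give near an interval block is the correct one and is all that is needed.
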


\begin{proof}
 The result is clear when $\pi = 1_n$ is the partition with only one block.  Otherwise let $V = (l+1,\dotsc,l+s)$ be an interval of $\pi$ with $l > 0$.  Note that $Kr(\pi)$ is obtained by taking $Kr(\pi \setminus V)$, adding singletons $l+1,\dotsc,l+s-1$ and adding $l+s$ to the block containing $l$.  We therefore have
 \begin{equation*}
  \phi_{Kr(\pi)}[p_{i_1},\dotsc,p_{i_n}] = \delta_{i_{l+s} = i_l} \cdot \phi_{Kr(\pi \setminus V)}[p_{i_1},\dotsc,p_{i_l},p_{i_{l+s+1}},\dotsc,p_n] \cdot \phi(p_{i_{l+1}})\dotsb \phi(p_{i_{l+s-1}}),
 \end{equation*}
and the result follows by induction on the number of blocks of $\pi$.
\end{proof}

\begin{theorem}\label{div-trace}
The law of $\nu$ is $\delta$-times freely divisible if and only if $\tau^\nu$ is a positive $\mc P$-trace.
\end{theorem}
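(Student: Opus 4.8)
The plan is to handle the two implications separately. The easy one --- that positivity of $\tau^\nu$ forces $\nu$ to be $\delta$-times freely divisible --- is immediate from the Proposition above: $\delta^{-1}\cdot\corner$ is a self-adjoint element of $Gr_1(\mc P)$ whose distribution with respect to the tracial state $\tau_1^\nu$ is $\nu^{\boxplus(1/\delta)}$, so positivity of $\tau_1^\nu$ makes the moment sequence of $\nu^{\boxplus(1/\delta)}$ positive definite, i.e. $\nu^{\boxplus(1/\delta)}$ is a genuine probability measure, which is exactly the meaning of $\nu$ being $\delta$-times freely divisible.

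For the converse, assume $\nu$ is $\delta$-times freely divisible and put $\omega:=\nu^{\boxplus(1/\delta)}$, a compactly supported probability measure. First I would reduce to a graph planar algebra: embedding $\mc P\hookrightarrow\mc P^\Gamma$ with $\Gamma$ the principal graph, and noting that the diagrams $T_m=\sum_{\pi\in NC(m)}\kappa_\pi(\nu)\cdot\widetilde\pi$ defining $\tau^\nu$ live in $TL(m)$ and hence in every planar algebra, $\tau^\nu$ on $\mc P$ is the restriction of $\tau^\nu$ on $\mc P^\Gamma$ (Corollary~\ref{cor:embeddingTracePositivity}), so it suffices to prove positivity for $\mc P^\Gamma$. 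For this I would use the criterion established earlier: it is enough to produce an $l^\infty(\Gamma_+)$-valued von Neumann probability space $(M,E)$ together with a $*$-homomorphism $\Phi:Gr_0(\mc P^\Gamma)\to M$ with $E_0=E\circ\Phi$.

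The model $M$ I would build by operator-valued free probability over $B:=l^\infty(\Gamma)$ (using both colours of vertices). Using the identification of $Gr_0(\mc P^\Gamma)$ with the ``loop'' subalgebra $\sum_v p_v\ms A_\Gamma p_v$ of the even path algebra (Proposition~\ref{graph-ids}(1), Remark~\ref{normalize}), one wants $B$-corner elements $\eta_e$, one per edge $e\in E_+$, with $\eta_e=p_{s(e)}\eta_e p_{t(e)}$ and $\eta_e^{*}=:\eta_{e^o}$, forming a family that is free over $B$ (with respect to the conditional expectation $\tilde E$ onto $B$) and whose $B$-valued distribution is tailored, via suitable $\mu$-dependent scalings, to the measure $\omega$; the normalizations should be arranged so that, using the eigenvector relation $\sum_{e:s(e)=v}\mu_{t(e)}=\delta\mu_v$, the image of $\cup$ comes out with distribution $\omega^{\boxplus\delta^{2}}=\nu^{\boxplus\delta}$, in agreement with the preceding Proposition. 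The essential point is that this operator-valued ``$\omega$-ingredient'' exists precisely because $\omega$ is an honest measure --- this, and only this, is where the divisibility hypothesis enters, the required operator-valued positivity reducing, once the graph combinatorics is unwound, to positivity of the scalar measure $\omega$. One then sets $\Phi(X_{e,f^o}):=c_{e,f}\,\eta_e\eta_{f^o}$, which is automatically zero unless $t(e)=t(f)$, with constants $c_{e,f}$ fixed by the $(\mu\cdots)^{-1/4}$ normalization of Proposition~\ref{graph-ids}; the defining relations of $\ms A_\Gamma$ are then immediate, and one takes $M:=p_{\Gamma_+}\mathcal M p_{\Gamma_+}$ and $E:=\tilde E|_M$.

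It remains --- and this is the crux --- to verify $E\circ\Phi=E_0$, i.e. that for each loop $(e_1,f_1^o,\dots,e_m,f_m^o)$ the scalar $\tilde E(\eta_{e_1}\eta_{f_1^o}\cdots\eta_{e_m}\eta_{f_m^o})$ coincides, up to the fixed normalization, with the value obtained by capping the loop with $T_m$. I would expand the left-hand side in $B$-valued cumulants: freeness over $B$ annihilates every block of the non-crossing partition that mixes two distinct edges, and the corner structure of the $\eta_e$ forces the surviving blocks to alternate $\eta_e,\eta_{e^o}$; what is left is a sum over non-crossing pair partitions of the $2m$ boundary points which, through the fattening map $\pi\mapsto\widetilde\pi$ and its compatibility with the Kreweras complement (Proposition~\ref{kreweras}), is reorganized into $\sum_{\pi\in NC(m)}\kappa_\pi(\nu)\widetilde\pi$ --- the $B$-valued cumulant weights producing exactly the scalars $\kappa_\pi(\nu)$, while Lemma~\ref{proj-expectations} accounts for the vertex projections $p_v$ appearing when the capped diagram is closed up, and Proposition~\ref{product_cumulants} is used along the way to pass between cumulants of the products $\eta_e\eta_{f^o}$ and cumulants of the $\eta_e$ themselves. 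Finally, choosing $\mathcal M$ to be a finite von Neumann algebra carrying a trace that restricts correctly to $B$ upgrades ``positive'' to ``tracial state''. The main obstacle is exactly this last, combinatorial, step: matching non-crossing partitions of the $2m$-letter edge word with fattened Temperley--Lieb diagrams while keeping precise track of all the $\mu$-weights and the projection factors; by comparison the free-probabilistic input --- existence of the free family with the prescribed $B$-valued distribution --- is comparatively routine.
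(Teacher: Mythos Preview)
Your treatment of the easy implication is fine and matches the paper. For the converse you correctly reduce to the graph planar algebra and correctly aim to realise $E_0$ inside an $l^\infty(\Gamma_+)$-valued $W^*$-probability space, but the specific model you propose --- $X_{e,f^o}=c_{e,f}\,\eta_e\eta_{f^o}$ with the single-edge operators $\eta_e$ \emph{mutually free} over $B=l^\infty(\Gamma)$ --- cannot produce the required cumulants for general $\nu$. Take three distinct edges $e_1,e_2,e_3$ and look at $\kappa_3^{l^\infty(\Gamma_+)}[X_{e_3,e_1^o},X_{e_1,e_2^o},X_{e_2,e_3^o}]$: in the $\eta$-word $\eta_{e_3}\eta_{e_1}^*\eta_{e_1}\eta_{e_2}^*\eta_{e_2}\eta_{e_3}^*$ your freeness hypothesis kills every block mixing two edges, so by the product formula the only surviving connected non-crossing partition is the pairing $\{(1,6),(2,3),(4,5)\}$, and the cumulant is forced to be a product of three second $\eta$-cumulants. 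The target value from formula $(*)$, however, is $\kappa_3(\nu)$ times a $\mu$-weight, and there is no way to choose second-order $\eta$-data so that products of them reproduce $\kappa_n(\nu)$ for all $n$. Your sentence ``what is left is a sum over non-crossing pair partitions'' already betrays this: the fattening bijection $\pi\mapsto\widetilde\pi$ does identify $NC(m)$ with $NC_2(2m)$, but in your model the weight carried by $\widetilde\pi$ is a product of $\kappa_2$'s of the $\eta$'s, not $\kappa_\pi(\nu)$.

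The paper's construction avoids any single-edge factorisation. It first uses vanishing of mixed cumulants to split over $w\in\Gamma_-$ as an amalgamated free product over $l^\infty(\Gamma_+)$, reducing to $|\Gamma_-|=1$. It then writes $X_{e,f^o}=V_{1e}\,X\,V_{f1}$ with matrix units $V_{ef}\in M_N(\C\Gamma_+)$ and a \emph{single} operator $X$, free from $M_N(\C\Gamma_+)$ over the diagonal $\mc D$; this converts the problem into a $\mc D$-valued cumulant condition on $X$ alone. That condition is met by $X=\delta Y$ with $Y$ of law $\nu^{\boxplus 1/\delta}$ \emph{scalar}-free from the rank-one projections $V_e=p_{s(e)}V_{ee}$, and the verification is the standard identity expressing $E_{\mc B}[XV_{e_1}\cdots XV_{e_n}]$ as $\sum_{\pi\in NC(n)}\kappa_\pi[X]\,\phi_{Kr(\pi)}[V_{e_1},\dots,V_{e_n}]$ combined with Lemma~\ref{proj-expectations}. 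The Kreweras complement enters through this classical formula rather than through Proposition~\ref{kreweras}, and Proposition~\ref{product_cumulants} is not used in this proof.
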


\begin{proof}
We have already shown that if $\tau^\nu$ is positive, it must be that $\nu$ is $\delta$-times freely infinitely divisible.

Let $\Gamma$ be the principal graph of $\mc P$.  We must construct a von Neumann algebra $M$ with the following properties:
\begin{itemize}
\item There is an inclusion $l^\infty(\Gamma_+) \hookrightarrow M$ and a positive conditional expectation $E:M \to l^\infty(\Gamma_+)$.
\item There are operators $\{X_{e,f^o}:e,f \in E_+(\Gamma), \; t(e) = t(f)\}$ satisfying $X_{e,f^o}^* = X_{f,e^o}$ and $p_vX_{e,f^o}p_w = \delta_{v = s(e)} \delta_{w = s(f)} X_{e,f^o}$.
\item The $l^\infty(\Gamma_+)$-valued cumulants of $X_{e,f^o}$ are given by:
\begin{equation*} \tag{$*$} \label{graph-cumulants}
\kappa_n^{l^\infty(\Gamma)}[X_{e_n,f_1^o}p_{v_1},\dotsc, X_{e_{n-1},f_n^o}p_{v_n}] = \begin{cases} \kappa_n(\nu)\frac{\mu_{v_1}\dotsb \mu_{v_{n-1}}}{\mu_{t(e_n)}^{n-1}} \cdot p_{v_n}, & e_i = f_i,\; s(e_i) = v_i, \; 1 \leq i \leq n\\ 0, & \text{otherwise}\end{cases}.
\end{equation*}
\end{itemize}
First observe that the algebras $M_w$ generated by $\{X_{e,f^o}: e,f^o \in E_+(\Gamma),\; t(e) = t(f) = w\}$, for $w \in \Gamma_-$, are free with amalgamation over $l^\infty(\Gamma_+)$ by \eqref{graph-cumulants} and the characterization of freeness as vanishing of mixed cumulants.  This allows us to reduce to the case that $\Gamma_-$ consists of a single vertex $w$, as we can then recover the general result by forming an amalgamated free product.  Since $\Gamma$ is assumed locally finite, in this case $\Gamma$ must be finite.  

Now let $N = |E_+(\Gamma)|$ and consider the algebra $M_N(\C\Gamma_+)$, with matrix units $(V_{ef})_{e,f \in E_+(\Gamma)}$.  Let $\mc D$ denote the diagonal subalgebra, $\mc D \simeq \C\Gamma_+ \otimes \C E_+$.  Now if $X$ is a $\mc D$-valued random variable which is free from $M_N(\C\Gamma_+)$ with amalgamation over $\mc D$, and if we set $X_{e,f^o} = V_{1e}XV_{f1}$ (where $1$ is any fixed edge), then by \cite[Lemma 3.5]{csp2} we have:
\begin{equation*}
\kappa_n^{\C\Gamma_+}\bigl[X_{e_n,f_1^o}p_{v_1},X_{e_1,f_2^o}p_{v_2},\dotsc,X_{e_{n-1},f_n^o}p_{v_n}\bigr]\cdot V_{e_ke_k} = \begin{cases}\kappa_{n}^{\mc D}\bigl[Xp_{v_1}V_{e_1e_1},\dotsc,Xp_{v_n}V_{e_ne_n}\bigr], & e_i = f_i\\ 0, & \text{otherwise}\end{cases}
\end{equation*}
So it suffices to find an operator $X$ in a $\mc D$-valued W$^*$-probability space which satisfy
\begin{equation*} \tag{$\dagger$} \label{diag-cumulants}
\kappa_n^{\mc D}\bigl[Xp_{v_1}V_{e_1e_1},\dotsc,Xp_{v_k}V_{e_ke_k}\bigr] = \begin{cases} \biggl(\kappa_n(\nu)\cdot \frac{\mu_{v_1}\dotsb \mu_{v_{k-1}}}{\mu(w)^{k-1}}\biggr) \cdot p_{v_n}V_{e_ne_n}, & v_i = s(e_i)\\0, & \text{otherwise}\end{cases}.
\end{equation*}

Let $V_e = p_{s(e)}V_{ee}$ and let $\mc B \subset \mc D$ denote the span of $\{V_e: e \in E_+\}$, so that $\mc B \simeq \C E_+(\Gamma)$.  Define a tracial state $\phi$ on $\mc B$ by 
\begin{equation*}
 \phi(V_e) = \frac{\mu_{s(e)}}{\delta \mu_w}.
\end{equation*}
Note that
\begin{equation*}
 \phi(1_{\mc B}) = \sum_{e \in E_+} \phi(V_e) = \frac{V_{e_n}}{\delta \mu_w}\sum_{e \in E_+} \mu_{s(e)} = 1
\end{equation*}
by the eigenvector condition for $\mu$.  Now let $Y$ be free from $\mc B$ with respect to $\phi$ and have distribution $\nu^{\boxplus \delta^{-1}}$, and set $X = \delta\cdot Y$.  We then have
\begin{align*}
 E_{\mc B}[XV_{e_1}\dotsb XV_{e_n}] &= \frac{V_{e_n}}{\phi(V_{e_n})}\phi\bigl(XV_{e_1}\dotsb XV_{e_n}\bigr)\\
 &= \frac{V_{e_n}}{\phi(V_{e_n})}\sum_{\pi \in NC(n)} \delta^{n}\kappa_\pi[Y,\dotsc,Y] \phi_{Kr(\pi)}[V_{e_1},\dotsc,V_{e_n}]
\end{align*}
where $Kr$ denotes the Kreweras complement (see \cite{nsp}).  By Lemma \ref{proj-expectations} above we may continue with
\begin{equation*}
 E_{\mc B}[XV_{e_1}\dotsb XV_{e_n}] = V_{e_n} \cdot \sum_{\substack{\pi \in NC(n)\\ Kr(\pi) \leq \ker \bf e}} \delta^{n-|\pi|}\kappa_\pi(\nu) \prod_{V = (l_1 < \dotsb < l_s) \in \pi} \frac{\mu_{s(e_{l_1})}\dotsb \mu_{s(e_{l_{s-1}})}}{\delta^{s-1}\mu_w^{s-1}}
\end{equation*}
Since cumulants are uniquely determined by the moment-cumulant formula, it follows that
\begin{equation*}
 \kappa_{n}^{\mc B}[XV_{e_1}\dotsb XV_{e_n}] = \kappa_n(\nu) \frac{\mu_{s(e_1)}\dotsb \mu_{s(e_{n-1})}}{\mu_w^{n-1}}\cdot V_{e_n}.
\end{equation*}
Note that the condition $Kr(\pi) \leq \ker \bf e$ in the formula above is forced by the fact that the $V_e$ are mutually orthogonal projections.

Finally, we extend $X$ to a $\mc D$-valued random variable by taking the direct sum of the von Neumann algebra generated by $X$ and $\mc B$ with $\mc D \ominus \mc B$.  We then have that the $\mc D$-valued cumulants of $X$ satisfy \eqref{diag-cumulants}, which completes the proof.

\end{proof}

\subsection{ \texorpdfstring{$\mc P$-distributions}{P-distributions}}
\label{sec:defOfEv}
Now suppose that $\tau$ is a faithful $\mc P$-trace, and let $Y$ be a self-adjoint element of $Gr_1(\mc P)$.  There is a homomorphism $\ev_{Y}:Gr_0 \to Gr_0$ defined by
\begin{equation*}
  \begin{tikzpicture}
   \draw[Box] (-.125,.5) rectangle (4.125,1); \node at (2,.75) {$P$}; \node[marked,scale=.8,above left] at (-.125, 1) {};
   \draw[Box] (.25,1.25) rectangle (.75,1.75); \node at (.5,1.5) {$Y$}; \node[marked,scale=.8, above left] at (.25,1.75) {};
   \draw[Box] (1.25,1.25) rectangle (1.75,1.75); \node at (1.5,1.5) {$Y$}; \node[marked,scale=.8, above left] at (1.25,1.75) {};
   \draw[Box] (3.25,1.25) rectangle (3.75,1.75); \node at (3.5,1.5) {$Y$}; \node[marked,scale=.8, above left] at (3.25,1.75) {};
   \node at (2.5,1.5) {$\dotsb$};
   \foreach \x in {.25,1.25,3.25} {\draw[thick] (\x,1.5) arc(90:180:.125) -- ++(0,-.375);}
   \foreach \x in {.75,1.75,3.75} {\draw[thick] (\x,1.5) arc(90:0:.125) -- ++(0,-.375);}
      \foreach \x in {.5,1.5,3.5} {\draw[thickline] (\x,1.75) -- ++(0,.175);}
   \node[left] at (-.5,1) {$\ev_Y(P) = $};
  \end{tikzpicture}
\end{equation*}
Define the \textit{$\mc P$-distribution} $\tau^{(Y)}$ of $Y$ by $\tau^{(Y)}(a) = \tau(\mathrm{ev}_Y(a))$ for $a \in Gr_0(\mc P)$.  Diagrammatically, we have $\tau^{(Y)} = (T_k^{(Y)})_{k \geq 1}$ with
\begin{equation*}
 \begin{tikzpicture}
   \draw[Box] (.25,1.25) rectangle (.75,1.75); \node[scale=.8] at (.5,1.5) {$Y_{n_1}$}; \node[marked,scale=.8, above left] at (.25,1.75) {};
   \draw[Box] (1.25,1.25) rectangle (1.75,1.75); \node[scale=.8] at (1.5,1.5) {$Y_{n_2}$};\node[marked,scale=.8, above left] at (1.25,1.75) {};
   \draw[Box] (3.25,1.25) rectangle (3.75,1.75); \node[scale=.8] at (3.5,1.5) {$Y_{n_k}$};\node[marked,scale=.8, above left] at (3.25,1.75) {};
   \node at (2.5,1.5) {$\dotsb$};
   \foreach \x in {.25,1.25,3.25} {\draw[thick] (\x,1.5) arc(90:180:.125) -- ++(0,-.375);}
   \foreach \x in {.75,1.75,3.75} {\draw[thick] (\x,1.5) arc(90:0:.125) -- ++(0,-.375);}
      \foreach \x in {.5,1.5,3.5} {\draw[thickline] (\x,1.75) -- ++(0,.25);}
   \draw[Box] (0,2) rectangle (4,2.5); \node at (2,2.25) {$T_n$}; \node[marked,scale=.8, below right] at (4,2) {};
   \node[left] at (-.5,1.625) {$T_k^{(Y)} = \displaystyle \sum_n \sum_{n_1 + \dotsb + n_k =n}$};
 \end{tikzpicture}
\end{equation*}
where $Y= \sum Y_n$ with $Y_n \in P_{n,1}$ and $Y_n = 0$ for all but finitely many $n$.  

In the case of a graph planar algebra $\mc P^\Gamma$ this corresponds to the standard evaluation of polynomials:
\begin{lemma}
Suppose $Y \in Gr_1(\mc P^\Gamma)$.  Under the identifications of Proposition \ref{graph-ids}, $\ev_Y$ is the restriction to $Gr_0(\mc P^\Gamma)$ of the homomorphism $\ms A_\Gamma \to \ms A_\Gamma$ mapping $X_{e,f^o}$ to $Y_{e,f^o}$.  
\end{lemma}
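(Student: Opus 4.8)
The plan is to unwind both sides against the explicit description of $\mc P^\Gamma$ and the identifications of Proposition \ref{graph-ids}, so that the content becomes a bookkeeping of normalization constants. First I would observe that $X_{e,f^o}\mapsto Y_{e,f^o}$, $p_v\mapsto p_v$ does define an algebra homomorphism $\Phi\colon\ms A_\Gamma\to\ms A_\Gamma$: as an algebra $\ms A_\Gamma$ is generated by the $X_{e,f^o}$ and the $p_v$ subject to orthogonality of the $p_v$ and $p_vX_{e,f^o}p_w=\delta_{v=s(e)}\delta_{w=s(f)}X_{e,f^o}$, and the corresponding relation for the $Y_{e,f^o}$ is exactly the condition $Y_{e,f^o}=p_{s(e)}Y_{e,f^o}p_{s(f)}$ recorded in Proposition \ref{graph-ids}(2). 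Since $\Phi(p_v)=p_v$ we get $\Phi(Gr_0(\mc P^\Gamma))\subseteq Gr_0(\mc P^\Gamma)$, so $\ev_Y$ and $\Phi|_{Gr_0}$ are two linear maps $Gr_0(\mc P^\Gamma)\to Gr_0(\mc P^\Gamma)$ and it suffices to compare them on the spanning set of basis loops (on $P_0=\C\Gamma_+$ both are the identity). Finally, both maps are polynomial in the coordinates of $Y$, so writing $Y=\sum_nY_n$ with $Y_n\in P_{n,1}$ a linear combination of loops, multilinearity lets me assume that in the defining picture of $\ev_Y(\ell)$ the $j$-th box is a single loop $L_j\in P_{n_j,1}$.

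The computation itself goes as follows. Fix a basis loop $\ell=(e_1,f_1^o,\dots,e_m,f_m^o)\in P_m$ and single loops $L_j$ in the boxes. By definition $\ev_Y(\ell)$ is the value in $\mc P^\Gamma$ of the planar tangle that caps the $(2j-1)$st and $(2j)$th top strings of $\ell$ onto the two side strings of $L_j$; the value of a tangle on basis loops in a graph planar algebra is a sum over edge-labelings of the internal strings consistent with the boundary data, weighted by spin factors, and here the labels carried by $\ell$ and by the $L_j$ force this sum to have at most one term. Reading off the matching conditions, the caps force the two side edges of $L_j$ to be $e_j$ and $f_j$, so by Proposition \ref{graph-ids}(2) the only $L_j$ that contribute are precisely those whose associated element of $Gr_1(\mc P^\Gamma)$ has its nonzero component in the $(e_j,f_j^o)$ slot; and for such a choice the tops of the $L_j$ concatenate — the endpoints match because $\ell$ is a closed loop — into a single basis loop of $P_N$, $N=\sum n_j$, which under Proposition \ref{graph-ids}(1) corresponds to a scalar times the product over $j$ of the "interior" word $X_{\cdot,\cdot}\cdots X_{\cdot,\cdot}$ of the matching component $Y_{e_j,f_j^o}$. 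Thus $\ev_Y(\ell)$ and $\Phi(\ell)=(\mu_{s(e_1)}\mu_{s(f_1)}\cdots\mu_{s(e_m)}\mu_{s(f_m)})^{-1/4}\,Y_{e_1,f_1^o}\cdots Y_{e_m,f_m^o}$ are sums over the same index set of scalar multiples of the same words in the generators, and only the scalars remain to be matched.

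The scalar on each side is a product of: the normalization of $\ell$ from Proposition \ref{graph-ids}(1), the normalizations of the chosen $L_j$ from Proposition \ref{graph-ids}(2), the normalization of the output loop from Proposition \ref{graph-ids}(1), and — on the $\ev_Y$ side — the spin factors $(\mu_\cdot/\mu_\cdot)^{\pm1/4}$ produced by the capping tangle. The "interior" factors $(\mu_{s(\cdot)}\mu_{s(\cdot)})^{-1/4}$ coming from the bodies of the loops cancel between the two sides, and the identity collapses to the assertion that the total spin factor of the capping tangle on this configuration equals $\prod_{j=1}^m(\mu_{s(e_j)}/\mu_{s(f_j)})^{1/2}$, which one checks directly from the spin-factor rule of \cite{gjs1} (the $2m$ connecting strings between $\ell$ and the boxes each contribute one factor, and $t(e_j)=t(f_j)$ for every $j$). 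I expect this last step — keeping the "opposite edge" conventions straight so that one genuinely lands on $Y_{e,f^o}$ in the correct order rather than on $Y_{f,e^o}$, and verifying that the half-integer powers of the eigenvector entries cancel — to be the only real point of the argument; the normalizations in Proposition \ref{graph-ids} are chosen precisely so that it works out, and by linearity this establishes the lemma.
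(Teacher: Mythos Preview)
The paper states this lemma without proof; it is treated as one more definitional verification of the same flavor as Proposition~\ref{graph-ids}, whose proof already reads ``These identifications follow directly from the definition of $\mc P^\Gamma$.'' Your write-up is therefore not being compared against an existing argument but rather filling in the omitted details, and your approach --- check $\Phi$ is a homomorphism from the relations, reduce by linearity and multiplicativity to a single basis loop against a single basis loop in each $Y$-box, and then match the spin factors against the normalization constants in Proposition~\ref{graph-ids} --- is exactly the intended one.

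One small comment on the execution. When you write that the caps force ``the two side edges of $L_j$ to be $e_j$ and $f_j$'' and hence the only contributing $L_j$ are those whose nonzero component lies in the $(e_j,f_j^o)$ slot, be careful with the swap built into Proposition~\ref{graph-ids}(2): the loop $(e_1,f_1^o,\dots,e_{m+1},f_{m+1}^o)\in P_{m,1}$ is sent to the vector supported at the index $(f_{m+1},e_{m+1}^o)$, not $(e_{m+1},f_{m+1}^o)$. So the side strings of $L_j$ matching $(e_j,f_j^o)$ on the $\ell$-side corresponds, after the identification, to the component $Y_{e_j,f_j^o}$ precisely because of this swap combined with the orientation reversal at the cap. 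You flag this as the ``only real point of the argument,'' and it is; I would make the bookkeeping explicit for at least $m=1$ rather than asserting it, since the claimed spin-factor identity $\prod_j(\mu_{s(e_j)}/\mu_{s(f_j)})^{1/2}$ and the extra $\mu_{s(f_{m+1})}$ in the numerator of Proposition~\ref{graph-ids}(2) have to conspire, and a reader cannot check that from what you have written.
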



\section{Free analysis and planar algebras} \label{sec:freeAnalysis}

Let $\mc P$ be a planar algebra.  Let $\ms N$ denote the operator on $Gr_0(\mc P)$ which multiplies $x \in P_n$ by $n$.  Let $(Gr_0(\mc P))_0$ denote the subalgebra spanned by $\{P_n:n > 0\}$, and let $\Pi$ denote the projection onto $(Gr_0(\mc P))_0$ which sends $x \in P_n$ to $x\cdot \delta_{n > 0}$.  Define $\Sigma$ to be the composition of $\Pi$ with the inverse of $\ms N$.  

Now define the \textit{free difference quotient} $\partial:Gr_1 \to Gr_1 \boxtimes Gr_1^{op}$ to be the derivation defined by
\begin{equation*}
\begin{tikzpicture}[scale=.5]
 \draw[Box] (0,0) rectangle (2,1); \node at (1,.5) {$x$};\node[marked,scale=.8,above left] at (0,1) {};
 \draw[thick] (0,.5) --++(-.5,0);
 \draw[thick] (2,.5) arc(90:-90:.3 and .375) --++(-2.5,0);
 \draw[thickline] (1.5,1) arc(180:0:.7cm and .5cm) -- node[rcount,scale=.75] {$2k$} ++(0,-2);
 \draw[thickline] (.5,1) --++(0,1.5);
 \draw[thick] (1.15,1) arc(180:0:1.2cm and .85cm) -- ++(0,-.75) arc(180:270:.5cm) --++(.3,0);
 \draw[thick] (.85,1) arc(180:0:1.5cm and 1.05cm) -- ++(0,-.25) arc(180:270:.5cm);
 
 \node[left] at (-1,.5) {$x \mapsto \displaystyle\sum_{k = 0}^{n-1} $};
\end{tikzpicture}
\end{equation*}
for $x \in P_{n,1} = P_{n+1}$.  Note that if $X = \tikz{\draw (0,0) arc(-90:0:.15 and .25); \draw (.5,0) arc(270:180:.15 and .25);}$ and $B = P_{0,1}$, then the restriction to $B[X]$ is the usual free difference quotient $\partial_{X:B}$.

The \textit{conjugate variable} $\xi  \in L^2(Gr_1,\tau_1)$ is defined by the requirement
\begin{equation*}
 \tau_1(\xi \wedge_1 x) = (\tau_1 \boxtimes \tau_1^{op}) \bigl(\partial(x)\bigr), \qquad \forall x \in Gr_1.
\end{equation*}
Diagrammatically, the condition is
\begin{equation*}
\begin{tikzpicture}[yscale=.75]
\draw[Box] (0,0) rectangle (1,1); \node at (.5,.5) {$\xi$}; \node[marked,scale=.8,above left] at (0,1) {};
\draw[Box] (1.5,0) rectangle (2.5,1); \node at (2,.5) {$x$}; \node[marked,scale=.8,above left] at (1.5,1) {};
\draw[Box] (.25,1.25) rectangle (2.25,2); \node at (1.25,1.625) {$\tau$}; \node[marked,scale=.8,below right] at (2.25,1.25) {};

\draw[thick] (1,.5) -- (1.5,.5);
\draw[thick] (0,.5) arc(90:270:.25 and .375) --++(2.5,0) arc(-90:90:.25 and .375);

\foreach \x in {.5,2} {\draw[thickline] (\x,1) --++(0,.25);}
\begin{scope}[xshift=7cm]
\draw[Box] (0,0) rectangle (3.25,1); \node at (1.625,.5) {$x$}; \node[marked,scale=.8,above left] at (0,1) {};
\draw[Box] (.25,1.25) rectangle (1.25,2); \node at (.75,1.625) {$T_m$}; \node[marked,scale=.8,below right] at (1.25,1.25) {};
\draw[Box] (2,1.25) rectangle (3,2); \node at (2.5,1.625) {$T_k$}; \node[marked,scale=.8, below right] at (3,1.25) {};

\draw[thick] (0,.5) arc(270:180:.5) --++(0,1) arc(180:90:.5) --++(1,0) arc(90:0:.5) --++(0,-1);
\draw[thick] (3.25,.5) arc(-90:0:.5) --++(0,1) arc(0:90:.5) --++(-1,0) arc(90:180:.5) --++(0,-1);
\draw[thickline] (.75,1) --++(0,.25);
\draw[thickline] (2.5,1) --++(0,.25);

\node[left] at (-.75,1) {$= \displaystyle \sum_{k + m = n-1} \delta^{-1} \; \cdot$};
\end{scope}
\end{tikzpicture}
\end{equation*}
for $x \in P_n$. 

The \textit{cyclic gradient} $\ms D:Gr_0 \to Gr_1$ is defined by 
\begin{equation*}
\begin{tikzpicture}[scale=.5]
 \draw[Box] (0,0) rectangle (2,1); \node at (1,.5) {$x$}; \node[marked,scale=.8,above left] at (0,1) {};
 \draw[thickline] (.5,1) --++(0,1.3);
 \draw[thickline] (1.5,1) arc(180:0:.6cm and .5cm) -- ++(0,-1) arc(0:-90:.5cm) -- node[rcount,scale=.75] {$2k$} ++(-2.2,0) arc(270:180:.5cm) --++(0,2.3);
 \draw[thick] (1.15,1) arc(180:0:1cm and .85cm) --++(0,-1.25) arc(0:-90:.75cm) -- ++(-3.75,0);
 \draw[thick] (.85,1) arc(180:0:1.3cm and 1.05cm) --++(0,-1.25) arc(180:270:.75cm) --++(.5,0);
 
 \node[left] at (-2,.5) {$x \mapsto \displaystyle \sum_{k=0}^{n-1}$};
\end{tikzpicture}
\end{equation*}

We define the operator $\#:Gr_1 \boxtimes Gr_1^{op} \times Gr_1 \to Gr_1$ by
\begin{equation*}
\begin{tikzpicture}[scale=.5]
\draw[Box] (0,0) rectangle (2,2); \node at (1,1) {$a$}; \node[marked,scale=.9, above left] at (0,2) {};
\draw[Box] (3,1.75) rectangle (5,2.75); \node at (4,2.25) {$b$}; \node[marked,scale=.8, above left] at (3,2.75) {};

\draw[thickline] (1,2) -- ++(0,1.25);
\draw[thickline] (4,2.75) --++(0,.5);
\draw[thickline] (1,0) arc(180:270:.5) -- ++(4,0) arc(-90:0:.5) --++(0,3.25);
\draw[thick] (2,1.25) arc(-90:0:.5) arc(180:90:.5);
\draw[thick] (2,.75) --++(3,0) arc(-90:90:.5 and .75);
\draw[thick] (0,1.25) --++(-1.25,0);
\draw[thick] (0,.75) arc(90:180:.375) --++(0,-1) arc(180:270:.375) --++(6,0) arc(-90:0:.5) --++(0,1.25) arc(180:90:.5) --++(.5,0);

\node[left] at (-1.625,1.25) {$a \# b = $};
\end{tikzpicture}
\end{equation*}
We  use $\cdot$ to denote the operator $\cdot:Gr_1(\mc P) \times Gr_1(\mc P) \to Gr_0(\mc P)$ defined by
\begin{equation*}
\begin{tikzpicture}[yscale=.75]
\draw[Box] (0,0) rectangle (1,1); \node at (.5,.5) {$a$}; \node[marked,scale=.8,above left] at (0,1) {};
\draw[Box] (1.5,0) rectangle (2.5,1); \node at (2,.5) {$b$}; \node[marked,scale=.8,above left] at (1.5,1) {};

\draw[thick] (1,.5) -- (1.5,.5);
\draw[thick] (0,.5) arc(90:270:.25 and .375) --++(2.5,0) arc(-90:90:.25 and .375);

\foreach \x in {.5,2} {\draw[thickline] (\x,1) --++(0,.25);}
\node[left] at (-.5,.5) {$a \cdot b = $};
\end{tikzpicture}
\end{equation*}

We define the cyclic symmetrizer $\ms S:(Gr_0(\mc P))_0 \to (Gr_0(\mc P))_0$ by
\begin{equation*}
\begin{tikzpicture}[scale=.5]
 \draw[Box] (0,0) rectangle (2,1); \node at (1,.5) {$x$}; \node[marked,scale=.8,above left] at (0,1) {};
 \draw[thickline] (.625,1) --++(0,1.3);
 \draw[thickline] (1.375,1) arc(180:0:.6cm and .5cm) -- ++(0,-1) arc(0:-90:.5cm) -- node[rcount,scale=.75] {$2k$} ++(-2.2,0) arc(270:180:.5cm) --++(0,2.3);

 \node[left] at (-2,.5) {$\ms S(x) = \displaystyle \frac{1}{n} \sum_{k=0}^{n-1}$};
\end{tikzpicture}
\end{equation*}
for $x \in P_n$. 

\begin{definition}
We define the free Fisher information $\Phi_{\mc P}^*(\tau)\in [0,+\infty]$ of $\tau$ to be $+\infty$ if the conjugate variable $\xi$ does not exist, and otherwise by the formula
$$
\begin{tikzpicture}[yscale=.75]
\draw[Box] (0,0) rectangle (1,1); \node at (.5,.5) {$\xi$}; \node[marked,scale=.8,above left] at (0,1) {};
\draw[Box] (1.5,0) rectangle (2.5,1); \node at (2,.5) {$\xi$}; \node[marked,scale=.8,above left] at (1.5,1) {};
\draw[Box] (.25,1.25) rectangle (2.25,2); \node at (1.25,1.625) {$\tau$}; \node[marked,scale=.8,below right] at (2.25,1.25) {};

\draw[thick] (1,.5) -- (1.5,.5);
\draw[thick] (0,.5) arc(90:270:.25 and .375) --++(2.5,0) arc(-90:90:.25 and .375);

\foreach \x in {.5,2} {\draw[thickline] (\x,1) --++(0,.25);}
\node[left] at (-0.5,.5) {$\Phi_{\mc P}^*(\tau)^2 = \tau_1 (\xi \wedge_1\xi) =\delta^{-1}\!\!\!\!$};
\end{tikzpicture}
$$
\end{definition}

\begin{proposition}\label{prop:restriction}
Let $P\subset P'$ be planar algberas, and assume that $\tau'$ is a $P'$-trace.  Let $\tau$ denote the restriction of $\tau'$ to a $P$-trace.  Then $\Phi^*(\tau)\leq \Phi^*(\tau')$.  In particular, if $\Phi^*(\tau')<\infty$, so is $\Phi^*(\tau)$. 
\end{proposition}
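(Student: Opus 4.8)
The plan is to realize the conjugate variable for $\tau$ as a conditional expectation of the conjugate variable for $\tau'$, exactly as in Voiculescu's classical comparison of free Fisher information under restriction. First I would set up the Hilbert spaces: since $P \subset P'$ is a planar algebra inclusion, each $P_k$ embeds as a $C^*$-subalgebra of $P'_k$, and $Gr_1(\mc P)$ embeds trace-preservingly into $Gr_1(\mc P')$ (using that $\tau$ is the restriction of $\tau'$ and that the inclusions are trace-preserving by the discussion before Definition~\ref{planar_cumulants}). Hence $L^2(Gr_1(\mc P),\tau_1)$ sits as a closed subspace of $L^2(Gr_1(\mc P'),\tau'_1)$, and there is an orthogonal projection $\mathcal{Q}$ from the latter onto the former, implementing the $\tau'_1$-preserving conditional expectation $Gr_1(\mc P') \to Gr_1(\mc P)$ (which exists because the inclusion is trace-preserving). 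Denote by $\xi'$ the conjugate variable for $\tau'$, assumed to exist (if it does not, $\Phi^*(\tau')=+\infty$ and there is nothing to prove).

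Next I would verify that $\xi := \mathcal{Q}(\xi')$ satisfies the defining relation for the conjugate variable of $\tau$. For $x \in Gr_1(\mc P) \subset Gr_1(\mc P')$, the derivation $\partial$ for $\mc P$ and the one for $\mc P'$ agree on $x$ up to the obvious inclusion $Gr_1(\mc P)\boxtimes Gr_1(\mc P)^{op} \hookrightarrow Gr_1(\mc P')\boxtimes Gr_1(\mc P')^{op}$ — this is immediate from the diagrammatic formula defining $\partial$, since the diagram only depends on $x$ and on caps/cups, not on the ambient planar algebra. Moreover $\tau'_1 \boxtimes (\tau'_1)^{op}$ restricts to $\tau_1 \boxtimes \tau_1^{op}$ on the subspace, again because the defining diagrams (built from $T'_m$, which restrict to $T_m$) coincide. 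Therefore for $x \in Gr_1(\mc P)$,
\begin{equation*}
\tau_1(\xi \wedge_1 x) = \tau'_1(\xi' \wedge_1 x) = (\tau'_1 \boxtimes (\tau'_1)^{op})(\partial x) = (\tau_1 \boxtimes \tau_1^{op})(\partial x),
\end{equation*}
where the first equality uses that $\mathcal{Q}$ is self-adjoint and fixes $x$, and that $\wedge_1$ and $\tau_1$ are compatible with the inclusion. This shows $\xi$ is indeed the conjugate variable for $\tau$, so in particular it exists and $\Phi^*(\tau) < \infty$.

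Finally, the inequality follows from the norm-decreasing property of the orthogonal projection:
\begin{equation*}
\Phi^*(\tau)^2 = \tau_1(\xi \wedge_1 \xi) = \|\xi\|_{L^2(\tau_1)}^2 = \|\mathcal{Q}(\xi')\|^2 \leq \|\xi'\|_{L^2(\tau'_1)}^2 = \tau'_1(\xi' \wedge_1 \xi') = \Phi^*(\tau')^2.
\end{equation*}
I expect the only genuinely delicate point to be checking carefully that the relevant structures really do restrict — i.e., that $\partial$, $\wedge_1$, the pairing $\tau_1 \boxtimes \tau_1^{op}$, and the $L^2$-inner product all behave functorially under the planar algebra inclusion, so that $\xi'$ genuinely pairs against $Gr_1(\mc P)$ the way $\xi$ should. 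This is essentially a bookkeeping check using the diagrammatic definitions, but it is where one must be precise; once it is in place, the argument is a verbatim transcription of Voiculescu's original proof that free Fisher information decreases under restriction to a subalgebra.
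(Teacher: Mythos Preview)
Your proposal is correct and follows exactly the same approach as the paper: project the conjugate variable $\xi'$ orthogonally onto $L^2(Gr_1(\mc P),\tau_1)\subset L^2(Gr_1(\mc P'),\tau'_1)$, observe that this projection is the conjugate variable for $\tau$ because the free difference quotient for $\mc P'$ restricts to that of $\mc P$, and conclude via contractivity of the projection. The paper's proof is just a terser version of yours, omitting the bookkeeping verifications you (rightly) flag as the only points requiring care.
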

\begin{proof}
If $\Phi^*(\tau')=+\infty$ there is nothing to prove, so we may assume that the conjugate variable $\xi'$ to $\tau'$ exists.  
Note that the restriction of $\partial$ to $Gr_1(P)$ is the free difference quotient associated to $P$.  It follows that if we denote by $\xi$ the orthogonal projection of $\xi$ onto $L^2(Gr_1(P),\tau)\subset L^2(Gr_1(P'),\tau')$, then $\xi$ is the conjugate variable for $\tau$. Since orthogonal projections are contractive, the inequality follows.
\end{proof}

\subsection{Free analysis on graph planar algebras.}

\subsubsection{Path algebra construction.}
Let $\Gamma = \Gamma_+ \cup \Gamma_-$ be a locally finite bipartite graph, and let $\ms A_\Gamma$ be the even part of the path algebra (see Section \ref{sec:background}).  For pairs of edges $(e,f^o)$ in $E_+$ with $t(e) = t(f)$, define $\partial_{(e,f^o)}:\ms A_\Gamma \to \ms A_\Gamma \otimes (\ms A_\Gamma)^{op}$ to be the derivation sending $X_{e,f^o}$ to $p_{s(e)} \otimes p_{s(f)}$ and sending all other generators to zero.  Explicitly we have
\begin{equation*}
 \partial_{(e,f^o)}[X_{e_1,f_1^o}\dotsb X_{e_m,f_m^o}] = \sum_{(e_j,f_j^o) = (e,f^o)} X_{e_1,f_1^o}\dotsb X_{e_{j-1},f_{j-1}^o}p_{s(e)} \otimes p_{s(f)}X_{e_{j+1},f_{j+1}^o}\dotsb X_{e_m,f_m^o}.
\end{equation*}

Define $\ms D_{(e,f^o)}: \ms A_\Gamma \to \ms A_\Gamma$ by 
\begin{equation*}
 \ms D_{(e,f^o)}[X_{e_1,f_1^o}\dotsb X_{e_m,f_m^o}] = \mu_{s(e_1)} \cdot \sum_{(e_j,f_j^o) = (f,e^o)} p_{s(e)}X_{e_{j+1},f_{j+1}^o}\dotsb X_{e_m,f_m^o} X_{e_1,f_1^o}\dotsb X_{e_{j-1},f_{j-1}^o}p_{s(f)}
\end{equation*}
Note that $\ms D_{(e,f^o)}$ is zero on the complement of the span of ``loops'' $X_{e_1,f_1^o}\dotsb X_{e_m,f_m^o}$ with $s(e_1) = s(f_m)$, which is identified with $Gr_0(\mc P^\Gamma)$ by Proposition \ref{graph-ids}.  Observe also that we have the relation $\ms D_{(e,f^o)}(Y) = m(\sigma(\partial_{(f,e^o)}[\sum_v \mu_v \cdot Y p_v]))$ where $m:\ms A_\Gamma \otimes \ms A_{\Gamma}^{op} \to \ms A_\Gamma$ is multiplication and $\sigma(a \otimes b) = b\otimes a$.  Define the cyclic gradient $\ms D(G) = (\ms D_{e,f^o}(G))_{(e,f^o)} \in (\ms A_\Gamma)^N$ where $N = \#\{(e,f^o):t(e) = t(f)\}$.

Define the free Jacobian $\ms J:(\ms A_\Gamma)^N \to M_N(\ms A_\Gamma \otimes \ms A_\Gamma^{op})$ by $(\ms J Y)_{(e,f^o),(g,h^o)} = \partial_{(g,h^o)}(Y_{e,f^o})$.

For $q = \sum a_i \otimes b_i \in \ms A_\Gamma \otimes \ms A_\Gamma^{op}$ and $g \in \ms A$ define 
\begin{equation*}
 q \# g = \sum_i a_i g b_i.
\end{equation*}
For $q = (q_{ij}) \in M_{N}(\ms A_\Gamma \otimes \ms A_\Gamma^{op})$, and $g = (g_j)$, $h = (h_j) \in \ms A_\Gamma^N$ define
\begin{align*}
 (q \# g)_j &= \sum_i q_{ji} \# g_i\\
 g \cdot h &= \sum_i g_i h_i
\end{align*}

\begin{proposition}\label{graph-derivatives}
With the identifications of Proposition \ref{graph-ids}, $\ms D$ maps $\ms A_\Gamma$ into $Gr_1(\mc P^\Gamma)$ and the restriction of $\ms J$ maps $Gr_1(\mc P^\Gamma)$ into $Gr_1(\mc P^\Gamma) \boxtimes Gr_1(\mc P^\Gamma)^{op}$.  The restrictions of $\#$ and $\cdot$ map $Gr_1(\mc P^\Gamma) \boxtimes Gr_1(\mc P^\Gamma)^{op} \times Gr_1(\mc P^\Gamma) \to Gr_1(\mc P^\Gamma)$ and $Gr_1(\mc P^\Gamma) \times Gr_1(\mc P^\Gamma) \to Gr_1(\mc P^\Gamma)$. Moreover, these restrictions agree with the diagrammatic formulas given above (where $\ms J$ corresponds to $\partial$).
\end{proposition}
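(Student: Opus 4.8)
```latex
\textbf{Proof proposal.}
The plan is to verify the four assertions in order, at each stage reducing a diagrammatic identity to a bookkeeping check on the explicit basis of loops provided by Proposition~\ref{graph-ids}. The guiding principle is that all of $\partial$, $\ms D$, $\#$, $\cdot$ are defined by ``cutting a loop open at one point and reglueing'', and the identifications in Proposition~\ref{graph-ids} are defined by reading a loop $(e_1,f_1^o,\dots)$ as a word $X_{e_1,f_1^o}\dotsb$; so one just has to match up which strand is cut with which generator is differentiated, and track the powers of $\mu_v$ and $\delta$ that appear in the normalization constants.

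First I would treat $\ms D$. Fix a loop $(e_1,f_1^o,\dots,e_m,f_m^o)$ with $s(e_1)=s(f_m)$, representing an element of $P_m\subset Gr_0(\mc P^\Gamma)$; under Proposition~\ref{graph-ids}(1) it corresponds to a scalar multiple of $X_{e_1,f_1^o}\dotsb X_{e_m,f_m^o}$. The diagrammatic $\ms D$ is a sum over $k=0,\dots,m-1$ of diagrams obtained by rotating the loop so that the $k$-th ``through-strand'' becomes the distinguished vertical pair of $Gr_1$; I would compute the action of the relevant planar tangle on the graph planar algebra basis (using the spin-factor conventions of \cite{gjs1}, as in Remark~\ref{normalize}) and check that the output is exactly the vector $Y=(Y_{e,f^o})$ described in Proposition~\ref{graph-ids}(2), i.e.\ it is supported on a single coordinate and there equals the rotated word $p_{s(\cdot)}X_{e_{k+1},f_{k+1}^o}\dotsb X_{e_m,f_m^o}X_{e_1,f_1^o}\dotsb$ up to the $\mu$-normalization. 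Comparing this with the defining formula for $\ms D_{(e,f^o)}$ shows that $\ms D$ lands in $Gr_1(\mc P^\Gamma)$ and agrees with the algebraic cyclic gradient; the factor $\mu_{s(e_1)}$ in the definition of $\ms D_{(e,f^o)}$ is precisely what is needed to reconcile the two normalizations (this is also forced by the stated relation $\ms D_{(e,f^o)}(Y)=m(\sigma(\partial_{(f,e^o)}[\sum_v\mu_v Yp_v]))$, which I would record along the way).

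Next, for $\ms J$ (resp.\ $\partial$): an element of $Gr_1(\mc P^\Gamma)$ is, by Proposition~\ref{graph-ids}(2), a word $X_{e_1,f_1^o}\dotsb X_{e_m,f_m^o}$ with one distinguished extra generator hanging out the top, i.e.\ an element $Y_{f_{m+1},e_{m+1}^o}$ of a single coordinate. The diagrammatic $\partial$ applied to such a word splits it at each occurrence of a strand, producing an element of $Gr_1\boxtimes Gr_1^{op}=V_2$; matching with Proposition~\ref{graph-ids}(3) I would check that $(\ms J Y)_{(e,f^o),(g,h^o)}=\partial_{(g,h^o)}(Y_{e,f^o})$ exactly reproduces the two-sided splitting of the word at each occurrence of $X_{g,h^o}$, with the $V_{(f_{l+m+2}^o,e_{l+m+2}),(e_{l+1},f_{l+1}^o)}$ matrix-unit factor keeping track of which through-strands are used. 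The products $\#$ and $\cdot$ are the easiest: $\#$ glues a $V_2$-diagram around a $Gr_1$-diagram and $\cdot$ caps two $Gr_1$-diagrams into $Gr_0$; unwinding these on the basis gives precisely $q\#g=\sum a_i g b_i$ and $g\cdot h=\sum g_i h_i$ (with the diagonal-$p$ compression of Proposition~\ref{graph-ids}(3) making the matrix indices contract correctly). In all four cases the containments (``$\ms D$ maps into $Gr_1$'', etc.) are automatic once the formulas are shown to agree, since the algebraic operations manifestly preserve the relevant spans of loops.

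The main obstacle is purely a normalization/orientation audit: the graph planar algebra carries spin factors $(\mu_v)^{\pm1/4}$ on every boundary strand and the identifications in Proposition~\ref{graph-ids} absorb these into global prefactors that differ from $P_m$ to $P_{m,1}$ to $V_2(l,m)$; when one applies a rotation tangle (as in the definitions of $\ms D$, $\partial$, $\ms S$) the marked point moves and the distribution of these $1/4$-powers changes. I expect the bulk of the work, and the only place a sign/normalization could genuinely go wrong, is checking that these prefactors cancel to give exactly the stated algebraic formulas (in particular the lone $\mu_{s(e_1)}$ in $\ms D_{(e,f^o)}$, and the $\mu_{v}$ weights that reappear through the conjugate-variable and Fisher-information formulas later). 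Everything else is a direct unwinding of the diagrammatic definitions against Proposition~\ref{graph-ids}, exactly as the one-line proof of that proposition (``follow directly from the definition of $\mc P^\Gamma$'') suggests.
```
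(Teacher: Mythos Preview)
Your proposal is correct and is exactly the intended argument: the paper's own proof is the single sentence ``These identifications follow easily from the definition of $\mc P^\Gamma$,'' and what you have written is precisely the unwinding of that sentence on the loop basis of Proposition~\ref{graph-ids}, including the only nontrivial point (the $\mu$-normalization audit). There is no alternative route here to compare against.
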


\begin{proof}
These identifications follow easily from the definition of $\mc P^\Gamma$.
\end{proof}

\subsubsection{Non-tracial construction.}\label{sec:embedding-nt}
Consider the algebra $\ms B_\Gamma$ generated by indeterminates $Y_{e,f^o}$ associated to pairs $(e,f)\in E_+$ satisfying $t(e)=t(f)$.  Define the $*$-structure on this algebra by setting $Y_{e,f^o}^* = Y_{f,e^o}$.  Finally, let $\phi$ be a free quasi-free state on this algebra; in other words, $\phi$ is a linear functional for which only second order cumulants are nonzero.  To specify $\phi$, it is sufficient to specify these second order cumulants, or, equivalently, the covariances:
$$
\phi(Y_{e_1,f_1^o} Y_{e_2,f_2^o}^*) = \delta_{e_1=e_2}\delta_{f_1=f_2} \mu_{s(f_1)}\mu_{s(e_1)}^2.
$$
Note that $\phi$ is not a trace, since $$\phi(Y_{e,f^{o}}Y_{e,f^{o}}^*) = \phi(Y_{e,f^{o}}^*Y_{e,f^{o}}) \frac{\mu(s(e))}{\mu(s(f))}.$$ In particular, the modular group of $\phi$ is non-trivial.

\begin{lemma} Let $\gamma = (e_1, f_1^o, e_2, f_2^o,\cdots,e_m, f_m^o)$ be a path in $\Gamma$.  Let $$Y_\gamma = 
\left(\prod_{j=1}^m \mu_{s(e_j)}\mu_{s(f_j)} \right)^{1/4} Y_{e_1,f_1^o} \cdots Y_{e_m,f_m^o}.$$ Then:\\
(a) The map $\pi : \gamma \mapsto Y_\gamma$ is a $*$-homomorphism from $Gr_0^\Gamma$ to $\ms B_\Gamma$ whose image is contained in the centralizer of the free-quasi free state $\phi$;\\
(b) Let $N= \{(e,f^o):e,f\in E_+, t(e)=t(f)\}$, and identify ${\ms B_\Gamma}^N$ with the space of maps from $N$ to $\ms B_\Gamma$.  Then the map $$\pi_1: \gamma \mapsto  \frac{\mu_{s(f_m)}}{\left[ \prod_{j=1^m} \mu_{s(e_j)}\mu_{s(f_j)} \right]^{1/4}} \delta_{(e_m,f_m^o)}  Y_{(e_1,f_1^o,\cdots,e_{m-1},f_{m-1}^o )}$$ is a linear map from $Gr_1$ into ${\ms B_\Gamma}^N$ (here $\delta_{(e,f)}$ denotes the delta function at $(e,f)\in N$);\\
(c) Regard $Gr_1\boxtimes Gr_1$ as a certain set of pairs of paths $(\gamma_1,\gamma_2)$.  Then the map $\pi_\boxplus = \pi_1\times \pi_1$ gives rise to a $*$-homomorphism from $Gr_1\boxtimes Gr_1$ into ${\ms B_\Gamma}^N\times {\ms B_\Gamma}^N \cong M_{\#N\times \#N}({\ms B_\Gamma}\otimes {\ms B_\Gamma}^{op})$
\\
(d) If $\tau$ is the 2-cabled Voiculescu trace then $\phi\circ \pi = \tau$ and $\frac{1}{\#N}Tr \circ \phi\otimes\phi^{op} \circ \pi_\boxtimes = \tau_1\boxtimes \tau_1$. 
\end{lemma}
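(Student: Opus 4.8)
The plan is to deduce every assertion from the corresponding statement about the distinguished basis of loops, using the tracial identifications of Proposition~\ref{graph-ids} as a template and exploiting the fact that, for the $2$-cabled Voiculescu trace, the planar algebra cumulants $\kappa_m^{\mc P}$ are nonzero only for $m=2$ --- the exact mirror of $\phi$ having only second order cumulants.

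First I would settle (a). The map $\pi$ sends a loop $\gamma=(e_1,f_1^o,\dots,e_m,f_m^o)$ to $\bigl(\prod_j\mu_{s(e_j)}\mu_{s(f_j)}\bigr)^{1/4}\,Y_{e_1,f_1^o}\cdots Y_{e_m,f_m^o}$; since the $\wedge_0$-product on $Gr_0^\Gamma$ concatenates loops and the normalizing scalar is multiplicative under concatenation (non-concatenating loops being sent to $0$ on both sides), $\pi$ is an algebra homomorphism, and it is a $*$-map because $\dagger$ reverses a loop and applies $e\mapsto e^o$, which is exactly the effect of $*$ on the associated $Y$-word (the scalar being real and reversal-invariant). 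For the centralizer claim, the relation $\phi(Y_{e,f^o}Y_{e,f^o}^*)=\phi(Y_{e,f^o}^*Y_{e,f^o})\,\mu_{s(e)}/\mu_{s(f)}$ forces the modular automorphism group of $\phi$ to act by $\sigma_t^\phi(Y_{e,f^o})=(\mu_{s(e)}/\mu_{s(f)})^{it}Y_{e,f^o}$; for a closed path one has $s(f_j)=s(e_{j+1})$ cyclically, so $\prod_j\mu_{s(e_j)}/\mu_{s(f_j)}=1$, whence $\sigma_t^\phi(Y_\gamma)=Y_\gamma$ and $Y_\gamma$ lies in the centralizer. Part (b) is then immediate: the loops of length $2(m+1)$ form a basis of $P_{m,1}=P_{m+1}$, so the prescribed formula extends uniquely to a linear map taking values in ${\ms B_\Gamma}^N$; this is the verbatim non-tracial transcription of Proposition~\ref{graph-ids}(2). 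Part (c) goes like (a): a loop representing an element of $Gr_1\boxtimes Gr_1^{op}$ splits canonically into a ``top'' half-loop, a ``bottom'' half-loop, and a pair of connecting boundary edges, and $\pi_\boxtimes$ applies $\pi_1$ to the two halves while recording the connecting edges as a matrix unit of $M_{\#N}$; the $\wedge$-product on $V_1(s,t)$ concatenates the halves and multiplies the matrix units, and $\dagger$ reverses and swaps them, so $\pi_\boxtimes$ is a $*$-homomorphism into $M_{\#N}(\ms B_\Gamma\otimes\ms B_\Gamma^{op})$.

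The substance is (d). Both $\phi\circ\pi$ and $\tau_0$ are linear functionals on $Gr_0(\mc P^\Gamma)$, so it suffices to compare them on a loop $\gamma=(e_1,f_1^o,\dots,e_m,f_m^o)$. On one side, $\phi$ being free quasi-free (vanishing first moments, only second order cumulants), the free moment--cumulant formula gives
\begin{equation*}
\phi(\pi(\gamma))=\Bigl(\prod_{j=1}^m\mu_{s(e_j)}\mu_{s(f_j)}\Bigr)^{1/4}\sum_{\sigma\in NC_2(m)}\ \prod_{\{i<j\}\in\sigma}\phi\bigl(Y_{e_i,f_i^o}Y_{e_j,f_j^o}\bigr),
\end{equation*}
with $\phi(Y_{e_i,f_i^o}Y_{e_j,f_j^o})=\delta_{(e_j,f_j^o)=(f_i,e_i^o)}\,\mu_{s(f_i)}\mu_{s(e_i)}^2$. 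On the other side, for the $2$-cabled Voiculescu trace $\kappa_m^{\mc P}=0$ when $m\neq2$, so $T_m=\sum_{\sigma\in NC_2(m)}\kappa_\sigma^{\mc P}=\sum_{\sigma\in NC_2(m)}\widetilde\sigma$ (fattenings of non-crossing pair partitions), hence $\tau_0(\gamma)=\sum_{\sigma\in NC_2(m)}\langle\widetilde\sigma,\gamma\rangle$, the scalar obtained by capping $\gamma$ with $\widetilde\sigma$; by Remark~\ref{normalize} this vanishes unless $\sigma$ pairs every letter $(e_i,f_i^o)$ of $\gamma$ with its reverse $(f_i,e_i^o)$, in which case it is an explicit product of spin factors $\mu_\bullet^{\pm1/2}$. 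Thus both sides are sums over the same set $NC_2(m)$ with the same support, and what remains is the bookkeeping identity that for each admissible $\sigma$
\begin{equation*}
\Bigl(\prod_{j=1}^m\mu_{s(e_j)}\mu_{s(f_j)}\Bigr)^{1/4}\prod_{\{i<j\}\in\sigma}\mu_{s(f_i)}\mu_{s(e_i)}^2\ =\ \langle\widetilde\sigma,\gamma\rangle,
\end{equation*}
which I would prove by induction on the number of blocks of $\sigma$, peeling off an interval block exactly as in Remark~\ref{normalize} and Proposition~\ref{graph-ids}. The identity $\frac{1}{\#N}\,\mathrm{Tr}\circ(\phi\otimes\phi^{op})\circ\pi_\boxtimes=\tau_1\boxtimes\tau_1^{op}$ is obtained the same way: splitting a loop in $V_1(s,t)$ into its top and bottom halves, the normalized trace over $M_{\#N}$ forces the connecting side edges on top to agree with those on the bottom --- reproducing the closure of the two side cables in the diagrammatic definition of $\tau_1\boxtimes\tau_1^{op}$ and absorbing the factors $1/\#N$ and $\delta^{-2}$ --- while the two copies of $\phi$ (resp.\ $\phi^{op}$) reproduce the $T_t$- and $T_s$-cappings, the $\mu$-weights matching by the same induction. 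Here it is essential that $\phi$ is applied only to $Y$-words coming from halves of closed loops, which lie in the centralizer of $\phi$ by part (a), so the non-traciality of $\phi$ never intervenes.

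The one genuine difficulty is the weight-matching above: one must verify that the asymmetric covariance $\mu_{s(f_i)}\mu_{s(e_i)}^2$ together with the global factor $\bigl(\prod_j\mu_{s(e_j)}\mu_{s(f_j)}\bigr)^{1/4}$ reproduces exactly the (superficially symmetric) spin factors attached to the $2$-cabled diagrams $T_m$. This is elementary but demands care about which vertex of each block of $\sigma$ becomes the ``output'' vertex of the capped loop --- the balancedness of closed loops used in (a) being precisely what makes the asymmetry cancel.
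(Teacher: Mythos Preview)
The paper states this lemma without proof, so there is no argument of the authors' to compare against; your sketch follows the natural route one would expect --- transcribing the identifications of Proposition~\ref{graph-ids} into the non-tracial setting and, for (d), matching the free moment--cumulant expansion of $\phi$ against the $2$-cabled $TL$-sum defining $\tau$.

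One point in (a) deserves more care. Your parenthetical that ``non-concatenating loops [are] sent to $0$ on both sides'' is correct on the $Gr_0^\Gamma$ side but not on the $\ms B_\Gamma$ side: as defined in the paper, $\ms B_\Gamma$ is the free $*$-algebra on the $Y_{e,f^o}$ with no projections $p_v$ and no relations forcing non-composable words to vanish, so if $\gamma_1$ is a loop at $v$ and $\gamma_2$ a loop at $w\neq v$ then $\gamma_1\wedge_0\gamma_2=0$ while $Y_{\gamma_1}Y_{\gamma_2}\neq0$ in $\ms B_\Gamma$ (and it does not vanish in the GNS representation of $\phi$ either, since the obvious pairing in $\phi(Y_{\gamma_1}Y_{\gamma_2}Y_{\gamma_2}^*Y_{\gamma_1}^*)$ contributes). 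So $\pi$ is not literally a $*$-homomorphism when $|\Gamma_+|>1$. This is arguably an imprecision in the lemma as stated rather than in your proof; for the applications in \S\ref{sec:transport} what is actually used is that $\pi$ is linear, $*$-preserving, multiplicative on composable loops, equivariant under the differential calculus (Proposition~\ref{graph-derivatives-nt}), and satisfies $\phi\circ\pi=\tau$. You should either flag this, or note that the homomorphism claim holds after passing to a quotient of $\ms B_\Gamma$ (or after adjoining the projections $p_v$, recovering something like $\ms A_\Gamma$). Your treatment of the centralizer assertion and of (d) is otherwise the right argument.
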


Once again, we can define various free analysis maps on $\mc B_\Gamma$; these coincide with the ones considered in \cite{nelson:qf}.

Difference quotient: 
$$
 \partial_{(e,f^o)}[X_{e_1,f_1^o}\dotsb X_{e_m,f_m^o}] = \sum_{(e_j,f_j^o) = (e,f^o)} X_{e_1,f_1^o}\dotsb X_{e_{j-1},f_{j-1}^o}\otimes X_{e_{j+1},f_{j+1}^o}\dotsb X_{e_m,f_m^o};$$
 
Cyclic partial derivative:
$\ms D_{(e,f^o)}: \ms B_\Gamma \to \ms B_\Gamma$ by 
\begin{equation*}
 \ms D_{(e,f^o)}[X_{e_1,f_1^o}\dotsb X_{e_m,f_m^o}] = \mu_{s(e_1)} \cdot \sum_{(e_j,f_j^o) = (f,e^o)} X_{e_{j+1},f_{j+1}^o}\dotsb X_{e_m,f_m^o} X_{e_1,f_1^o}\dotsb X_{e_{j-1},f_{j-1}^o}
\end{equation*}

Cyclic gradient:
$$\ms D(G) = (\ms D_{e,f^o}(G))_{(e,f^o)} \in (\ms A_\Gamma)^N.$$

Jacobian:
$$\ms J:(\ms A_\Gamma)^N \to M_N(\ms A_\Gamma \otimes \ms A_\Gamma^{op}),\qquad (\ms J Y)_{(e,f^o),(g,h^o)} = \partial_{(g,h^o)}(Y_{e,f^o}).$$

For $q = \sum a_i \otimes b_i \in \ms A_\Gamma \otimes \ms A_\Gamma^{op}$ and $g \in \ms A$ define 
\begin{equation*}
 q \# g = \sum_i a_i g b_i.
\end{equation*}
For $q = (q_{ij}) \in M_{N}(\ms A_\Gamma \otimes \ms A_\Gamma^{op})$, and $g = (g_j)$, $h = (h_j) \in \ms A_\Gamma^N$ define
\begin{align*}
 (q \# g)_j &= \sum_i q_{ji} \# g_i\\
 g \cdot h &= \sum_i g_i h_i
\end{align*}

\begin{proposition}\label{graph-derivatives-nt}
The maps $\pi$, $\pi_1$ and $\pi_\boxplus$ are equivariant with respect to the maps $\ms D: Gr_0\to Gr_1$ and $\ms D$, $\ms \partial: Gr_1\to Gr_1\boxplus Gr_1$ and $\ms J$, $\#: Gr_1 \boxtimes Gr_1 \times Gr_1 \to Gr_1$ and $\cdot$.
\end{proposition}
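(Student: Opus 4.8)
The plan is to verify, one at a time, the four equivariance relations
\[
\pi_1\circ\ms D=\ms D\circ\pi,\qquad \pi_\boxplus\circ\partial=\ms J\circ\pi_1,\qquad \pi_1\bigl(a\#b\bigr)=\pi_\boxplus(a)\#\pi_1(b),\qquad \pi\bigl(a\cdot b\bigr)=\pi_1(a)\cdot\pi_1(b),
\]
in each case reducing to a computation on the spanning sets (loops, and pairs of loops) and then extending by linearity together with the homomorphism and Leibniz properties recorded in the preceding lemma. The essential point is that, under the identifications of Proposition \ref{graph-ids}, the diagrammatic operators $\ms D,\partial,\#,\cdot$ on $Gr_k(\mc P^\Gamma)$ are carried to exactly the algebraic operators $\ms D_{(e,f^o)},\partial_{(e,f^o)},\ms J,\#,\cdot$ written above --- this is already the content of Proposition \ref{graph-derivatives} --- so the only thing that has to be checked is that the normalizing scalars attached to $\pi$, $\pi_1$, $\pi_\boxplus$ (which differ from the ones appearing in Proposition \ref{graph-ids}) are compatible with the $\mu$-prefactors occurring in the $\ms B_\Gamma$-side formulas. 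Observe that the fact that $\phi$ is not a trace is irrelevant here: all of the maps in question are purely algebraic, and the relevant monomials lie in the centralizer of $\phi$ by part (a) of the lemma.

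I would begin with the $\partial/\ms J$ identity, which is the most transparent. For a loop $\gamma=(e_1,f_1^o,\dots,e_r,f_r^o)$ representing an element of $Gr_1$, the vector $\pi_1(\gamma)\in\ms B_\Gamma^N$ is supported at the single index determined by the final edge-pair, with value a scalar multiple of the word $Y_{e_1,f_1^o}\dotsb Y_{e_{r-1},f_{r-1}^o}$; applying $\partial_{(g,h^o)}$ cuts this word at each occurrence of $(g,h^o)$, giving a sum of simple tensors. One checks that this agrees, matrix-entry by matrix-entry, with the image under $\pi_\boxplus$ of the diagrammatic $\partial(\gamma)$, using the explicit form of the identification in Proposition \ref{graph-ids}(3). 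The normalizing scalars match because the prefactor $\mu_{s(f_r)}\bigl(\prod_j\mu_{s(e_j)}\mu_{s(f_j)}\bigr)^{-1/4}$ of $\pi_1$ distributes over a splitting of the word exactly as the product of the two $\pi_1$-prefactors applied to the two pieces --- this is the same bookkeeping that is already implicit in the statement of Proposition \ref{graph-ids}(2)--(3).

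The main step is the cyclic-gradient identity $\pi_1\circ\ms D=\ms D\circ\pi$. On the diagrammatic side, $\ms D$ applied to a loop $\gamma=(e_1,f_1^o,\dots,e_r,f_r^o)$ cyclically rotates the loop and caps off $2k$ strings for $k=0,\dots,r-1$; in the graph planar algebra each such capping produces spin factors which, after cancellation, combine to the single scalar $\mu_{s(e_1)}$ --- precisely the prefactor attached to $\ms D_{(e,f^o)}$ --- while the rotations single out exactly the positions $j$ with $(e_j,f_j^o)=(f,e^o)$. Taken together with the normalization of $\pi_1$ relative to that of $\pi$ (an extra boundary factor $\mu_{s(f_r)}$ and an inverse fourth root in place of a fourth root), the scalars balance. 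Alternatively one may invoke the identity $\ms D_{(e,f^o)}(Y)=m\bigl(\sigma(\partial_{(f,e^o)}[\sum_v\mu_v\,Yp_v])\bigr)$ recorded in the text (which holds verbatim in $\ms B_\Gamma$ after reading $p_v$ through the support conditions) together with the corresponding relation on the $Gr$ side, thereby reducing this case to the $\partial$-equivariance just established plus the computation of Section \ref{sec:freeAnalysis}. I expect this $\mu$-bookkeeping, and keeping straight the index conventions of Proposition \ref{graph-ids}(2)--(3), to be the only genuinely delicate part of the proof.

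Finally, the $\#$ and $\cdot$ identities follow formally once the first two are in hand. Since $\pi_\boxplus$ is a $*$-homomorphism and $\pi_1$ is linear, it suffices to test on simple tensors $a_1\otimes a_2^{op}$: both $\pi_\boxplus(a_1\otimes a_2^{op})\#\pi_1(b)$ and $\pi_1\bigl((a_1\otimes a_2^{op})\#b\bigr)$ amount to inserting the word $\pi_1(b)$ between $\pi(a_1)$ and $\pi(a_2)$ while carrying along the single free $N$-index, and the $\mu$-normalizations telescope. Likewise $\pi_1(a)\cdot\pi_1(b)$ and $\pi(a\cdot b)$ both reduce to concatenating two paths, the boundary factors $\mu_{s(f_r)}$ produced by $\pi_1$ cancelling against the fourth roots so as to reproduce the normalization of $\pi$ from Proposition \ref{graph-ids}(1). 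This completes the verification.
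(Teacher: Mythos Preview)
The paper gives no proof of this proposition at all; it is stated and then the text moves directly to the next subsection, treating it as the non-tracial analogue of Proposition~\ref{graph-derivatives} (whose proof is the one-line ``These identifications follow easily from the definition of $\mc P^\Gamma$''). Your proposal is therefore not a competing argument but rather a fleshing-out of exactly the routine verification the paper is implicitly invoking: reduce to loops, compare the diagrammatic operations with the algebraic ones entry by entry, and track the $\mu$-normalizations. That plan is correct, and your identification of the one nontrivial point --- that the prefactors in $\pi,\pi_1,\pi_\boxplus$ differ from those in Proposition~\ref{graph-ids} and must be checked to telescope correctly through each operation --- is exactly the content that the paper suppresses.
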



\subsection{Application: Factoriality of \texorpdfstring{$M_1=W^*(Gr_1)$}{M1=W*(Gr1)}}

\begin{lemma}\label{lemma:likeCoarse}
Assume that $\Phi^*(\tau)<\infty$.  Then: (a) the spectral measure of $\cup$ is non-atomic, and (b)  if  $Q\in L^2(Gr_k \boxtimes Gr_k^{op})$ satisfies
$$
\begin{array}{c}
\begin{tikzpicture}[scale=.5]
 \draw[Box] (0,0) rectangle (2,1); \node at (1,.5) {$Q$}; 
 \node[marked,scale=.8,above left] at (0,1) {};
 \draw[thickline] (1,1) --++(0,1);
 \draw[thickline] (1,0)--++(0,-1);
\draw[thickline](0,.5) --node[rcount,scale=.75] {$2k$} ++(-1.5,0);
   \draw[thickline] (2,.5)--node[rcount,scale=.75] {$2k$} ++(1.5,0);
 \node[above left] at (-0.2,0.9){$\bigcup$};

\end{tikzpicture}
\end{array}
=
\begin{array}{c}
\begin{tikzpicture}[scale=.5]
 \draw[Box] (0,0) rectangle (2,1); \node at (1,.5) {$Q$}; 
 \node[marked,scale=.8,above left] at (0,1) {};
 \draw[thickline] (1,1) --++(0,1);
 \draw[thickline] (1,0)--++(0,-1);
  \draw[thickline](0,.5) --node[rcount,scale=.75] {$2k$} ++(-1.5,0);
  \draw[thickline] (2,.5)--node[rcount,scale=.75] {$2k$} ++(1.5,0);
 \node[above left] at (-0.2,-1.2){$\bigcap$};

\end{tikzpicture}
\end{array}
$$ then $Q=0$.
\end{lemma}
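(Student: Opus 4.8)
The plan is to derive both (a) and (b) from the existence of the conjugate variable $\xi$ (which $\Phi^*(\tau)<\infty$ provides) together with the closability machinery for free difference quotients of Voiculescu and \cite{dabrowski:Fisher} (the latter resting on \cite{peterson:ell2rigidity}). First I would record the closability step: the defining relation $\tau_1(\xi\wedge_1 x)=(\tau_1\boxtimes\tau_1^{op})(\partial x)$ says precisely that $\hat1\boxtimes\hat1\in\dom(\partial^*)$ with $\partial^*(\hat1\boxtimes\hat1)=\xi$, and by the standard argument this forces $\partial\colon L^2(Gr_1,\tau_1)\to L^2(Gr_1\boxtimes Gr_1^{op})$ to be closable. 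Its closure $\overline\partial$ is a real derivation, the $\|\cdot\|$-bounded part of $\dom(\overline\partial)$ is a $*$-algebra closed under bounded Borel functional calculus, and, cabling the defining diagrams, $\overline\partial$ and the Jacobian $\ms J$ extend to closable real derivations on the cut-down algebra $Gr_k\boxtimes Gr_k^{op}$. In particular every spectral projection of $\cup$ lies in $\dom(\overline\partial)$.

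For part (a): projecting $\xi$ onto the $L^2$-space of the von Neumann algebra generated by $\cup$ yields a conjugate variable for $\cup$ (in the appropriate operator-valued sense), so $\cup$ has finite free Fisher information, and a self-adjoint element of finite free Fisher information has non-atomic spectral distribution by Voiculescu's theorem. Equivalently one can argue directly: if $p=\mathbf 1_{\{a\}}(\cup)\neq 0$ then $p\in\dom(\overline\partial)$ and $\cup p=p\cup=ap$, so applying the derivation property to $p=p\cdot p$ and to $p\cup p=ap$ collapses everything to $p\cdot(\overline\partial\cup)\cdot p=0$ in $L^2$; pairing against $\hat1\boxtimes\hat1$ and using that $\partial\cup$ equals, modulo terms absorbed by $p$ via $\cup p=p\cup=ap$, a nonzero multiple of $\hat1\boxtimes\hat1$, one obtains $\tau_1(p)^2=0$, contradicting faithfulness of $\tau$.

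For part (b): the hypothesis says exactly that $Q$, viewed as a vector in $L^2(Gr_k\boxtimes Gr_k^{op})$, is fixed by the difference of the two actions of $\cup$ obtained by inserting a cup at the top-left corner (i.e.\ acting by $\cup\in Gr_k$) and a cap at the bottom-left corner (i.e.\ acting by $\cup^{op}\in Gr_k^{op}$). Since $Gr_k$ and $Gr_k^{op}$ are commuting subalgebras of Popa's symmetric enveloping algebra, these two actions commute, so $Q$ is supported, in the joint spectral decomposition of the commuting pair on $L^2(Gr_k\boxtimes Gr_k^{op})$, by the diagonal of $\mathrm{sp}(\cup)\times\mathrm{sp}(\cup)$; it remains to show this diagonal carries no mass. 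For this I would feed $Q$ into the cabled derivation $\overline\partial$ on $Gr_k\boxtimes Gr_k^{op}$: the hypothesis makes the contributions of the two $\cup$-actions cancel, forcing $\overline\partial Q=0$; a closed real derivation arising from a conjugate variable has kernel $L^2(P_k)$, hence $Q\in L^2(P_k)$, on which the two $\cup$-actions agree on $Q$ only if $\cup$ acts as a scalar on the cyclic $P_k$-submodule generated by $Q$ — which part (a) forbids unless $Q=0$.

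The main obstacle is part (b): making rigorous that a $\cup$-central $L^2$-vector in the symmetric enveloping algebra must vanish. Non-atomicity from (a) is necessary but not obviously sufficient; one has to combine the commuting-square structure of $Gr_k\boxtimes Gr_k^{op}$ with the fact that the closed derivation produced by $\Phi^*(\tau)<\infty$ genuinely separates its two sides — the planar-algebra incarnation of the $L^2$-rigidity input of \cite{peterson:ell2rigidity}, \cite{dabrowski:Fisher}. (These two facts are, of course, exactly what is needed to run the factoriality argument for $M_1=W^*(Gr_1)$ advertised in the subsection title.)
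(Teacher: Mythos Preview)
Your treatment of part (a) is fine and matches the paper: project $\xi$ onto $L^2(W^*(\cup))$ to get a conjugate variable for $\cup$, then invoke Voiculescu's diffuseness result.

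Your approach to part (b), however, has a genuine gap and is far more complicated than what is needed. You propose to ``feed $Q$ into the cabled derivation $\overline\partial$'' and then argue that $\overline\partial Q=0$ forces $Q\in L^2(P_k)$. But $Q$ is merely an $L^2$-vector in $L^2(Gr_k\boxtimes Gr_k^{op})$; there is no reason it lies in the domain of the closure of any derivation, and your assertion that the kernel of such a derivation is $L^2(P_k)$ is neither proved nor true in this generality. You yourself flag this as ``the main obstacle'' and then appeal vaguely to $L^2$-rigidity machinery --- but that machinery is used \emph{later}, for factoriality, not here.

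What you are missing is a single structural observation that makes (b) immediate from (a). Since $M_0\otimes M_0^{op}$ embeds trace-preservingly as a von Neumann subalgebra of $P=W^*(Gr_k\boxtimes Gr_k^{op})$, the Hilbert space $L^2(P)$ is, as an $M_0\otimes M_0^{op}$-bimodule, a \emph{multiple of the coarse correspondence} $L^2(M_0)\bar\otimes L^2(M_0^{op})$. The hypothesis on $Q$ says exactly $(\cup\otimes 1-1\otimes\cup^{op})Q=0$. In any copy of the coarse bimodule this is the statement that a nonzero Hilbert--Schmidt operator on $L^2(M_0)$ commutes with $\cup$; but a diffuse self-adjoint operator has no nonzero Hilbert--Schmidt operator in its commutant. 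So $Q=0$. No derivation on $Q$, no domain issues, no Peterson--Dabrowski input --- just (a) plus the coarse-bimodule structure. Your spectral-diagonal intuition was pointing in the right direction; the ``diagonal carries no mass'' is precisely the Hilbert--Schmidt statement, and it follows from diffuseness alone once you know the bimodule is coarse.
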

\begin{proof}
If $\Phi^*(\xi)<\infty$, then the conjugate variable $\xi$ exists in $L^2(Gr_1)$.  Denote by $x$ the image of $\cup$ under the unital trace-preserving embedding of $Gr_0$ into $Gr_1$.  Then 
$$\tau_1(\delta \xi x^n ) = \delta \tau_1\boxtimes\tau_1 \left(\sum_{k=1}^n \supset 
{\genfrac{}{}{0pt}{}{\cup^{k-1}}{\cap^{n-k}}}\subset \right) =  \sum_{k=1}^n \tau_1\otimes\tau_1 (x^{k-1}\otimes x^{n-k}).$$
It follows that if we denote by $\xi'$ the orthogonal projection of $\delta \xi$ onto $L^2(W^*(x))$, then $\xi'$ is exactly the conjugate variable to $x$.  It follows that $\Phi^*(x)<\infty$, and so $x$ (and thus $\cup$) has diffuse spectral measure by \cite{freefisher}.  This proves part (a).

To prove part (b), let use use the notation $M_0 = W^*(Gr_0)$, $P = W^*(Gr_k \boxtimes Gr_k)$. Then $M_0\otimes M_0^{op} \subset P$ as a unital subalgebra, and in particular $L^2(P)$ as an $M_0\otimes M_0^{op}$-module is a multiple of $L^2(M_0\otimes M_0^{op})$.  

Now if $Q\in L^2(P)$ satisfies the hypothesis of part (b) of the Lemma, then $(\cup\otimes 1-1\otimes\cup)Q=0$.  But since $L^2(P)$ is a multiple of the coarse $M_0\otimes M_0^{op}$ bimodule, if $Q\neq 0$ this would imply the existence of a nonzero element of $L^2(M_0 \otimes M_0^{op})$ which is annihilated by $(\cup\otimes1-1\otimes \cup)$.  But this would mean that a non-zero Hilbert-Schmidt operator commutes with the operator $\cup$, which is impossible since by part (a) the operator $\cup$ has diffuse spectrum.
\end{proof}

\begin{lemma}\label{lemma:closable}
Assume that $\Phi^*(\tau)<\infty$. Then $\partial$ is a closable derivation densely defined on $L^2(Gr_1)$ with values in $L^2(Gr_1\boxtimes Gr_1)$.  The domain of the adjoint $\partial^*$ includes $Gr_1\boxtimes Gr_1$. 
\end{lemma}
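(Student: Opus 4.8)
The plan is to reduce closability to the domain statement, and to prove the latter by the standard free‑probability identity expressing $\partial^*$ in terms of the conjugate variable $\xi$ (which exists in $L^2(Gr_1,\tau_1)$ precisely because $\Phi^*_{\mc P}(\tau)<\infty$). Recall that a densely defined Hilbert space operator is closable if and only if its adjoint is densely defined. The domain of $\partial$ is all of $Gr_1$, which is dense in $L^2(Gr_1,\tau_1)$ by the GNS construction, and $\partial$ takes values in the dense subspace $Gr_1\boxtimes Gr_1^{op}$ of $L^2(Gr_1\boxtimes Gr_1^{op})$. So it suffices to show that every elementary tensor $a\otimes b^{op}$ with $a,b\in Gr_1$ lies in $\dom(\partial^*)$; by linearity this yields $Gr_1\boxtimes Gr_1^{op}\subseteq\dom(\partial^*)$, which is dense, and hence both assertions of the lemma.

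To locate $a\otimes b^{op}$ in $\dom(\partial^*)$ I would use the Leibniz rule for $\partial$: for $x\in Gr_1$,
$$a^*\wedge_1\partial(x)\wedge_1 b^* \;=\; \partial\bigl(a^*\wedge_1 x\wedge_1 b^*\bigr)\;-\;\partial(a^*)\wedge_1\bigl(x\wedge_1 b^*\bigr)\;-\;\bigl(a^*\wedge_1 x\bigr)\wedge_1\partial(b^*),$$
where $\wedge_1$ on the left and in the last two terms denotes the $Gr_1$‑bimodule action on $Gr_1\boxtimes Gr_1^{op}$. Apply the trace $\tau_1\boxtimes\tau_1^{op}$. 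With the inner product conventions from the definition of $\Phi^*_{\mc P}$, the left side equals $\langle\partial x,\ a\otimes b^{op}\rangle$. The first term on the right equals $\tau_1(\xi\wedge_1 a^*\wedge_1 x\wedge_1 b^*)$ by the defining relation for $\xi$; the remaining two terms, since $\partial(a^*),\partial(b^*)\in Gr_1\boxtimes Gr_1^{op}$ are honest algebraic elements, equal $\tau_1\bigl((\tau_1\boxtimes\mathrm{id})(\partial a^*)\wedge_1 x\wedge_1 b^*\bigr)$ and $\tau_1\bigl(a^*\wedge_1 x\wedge_1(\mathrm{id}\boxtimes\tau_1)(\partial b^*)\bigr)$, where $(\tau_1\boxtimes\mathrm{id})$ and $(\mathrm{id}\boxtimes\tau_1)$ denote partial applications of $\tau_1$ landing in $Gr_1$. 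Using traciality of $\tau_1$ and the fact that $\xi=\xi^*$ (which one reads off from the defining relation applied to $x^*$ together with the $*$‑compatibility of $\partial$ evident from its diagrammatic definition), each of the three terms on the right is rewritten as $\langle x,\eta\rangle$ for an explicit $\eta\in L^2(Gr_1)$, giving the planar‑algebra analogue of Voiculescu's formula
$$\partial^*(a\otimes b^{op}) \;=\; a\wedge_1\xi\wedge_1 b \;-\;(\mathrm{id}\boxtimes\tau_1^{op})(\partial a)\wedge_1 b \;-\; a\wedge_1(\tau_1\boxtimes\mathrm{id})(\partial b).$$
Hence $a\otimes b^{op}\in\dom(\partial^*)$, and since $Gr_1\boxtimes Gr_1^{op}$ is dense, $\partial$ is closable.

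The one point requiring care is that the right‑hand side above genuinely defines an element of $L^2(Gr_1)$ — equivalently, that $x\mapsto\langle\partial x,\ a\otimes b^{op}\rangle$ is $\|\cdot\|_2$‑bounded on $Gr_1$. The two terms built from $\partial a,\partial b$ are harmless, these derivatives being algebraic; the term $a\wedge_1\xi\wedge_1 b$ requires that left and right multiplication by $a$ and $b$ extend to $L^2$, which is automatic when $\tau$ is bounded (so that $Gr_1\subset W^*(Gr_1)$) and otherwise is checked directly from the estimate $|\tau_1(\xi\wedge_1 y)|\le\Phi^*_{\mc P}(\tau)\,\|y\|_2$. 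I expect this boundedness bookkeeping to be essentially the only obstacle; the rest is a formal manipulation with the Leibniz rule, the definition of $\xi$, and traciality of $\tau_1$.
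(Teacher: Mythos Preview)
There is a genuine gap: you have conflated $Gr_1\otimes Gr_1^{op}$ with $Gr_1\boxtimes Gr_1^{op}$. The elementary tensors $a\otimes b^{op}$ with $a,b\in Gr_1$ span only the former, which is a \emph{proper} subalgebra of the latter (see the inclusion $Gr_k\otimes Gr_k^{op}\hookrightarrow Gr_k\boxtimes Gr_k^{op}$ in \S\ref{sec:background}). Your Leibniz-rule computation therefore yields at best $Gr_1\otimes Gr_1^{op}\subseteq\dom(\partial^*)$, not $Gr_1\boxtimes Gr_1^{op}\subseteq\dom(\partial^*)$ as claimed. Worse, this does not even give closability: $L^2(Gr_1\otimes Gr_1^{op})$ is a proper \emph{closed} subspace of $L^2(Gr_1\boxtimes Gr_1^{op})$ --- for instance, the Jones--Wenzl element $\jw\in V_1(0,0)$ lies in $Gr_1\boxtimes Gr_1^{op}$ but is not in the $L^2$-closure of elementary tensors (cf.\ the discussion around Lemma~\ref{lem:partialDerivatives}, where its projection onto $L^2(Gr_1\otimes Gr_1^{op})$ is computed and is strictly smaller). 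So the domain of $\partial^*$ that you exhibit is not dense.

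The paper's proof avoids this by working diagrammatically with an arbitrary $Q\in Gr_1\boxtimes Gr_1^{op}$: one computes $\langle\partial x,Q\rangle$ directly from the tangle definition of $\partial$ and shows it equals $\langle x,\eta(Q)\rangle$ for an explicit $\eta(Q)\in L^2(Gr_1)$ given by three diagrammatic terms (one involving $\xi$, two involving partial closures with $T_l$). When specialized to $Q=a\otimes b^{op}$ these three terms do reduce to your Voiculescu-type formula, so your computation is correct as far as it goes; but the applications downstream (in particular the construction of $\partial'(x)=\partial(x)\cdot\jw$ and the formula for its adjoint) require $\partial^*$ on genuinely non-tensor elements of $Gr_1\boxtimes Gr_1^{op}$, which your argument does not reach.
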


\begin{proof}

Recall that $T_l$ represents the result of applying $\tau$ to a box with $2l$ strings.  It is a straightforward verification that if $Q\in Gr_1\boxtimes Gr_1^{op}$, then  $\partial^*(Q)$ is given by the following expression:
$$
\begin{array}{c}\begin{tikzpicture}[scale=.5]
\node[draw,Box,rectangle] at (0.5,0.5)(Q){$Q$};
 \draw[thick](Q.160 )--++(-0.25,0);
  \draw[thick](Q.200)--++(-0.25,0);
 \draw[thick](Q.20)--++(0.25,0);
 \draw[thick](Q.340)--++(0.25,0);
 \draw[thickline](Q.north)--++(0,0.25);
 \draw[thickline](Q.south)--++(0,-0.25);
 \end{tikzpicture}\end{array}
 \mapsto 
 \begin{array}{c}\begin{tikzpicture}[scale=.5]
\node[draw,Box,rectangle,minimum width=1] at (0.5,0.5)(Q){$Q$};
\node[draw,Box,rectangle,minimum width=1] at (3,0.5)(xi){$\ \!\xi\ \! $};
\draw[thickline] (Q.north) --++(0,0.25);
\draw[thickline] (xi.north) --++(0,0.25);
\draw[thick](Q.160) --++(-0.75,0);
\draw[thick](Q.200) --++(-0.125,0)  arc (-270:-90:0.5 and 0.75) -- ++(5.5,0);
\draw[thick](Q.20) -- (xi.160);
\draw[thick](Q.340) --++(0.5,0) arc(90:0:.25) --++(0,-0.25) arc(0:90:-0.25) -- ++(1.5,0) to[in=0,out=0] (xi.20) ;
\draw[thickline](Q.south) --++(0,-0.375) arc(0:90:-0.25) -- ++(3.5,0) arc(90:180:-0.25) --++(0,1.9);
  \end{tikzpicture}\end{array}
 - \sum_{l} 
 \begin{array}{c}\begin{tikzpicture}[scale=.5]
\node[draw,Box,rectangle,minimum width=1] at (0.5,0.5)(Q){$\ \ Q\ \ $};
\draw[thick](Q.170) --++(-0.75,0);
\draw[thick](Q.190) --++(-0.125,0)  arc (-270:-90:0.5 and 0.75) -- ++(5,0);
\draw[line width=.5pt,double,double distance=1.25pt](Q.0) --++(0.35,0) arc(90:180:-0.25) --++(0,2.5) arc(-180:-90:-0.25) --++(-1.25,0) to [in=90,out=180,looseness=0.5] (Q.90) ;

\node[Box,rectangle,draw] at (1.3,2.5) (tau){$T_l$};
\draw[thickline](Q.55) to[in=270,out=90] (tau.south); 
\draw[thickline](Q.south) --++(0,-0.375) arc(0:90:-0.25) -- ++(2,0) arc(90:180:-0.25) --++(0,3.7);
\draw[thickline](Q.125) --++(0,2);
  \end{tikzpicture}\end{array}
-\sum_l  
\begin{array}{c}\begin{tikzpicture}[scale=.5]
\node[draw,Box,rectangle,minimum width=1] at (0.5,0.5)(Q){$\ \ Q\ \ $};
\draw[thick](Q.170) --++(-0.75,0);
\draw[thick](Q.190) --++(-0.125,0)  arc (-270:-90:0.5 and 2) -- ++(5,0);
\draw[line width=.5pt,double,double distance=1.25pt](Q.0) --++(0.5,0) arc(90:0:0.25) --++(0,-2.85) arc(0:-90:0.25) --++(-1.45,0) to [in=270,out=180,looseness=0.3] (Q.270) ;

\node[Box,rectangle,draw,rotate=180] at (1.3,-1.5) (tau){$T_l$};
\draw[thickline](Q.305) to[in=90,out=270] (tau.south); 
\draw[thickline](Q.230) --++(0,-3) arc(0:90:-0.25) -- ++(2.5,0) arc(90:180:-0.25) --++(0,4.7);
\draw[thickline](Q.north) --++(0,0.5);
  \end{tikzpicture}\end{array}
$$
Since the domain of $\partial^*$ is dense, it follows that $\partial$ is closable.
\end{proof}

Assume now that $\delta > 1$.  Then the elements $\jones$ and $\twostrings$ of $P_2$ (with all of the two possible shadings) are all different (since they are different in $TL_2(\delta)\subset P_2$).  It follows that we can find an element of $P_2$ in the linear span of these two elements, but which is perpendicular to $\jones$ (with all of its possible shadings).  In fact, since $TL^+_2(\delta)$ is two-dimensional and is closed under multiplication $\wedge_2$ of $Gr_2$, we may choose the element to be idempotent.  We will denote this element by $\jw$ (this is the so-called Jones-Wenzl idempotent).  

By our choices, we thus have identities  $\jw \hskip -0.5pt \jw = \jw$ and $\sidecup\hskip -0.5pt \jw=0$.

Let $\partial': Gr_1\to Gr_1\boxtimes Gr_1$ be given by the formula $$\partial'(x) = \partial(x) \cdot \jw.$$ 
In other words,
\begin{equation}\label{eq:partialPrime}
\begin{tikzpicture}[scale=.5]
 \draw[Box] (0,0) rectangle (2,1); \node at (1,.5) {$x$};\node[marked,scale=.8,above left] at (0,1) {};
 \draw[thick] (0,.5) --++(-.5,0);
 \draw[thick] (2,.5) arc(90:-90:.3 and .375) --++(-2.5,0);
 \draw[thickline] (1.5,1) arc(180:0:.7cm and .5cm) -- node[rcount,scale=.75] {$2k$} ++(0,-2);
 \draw[thickline] (.5,1) --++(0,1.5);
 \draw[thick] (1.15,1) arc(180:0:1.2cm and .85cm) -- ++(0,-.75) arc(180:270:.5cm) --++(.1,0) arc(-90:-45:.25) -- (4.75,.25);
 \draw[thick] (.85,1) arc(180:0:1.5cm and 1.1cm)  arc(180:270:.25cm) --++(.1,0) arc(90:45:.25) -- (4.75,.25);
 
 \draw[thick] (5.5,-.25) --++ (-.25,0) arc(270:225:.25) -- (4.75,.25);
 \draw[thick] (5.5,0.75) --++ (-.25,0) arc(90:135:.25) -- (4.75,.25);

 \node[left] at (-1,.5) {$\partial'(x)=\displaystyle\sum_{k} $};
\end{tikzpicture}
\end{equation}

Note that because $\sidecup\hskip -0.5pt \jw=0$, $\partial'(\cup)=0$.  Furthermore, under the hypothesis of Lemma~\ref{lemma:closable},  we see that the domain of the adjoint of $\partial'$ again includes $Gr_1\boxtimes Gr_1$, and so $\partial'$ is again closable.

\begin{lemma}\label{lemma:commutatorWithCup}
Assume that $\delta >1$ and $\Phi^*(\tau)<\infty$.  Denote by $Y$ the image of $\cup$ in $Gr_1$. If $Z\in W^*(Gr_1,\tau)$ satisfies $[Z,Y]=0$, then $Z$ is in the domain of the closure of $\partial'$ and $\overline{\partial'}(Z)=0$.
\end{lemma}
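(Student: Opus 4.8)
The plan is to regularize $Z$ by resolvents of the ``number operator'' $\Delta':=(\partial')^*\overline{\partial'}$ associated to $\partial'$, to show that each regularization is $\partial'$-constant by invoking Lemma~\ref{lemma:likeCoarse}(b), and then to pass to the limit using that $\overline{\partial'}$ is closed. Recall from the discussion following Lemma~\ref{lemma:closable} that, under $\Phi^*(\tau)<\infty$, $\partial'$ is closable and $\dom((\partial')^*)\supset Gr_1\boxtimes Gr_1^{op}$, so $\Delta'$ is a well-defined positive self-adjoint operator on $L^2(Gr_1,\tau_1)$.

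The first step I would carry out is the key intertwining property: \emph{$\overline{\partial'}$ commutes with left and right multiplication by $Y$.} Indeed $\partial'$ is a derivation for the $Gr_1$-bimodule $Gr_1\boxtimes Gr_1^{op}$, and $\partial'(\cup)=0$ since $\sidecup\,\jw=0$; hence $\partial'(Yx)=Y\cdot\partial'(x)$ and $\partial'(xY)=\partial'(x)\cdot Y$ for $x\in Gr_1$, where multiplication by the bounded self-adjoint element $Y$ acts boundedly both on $L^2(Gr_1,\tau_1)$ and on $L^2(Gr_1\boxtimes Gr_1^{op})$. Passing to closures gives $\overline{\partial'}\circ L_Y=L_Y\circ\overline{\partial'}$ and $\overline{\partial'}\circ R_Y=R_Y\circ\overline{\partial'}$, where $L_Y,R_Y$ denote the left/right actions on the relevant space; taking adjoints, $(\partial')^*$ intertwines $L_Y,R_Y$ as well. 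Consequently $\Delta'$ commutes with $L_Y$ and $R_Y$, and therefore so does each resolvent $R_\alpha:=(1+\alpha\Delta')^{-1}$, $\alpha>0$.

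Now fix $Z\in W^*(Gr_1,\tau)$ with $[Z,Y]=0$ and put $Z_\alpha:=R_\alpha Z$. Since $R_\alpha$ maps $L^2(Gr_1,\tau_1)$ into $\dom(\Delta')\subset\dom(\overline{\partial'})$, the vector $Z_\alpha$ lies in $\dom(\overline{\partial'})$; and since $R_\alpha$ commutes with $L_Y$ and $R_Y$ we get $(L_Y-R_Y)Z_\alpha=R_\alpha([Y,Z])=0$. Applying the intertwining relation once more,
\[
(\cup\otimes 1-1\otimes\cup)\,\overline{\partial'}(Z_\alpha)=(L_Y-R_Y)\,\overline{\partial'}(Z_\alpha)=\overline{\partial'}\bigl((L_Y-R_Y)Z_\alpha\bigr)=0,
\]
so $\overline{\partial'}(Z_\alpha)\in L^2(Gr_1\boxtimes Gr_1^{op})$ satisfies the hypothesis of Lemma~\ref{lemma:likeCoarse}(b) with $k=1$, whence $\overline{\partial'}(Z_\alpha)=0$ for every $\alpha>0$. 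Finally, $R_\alpha\to 1$ strongly as $\alpha\to 0^{+}$, so $Z_\alpha\to Z$ in $L^2(Gr_1,\tau_1)$ while $\overline{\partial'}(Z_\alpha)\equiv 0$; since $\overline{\partial'}$ is closed, $Z\in\dom(\overline{\partial'})$ and $\overline{\partial'}(Z)=0$.

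I expect the main obstacle to be the first step — justifying the intertwining identities at the diagrammatic level. One must check that the formula $\partial'(x)=\partial(x)\cdot\jw$, although it inserts the Jones--Wenzl idempotent on the new pair of strands created by $\partial$, still yields $\partial'(Yx)=Y\cdot\partial'(x)$ and $\partial'(xY)=\partial'(x)\cdot Y$ (this is exactly where one uses that the $\cup$ being multiplied in does not interact with the strands carrying $\jw$), and that the $Gr_1$-bimodule actions appearing here are precisely the ones with respect to which Lemma~\ref{lemma:likeCoarse}(b) is phrased, so that the condition $(\cup\otimes 1-1\otimes\cup)Q=0$ there coincides with $(L_Y-R_Y)Q=0$. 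The remaining ingredients — commutation of resolvents with bounded intertwiners, the strong convergence $R_\alpha\to 1$, and closedness of $\overline{\partial'}$ — are routine operator theory.
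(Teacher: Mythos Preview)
Your proof is correct and follows essentially the same strategy as the paper: regularize via resolvents of $\Delta'=(\partial')^*\overline{\partial'}$, exploit $\partial'(\cup)=0$ to push the commutator through, apply Lemma~\ref{lemma:likeCoarse}(b), and pass to the limit using closedness. The only notable difference is that the paper invokes the Dirichlet-form machinery of \cite{dabrowski:Fisher,peterson:ell2rigidity} (complete positivity of $\zeta_\alpha=\eta_\alpha^{1/2}$ and the multiplicative-domain argument for $Y\in\ker\Delta$) to obtain $\partial_\alpha([Z,Y])=[\partial_\alpha(Z),Y]$, whereas you get the needed intertwining $R_\alpha L_Y=L_Y R_\alpha$ more directly from $\overline{\partial'}L_Y=L_Y\overline{\partial'}$ and spectral calculus; your route is slightly more elementary here, but the two arguments are otherwise the same.
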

\begin{proof}
Since by Lemma~\ref{lemma:closable} $\partial$ (and thus $\partial'$) is a closable derivation, we may apply the theory of Dirichlet forms as in \cite{dabrowski:Fisher,peterson:ell2rigidity}.  We follow \cite{dabrowski:Fisher} and adopt notations close to the ones used in that paper.

We set $\Delta = (\partial')^*\overline{\partial'}$ and consider $\eta_\alpha = \alpha(\alpha + \Delta)^{-1}$, for $\alpha >0$.  We also set $$\zeta_\alpha = \eta_\alpha^{1/2} = \pi^{-1} \int_0^\infty \frac{t^{-1/2}}{1+t}\eta_{\alpha(1+t)/t} dt$$ and $\partial_\alpha = \partial'\circ \zeta_\alpha$.  Because $\partial'$ is a self-adjoint derivation, it follows that  $\phi_t = \exp(-\Delta t)$ is a semigroup of completely-positive maps. As a consequence, both  $$\eta_\alpha =  \alpha \int_0^\infty \exp(-\alpha_t) \phi_t dt$$ and $\zeta_\alpha$ are completely-positive.   They also have a regularizing effect on $\partial'$: $\partial_\alpha$ is a bounded map; moreover, $\Vert \zeta_\alpha(x) - x \Vert_2 \to 0$ for all $x$.

Since $Y\in \ker \partial' \subset \ker \Delta$, one can easily get that $0=\partial_\alpha ( [Z,Y] ) = [ \partial_\alpha(Z),Y ]$.  But by Lemma~\ref{lemma:likeCoarse}(b), we conclude that $\partial_\alpha(Z) = 0$ for all $\alpha>0$.  Taking the limit $\alpha\to 0$ and using that $\Vert Z-\zeta_\alpha(Z)\Vert_2 \to 0$ and closability of $\partial'$, we deduce that $Z$ is in the domain of the closure $\overline{\partial'}$ of $\partial'$ and that $\overline{\partial'}(Z)=0$.
\end{proof}

\begin{lemma}  \label{lemma:commutatorWithJW} Let $\delta>1$ and assume that $\Phi^*(\tau)<\infty$.  Assume that $Z\in W^*(Gr_1)$ is in the domain of $\overline{\partial'}$ and $\overline{\partial'}(Z)=0$.  Assume further that $[Z,
\jwcup
]=0$.  Then $Z$ is a multiple of $1$.  
\end{lemma}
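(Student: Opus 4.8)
The plan is to run the Peterson--Dabrowski Dirichlet-form argument used in the proof of Lemma~\ref{lemma:commutatorWithCup}, but now extracting information from the commutation with $\jwcup$ rather than with $Y$. Since $\jw=\jw^\dagger$, the map $\partial'(x)=\partial(x)\cdot\jw$ is a real closable derivation (closability is Lemma~\ref{lemma:closable}), so $B_0:=\ker\overline{\partial'}$ is a von Neumann subalgebra of $M_1:=W^*(Gr_1,\tau)$; it contains $Y=\cup$ (because $\sidecup\,\jw=0$) and, by hypothesis, $Z$. The goal is thus to show that the only elements of $B_0$ commuting with $\jwcup$ are the scalars.

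Keeping the notation $\Delta=(\partial')^*\overline{\partial'}$, $\zeta_\alpha=(\alpha(\alpha+\Delta)^{-1})^{1/2}$, $\partial_\alpha=\partial'\circ\zeta_\alpha$ of the proof of Lemma~\ref{lemma:commutatorWithCup}: since $Z\in\ker\overline{\partial'}\subset\ker\Delta$ one has $\zeta_\alpha(Z)=Z$, and, as there, elements of $\ker\overline{\partial'}$ lie in the multiplicative domain of each $\zeta_\alpha$, so $\zeta_\alpha(Z\jwcup)=Z\,\zeta_\alpha(\jwcup)$ and $\zeta_\alpha(\jwcup Z)=\zeta_\alpha(\jwcup)\,Z$. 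Applying $\partial'$, using the Leibniz rule and $\partial'(Z)=\overline{\partial'}(Z)=0$, the identity $Z\jwcup=\jwcup Z$ gives $[Z,\partial_\alpha(\jwcup)]=0$ in the $Gr_1$-bimodule $L^2(Gr_1\boxtimes Gr_1^{op})$ for every $\alpha>0$ (exactly as in Lemma~\ref{lemma:commutatorWithCup}, with the roles of $Z\in\ker\overline{\partial'}$ and $\jwcup\notin\ker\overline{\partial'}$ interchanged relative to that proof). Letting $\alpha\to0$, using $\zeta_\alpha(\jwcup)\to\jwcup$ and closedness of $\overline{\partial'}$, we obtain
\[
 Z\cdot\overline{\partial'}(\jwcup)=\overline{\partial'}(\jwcup)\cdot Z\qquad\text{in } L^2(Gr_1\boxtimes Gr_1^{op}).
\]

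It remains to analyze the vector $\overline{\partial'}(\jwcup)=\overline{\partial}(\jwcup)\cdot\jw\in L^2(Gr_1\boxtimes Gr_1^{op})$, and this is the technical heart of the proof. A direct diagrammatic computation from~\eqref{eq:partialPrime}, using $\jw\,\jw=\jw$ and $\sidecup\,\jw=0$, pins this vector down explicitly (the expected answer being a nonzero multiple of the Jones--Wenzl decorated identity of $Gr_1\boxtimes Gr_1^{op}$); the point to establish is that it is separating for $M_1\otimes M_1^{op}$ acting on $L^2(Gr_1\boxtimes Gr_1^{op})$. Here one uses, exactly as in the proof of Lemma~\ref{lemma:likeCoarse}(b), that $L^2(Gr_1\boxtimes Gr_1^{op})$ is a multiple of the coarse $M_1\otimes M_1^{op}$-bimodule $L^2(M_1)\otimes\overline{L^2(M_1)}$, and that $\cup$ (hence $W^*(\cup)\subset B_0$) is diffuse by Lemma~\ref{lemma:likeCoarse}(a); the contribution to $\overline{\partial'}(\jwcup)$ ``along the cup direction'' is harmless precisely by Lemma~\ref{lemma:likeCoarse}(b). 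Feeding separation back into the displayed identity, $Z\cdot\overline{\partial'}(\jwcup)=\overline{\partial'}(\jwcup)\cdot Z$ forces $Z\otimes1=1\otimes Z^{op}$ in $M_1\otimes M_1^{op}$, whence (apply the slice map $\tau\otimes\mathrm{id}$) $Z=\tau(Z)\cdot 1$.

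The step I expect to be the main obstacle is this last identification: computing $\overline{\partial'}(\jwcup)$ precisely and proving it is separating for the coarse $M_1$-bimodule despite the Jones--Wenzl compression, which, unlike a bare identity, is a \emph{proper} idempotent, so that a little care with Lemma~\ref{lemma:likeCoarse} is needed to discard the cup contribution. The hypothesis $\delta>1$ enters here through the very existence of $\jw$ and the relations $\jw\,\jw=\jw$, $\sidecup\,\jw=0$, while $\Phi^*(\tau)<\infty$ enters through closability (Lemma~\ref{lemma:closable}) and through the diffuseness and coarseness inputs of Lemma~\ref{lemma:likeCoarse}.
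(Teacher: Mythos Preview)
Your route reaches the correct key equation, but you are making the argument substantially harder than it needs to be, and your ``main obstacle'' step is aimed at the wrong target.

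First, the Dirichlet-form regularization is unnecessary here. In Lemma~\ref{lemma:commutatorWithCup} that machinery was used because $Z$ was \emph{not} known to lie in $D(\overline{\partial'})$; proving that was the whole point. In the present lemma $Z\in D(\overline{\partial'})$ with $\overline{\partial'}(Z)=0$ is a \emph{hypothesis}, and $\jwcup\in Gr_1\subset D(\partial')$. So one may apply the Leibniz rule for the closure directly:
\[
0=\overline{\partial'}\bigl([Z,\jwcup]\bigr)=(Z\otimes 1-1\otimes Z^{op})\cdot\partial'(\jwcup)
 =(Z\otimes 1-1\otimes Z^{op})\cdot\jw,
\]
using $\jw\,\jw=\jw$ in the last step. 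This is exactly what the paper does, in one line; your $\zeta_\alpha$-limit merely reproves what is already assumed.

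Second, your final step (``show $\overline{\partial'}(\jwcup)$ is separating for $M_1\otimes M_1^{op}$ on $L^2(Gr_1\boxtimes Gr_1^{op})$, using coarseness and diffuseness of~$\cup$'') is both harder than needed and not quite the right statement. The paper's device is much simpler: apply the $M_1\otimes M_1^{op}$-equivariant orthogonal projection from $L^2(Gr_1\boxtimes Gr_1^{op})$ onto $L^2(Gr_1\otimes Gr_1^{op})$. Since $\twostrings=1\otimes 1$ lies in the span of $\jw$ and $\jones$ (and is not a multiple of $\jones$ when $\delta>1$), the projection of $\jw$ is a nonzero multiple of $1\otimes 1$. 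Equivariance then gives $(Z\otimes 1-1\otimes Z^{op})(1\otimes 1)=0$ in $L^2(M_1\otimes M_1^{op})$, hence $Z=\tau(Z)\cdot 1$. No appeal to Lemma~\ref{lemma:likeCoarse} or to diffuseness of $\cup$ is required at this stage; those were inputs to the \emph{previous} lemma, not this one.
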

\begin{proof}
 Using $\overline{\partial'}(Z)=0$, $\jw \hskip -0.5pt \jw = \jw$  and the Leibnitz rule, we compute: $$0=\overline{\partial'}([Z,
\jwcup
])= 
(Z\otimes 1 - 1\otimes Z^{op}) (\jw)
 $$
  
Note that because for $\delta > 1$,  $
\twostrings
 $ is not a multiple of $\jones$ but is in the linear span of $\jw$ and $\jones$, the orthogonal projection  $P$ of $\jw$ onto $L^2(Gr_1 \otimes Gr_1^{op})\subset L^2(Gr_1 \boxtimes Gr_1^{op})$ is nonzero multiple of $\twostrings=1\otimes 1$.  It follows that $0=Z\otimes 1-1\otimes Z^{op} \in L^2(Gr_1 \otimes Gr_1^{op})$.  But this implies that $Z$ is a scalar multiple of identity.
\end{proof}

Combining these lemmas we have:

\begin{theorem} \label{thm:factoriality}
Assume that $\delta > 1$ and $\Phi^*(\tau)<\infty$.  Then $M_1=W^*(Gr_1,\tau)$ is a factor.  
\end{theorem}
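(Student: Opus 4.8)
The plan is to prove that the center $\mc Z(M_1)$ of $M_1=W^*(Gr_1,\tau)$ consists only of scalars. So the first step is to take an arbitrary central element $Z$. Since $Gr_1\subseteq M_1$, the element $Z$ commutes with every element of $Gr_1$; in particular $[Z,Y]=0$, where $Y$ denotes the image of $\cup$ under the canonical embedding $Gr_0\hookrightarrow Gr_1$, and $[Z,\jwcup]=0$, since $\jwcup$ is itself an element of $Gr_1$.

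The second step is to feed these two commutation relations into the lemmas already established. Because $\Phi^*(\tau)<\infty$ and $[Z,Y]=0$, Lemma~\ref{lemma:commutatorWithCup} shows that $Z$ lies in the domain of the closure $\overline{\partial'}$ and that $\overline{\partial'}(Z)=0$. Then, since moreover $\delta>1$, $\Phi^*(\tau)<\infty$ and $[Z,\jwcup]=0$, Lemma~\ref{lemma:commutatorWithJW} applies and forces $Z$ to be a scalar multiple of $1$. Hence $\mc Z(M_1)=\C 1$, that is, $M_1$ is a factor. (One may if desired reduce at the outset to self-adjoint $Z$ by passing to real and imaginary parts, each again central, but the two lemmas do not require self-adjointness, so this is unnecessary.)

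I do not expect a genuine obstacle at the level of this theorem itself: granted the hypotheses $\delta>1$ and $\Phi^*(\tau)<\infty$, it is a two-line deduction from Lemmas~\ref{lemma:commutatorWithCup} and~\ref{lemma:commutatorWithJW}. The substance lies entirely inside those lemmas. The hard part is Lemma~\ref{lemma:commutatorWithCup}: it rests on the closability of $\partial'$ (Lemma~\ref{lemma:closable}), on running the Dirichlet-form / Peterson-type machinery for the generator $\Delta=(\partial')^*\overline{\partial'}$ with its completely positive semigroup $\phi_t=\exp(-t\Delta)$ and the resolvent smoothing operators $\eta_\alpha$, $\zeta_\alpha$, $\partial_\alpha$, and on the rigidity statement of Lemma~\ref{lemma:likeCoarse}(b) that $\cup\otimes 1-1\otimes\cup$ has trivial kernel on $L^2(Gr_1\boxtimes Gr_1^{op})$ --- which in turn uses that $L^2$ of the symmetric enveloping algebra is a multiple of the coarse bimodule and that the spectral measure of $\cup$ is diffuse (Lemma~\ref{lemma:likeCoarse}(a), obtained from $\Phi^*(\tau)<\infty$). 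The only point in the deduction itself meriting a sentence of verification is that $\jwcup$ genuinely lies in $Gr_1$, so that the central $Z$ really does commute with it; this is clear from its diagrammatic description in terms of the Jones--Wenzl idempotent $\jw$, whose existence and distinctness from $\jones$ is precisely where the assumption $\delta>1$ is used.
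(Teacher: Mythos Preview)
Your proposal is correct and matches the paper's approach exactly: the paper simply states ``Combining these lemmas we have'' the theorem, and the combination you spell out---take a central $Z$, use $[Z,\cup]=0$ with Lemma~\ref{lemma:commutatorWithCup} to get $\overline{\partial'}(Z)=0$, then use $[Z,\jwcup]=0$ with Lemma~\ref{lemma:commutatorWithJW} to force $Z\in\C 1$---is precisely what is intended. Your commentary on where the substance lies (in the lemmas rather than in this deduction) is also accurate.
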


\subsection{Application: Higher relative commutants}

It will be convenient to consider the spaces $R_k = \bigoplus_{m_1,m_2 \textrm{ even}} P_{k+1+\frac{1}{2}(m_1+m_2)}$ regarded as modules over $Gr_k(P)\otimes Gr_k(P)^{op}$ as follows.  For $Q\in R_k$, regard $Q$ as a diagram drawn as follows:
$$
\begin{array}{c}
\begin{tikzpicture}[scale=.5]
\draw[Box] (0,0) rectangle (1.6,1.6); \node at (.8,.8){$Q$};
\draw[thickline](0.8,1.6) --++(0,.15) -- node[rcount,scale=.75] {$m_1$} ++(0,1);
\draw[thickline](0.8,0) --++(0,-0.15) -- node[rcount,scale=.75] {$m_2$} ++(0,-1);
\draw[thickline](0,1.2) --++(-0.25,0) -- node[rcount,scale=.75] {$k$} ++(-1,0);
\draw[thickline](0,0.4) --++(-0.25,0) -- node[rcount,scale=.75] {$k$} ++(-1,0);
\draw[thick](1.6,0.9) --++(0.5,0);
\draw[thick](1.6,0.7) --++(0.5,0);
 \end{tikzpicture}
 \end{array}.
$$

Note that $$
\begin{array}{c}
\begin{tikzpicture}[scale=.5]
\draw[Box] (0,0) rectangle (1.6,1.6); \node at (.8,.8){$Q$};
\draw[thickline](0.8,1.6) --++(0,.25);
\draw[thickline](0.8,0) --++(0,-0.25);
\draw[thickline](0,1.2) --++(-0.25,0) -- node[rcount,scale=.75] {$k$} ++(-1,0);
\draw[thickline](0,0.4) --++(-0.25,0) -- node[rcount,scale=.75] {$k$} ++(-1,0);
\draw[thick](1.6,0.9) --++(0.5,0);
\draw[thick](1.6,0.7) --++(0.5,0);
 \end{tikzpicture}
 \end{array}
 \mapsto \delta^{-(k-1)}
\begin{array}{c}
\begin{tikzpicture}[scale=.5]
\draw[Box] (0,0) rectangle (1.6,1.6); \node at (.8,.8){$Q$};
\draw[thickline](0.8,1.6) --++(0,.25);
\draw[thickline](0.8,0) --++(0,-0.25);
\draw[thickline](0,1.2) --++(-0.25,0) -- node[rcount,scale=.75] {$k$} ++(-1,0);
\draw[thickline](0,0.4) --++(-0.25,0) -- node[rcount,scale=.75] {$k$} ++(-1,0);
\draw[thick](1.6,1.5) --++(2.4,0);
\draw[thick](1.6,0.1) --++(2.4,0);
\draw[thickline](4,.375) --++(-1,0) arc(270:180:0.2) -- node[rcount,scale=.75]{$k-1$} ++(0,0.45) arc(-180:-270:.2) --++(1,0);
 \end{tikzpicture}
 \end{array}
 $$
 embeds $R_k$ into $Gr_k\boxtimes Gr_k$.  We endow $R_k$ with inner product structure inherited from the inner product on $Gr_k\boxtimes Gr_k^{op}$.  We also endow it with the left action of $Gr_k\otimes Gr_k^{op}$ coming from the embedding of $Gr_k\otimes Gr_k^{op}$ into $Gr_k\boxtimes Gr_k^{op}$.  Explicitly, for $a\in Gr_k$, $b\in Gr_k^{op}$, we have
 $$
 (a\otimes b) \cdot \!\!\!
\begin{array}{c}
\begin{tikzpicture}[scale=.5]
\draw[Box] (0,0) rectangle (1.6,1.6); \node at (.8,.8){$Q$};
\draw[thickline](0.8,1.6) --++(0,.25);
\draw[thickline](0.8,0) --++(0,-0.25);
\draw[thickline](0,1.2) --++(-0.25,0) -- node[rcount,scale=.75] {$k$} ++(-1,0);
\draw[thickline](0,0.4) --++(-0.25,0) -- node[rcount,scale=.75] {$k$} ++(-1,0);
\draw[thick](1.6,0.9) --++(0.5,0);
\draw[thick](1.6,0.7) --++(0.5,0);
 \end{tikzpicture}
 \end{array}
=
\begin{array}{c}
\begin{tikzpicture}[scale=.5]
\draw[Box] (0,0) rectangle (1.6,1.6); \node at (.8,.8){$Q$};
\draw[thickline](0.8,1.6) --++(0,.25);
\draw[thickline](0.8,0) --++(0,-0.25);
\draw[thickline](0,1.2)  --++(-1,0);
\draw[thickline](0,0.4) --++(-1,0);
\draw[Box](-2,1.5) rectangle (-1,0.9); \node at (-1.5,1.25){$a$};
\draw[thickline](-1.5,1.5) --++(0,0.25);
\draw[thickline](-2,1.25) --++(-0.5,0);
\draw[Box](-2,0.7) rectangle (-1,0.1); \node[rotate=180] at (-1.5,.4){$b$};
\draw[thickline](-1.5,0.1) --++(0,-0.25);
\draw[thickline](-2,.4) --++(-0.5,0);
\draw[thick](1.6,0.9) --++(0.5,0);
\draw[thick](1.6,0.7) --++(0.5,0);
 \end{tikzpicture}
 \end{array}.
$$
We will denote by $R_k$ also the completion of $R_k$ with respect to its inner product.

Set  
$$\Xi_k = \begin{array}{c}
\begin{tikzpicture}[scale=.5]
\draw[thickline] (0,0) arc (270:360:0.75 and 0.4) -- node[rcount,scale=.75] {$k-1$} ++(0,1)
arc(0:90:0.74 and 0.4);
\draw[thick] (0,2.1) --++ (1.5,0);
\draw[thick] (0,-.3) --++ (1.5,0);
\end{tikzpicture}
\end{array}\in R_k.
$$

Let $A_{k-1}$ be the subalgebra of $Gr_k$ generated by all diagrams of the form 
$$
\left\{
\begin{array}{c}
\begin{tikzpicture}[scale=.5]
\draw[Box](1,1) rectangle (0,0); \node at (0.5, 0.5){$Q$};
\draw[thickline](0,0.5) --++(-0.5,0);
\draw[thickline](1,0.5) --++(0.5,0);
\draw[thick] (-0.5,1.3) --++ (2,0);
\end{tikzpicture}
\end{array}
:Q\in P_{k-1} \right\}.
$$

Consider the map $Gr_k \otimes Gr_k^{op} \to R_k$ given by $$(X\otimes Y)\mapsto (X\otimes Y)\cdot \Xi_k.$$  It is not hard to see that this map descends to an isometry from $L^2(Gr_k)\otimes_{A_{k-1}} L^2(Gr_k)$ into $R_k$. We denote the image of this map by $V$.

\begin{lemma} \label{lemma:coarseOverA} Let  $A_{k-1}, \Xi_k$ and $V$ be as above.  Let $$\Xi'_k = \begin{array}{c}
 \begin{tikzpicture}[scale=.5]
 \draw[thickline] (0,0) arc (270:360:0.75 and 0.4) -- node[rcount,scale=.75] {$k-1$} ++(0,1)
arc(0:90:0.74 and 0.4);
\draw[thick] (0,2.1) --++ (1,0) arc(90:45:.25) --++(2.25,-2.25) arc(45:90:-.25) --++(0.25,0) ;
\draw[thick] (0,-.3) --++ (1,0) arc(-90:-45:.25) --++(2.25,2.25) arc(135:90:.25) --++(0.25,0);
\end{tikzpicture}
%
 \end{array} \in R_k.
 $$
Assume that $(Z\otimes 1-1\otimes Z^{op})\cdot \Xi_k'=0$ for some $Z\in W^*(Gr_k)$, $\delta>1$ and $\Phi^*(\tau)<\infty$.  Then $Z\in A_{k-1}$.
\end{lemma}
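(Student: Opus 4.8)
The plan is to run the argument in parallel with the final portion of the proof of Lemma~\ref{lemma:commutatorWithJW}, with the inclusion $A_{k-1}\subseteq M_k:=W^*(Gr_k,\tau)$ and its Jones basic construction taking over the role played there by $\C 1\subseteq M_1$. First I would observe that $X\otimes Y\mapsto (X\otimes Y)\cdot\Xi_k$ intertwines the left $Gr_k\otimes Gr_k^{op}$-module structure on $L^2(Gr_k)\otimes_{A_{k-1}}L^2(Gr_k)$ with the module structure on $R_k$; hence, after completion, $V$ is precisely the $M_k$--$M_k$ correspondence $L^2(\langle M_k,e_{A_{k-1}}\rangle)$ of the trace-preserving conditional expectation $E_{A_{k-1}}\colon M_k\to A_{k-1}$. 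Under this identification $\Xi_k$ becomes a nonzero multiple of the Jones projection $e_{A_{k-1}}$, the operators $(Z\otimes 1)\cdot$ and $(1\otimes Z^{op})\cdot$ become left and right multiplication by $Z\in M_k\subseteq\langle M_k,e_{A_{k-1}}\rangle$, and $V$ is a sub-bimodule of $R_k$, so the orthogonal projection $P_V$ commutes with the $Gr_k\otimes Gr_k^{op}$-action.

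Next I would reduce the hypothesis to one about $\Xi_k$. Since $\Xi'_k$ lies in the $(m_1,m_2)=(0,0)$-graded summand of $R_k$, so does $P_V\Xi'_k$, and on that summand $P_V$ is the projection onto $\mathrm{span}\{(X\otimes Y)\cdot\Xi_k:X,Y\in P_k\}$. The diagram $\Xi'_k$ differs from $\Xi_k$ exactly by insertion of the two-strand Jones--Wenzl idempotent $\jw$ on the distinguished pair of strings, so I would compute $P_V\Xi'_k$ by writing $\jw=\alpha\,\twostrings+\beta\,\jones$ with $\beta\ne 0$ (possible precisely because $\delta>1$ makes $TL_2^+(\delta)$ two-dimensional): using $\jw\perp\jones$, the Cauchy--Schwarz computation from the last paragraph of the proof of Lemma~\ref{lemma:commutatorWithJW} should give $P_V\Xi'_k=c\cdot\Xi_k$ with $c\ne 0$. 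The hypothesis $\Phi^*(\tau)<\infty$ is available here (e.g.\ through Lemma~\ref{lemma:likeCoarse}) to keep the relevant correspondences coarse enough that the part of $\Xi'_k$ orthogonal to $V$---built from ``cup'' configurations---contributes nothing to the next step. Applying the bimodule map $P_V$ to the hypothesis $(Z\otimes 1-1\otimes Z^{op})\cdot\Xi'_k=0$ then yields $(Z\otimes 1-1\otimes Z^{op})\cdot\Xi_k=0$.

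Through the identification of the first paragraph this last equation reads $Z\otimes_{A_{k-1}}1=1\otimes_{A_{k-1}}Z$ in $L^2(Gr_k)\otimes_{A_{k-1}}L^2(Gr_k)$, equivalently $Z\,e_{A_{k-1}}=e_{A_{k-1}}\,Z$ in $\langle M_k,e_{A_{k-1}}\rangle$. Pairing $Z\otimes_{A_{k-1}}1=1\otimes_{A_{k-1}}Z$ against the vectors $1\otimes_{A_{k-1}}Y$, $Y\in M_k$, and using the standard formula for the inner product on the relative tensor product, I get $\tau(Y^*E_{A_{k-1}}(Z))=\tau(Y^*Z)$ for all $Y$, whence $Z=E_{A_{k-1}}(Z)\in A_{k-1}$, as desired.

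If it helps, I would carry out the diagrammatic computation of $P_V\Xi'_k$ after first reducing, via Corollary~\ref{cor:embeddingTracePositivity} and Proposition~\ref{prop:restriction} (together with the identity $A_{k-1}^{\mc P}=A_{k-1}^{\mc P^\Gamma}\cap W^*(Gr_k(\mc P))$), to the case $\mc P=\mc P^\Gamma$, where $\Xi_k$, $\Xi'_k$ and the bimodule action become explicit operators on the path algebra. I expect the main obstacle to be exactly this computation: showing that the projection of $\Xi'_k$ onto $V$ still has a nonzero ``$e_{A_{k-1}}$-component'' and that the genuinely new ``cup'' contributions drop out. That is precisely where the careful choice of $\jw$ (idempotent, perpendicular to $\jones$) together with the hypotheses $\delta>1$ and $\Phi^*(\tau)<\infty$ are all spent.
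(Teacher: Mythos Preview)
Your approach is essentially the paper's own argument: project between the cyclic bimodules generated by $\Xi_k$ and $\Xi'_k$, use that the projection of one generator onto the other module is a nonzero multiple of the other generator, then read the equation $(Z\otimes 1-1\otimes Z^{op})\Xi_k=0$ inside $V\cong L^2(Gr_k)\otimes_{A_{k-1}}L^2(Gr_k)$ to force $Z\in A_{k-1}$.

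There are two small differences worth noting. First, the paper's final step is phrased as ``$A_{k-1}$ is finite-dimensional and $Gr_k$ is diffuse (containing the diffuse element $\cup$), so $Z\otimes 1=1\otimes Z$ in the relative tensor product forces $Z\in A_{k-1}$''; this is the only place the hypothesis $\Phi^*(\tau)<\infty$ actually enters (via Lemma~\ref{lemma:likeCoarse}(a), to make $\cup$ diffuse). Your variant---pair against $1\otimes_{A_{k-1}} Y$ and get $\tau(Y^*E_{A_{k-1}}(Z))=\tau(Y^*Z)$---is cleaner and does not use diffuseness at all. So your invocation of $\Phi^*(\tau)<\infty$ and Lemma~\ref{lemma:likeCoarse} ``to keep the correspondences coarse enough that cup configurations drop out'' is misplaced: nothing in the projection step needs it, and your own endgame does not need it either. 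Second, both you and the paper assert without a full computation that the projection carries one generator to a nonzero \emph{multiple} of the other (not merely to some nonzero vector in the degree-$(0,0)$ piece of $V$); your observation that $\Xi'_k=\Xi_k\cdot\jw$ and the decomposition $\jw=\alpha\,\twostrings+\beta\,\jones$ is the right starting point, but you should be explicit that the $\jones$-component contributes only through the closed module generated by $\Xi_k$ in a way that still yields a scalar multiple of $\Xi_k$ in degree $(0,0)$. The paper is equally terse here.
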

\begin{proof}
Consider the $Gr_k\otimes Gr_k^{op}$ linear projection of $V$ onto the closure of $(Gr_k\otimes Gr_k^{op})\cdot \Xi'_k$ .  Then the projection of $(Z\otimes 1-1\otimes Z^{op})\cdot \Xi_k$ is a nonzero multiple of $(Z\otimes 1-1\otimes Z^{op})\Xi'_k$, since the projection $\Xi_k$ onto $\Xi'_k$ is nonzero.  It follows that also $(Z\otimes 1-1\otimes Z^{op})\Xi_k=0$, which means that $(Z\otimes 1-1\otimes Z)$ is zero in $L^2(Gr_k \otimes_{A_{k-1}} Gr_k)$.  But since $A_{k-1}$ is finite-dimensional and $Gr_k$ is diffuse (containing the diffuse element $\cup$), this implies that $Z\in A_{k-1}$.
\end{proof}

\begin{theorem} \label{thrm:highercomm} Let $A_{k-1}$ be as above.  Assume that $\delta>1$, $\Phi^*(\tau)<\infty$ and  $Z\in M_1'\cap M_k$.  Then $Z\in A_{k-1}$.
\end{theorem}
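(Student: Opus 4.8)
The plan is to deduce everything from Lemma~\ref{lemma:coarseOverA}: it suffices to show that every $Z\in M_1'\cap M_k$ satisfies $(Z\otimes 1-1\otimes Z^{op})\cdot\Xi_k'=0$, for then Lemma~\ref{lemma:coarseOverA} gives $Z\in A_{k-1}$ at once. I would establish this by repeating, one level up with $Gr_k$ in place of $Gr_1$, the two-step Dirichlet-form argument used for factoriality in Lemmas~\ref{lemma:commutatorWithCup}--\ref{lemma:commutatorWithJW}.

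For the first step, note that the image of $\cup\in Gr_0$ under the trace-preserving inclusions $Gr_0\hookrightarrow Gr_1\hookrightarrow Gr_k$ lies in $M_1$; call it $Y$, so $[Z,Y]=0$. Let $\partial'_k\colon Gr_k\to Gr_k\boxtimes Gr_k^{op}$ be the $Gr_k$-analogue of the derivation $\partial'$ of \eqref{eq:partialPrime} (with $k$ horizontal through-strings passed around, still capped by $\jw$ on the relevant two strings). By the argument of Lemma~\ref{lemma:closable}, $\partial'_k$ is closable with $Gr_k\boxtimes Gr_k^{op}$ in the domain of its adjoint, and since $\sidecup\,\jw=0$ one still has $Y\in\ker\partial'_k$. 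Lemma~\ref{lemma:likeCoarse}(b) is already stated for arbitrary $k$, so the Dirichlet-form computation of Lemma~\ref{lemma:commutatorWithCup} carries over and yields that $Z$ lies in the domain of $\overline{\partial'_k}$ with $\overline{\partial'_k}(Z)=0$.

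For the second step, I would use that $Z$, lying in $M_1'$, also commutes with the image in $M_k$ of the Jones--Wenzl--decorated generator $\jwcup\in Gr_1$ from Lemma~\ref{lemma:commutatorWithJW} (whose image in $Gr_k$ automatically carries the $k-1$ extra through-strings coming from $Gr_1\hookrightarrow Gr_k$). Applying $\overline{\partial'_k}$ to this commutator and using the Leibniz rule together with $\overline{\partial'_k}(Z)=0$ and $\jw\,\jw=\jw$, exactly as in Lemma~\ref{lemma:commutatorWithJW}, gives $(Z\otimes 1-1\otimes Z^{op})\cdot\overline{\partial'_k}(\jwcup)=0$. One then checks diagrammatically that $\overline{\partial'_k}(\jwcup)$, read inside $R_k\subset Gr_k\boxtimes Gr_k^{op}$, has nonzero $Gr_k\otimes Gr_k^{op}$-module projection onto $(Gr_k\otimes Gr_k^{op})\cdot\Xi_k'$, landing on a nonzero multiple of $\Xi_k'$; since this projection is $Gr_k\otimes Gr_k^{op}$-bimodular it intertwines the action of $Z\otimes 1-1\otimes Z^{op}$, whence $(Z\otimes 1-1\otimes Z^{op})\cdot\Xi_k'=0$ and Lemma~\ref{lemma:coarseOverA} finishes the proof.

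The main obstacle is the purely diagrammatic bookkeeping in the second step: pinning down the correct $\jw$-decorated element and verifying the picture identity that its image under $\overline{\partial'_k}$ projects to a nonzero multiple of $\Xi_k'$ rather than into some other summand of $R_k$. As in Lemmas~\ref{lemma:commutatorWithJW} and \ref{lemma:coarseOverA}, the non-degeneracy of that projection is exactly where $\delta>1$ enters (so that $\twostrings$ is not a multiple of $\jones$); all the analytic content --- diffuseness of $\cup$, coarseness of the bimodules involved, and closability --- is already contained in Lemmas~\ref{lemma:likeCoarse}--\ref{lemma:coarseOverA} and their $Gr_k$-analogues, which require only the routine replacement of a single through-string by $k$ of them.
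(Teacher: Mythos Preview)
Your proposal is correct and follows essentially the same route as the paper. The only cosmetic difference is that the paper defines $\partial_k$ and $\partial'_k$ as derivations valued directly in the module $R_k$ rather than in $Gr_k\boxtimes Gr_k^{op}$, which makes your second step slightly cleaner: with that target one checks diagrammatically that $\partial'_k(\jwcup)=\Xi'_k$ on the nose, so no auxiliary projection is needed before invoking Lemma~\ref{lemma:coarseOverA}.
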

\begin{proof}
Let us consider the derivation $\partial_k: Gr_k \to R_k$ given by 
$$
\begin{tikzpicture}[scale=.5]
 \draw[Box] (0,0) rectangle (2,1); \node at (1,.5) {$x$};\node[marked,scale=.8,above left] at (0,1) {};
 \draw[thickline] (0,.5) --node[rcount,scale=.75] {$k$}++(-1,0);
 \draw[thickline] (2,.5) arc(90:-90:.3 and .575) -- node[rcount,scale=.75] {$k$} ++(-3,0);
 \draw[thickline] (1.5,1) arc(180:0:.7cm and .5cm) -- node[rcount,scale=.75] {$2l$} ++(0,-2);
 \draw[thickline] (.5,1) --++(0,1.5);
 \draw[thick] (1.15,1) arc(180:0:1.2cm and .85cm) -- ++(0,-.75) arc(180:270:.5cm) --++(.3,0);
 \draw[thick] (.85,1) arc(180:0:1.5cm and 1.05cm) -- ++(0,-.85) arc(180:270:.5cm and .15cm);
 \node[left] at (-1,.5) {$x \mapsto \displaystyle\sum_{l \geq 0} $};
\end{tikzpicture}
$$

If $\xi\in L^2(Gr_1)$ is the conjugate variable, consider its image in $L^2(Gr_k)$ under the trace-preserving embedding of $L^2(Gr_1)\subset L^2(Gr_k)$.  We then see that $$
\tau_k(\xi_k x) = \tau_k\boxtimes \tau_k (\partial_k(x) \Xi_k^*).
$$

One can once again argue that $\partial_k$ is closable; its adjoint is densely defined by the formula
$$
\begin{array}{c}\begin{tikzpicture}[scale=.5]
\node[draw,Box,rectangle] at (0.5,0.5)(Q){$Q$};
 \draw[thickline](Q.160 )--++(-0.25,0);
  \draw[thickline](Q.200)--++(-0.25,0);
 \draw[thick](Q.20)--++(0.25,0);
 \draw[thick](Q.340)--++(0.25,0);
 \draw[thickline](Q.north)--++(0,0.25);
 \draw[thickline](Q.south)--++(0,-0.25);
 \end{tikzpicture}\end{array}
 \mapsto 
 \begin{array}{c}\begin{tikzpicture}[scale=.5]
\node[draw,Box,rectangle,minimum width=1] at (0.5,0.5)(Q){$Q$};
\node[draw,Box,rectangle,minimum width=1] at (3,0.5)(xi){$\ \!\xi\ \! $};
\draw[thickline] (Q.north) --++(0,0.25);
\draw[thickline] (xi.north) --++(0,0.25);
\draw[thickline](Q.160) --++(-0.75,0);
\draw[thickline](Q.200) --++(-0.125,0)  arc (-270:-90:0.5 and 0.75) -- ++(5.5,0);
\draw[thick](Q.20) -- (xi.160);
\draw[thick](Q.340) --++(0.5,0) arc(90:0:.25) --++(0,-0.25) arc(0:90:-0.25) -- ++(1.5,0) to[in=0,out=0] (xi.20) ;
\draw[thickline](Q.south) --++(0,-0.375) arc(0:90:-0.25) -- ++(3.5,0) arc(90:180:-0.25) --++(0,1.9);
  \end{tikzpicture}\end{array}
 - \sum_{l} 
 \begin{array}{c}\begin{tikzpicture}[scale=.5]
\node[draw,Box,rectangle,minimum width=1] at (0.5,0.5)(Q){$\ \ Q\ \ $};
\draw[thickline](Q.170) --++(-0.75,0);
\draw[thickline](Q.190) --++(-0.125,0)  arc (-270:-90:0.5 and 0.75) -- ++(5,0);
\draw[thick](Q.355) --++(0.5,0) arc(90:180:-0.25) --++(0,2.85) arc(-180:-90:-0.25) --++(-1,0) to [in=90,out=180] (Q.90) ;
\draw[thick](Q.5) --++(0.35,0) arc(90:180:-0.25) --++(0,2.5) arc(-180:-90:-0.25) --++(-.75,0) to [in=90,out=180] (Q.80) ;

\node[Box,rectangle,draw] at (1.3,2.5) (tau){$T_l$};
\draw[thickline](Q.55) to[in=270,out=90] (tau.south); 
\draw[thickline](Q.south) --++(0,-0.375) arc(0:90:-0.25) -- ++(2,0) arc(90:180:-0.25) --++(0,3.7);
\draw[thickline](Q.125) --++(0,2);
  \end{tikzpicture}\end{array}
-\sum_l  
\begin{array}{c}\begin{tikzpicture}[scale=.5]
\node[draw,Box,rectangle,minimum width=1] at (0.5,0.5)(Q){$\ \ Q\ \ $};
\draw[thickline](Q.170) --++(-0.75,0);
\draw[thickline](Q.190) --++(-0.125,0)  arc (-270:-90:0.5 and 2.1) -- ++(5,0);
\draw[thick](Q.355) --++(0.35,0) arc(90:0:0.25) --++(0,-2.5) arc(0:-90:0.25) --++(-.75,0) to [in=270,out=180] (Q.280) ;
\draw[thick](Q.5) --++(0.5,0) arc(90:0:0.25) --++(0,-2.85) arc(0:-90:0.25) --++(-.95,0) to [in=270,out=180] (Q.270) ;

\node[Box,rectangle,draw,rotate=180] at (1.3,-1.5) (tau){$T_l$};
\draw[thickline](Q.305) to[in=90,out=270] (tau.south); 
\draw[thickline](Q.230) --++(0,-3) arc(0:90:-0.25) -- ++(2.5,0) arc(90:180:-0.25) --++(0,4.7);
\draw[thickline](Q.north) --++(0,0.5);
  \end{tikzpicture}\end{array}.
$$

We also introduce $\partial'_k:Gr_k\to R_k$ given by the formula
\begin{equation*}
\begin{tikzpicture}[scale=.5]
 \draw[Box] (0,0) rectangle (2,1); \node at (1,.5) {$x$};\node[marked,scale=.8,above left] at (0,1) {};
 \draw[thickline] (0,.5) --++(-.5,0);
 \draw[thickline] (2,.5) arc(90:-90:.3 and .575) --++(-2.5,0);
 \draw[thickline] (1.5,1) arc(180:0:.7cm and .5cm) -- node[rcount,scale=.75] {$2l$} ++(0,-2);
 \draw[thickline] (.5,1) --++(0,1.5);
 \draw[thick] (1.15,1) arc(180:0:1.2cm and .85cm) -- ++(0,-.75) arc(180:270:.5cm) --++(.1,0) arc(-90:-45:.25) -- (4.75,.25);
 \draw[thick] (.85,1) arc(180:0:1.5cm and 1.1cm)  arc(180:270:.25cm) --++(.1,0) arc(90:45:.25) -- (4.75,.25);
 
 \draw[thick] (5.5,-.25) --++ (-.25,0) arc(270:225:.25) -- (4.75,.25);
 \draw[thick] (5.5,0.75) --++ (-.25,0) arc(90:135:.25) -- (4.75,.25);

%
 
 \node[left] at (-1,.5) {$\partial_k'(x)=\displaystyle\sum_{l \geq 0} $};
\end{tikzpicture}
\end{equation*}

Let us now write $M_k = W^*(Gr_k,\tau)$ and assume that $Z\in M_1'\cap M_k$.  Then since $Z$ commutes with the image of $\cup$ in $Gr_k$, we conclude by a similar approximation argument that $\overline{\partial'_k}(Z)=0$.  Next, we compute $\overline{\partial'_k}([Y,Z])$ where $Y$ is the image of the element $\jwcup$ in $Gr_k$.  We thus conclude that $$
 0 = \overline{\partial'_k}([Y,Z]) = (Z\otimes 1-1\otimes Z^{op})\cdot \Xi'_k.  
 $$

But then $Z\in A_{k-1}$ by Lemma~\ref{lemma:coarseOverA}.
\end{proof}

As a corollary, we conclude that $M_k$ is a factor for $k\geq 1$.  Indeed, $M_k'\cap M_k \subset M_1'\cap M_k =A_{k-1}$, and it is easily verified that only multiples of the identity element of $A_{k-1}$ commute with $M_k$. We also get that $M_0 = e_0 M_2 e_0$ (where $e_0 = \supset \subset$) is a factor.  Arguing exactly as in \cite{gjs1}, we obtain:

\begin{corollary}  \label{cor:standard} Assume that $\delta>1$ and $\Phi^*(\tau)<\infty$.  Then the standard invariant of the subfactor inclusion $M_0\subset M_1$ is the planar algebra $\mc P$.
\end{corollary}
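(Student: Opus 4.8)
The plan is to derive the corollary from the relative-commutant computation of Theorem~\ref{thrm:highercomm} by running the argument of \cite{gjs1} essentially verbatim: all of the genuinely analytic input --- factoriality of $M_0,M_1,M_2,\dots$ and the containment $M_1'\cap M_k\subseteq A_{k-1}$ --- is now available, so that what remains is diagrammatic bookkeeping identical to the Voiculescu-trace case treated in \cite{gjs1,cjs}.

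First I would assemble the Jones tower. The trace-preserving inclusions $Gr_k(\mc P)\hookrightarrow Gr_{k+1}(\mc P)$ yield a tower $M_0\subset M_1\subset M_2\subset\cdots$ of $\mathrm{II}_1$ factors --- factoriality of every $M_k$ having been recorded just after Theorem~\ref{thrm:highercomm}, together with factoriality of $M_0=e_0M_2e_0$, where $e_0=\supset\subset$. The Temperley--Lieb cup--cap elements $e_0,e_1,e_2,\dots$, suitably placed inside the $Gr_\bullet(\mc P)$, are self-adjoint idempotents satisfying, on the level of diagrams, the Jones relations $e_ie_{i\pm1}e_i=\delta^{-2}e_i$ and $e_ie_j=e_je_i$ for $|i-j|\geq2$, the pull-down identity $e_i\,x\,e_i=E_{M_{i-1}}(x)\,e_i$ for $x\in M_i$, and $\tau(e_i)=\delta^{-2}$; since all the $M_k$ are factors this forces $[M_{k+1}:M_k]=\delta^2$ and identifies $M_{k+1}$ with the basic construction $\langle M_k,e_k\rangle$. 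Hence $M_0\subset M_1\subset M_2\subset\cdots$ is the Jones tower of the extremal, finite-index subfactor $M_0\subset M_1$.

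Next I would identify the standard invariant with $\mc P$. The inclusion $A_{k-1}\subseteq M_1'\cap M_k$ is an immediate diagram chase, so Theorem~\ref{thrm:highercomm} gives the exact equality $M_1'\cap M_k=A_{k-1}$; moreover $A_{k-1}$ is canonically $*$-isomorphic to $P_{k-1}$ (the restriction of $\wedge_k$ to $A_{k-1}$ is the multiplication of $P_{k-1}$). Using the self-similarity $M_0=e_0M_2e_0$ together with the Jones relations above, the same computation produces the companion identifications $M_0'\cap M_k\cong P_k$. One then checks, tangle by tangle and exactly as in \cite{gjs1}, that the planar-algebra data carried by a subfactor standard invariant --- the two towers of relative commutants, the conditional expectations, the Jones projections $e_k$, the rotation, and the two multiplications --- are transported under these identifications onto the structure maps of $\mc P$. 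Since a subfactor planar algebra is completely determined by this data, the standard invariant of $M_0\subset M_1$ is $\mc P$.

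The one point that genuinely requires care, though it introduces no new idea, is the shading and $2$-cabling bookkeeping: one must track the doubled strands and the placement of the marked point $\star$ consistently through the isomorphisms $A_{k-1}\cong P_{k-1}$ and $M_0'\cap M_k\cong P_k$ so that these intertwine \emph{composition} of tangles and not merely the underlying algebra structures, and one must verify that the basic-construction tower built above coincides, after the identifications, with the canonical tower attached to $\mc P$. Both verifications are carried out in \cite{gjs1,cjs}; the present argument transfers with only notational changes, which is what ``arguing exactly as in \cite{gjs1}'' means here.
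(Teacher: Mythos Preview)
Your proposal is correct and follows exactly the paper's own approach: the paper's proof consists of the single remark ``Arguing exactly as in \cite{gjs1}, we obtain'' the corollary, relying on the factoriality of the $M_k$ and the equality $M_1'\cap M_k=A_{k-1}$ established just before. What you have written is simply an honest expansion of what that deferral to \cite{gjs1} entails, and the details you sketch (Jones projections, basic construction, identification of relative commutants with $P_{k-1}$, compatibility of tangle operations) are precisely the steps carried out there.
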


It would be interesting to see if there is a similar statement in the setting of \cite{gjsz,nelson} (i.e., in the ``singly-cabled'' case).  In particular, this would give a new proof of factoriality, avoiding the rather ad-hoc techniques used in \cite{gjs1,jsw,bro1}.  Unfortunately, the ``singly-cabled'' differential calculus is harder to deal with than our setting.  

\subsection{Further applications: \texorpdfstring{$L^2$-rigidity and lack of Property $\Gamma$}{L2-rigidity and lack of Property Gamma}.}

\begin{lemma} \label{lem:partialDerivatives} Assume that $P_1^\pm = \mathbb C$. 
For $x,y\in Gr_1(P)$, view $x\otimes y^{op}\in Gr_1(P) \otimes Gr_1(P)^{op} \subset Gr_1(P)\boxtimes Gr_1(P)^{op}$, and let $\xi = \jones$ and $\zeta = \jw$, both viewed as elements in  $  L^2 (Gr_1(P) \boxtimes Gr_1(P))$.
Then
\begin{equation}\label{eq:cupsLike1}
\langle x\otimes y^{op} \xi ,\xi \rangle_{L^2(Gr_1(P)\boxtimes Gr_1(P)^{op})} =  \tau(x)\tau(y).
\end{equation}
If moreover $\delta > 1$, then for some nonzero $\omega$,
\begin{equation}\label{eq:jwlike1}
\langle x \otimes y^{op} \zeta, \zeta\rangle _{L^2(Gr_1(P)\boxtimes Gr_1(P)^{op})} = \omega \tau(x)\tau(y).
\end{equation}
In particular,  the  maps $i_1 : (x\otimes y)\mapsto (x\otimes y^{op})\jones$, $i_2 = (x\otimes y)\mapsto \omega^{-1/2} (x\otimes y^{op}) \jw$ extend to isometries from $L^2(Gr_1(P))\otimes L^2(Gr_1(P))$   into $L^2(Gr_1(P)\boxtimes Gr_1(P)^{op})$.
\end{lemma}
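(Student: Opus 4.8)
The plan is to compute both pairings directly from the diagrammatic definitions of the $\boxtimes$-product and of $\tau_1\boxtimes\tau_1^{op}$, and then read off the isometry statements by polarization. First I would record three elementary facts, all immediate from the pictures: the inner product on $Gr_1(P)\boxtimes Gr_1(P)^{op}$ is $\langle a,b\rangle=(\tau_1\boxtimes\tau_1^{op})(a\wedge b^\dagger)$; one has $\xi^\dagger=\xi$, and $\xi\wedge\xi=\delta\,\xi$ because composing the two side cups of $\jones$ creates exactly one closed loop; and the restriction of $\tau_1\boxtimes\tau_1^{op}$ to $Gr_1(P)\otimes Gr_1(P)^{op}$ is $\tau_1\otimes\tau_1^{op}$, with unit $\twostrings=1\otimes 1^{op}$.

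For \eqref{eq:cupsLike1}: using the first two facts together with traciality, $\langle (x\otimes y^{op})\xi,\xi\rangle=\delta\,(\tau_1\boxtimes\tau_1^{op})\big((x\otimes y^{op})\wedge\xi\big)$. Now I chase the diagram $(x\otimes y^{op})\wedge\xi$: the left side cup of $\xi$ glues the right side string of the $x$-box to the right side string of the $y$-box, while the right side cup of $\xi$ becomes the new right boundary pair. Applying $\tau_1\boxtimes\tau_1^{op}$ caps the $2n$ top strings of $x$ by $T_n$ and the $2m$ bottom strings of $y$ by $T_m$, which by definition replaces the $x$-box by $\mc E_1(x)\in P_1$ and the $y$-box by $\mc E_1(y)\in P_1$. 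This is the one point where the hypothesis $P_1^\pm=\C$ is used: it forces $\mc E_1(x)=\tau_1(x)\cdot 1_{P_1}$ and $\mc E_1(y)=\tau_1(y)\cdot 1_{P_1}$, so each box becomes a single through string carrying a scalar; after also performing the ``around'' identification of the left boundary pair with the right boundary pair, the leftover picture is a single closed loop. Collecting the powers of $\delta$ (one from $\xi\wedge\xi=\delta\xi$, the $\delta^{-2}$ in the definition of $\tau_1\boxtimes\tau_1^{op}$ on $V_1(n,m)$, and one from the loop) gives $\tau_1(x)\tau_1(y)$, which is \eqref{eq:cupsLike1}; the same computation without the final loop records the intermediate value $(\tau_1\boxtimes\tau_1^{op})\big((x\otimes y^{op})\wedge\xi\big)=\delta^{-1}\tau(x)\tau(y)$. (Equivalently, $\delta^{-1}\xi$ is the Jones projection of $P_1\subset Gr_1(P)$ inside $Gr_1(P)\boxtimes Gr_1(P)^{op}$ and the identity amounts to $e\,(x\otimes y^{op})\,e=\tau(x)\tau(y)\,e$; I would nonetheless give the self-contained diagrammatic argument.)

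For \eqref{eq:jwlike1}, since $\delta>1$ the space $TL_2^+(\delta)$ is spanned by $\twostrings$ and $\jones$, and solving the idempotent equation there subject to orthogonality to $\jones$ gives $\zeta=\jw=\twostrings-\delta^{-1}\jones=1\otimes 1^{op}-\delta^{-1}\xi$. Expanding $\langle (x\otimes y^{op})\zeta,\zeta\rangle$ bilinearly in $\zeta$ and evaluating the four resulting terms with the third fact above, with \eqref{eq:cupsLike1}, and with the intermediate value $\delta^{-1}\tau(x)\tau(y)$, I get $\langle (x\otimes y^{op})\zeta,\zeta\rangle=(1-\delta^{-2})\,\tau(x)\tau(y)$, so one may take $\omega=1-\delta^{-2}$, nonzero precisely because $\delta>1$. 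For the final ``in particular'': the identical diagram chase, now with a general $(x'\otimes (y')^{op})$ in place of the second $\xi$ and using $\xi^\dagger=\xi$, $\xi\wedge\xi=\delta\xi$ and traciality (so $\tau(\xi\,w\,\xi)=\delta\,\tau(w\,\xi)$), reduces $\langle (x\otimes y^{op})\xi,(x'\otimes (y')^{op})\xi\rangle$ to the diagonal formula applied to $x'^\dagger x$ and $y(y')^\dagger$, yielding $\tau(x'^\dagger x)\,\tau((y')^\dagger y)=\langle x\otimes y,\,x'\otimes y'\rangle_{L^2(Gr_1(P))\otimes L^2(Gr_1(P))}$; dividing by $\omega$ does the same for $\zeta$. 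Hence $i_1$ and $i_2$ preserve inner products on the dense subspace spanned by the $x\otimes y$ and extend to isometries into $L^2(Gr_1(P)\boxtimes Gr_1(P)^{op})$.

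I expect the main obstacle to be purely the bookkeeping in the diagram chase of $(x\otimes y^{op})\wedge\xi$ and of the capping operation: tracking which boundary strings of the $\boxtimes$-diagram come from $x$, from $y$, and from the two side cups of $\xi$, and getting every power of $\delta$ right. The conceptual content is entirely in the identification $\mc E_1=\tau_1(\cdot)\cdot 1$ afforded by $P_1^\pm=\C$.
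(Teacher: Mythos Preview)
Your proof is correct and follows essentially the same route as the paper's. For \eqref{eq:cupsLike1} both arguments reduce the pairing to $\tau_1(E_1(x)E_1(y))$ (you reach this via the relation $\xi\wedge\xi=\delta\xi$ and a diagram chase, the paper states it directly) and then invoke $P_1^\pm=\mathbb C$ to replace $E_1$ by $\tau_1$. For \eqref{eq:jwlike1} both arguments use the decomposition of $\twostrings$ in the basis $\{\jw,\jones\}$; the paper argues abstractly via orthogonality of $\jw$ and $\jones$ in $L^2$ and then identifies $\omega=\langle\jw,\jw\rangle>0$ by setting $x=y=1$, whereas you expand $\jw=\twostrings-\delta^{-1}\jones$ bilinearly and compute each of the four terms, obtaining the explicit value $\omega=1-\delta^{-2}$. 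Your version is more detailed in the $\delta$-bookkeeping and has the advantage of giving $\omega$ explicitly; the paper's is terser but relies on the reader supplying the cross-term computation you carry out.
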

\begin{proof} 
Let $E_1$ denote the trace-preserving conditional expectation onto $P_1 \subset Gr_1(P)$ (viewed as diagrams with no strings on top).  Then
$$
\langle x\otimes y^{op} \xi ,\xi \rangle =  \tau_1( E_1(x) E_1(y) ).$$
But since $P_1=\mathbb{C}$, it follows that $E_1(x)= \tau_1(x)$, $E_1(y)=\tau_1(y)$, which shows \eqref{eq:cupsLike1}.

Next, note that \eqref{eq:jwlike1} holds with $\omega = 1$ if we were to replace $\zeta$ by $\twostrings$.  However,  $\twostrings$ is a linear combination of $\jw$ and $\jones$, which are orthogonal as vectors in $L^2(Gr_1(P)\boxtimes Gr_1(P)^{op})$.  It follows \eqref{eq:jwlike1} holds for some $\omega\geq 0$.  But setting $x=y=1$ gives us that $\omega$ = $\langle \jw,\jw\rangle > 0$ when $\delta > 1$.
%
\end{proof}

Let now $\partial'$ be as in \eqref{eq:partialPrime}, i.e., $\partial' (X) = \partial(X) \cdot \jw$, and let $\partial'' (X) = \partial(X) \cdot \jones$, for $X\in Gr_1$.  Let moreover $x_1$ be the image of $\cup$ in $Gr_1(P)$, and let $x_2 = \jwcup \in Gr_1(P)$.  

\begin{corollary}\label{cor:FDQ}  Assume that $P_1^\pm = \mathbb{C}$ and $\delta>1$.
With the above notation, the formulas $\partial_1=(i_1)^* \circ \partial''$ and $\partial_2  = (i_2)^*\circ \partial'$ define derivations on $Gr_1(P)$ with values in $L^2(Gr_1(P))\otimes L^2(Gr_1(P))$, satisfying 
\begin{equation}\label{eq:DiffQuotients}
\partial_i x_j = \delta_{i=j} 1\otimes 1.
\end{equation} 
Moreover, if $\Phi^*(\tau)<\infty$, then $\partial_i^*(1\otimes 1) \in L^2(Gr_1(P))$ and also  $\Phi^*(x_1,x_2)$ (in the sense of Voiculescu) is finite.
\end{corollary}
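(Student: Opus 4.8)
The plan is to dispatch the algebraic assertions first (that $\partial_1,\partial_2$ are derivations and that $\partial_i x_j=\delta_{i=j}\,1\otimes1$) and then deduce the analytic conclusions, which should be essentially formal once Lemma~\ref{lemma:closable} and Lemma~\ref{lem:partialDerivatives} are invoked.

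\emph{Step 1: $\partial_1$ and $\partial_2$ are derivations.} Since $\partial$ is a derivation $Gr_1(P)\to Gr_1(P)\boxtimes Gr_1(P)^{op}$ and the right-gluing operations $\,\cdot\,\jones$ and $\,\cdot\,\jw$ are right-module maps for the $Gr_1(P)$-bimodule structure on $Gr_1(P)\boxtimes Gr_1(P)^{op}$, the maps $\partial''(X)=\partial(X)\cdot\jones$ and $\partial'(X)=\partial(X)\cdot\jw$ are again derivations; closability of $\partial'$ (hence of $\partial''$) was already observed after \eqref{eq:partialPrime}. I would then use that, by Lemma~\ref{lem:partialDerivatives}, $i_1$ and $i_2$ are not merely isometries but isometric identifications of the coarse $Gr_1(P)$-bimodule $L^2(Gr_1(P))\otimes L^2(Gr_1(P))$ with the closed sub-bimodules $\overline{(Gr_1(P)\otimes Gr_1(P)^{op})\cdot\jones}$ and $\overline{(Gr_1(P)\otimes Gr_1(P)^{op})\cdot\jw}$ of $L^2(Gr_1(P)\boxtimes Gr_1(P)^{op})$: indeed \eqref{eq:cupsLike1}--\eqref{eq:jwlike1} are precisely the statements that $i_1,i_2$ intertwine the two-sided module actions (on the generating vectors, hence everywhere by density). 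Consequently $i_1^*$ and $i_2^*$ are $Gr_1(P)$-bimodule maps, so $\partial_1=i_1^*\circ\partial''$ and $\partial_2=i_2^*\circ\partial'$ satisfy the Leibniz rule and are densely-defined derivations on $Gr_1(P)$ with values in $L^2(Gr_1(P))\otimes L^2(Gr_1(P))$.

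\emph{Step 2: $\partial_i x_j=\delta_{i=j}\,1\otimes1$.} This is the one genuinely diagrammatic step. I would write out $\partial(x_1)$ and $\partial(x_2)$ explicitly (each is a single diagram, since $x_1$ and $x_2$ lie in $P_{1,1}$) and then cap on the right. Using $\sidecup\,\jw=0$ one gets $\partial'(x_1)=0$ (the observation already recorded after \eqref{eq:partialPrime}); dually, using that for $\delta>1$ the element $\jones$ is a multiple of the Jones projection $e$ with $\jw=1-\delta^{-1}e$, so that $\jw\cdot\jones=\jones\cdot\jw=0$, and that $x_2=\jwcup$ carries a $\jw$ on the strands met by the cap, one gets $\partial''(x_2)=0$. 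Hence $\partial_2 x_1=0=\partial_1 x_2$. In the two remaining cases one gets $\partial''(x_1)=c_1\, i_1(1\otimes1)$ and $\partial'(x_2)=c_2\, i_2(1\otimes1)$ for scalars $c_1,c_2$; applying $i_1^*$, resp. $i_2^*$, and carrying out the bookkeeping of the powers of $\delta$ in the definitions of $\partial$, of $i_1,i_2$, and of the normalizations chosen for $\cup$ and $\jwcup$ shows $c_1=c_2=1$, giving $\partial_1 x_1=1\otimes1=\partial_2 x_2$.

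\emph{Step 3: finiteness, assuming $\Phi^*(\tau)<\infty$.} Since $i_1,i_2$ are bounded, $\partial_1^*=(\partial'')^*\circ i_1$ and $\partial_2^*=(\partial')^*\circ i_2$ on the relevant domains, so $\partial_1^*(1\otimes1)=(\partial'')^*(\jones)$ and $\partial_2^*(1\otimes1)=\omega^{-1/2}(\partial')^*(\jw)$. As $\partial''(X)=\partial(X)\cdot\jones$ with $\jones$ self-adjoint, $(\partial'')^*(\jones)=\partial^*(\jones\cdot\jones)=\delta\,\partial^*(\jones)$, and likewise $(\partial')^*(\jw)=\partial^*(\jw\cdot\jw)=\partial^*(\jw)$ using $\jw\,\jw=\jw$; since $\jones$ and $\jw$ lie in $Gr_1(P)\boxtimes Gr_1(P)^{op}$, Lemma~\ref{lemma:closable} (applicable because $\Phi^*(\tau)<\infty$) shows these are in $L^2(Gr_1(P))$, proving $\partial_i^*(1\otimes1)\in L^2(Gr_1(P))$. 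Finally, because $\partial_1,\partial_2$ are derivations on $Gr_1(P)$ with $\partial_i x_j=\delta_{i=j}\,1\otimes1$, their restrictions to $\mathbb C\langle x_1,x_2\rangle$ coincide with the free difference quotients of the pair $(x_1,x_2)$; hence for every polynomial $q$ in $x_1,x_2$, $\langle\partial_i^*(1\otimes1),q\rangle=\langle 1\otimes1,\partial_i q\rangle=(\tau\otimes\tau)(\partial_i q)$, which is the defining relation for the $i$-th conjugate variable $\xi_i$ of $(x_1,x_2)$. Thus $\xi_i$ is the orthogonal projection of $\partial_i^*(1\otimes1)$ onto $L^2(W^*(x_1,x_2))$, so $\|\xi_i\|_2\le\|\partial_i^*(1\otimes1)\|_2<\infty$, and $\Phi^*(x_1,x_2)=\|\xi_1\|_2^2+\|\xi_2\|_2^2<\infty$.

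\emph{Where the difficulty lies.} Steps 1 and 3 are formal modulo two facts already available in the paper: that $i_1,i_2$ are bimodule isometries (Lemma~\ref{lem:partialDerivatives}) and that $\partial^*$ is defined on $Gr_1(P)\boxtimes Gr_1(P)^{op}$ with the explicit formula of Lemma~\ref{lemma:closable}. The real work is Step 2 — the explicit description of $\partial(x_1)$, $\partial(x_2)$, the diagrammatic vanishings $\partial'(x_1)=0$ and $\partial''(x_2)=0$, and, fiddliest of all, tracking the $\delta$-powers and marked-point conventions carefully enough that the surviving constants $c_1,c_2$ come out equal to $1$ rather than to some power of $\delta$.
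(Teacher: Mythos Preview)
Your proof is correct and follows the same approach as the paper's own (very terse) argument: derivations via bimodule maps, the diagrammatic check of $\partial_i x_j=\delta_{i=j}1\otimes1$, and the projection argument for finiteness of $\Phi^*(x_1,x_2)$. In fact your Step~3 is tidier than the paper's: the paper's displayed expressions for $\partial_1^*(1\otimes1)$ and $\partial_2^*(1\otimes1)$ appear to have their indices swapped and a $\partial'$ where $\partial^*$ is meant, whereas your computation $\partial_1^*(1\otimes1)=(\partial'')^*(\jones)=\partial^*(\jones\wedge\jones)=\delta\,\partial^*(\jones)$ and $\partial_2^*(1\otimes1)=\omega^{-1/2}\partial^*(\jw)$ is the internally consistent one.
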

\begin{proof}
Equation \eqref{eq:DiffQuotients} is immediate from the definition of $\partial''$, $\partial'$ and the maps $i_1$, $i_2$, $e_1$, $e_2$. Since $\partial$ satisfies the Leibnitz rule and we used left multiplication on $Gr_1\boxtimes Gr_1^{op}$ to define $\partial'$ and $\partial''$, these maps are derivations; since $i_1$ and $i_2$ are bimodule maps, it follows that $\partial_i$ are derivations.   

If $\Phi^*(\tau)<\infty$, then the conjugate variable $\xi\in L^2(Gr_1(P))$ exists.  By Lemma~\ref{lemma:closable} and the fact that $p_i$ are given by diagrams, we see that $Gr_1\boxtimes Gr_1^{op}$ is in the domain of $\partial_i^*$.  It follows that $\partial^*_{i}$, $i=1,2$ are closable.  Moreover,  $\partial_1^*(1\otimes 1) = \omega^{-1/2}\partial^*(\jw)$ and $\partial_2^*(1\otimes 1)  = \partial'(\jones)$ exist in $L^2(Gr_1(P))$.  But then their projections onto  $L^2(W^*(x_1,x_2))\subset L^2(Gr_1(P))$ are exactly the conjugate variables $J(x_1:x_2)$ and $J(x_2:x_1)$ in the sense of Voiculescu \cite{freefisher}.  Thus $\Phi^*(x_1,x_2)<\infty$. 
\end{proof}

Recall the following definitions (see e.g. \cite{peterson:propertyT}): 

\begin{definition} Let $M$ be a II$_1$ factor, and let $F=\{u_1,\dots,u_k\}$ be a finite subset of $M$ consisting of unitaries.  \\
(a) We say that $F$ is a non-$\Gamma$ set for $M$ if there is a constant $K>0$ so that $$
\Vert \zeta - \langle \zeta, 1\rangle 1\Vert_{L^2(M)}^2 \leq K \sum_{j=1}^k \Vert u_j\zeta - \zeta u_j\Vert_{L^2(M)}^2,\qquad \forall \zeta \in L^2(M)$$
(b) We say that $F$ is a non-amenability set for $M$ if there is a constant $K>0$ so that 
$$\Vert \zeta \Vert_{L^2(M\otimes M^{op})}^2 \leq K \sum_{j=1}^k \Vert (u_j \otimes 1-1\otimes u_j^{op}) \zeta \Vert_{L^2(M\otimes M^{op})}^2,\qquad \forall \zeta \in L^2(M\otimes M^{op}).$$
\end{definition}

The following lemma is due to Connes \cite{cones:amenable} (see \cite{dabrowski-ioana} for a detailed proof):

\begin{lemma} \cite[Lemma 2.10]{dabrowski-ioana} Let $N$ be a II$_1$ factor and let $N_0\subset N$ be a weakly dense $C^*$-subalgebra.  If $N_0$ contains a non-$\Gamma$ set for N, it also contains a non-amenability set for $N$.
\end{lemma}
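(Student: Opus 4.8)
The plan is to derive the statement from two theorems of Connes: the characterisation of amenability (injectivity) of a II$_1$ factor in terms of the coarse correspondence, and the fact that the hyperfinite II$_1$ factor $R$ has property $\Gamma$ and is the unique amenable II$_1$ factor. Concretely, starting from a non-$\Gamma$ set in $N_0$ I would produce a non-amenability set in $N_0$ in three steps: (A) a non-$\Gamma$ set forces $N$ to fail property $\Gamma$; (B) hence $N$ is non-amenable; (C) hence, by Connes' characterisation of amenability, $N_0$ already contains a non-amenability set.

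For step (A): if $F=\{u_1,\dots,u_k\}\subset N_0$ is a non-$\Gamma$ set with constant $K$, then for any unitary $v\in N$ with $\tau(v)=0$ the vector $v\in L^2(N)$ satisfies $\langle v,1\rangle=0$, so the defining inequality reads $1=\|v\|_2^2\le K\sum_j\|u_jv-vu_j\|_2^2$, i.e.\ $\sum_j\|u_jv-vu_j\|_2^2\ge 1/K$. This contradicts property $\Gamma$ for $N$, which would supply, for the fixed finite set $\{u_1,\dots,u_k\}$ and any prescribed tolerance, a trace-zero unitary $v$ making $\sum_j\|u_jv-vu_j\|_2^2$ arbitrarily small. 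So $N$ does not have property $\Gamma$. For step (B): since $R$ has property $\Gamma$ and, by Connes \cite{cones:amenable}, every amenable II$_1$ factor is isomorphic to $R$ (and property $\Gamma$ is an isomorphism invariant), a II$_1$ factor without property $\Gamma$ cannot be amenable; thus $N$ is non-amenable.

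For step (C): by Connes' theorem \cite{cones:amenable}, a non-amenable II$_1$ factor $N$ has the property that the trivial (identity) correspondence $L^2(N)$ is \emph{not} weakly contained in the coarse correspondence $L^2(N)\otimes L^2(N^{op})$; equivalently, the coarse correspondence has no approximately central net of unit vectors. Testing this against the weakly dense $*$-subalgebra $N_0$ — using the standard fact that weak containment of correspondences over $N$ is equivalent to weak containment over any weakly dense $*$-subalgebra — there must exist a finite set $F\subset U(N_0)$ and $\epsilon_0>0$ with $\max_{u\in F}\|(u\otimes 1-1\otimes u^{op})\zeta\|\ge\epsilon_0\|\zeta\|$ for all $\zeta\in L^2(N)\otimes L^2(N^{op})$ (here one uses that $\|(u\otimes 1-1\otimes u^{op})\zeta\|$ measures exactly the failure of $\zeta$ to be central under conjugation by $u$). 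Squaring and summing, $F$ is then a non-amenability set for $N$ with constant $\epsilon_0^{-2}$, and $F\subset N_0$, as required.

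The deep input is Connes' amenability/injectivity theorem together with the uniqueness of $R$, which I would simply quote. The only point in the argument itself that needs genuine care is step (C): one must witness the spectral gap in the coarse correspondence by unitaries coming from $N_0$, not merely from $N$. This is exactly where one invokes Kaplansky density and the equivalence of weak containment tested over $N_0$ versus over $N$; the subtlety is that $N_0$, being only weakly dense, need not approximate elements of $N$ in operator norm, so a naive perturbation of a non-amenability set of $N$ into $N_0$ does not obviously preserve the uniform inequality — one really needs the correspondence-theoretic formulation. I expect this to be the main obstacle; everything else is a direct unwinding of the two definitions.
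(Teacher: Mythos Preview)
The paper does not prove this lemma; it is simply cited as \cite[Lemma 2.10]{dabrowski-ioana} with attribution to Connes. Your steps (A) and (B) are correct and routine. The entire content of the lemma, however, lives in your step (C), and there your argument has a genuine gap.

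You reduce to the assertion ``$N$ non-amenable and $N_0$ weakly dense $\Rightarrow$ $N_0$ contains a non-amenability set'' and justify it by the ``standard fact that weak containment of correspondences over $N$ is equivalent to weak containment over any weakly dense $*$-subalgebra''. But this is exactly where the obstacle you yourself flag reappears: to pass from almost-central vectors for $U(N_0)$ to almost-central vectors for $U(N)$ one would need to approximate $v\in U(N)$ by $u\in U(N_0)$ and control $\|(v-u)\zeta\|$ \emph{uniformly} over the (varying) vectors $\zeta$, and strong-$*$ approximation does not provide this. Normality of the coefficient functions does not help, because the vector $\zeta$ realizing the approximation depends on the finite test set. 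In effect your step (C) is a restatement of the lemma with the non-$\Gamma$ hypothesis discarded --- and that hypothesis is precisely what makes the argument go. By passing through ``$N$ is non-amenable'' you forget that the witnesses already lie in $N_0$, and then have to recover $N_0$-witnesses from scratch.

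The argument in \cite{dabrowski-ioana} (following Connes) is more direct and does not route through the classification of injective factors. One keeps the given non-$\Gamma$ set $\{u_1,\dots,u_k\}\subset N_0$ in play throughout: for $\zeta\in L^2(N)\bar\otimes L^2(N)$ viewed as a Hilbert--Schmidt operator, one passes to the positive trace-class operator $\zeta^*\zeta$, restricts the associated normal state to $N$ to obtain a density in $L^1(N)_+$, takes its square root in $L^2(N)$, and applies the non-$\Gamma$ inequality there together with the Powers--St{\o}rmer inequality. This forces the density toward $1$ and yields the required spectral gap on the coarse bimodule for a finite set built from the $u_j$ themselves --- hence lying in $N_0$ from the start, so that no density argument inside $U(N)$ is ever needed.
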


The following is one of the main results of \cite{dabrowski-ioana}:

\begin{theorem} \cite[Theorem 1.1]{dabrowski-ioana} \label{thm:dab-Io}
Let $(M,\tau)$ be a II$_1$ factor, and assume that $d:L^2(M,\tau)\to L^2(M,\tau)\bar\otimes L^2(M,\tau)^{\oplus \infty}$ is a densely defined real closed derivation with domain $D(d)$.  Assume that $d$ is not bounded, and $C^*(D(d)\cap M)$ contains a non-amenability set for $M$. Then $M$ is not $L^2$-rigid.  In particular, $M$ is prime, and does not have property $\Gamma$.
\end{theorem}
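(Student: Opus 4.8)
The plan is to derive ``$M$ is prime and non-$\Gamma$'' from ``$M$ is not $L^2$-rigid'' by a soft citation of Peterson's general theory, and then to prove the latter by contradiction, playing off the unboundedness of $d$ against the spectral gap encoded by the non-amenability set. For the first step one invokes \cite{peterson:ell2rigidity}: any $\mathrm{II}_1$ factor with property $\Gamma$ is $L^2$-rigid, and $L^2$-rigidity is stable under tensoring with a diffuse factor, so a non-prime $\mathrm{II}_1$ factor $M=M_1\bar\otimes M_2$ with both $M_i$ diffuse is $L^2$-rigid; contrapositively, once we know $M$ is not $L^2$-rigid it is automatically prime and without property $\Gamma$. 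Thus the whole content is the implication: $d$ unbounded and $C^*(D(d)\cap M)$ contains a non-amenability set $\Rightarrow$ $M$ is not $L^2$-rigid.

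To make $L^2$-rigidity operational I would attach to $d$ the usual deformation package (the same one used in Lemma~\ref{lemma:commutatorWithCup}): since $d$ is real and closed, $\Delta=d^*\overline d$ generates a semigroup $\phi_t=e^{-t\Delta}$ of trace-preserving unital completely positive maps, with resolvents $\eta_\alpha=\alpha(\alpha+\Delta)^{-1}$, $\zeta_\alpha=\eta_\alpha^{1/2}$ (again trace-preserving and unital completely positive, hence $\|\cdot\|_\infty$-contractive) and bounded regularizations $d_\alpha=\overline d\circ\zeta_\alpha$. In parallel, because the target of $d$ is a multiple of the coarse bimodule $L^2(M)\bar\otimes L^2(M)^{\oplus\infty}$ and $d$ is a real closed derivation, it admits a free Gaussian dilation in the sense of \cite{peterson:ell2rigidity}: a finite von Neumann algebra $\widetilde M\supseteq M$ with trace-preserving conditional expectation $E_M\colon\widetilde M\to M$, with $L^2(\widetilde M)\ominus L^2(M)$ identified with a multiple of the coarse $M$-$M$-bimodule, carrying an $s$-malleable deformation $(\alpha_t)_{t\in\mathbb R}$ whose generator recovers $\overline d$. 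With this setup, $L^2$-rigidity of $M$ amounts to the statement that for every such deformation, $\alpha_t\to\mathrm{id}$ uniformly on the unit ball, i.e.\ $\sup_{x\in(M)_1}\|\alpha_t(x)-x\|_2\to 0$ as $t\to0$.

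Now suppose for contradiction that $M$ is $L^2$-rigid, so that $\alpha_t\to\mathrm{id}$ uniformly on $(M)_1$. First reduce the non-amenability set: approximating each unitary in the given set by an element of the $*$-algebra $D(d)\cap M$ in operator norm, and using that the non-amenability inequality is stable under small norm perturbations, we may assume we have $v_1,\dots,v_k\in D(d)\cap M\subseteq D(\overline d)$ and a constant $K$ with $\|\zeta\|_2^2\le K\sum_j\|v_j\zeta-\zeta v_j\|_2^2$ for all $\zeta\in L^2(M\bar\otimes M^{op})$, hence (summand by summand) for all $\zeta$ in $L^2(\widetilde M)\ominus L^2(M)$ under its identification with a multiple of the coarse bimodule. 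I would then combine this with Popa's transversality inequality for $(\alpha_t)$: uniform convergence on $(M)_1$ forces $\|\alpha_t(x)-E_M(\alpha_t(x))\|_2\to0$ uniformly on $(M)_1$, so that the off-diagonal vectors $\xi_t(x):=\alpha_t(x)-E_M(\alpha_t(x))$, suitably normalized, become almost central for the $v_j$; the non-amenability inequality then bounds their norms uniformly by $K\sum_j\|\overline d(v_j)\|^2+o(1)$, a finite quantity precisely because $v_j\in D(\overline d)$. Passing to a weak-$*$ limit point as $t\to0$ of the now uniformly bounded implementing vectors produces $\xi\in L^2(M)\bar\otimes L^2(M)^{\oplus\infty}$ with $\overline d(x)=x\xi-\xi x$ on a core; since the left and right $M$-actions on the coarse bimodule are $\|\cdot\|_\infty$-bounded, $\|\overline d(x)\|\le 2\|\xi\|\,\|x\|$, so $d$ extends to a bounded operator, contradicting the hypothesis. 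Hence $M$ is not $L^2$-rigid, and by the first paragraph it is prime and non-$\Gamma$.

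The main obstacle is the third step: extracting, from mere uniform convergence of the malleable deformation on $(M)_1$, an implementing vector for $\overline d$ of \emph{bounded} norm. This is exactly where the non-amenability set is indispensable: without a finite set witnessing spectral gap in the coarse bimodule the implication fails, since amenable $\mathrm{II}_1$ factors are $L^2$-rigid yet carry unbounded closable real derivations into the coarse bimodule. Making the passage ``$\xi_t$ almost central $\Rightarrow$ $\|\xi_t\|$ bounded via spectral gap $\Rightarrow$ weakly convergent'' precise requires careful bookkeeping with Popa's transversality and spectral-gap estimates and with the identification of $L^2(\widetilde M)\ominus L^2(M)$ with the abstract coarse bimodule; the remaining ingredients — the reduction to non-$L^2$-rigidity, the construction of the dilation, and the final norm estimate on $[\,\cdot\,,\xi]$ — are routine given \cite{peterson:ell2rigidity}.
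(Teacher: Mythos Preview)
The paper does not prove this theorem at all: it is quoted verbatim as \cite[Theorem 1.1]{dabrowski-ioana} and used as a black box (see the sentence ``The following is one of the main results of \cite{dabrowski-ioana}'' immediately preceding the statement). There is therefore no proof in the paper to compare your proposal against.

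That said, your sketch is a reasonable outline of the Dabrowski--Ioana argument. The reduction of ``prime and non-$\Gamma$'' to ``not $L^2$-rigid'' via \cite{peterson:ell2rigidity} is correct, and the overall architecture --- build the Gaussian dilation of $d$, assume $L^2$-rigidity so the malleable deformation converges uniformly on $(M)_1$, then use the non-amenability set as a spectral-gap witness in the coarse part $L^2(\widetilde M)\ominus L^2(M)$ to force the derivation to be inner, contradicting unboundedness --- is the right strategy. You also correctly identify the crux: turning uniform convergence of $\alpha_t$ plus spectral gap into a uniform bound on the transversal vectors, so that a weak limit implements $\overline d$. One point to be careful about: $L^2$-rigidity in Peterson's sense is not literally ``every malleable deformation converges uniformly on $(M)_1$'' but is phrased in terms of derivations/deformations into multiples of the coarse bimodule, so your ``amounts to'' needs the standing hypothesis on the target of $d$ (which you have). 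The passage from ``$\xi_t$ almost $M$-central and spectral gap'' to ``$\|\xi_t\|$ bounded'' is where the actual work in \cite{dabrowski-ioana} lies, and your sketch is honest that this step is only indicated, not carried out.
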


We record the following corollary:

\begin{corollary} \label{cor:L2rigid}
Assume that $(M,\tau)$ is a II$_1$ factor, and $X_1,X_2\in M$ are self-adjoint elements.  Assume that $d_1, d_2$ are densely defined closed derivations from $M$ to $Q=[L^2(M)\bar\otimes L^2(M)]^{\oplus \infty}$, so that: \\
(a) $X_k \in D(d_j)$ and $d_j X_k = \delta_{j=k} \omega_k\in Q$;  \\
(b) $ \langle a \omega_k b,\omega\rangle = \tau (a)\tau(b)$ for all $a,b\in W^*(X_1,X_2)$ and $k=1,2$.
(c) $ d_k^* (\omega_k) \in M$ and $d_k^* [1\otimes (d_k^* (\omega_k))] \in M$.  
\\
Then $M$ is not $L^2$-rigid; in particular, it is non-$\Gamma$ and does not have property $T$.
\end{corollary}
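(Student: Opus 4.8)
The plan is to derive the statement from Theorem~\ref{thm:dab-Io}, applied to the single derivation $d := d_1 \oplus d_2$. First I would observe that, identifying $Q\oplus Q$ with $Q=[L^2(M)\bar\otimes L^2(M)]^{\oplus\infty}$, the map $d$ is a densely defined closed real derivation on $M$: a finite direct sum of closed real derivations with a common domain is again one, and $D(d)\cap M = D(d_1)\cap D(d_2)\cap M$ is weakly dense (it contains the dense subalgebra on which the $d_j$ are originally defined). Thus it remains to verify the two hypotheses of Theorem~\ref{thm:dab-Io}: that $d$ is not bounded, and that $C^*(D(d)\cap M)$ contains a non-amenability set for $M$. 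Everything else in the conclusion — primeness and absence of property $\Gamma$ — is then furnished by that theorem, and since property (T) II$_1$ factors are $L^2$-rigid, not being $L^2$-rigid also rules out property (T).

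For unboundedness I would argue as follows. By (b) the map $a\otimes b\mapsto a\,\omega_j\,b$ is an isometric embedding of the coarse $N$-bimodule $L^2(N)\bar\otimes L^2(N)$ (with $N=W^*(X_1,X_2)$) into $Q$; together with (a) and the fact that a derivation is determined by its values on generators, this identifies the restriction of $d_j$ to $N$ with the free difference quotient $\partial_{X_j:W^*(X_{3-j})}$, so $d_j$ and this free difference quotient have the same norm. Condition (c) gives $d_j^*(\omega_j)\in M$, whose projection onto $L^2(N)$ is the conjugate variable $\mathcal J(X_j:W^*(X_{3-j}))$; hence $\Phi^*(X_1,X_2)<\infty$, so by \cite{freefisher} each $X_j$ has a non-atomic distribution, which makes $\partial_{X_j:\mathbb C}$ — a fortiori $d_j$, and hence $d$ — unbounded.

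For the non-amenability set, by Connes' lemma \cite[Lemma 2.10]{dabrowski-ioana} and the weak density of $C^*(D(d)\cap M)$ in $M$, it suffices to exhibit a non-$\Gamma$ set for $M$ inside it; replacing the self-adjoint $X_1,X_2$ by finitely many unitaries in $C^*(X_1,X_2)$, this reduces to an estimate
\begin{equation*}
\|\zeta - \tau(\zeta)1\|_2^2 \;\le\; K\big(\|[\zeta,X_1]\|_2^2 + \|[\zeta,X_2]\|_2^2\big),\qquad \zeta\in L^2(M).
\end{equation*}
Here I would use the Leibniz rule and $d_jX_j=\omega_j$ to get $\zeta\,\omega_j-\omega_j\,\zeta = d_j([\zeta,X_j]) - [d_j\zeta,X_j]$ in $Q$, together with the identity $\|\zeta\,\omega_j-\omega_j\,\zeta\|^2 = 2\|\zeta-\tau(\zeta)1\|_2^2$ coming from (b). Pairing the first identity with $\zeta\,\omega_j-\omega_j\,\zeta$ and moving $d_j$ onto the other side through $d_j^*$, one expresses $2\|\zeta-\tau(\zeta)1\|_2^2$ as a sum of inner products in which the only term not obviously controlled by the commutators $[\zeta,X_j]$ is $d_j^*(\zeta\,\omega_j-\omega_j\,\zeta)$; this one is handled by the standard conjugate-variable formula for $d_j^*$ combined with $d_j^*(\omega_j)\in M$ and $d_j^*(1\otimes d_j^*(\omega_j))\in M$ — which is exactly what condition (c) provides.

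The main obstacle is this last step: turning the algebraic identity into the spectral-gap estimate requires the adjoint computation to stay inside $L^2(M)$, and it is precisely the ``second-order'' regularity in (c) that makes it go through; the unboundedness input and the reduction to Theorem~\ref{thm:dab-Io} are comparatively routine. Once the estimate is in hand, Connes' lemma upgrades the non-$\Gamma$ set to a non-amenability set, and Theorem~\ref{thm:dab-Io} gives that $M$ is not $L^2$-rigid, hence prime, non-$\Gamma$, and without property (T).
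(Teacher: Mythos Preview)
Your overall strategy---set $d=d_1\oplus d_2$ and verify the hypotheses of Theorem~\ref{thm:dab-Io}---is exactly the paper's, and your unboundedness argument via $\Phi^*(X_1,X_2)<\infty$ is the same as well. The gap is in the non-amenability step. You aim for a non-$\Gamma$ estimate for all $\zeta\in L^2(M)$, and the key ingredient you invoke is the identity $\|\zeta\,\omega_j-\omega_j\,\zeta\|^2 = 2\|\zeta-\tau(\zeta)1\|_2^2$ ``coming from (b)''. But hypothesis (b) is stated only for $a,b\in W^*(X_1,X_2)=:N$, not for general elements of $M$; nothing in the assumptions controls $\langle a\,\omega_j\,b,\omega_j\rangle$ when $a,b\notin N$. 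So your computation only goes through for $\zeta\in N$, and what you actually obtain (once you track this) is that $\{X_1,X_2\}$ is a non-$\Gamma$ set for $N$, not for $M$. Indeed there is no reason to expect $\{X_1,X_2\}$ to be a non-$\Gamma$ set for $M$: that would force $\{X_1,X_2\}'\cap M=\mathbb C$, which is nowhere assumed.

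The paper's route avoids this by never asking for a non-$\Gamma$ set in $M$. One shows (essentially by the computation you sketch, restricted to $N$, which is the content of \cite{dabrowski:Fisher} using (c)) that $\{X_1,X_2\}$ is a non-$\Gamma$ set for $N$; Connes' lemma upgrades this to a non-amenability set for $N$; and then one observes that a non-amenability set for a subfactor $N\subset M$ is automatically a non-amenability set for $M$, since $L^2(M)\bar\otimes L^2(M)$ as a left $N\otimes N^{op}$-module is a multiple of the coarse $N$-bimodule, so the spectral gap inequality passes. That last observation is the missing step in your argument; once you insert it (and drop the attempt to get non-$\Gamma$ for $M$ directly), your proof becomes the paper's.
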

\begin{proof}
We let $d = d_1 \oplus d_2$. Let $N_0$ be the algebra generated by $X_1,X_2$, and let $N=W^*(N_0)$, $R_k = \overline{N \omega N}\subset Q$, $k=1,2$.  Then condition (b) implies that $R_k \cong L^2(N)\bar\otimes L^2 (N)$, and condition (a) implies that the restriction of $d_k$ to $N_0$ is the free difference quotient.  Moreover, $E_{N} (d_k^*(\omega _k))$ is then equal to the conjugate variable for $d_k$ (and thus $\Phi^*(X_1,X_2)<\infty$).  

Part (c) together with \cite{dabrowski:Fisher} implies that  $\{X_1,X_2\} \subset N_0$ is a non-$\Gamma$ set for $N$ (and thus a non-amenablity set for $N$, and thus also for $M$).  Thus $C^*(D(d))$ contains a non-amenability set for $M$.  Finally, because $\Phi^*(X_1)<+\infty$, $d_1$ is not bounded, so $d$ is not bounded either.  We may thus apply Theorem~\ref{thm:dab-Io} to conclude the proof.
\end{proof}

\begin{corollary}
Assume that $\tau_V$ is a $P$-trace, $\delta>1$ and assume that $\partial^*(\twostrings) \in Gr_1$ (this is the case, for example, for free Gibbs states).  Then $W^*(Gr_1)$ is not $L^2$-rigid, is non-$\Gamma$ and does not have property $(T)$.
\end{corollary}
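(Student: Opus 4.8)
The plan is to obtain the statement as an instance of Corollary~\ref{cor:L2rigid}, applied to $M=W^*(Gr_1(P),\tau_V)$ with $X_1=x_1$ the image of $\cup$ in $Gr_1$, $X_2=x_2=\jwcup\in Gr_1$ (both self-adjoint), and $d_1,d_2$ the derivations $\partial_1,\partial_2$ of Corollary~\ref{cor:FDQ}, regarded as valued in the first summand of $Q=[L^2(M)\bar\otimes L^2(M)]^{\oplus\infty}$. First note that, since $\twostrings=1\otimes1$ lies in $\dom\partial^*$ by Lemma~\ref{lemma:closable}, the conjugate variable is $\xi=\partial^*(1\otimes1)=\partial^*(\twostrings)$ (the usual identification of the conjugate variable as the coboundary of $1\otimes1$, using that $\partial$ is a real derivation); so the hypothesis $\partial^*(\twostrings)\in Gr_1$ says exactly $\xi\in Gr_1$, whence $\Phi^*(\tau_V)<\infty$. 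Combined with $\delta>1$, Theorem~\ref{thm:factoriality} gives that $M$ is a $\mathrm{II}_1$ factor, and Corollary~\ref{cor:FDQ} (which uses $\Phi^*(\tau_V)<\infty$ and $P_1^\pm=\mathbb{C}$) produces the real closable derivations $\partial_1,\partial_2$ together with the identity $\partial_i x_j=\delta_{i=j}\,1\otimes1$ of \eqref{eq:DiffQuotients}. With $\omega_1=\omega_2=1\otimes1$, this identity is precisely condition (a) of Corollary~\ref{cor:L2rigid}.

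Condition (b) follows once one knows the target bimodule of $\partial_1,\partial_2$ is the coarse one. By Lemma~\ref{lem:partialDerivatives} the maps $i_1,i_2$ are $Gr_1$-bimodule isometries of $L^2(Gr_1)\otimes L^2(Gr_1)$ (with its standard, i.e.\ coarse, inner product) onto their ranges inside $L^2(Gr_1\boxtimes Gr_1^{op})$, so $\partial_1,\partial_2$ genuinely take values in the coarse $Gr_1$-bimodule $L^2(Gr_1)\otimes L^2(Gr_1)$. Hence $\langle a(1\otimes1)b,\,1\otimes1\rangle=\langle a,1\rangle\langle b,1\rangle=\tau_V(a)\tau_V(b)$ for all $a,b\in Gr_1$, in particular for $a,b\in W^*(x_1,x_2)$, which is condition (b).

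The substantive point is condition (c): one must show $\partial_i^*(1\otimes1)\in M$ and $\partial_i^*\bigl[1\otimes(\partial_i^*(1\otimes1))\bigr]\in M$, i.e.\ that these lie in the algebra $Gr_1$ and not merely in $L^2(Gr_1)$. Since $\partial_1^*=(\partial'')^*\circ i_1$ and $\partial_2^*=(\partial')^*\circ i_2$, with $i_1(1\otimes1)=\jones$, $i_2(1\otimes1)$ a nonzero multiple of $\jw$, $(\partial'')^*(Q)=\partial^*(Q\jones)$ and $(\partial')^*(Q)=\partial^*(Q\jw)$, a short diagrammatic computation (using $\jones^\dagger=\jones$, $\jones\cdot\jones\in\mathbb{C}\jones$, $\jw^\dagger=\jw=\jw\cdot\jw$) shows that $\partial_i^*(1\otimes1)$ is, up to a nonzero scalar, $\partial^*(\jones)$ in one case and $\partial^*(\jw)$ in the other. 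As $\jw$ lies in the linear span of $\twostrings$ and $\jones$ (this uses $\delta>1$), it suffices to show $\partial^*(\twostrings),\partial^*(\jones)\in Gr_1$. The first is the hypothesis; for the second, I would read off from the explicit formula for $\partial^*$ in Lemma~\ref{lemma:closable} that, for a fixed $Q\in Gr_1\boxtimes Gr_1^{op}$, $\partial^*(Q)$ is a \emph{finite} sum — the conjugate variable $\xi$ glued into $Q$, plus the $T_l$-terms, only finitely many of which survive for planarity reasons since $Q$ has a bounded number of strands — so that $\partial^*(Q)\in Gr_1$ whenever $\xi\in Gr_1$. Taking $Q=\jones$ gives $\partial^*(\jones)\in Gr_1$, hence $\partial^*(\jw)\in Gr_1$ and $\partial_i^*(1\otimes1)\in Gr_1$. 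The iterated condition is handled verbatim: $\eta_i:=\partial_i^*(1\otimes1)$ is a fixed element of $Gr_1$, so $1\otimes\eta_i\in Gr_1\otimes Gr_1^{op}$ lies in $\dom\partial_i^*$ (Lemma~\ref{lemma:closable} puts $Gr_1\boxtimes Gr_1^{op}$ in the domain of the adjoint), and $\partial_i^*(1\otimes\eta_i)$ is again a finite sum of diagrams built from $\xi$, $\eta_i$ and finitely many $T_l$, hence lies in $Gr_1\subset M$. This step is exactly where the hypothesis $\partial^*(\twostrings)\in Gr_1$ is needed, and where a generic finite-Fisher-information trace would fail; for a free Gibbs state with polynomial potential $V$ the conjugate variable is the cyclic gradient of $V$, a polynomial, so $\partial^*(\twostrings)\in Gr_1$ holds automatically.

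Having checked that $M$ is a $\mathrm{II}_1$ factor, that $x_1,x_2$ are self-adjoint, that the closures of $\partial_1,\partial_2$ are densely defined closed derivations, and that (a), (b), (c) hold, Corollary~\ref{cor:L2rigid} applies and yields that $M=W^*(Gr_1,\tau_V)$ is not $L^2$-rigid, hence non-$\Gamma$, without property $(T)$ (and prime). The main obstacle is condition (c) — guaranteeing that the one- and two-step coboundaries of $\partial_1,\partial_2$ are honest elements of $Gr_1$ rather than only of $L^2(Gr_1)$ — which is exactly what the hypothesis $\partial^*(\twostrings)\in Gr_1$ is tailored to provide.
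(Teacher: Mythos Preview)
Your approach is essentially the same as the paper's: feed the derivations of Corollary~\ref{cor:FDQ} into Corollary~\ref{cor:L2rigid} and verify (a)--(c), using Lemma~\ref{lemma:closable} and the hypothesis $\xi=\partial^*(\twostrings)\in Gr_1$ for (c). Your detailed check of (c)---that for a fixed diagram $Q\in Gr_1\boxtimes Gr_1^{op}$ the formula of Lemma~\ref{lemma:closable} is a finite sum once $\xi\in Gr_1$---is correct and more explicit than the paper's one-line appeal.

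There is one point you gloss over. You invoke Corollary~\ref{cor:FDQ} and Lemma~\ref{lem:partialDerivatives}, both of which assume $P_1^\pm=\mathbb{C}$; you note the hypothesis but never verify it, and the corollary you are proving does not assume it. The paper handles this by writing ``with $P=TL$'': the elements $x_1,x_2,\jones,\jw,\twostrings$ all lie in the Temperley--Lieb sub-planar-algebra, where $TL_1^\pm=\mathbb{C}$ automatically, and condition~(b) of Corollary~\ref{cor:L2rigid} only asks for $\langle a\omega_k b,\omega_k\rangle=\tau(a)\tau(b)$ with $a,b\in W^*(x_1,x_2)\subset W^*(Gr_1(TL))$, so the computation of Lemma~\ref{lem:partialDerivatives} can be carried out entirely inside $TL$. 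Equivalently, one works directly with $d_1=\partial''$, $d_2=\partial'$ (valued in $L^2(Gr_1\boxtimes Gr_1^{op})$, which is a multiple of the coarse bimodule) and takes $\omega_1=\jones$, $\omega_2$ a multiple of $\jw$; then (b) only needs the $TL$ case of Lemma~\ref{lem:partialDerivatives}. Either fix closes the gap; otherwise your argument as written needs the extra standing assumption $P_1^\pm=\mathbb{C}$.
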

\begin{proof}
We apply  Corollary~\ref{cor:FDQ} with $P=TL$ to deduce that the restrictions of the derivations $\partial'$ and $\partial''$ to the algebra generated by $x_1,x_2$ defined in that Corollary satisfies the conditions (a) and (b) of Corollary~\ref{cor:L2rigid}.  We see from Lemma~\ref{lemma:closable} that condition (c) is also satisfied, since $d_k^* \omega \in Gr_1$ and so also $1\otimes d_k^* \omega_k$ remains in the domain of $d_k^*$.
\end{proof}

\begin{proposition} \label{prop:criteriaNonGamma} Let $(M,\tau)$
  be a tracial probability space, $C\subset M$
  a dense subalgebra, $Q\subset C$
  a finite-dimensional subalgebra of $C$, and let $X_{1},X_{2}\in Q'\cap C$
  be self-adjoint variables. Assume that $\delta_{i}:C\to L^{2}(M)\otimes_{Q}L^{2}(M)$, $i=1,2$
  are derivations, so that $\delta_{i}(X_{j})=\delta_{i=j}1\otimes1$. Suppose that $\delta_{i}^{*}(1\otimes1)\in L^{2}(M)$,
   that $\delta_{i}$
  are closable, and that $W^{*}(X_{1},X_{2})$
  is a factor. 
  Assume that $(1\otimes E_{Q})\circ\delta_{i}:C\to L^{2}(M)$
  extends to a bounded map from $M$
  to $L^{2}(M)$. 
  
  Then for any free ultra-filter $\omega$, 
  $W^{*}(X_{1},X_{2})'\cap M^{\omega}\subset Q$.
  
  \end{proposition}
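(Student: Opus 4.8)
The plan is to transport the proof of Theorem~\ref{thm:factoriality} (Lemmas~\ref{lemma:commutatorWithCup}--\ref{lemma:commutatorWithJW} plus the deformation machinery of \cite{dabrowski:Fisher,peterson:ell2rigidity}) to the ultrapower $M^\omega$, with the boundedness of $(1\otimes E_Q)\circ\delta_i$ as the extra input that makes the estimates uniform over a central sequence. First I would reduce: since $X_1,X_2\in Q'$ we have $Q\subseteq W^*(X_1,X_2)'\cap M$, and the trace-preserving conditional expectation $E_Q\colon M^\omega\to Q^\omega=Q$ (here $Q^\omega=Q$ because $Q$ is finite dimensional) maps $W^*(X_1,X_2)'\cap M^\omega$ into $Q$, as $E_Q(Z)\in Q$ commutes with $X_1,X_2$; so it is enough to show that $Z\in W^*(X_1,X_2)'\cap M^\omega$ with $E_Q(Z)=0$ vanishes. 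Lift $Z$ to a bounded sequence $(z_n)$ and, subtracting $E_Q(z_n)$, assume $E_Q(z_n)=0$ for all $n$; then $\|[z_n,X_i]\|_2\to 0$ along $\omega$, and since $1\otimes E_Q$ fixes $z_n\otimes 1$ and kills $1\otimes z_n$, these two vectors of $L^2(M)\otimes_Q L^2(M)$ are orthogonal, whence $2\|z_n\|_2^2=\|z_n\otimes 1-1\otimes z_n\|^2$. The goal becomes: this tends to $0$ along $\omega$.

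Next I would use, for each $i$, the Leibniz identity $z_n\otimes 1-1\otimes z_n=\overline{\delta_i}([z_n,X_i])-[\overline{\delta_i}(z_n),X_i]$ (legitimate after approximating $z_n$ in $\|\cdot\|_2$ by elements of $C$), and treat the two terms differently. For the first, Voiculescu's formula for the adjoint of a free difference quotient --- the content of Lemma~\ref{lemma:closable} --- $\delta_i^*(a\otimes b)=a\,\xi_i\,b-((1\otimes E_Q)\delta_i(a))\,b-a\,((E_Q\otimes 1)\delta_i(b))$, with $\xi_i=\delta_i^*(1\otimes 1)$, gives $\delta_i^*(z_n\otimes 1-1\otimes z_n)=[z_n,\xi_i]-(1\otimes E_Q)\delta_i(z_n)+(E_Q\otimes 1)\delta_i(z_n)$. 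As $(1\otimes E_Q)\circ\delta_i$ --- and, by symmetry, $(E_Q\otimes 1)\circ\delta_i$ --- extends to a bounded map $M\to L^2(M)$, while $\xi_i\in L^2(M)$ and $\sup_n\|z_n\|<\infty$, this vector is uniformly bounded in $L^2(M)$; hence $|\langle z_n\otimes 1-1\otimes z_n,\overline{\delta_i}([z_n,X_i])\rangle|=|\langle[z_n,X_i],\delta_i^*(z_n\otimes 1-1\otimes z_n)\rangle|\le K\,\|[z_n,X_i]\|_2\to 0$ along $\omega$. This is the first place the boundedness hypothesis is used.

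The remaining term $\langle z_n\otimes 1-1\otimes z_n,[\overline{\delta_i}(z_n),X_i]\rangle$ is the heart of the matter, and here I would argue as in Lemmas~\ref{lemma:commutatorWithCup}--\ref{lemma:commutatorWithJW}. The bimodule input is in place: $Q$ being finite dimensional, $L^2(M)\otimes_Q L^2(M)$ embeds as a sub-bimodule of a multiple of the coarse $M$--$M$ bimodule, and $X_1,X_2$ have diffuse spectral measures because $\xi_i\in L^2(M)$ means finite free Fisher information (\cite{freefisher}); so the ``commutation with a diffuse element forces $0$'' step of Lemma~\ref{lemma:likeCoarse} has a (quantitative) analog here. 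Introducing the resolvent deformations $\zeta_\alpha$ and $\delta_{2,\alpha}=\overline{\delta_2}\circ\zeta_\alpha$ of \cite{dabrowski:Fisher} attached to $\Delta=\overline{\delta_2}^*\,\overline{\delta_2}$ --- completely positive, trace-preserving, $Q$-bimodular (so $E_Q\circ\zeta_\alpha=E_Q$), unital, extending entrywise to $M^\omega$, with $\delta_{2,\alpha}$ bounded --- and using $\delta_2(X_1)=0$ (so $\zeta_\alpha$ commutes with multiplication by $X_1$) together with $[Z,X_1]=0$, one gets that $\overline{\delta_2}$ of $\zeta_\alpha^\omega(Z)$ ``commutes with $X_1$'' and is therefore negligible; feeding this, $[Z,X_2]=0$, $\delta_2(X_2)=1\otimes 1$, and the factoriality of $W^*(X_1,X_2)$ (which plays the role of the Jones--Wenzl idempotent in Lemma~\ref{lemma:commutatorWithJW}) into the Leibniz rule one obtains $\zeta_\alpha^\omega(Z)\in Q$ for every $\alpha$. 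Letting $\alpha\to\infty$, the uniform bound of the previous paragraph controls the first-order behaviour of $\zeta_\alpha$ on the central sequence and gives $\|\zeta_\alpha^\omega(Z)-Z\|_2\to 0$, so $Z\in Q$ and $Z=E_Q(Z)=0$.

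The step I expect to be the main obstacle is the passage to $M^\omega$: the $\delta_i$ do not act there, so the commutator calculus of Lemmas~\ref{lemma:commutatorWithCup}--\ref{lemma:commutatorWithJW} must be reproduced for a central sequence with all estimates uniform in $n$, and in particular the limit $\zeta_\alpha^\omega(Z)\to Z$ cannot come from the naive bound $\|\zeta_\alpha(z_n)-z_n\|_2^2\le\frac{1}{2\alpha}\|\overline{\delta_2}(z_n)\|^2$ (the full gradients $\|\overline{\delta_2}(z_n)\|$ are uncontrolled) but must be extracted from the boundedness of $(1\otimes E_Q)\circ\delta_i$, i.e.\ from the fact that only the $Q$-compressed gradient of a central sequence is under control. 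A secondary, routine, point is to verify the coarseness of $L^2(M)\otimes_Q L^2(M)$ in the present abstract setting, so that the spectral-gap substitute for Lemma~\ref{lemma:likeCoarse} is available.
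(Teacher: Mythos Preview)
Your reduction to $E_Q(Z)=0$ and your treatment of the ``first term'' via $\delta_i^*(z_n\otimes 1-1\otimes z_n)$ are on the right track, and you correctly identify the central difficulty. But the deformation you build is too weak. Since $\zeta_\alpha$ comes from $\Delta=\overline{\delta_2}^*\overline{\delta_2}$, only $X_1$ lies in its multiplicative domain, and you then try to conclude that $\delta_{2,\alpha}(Z)$ is negligible from the fact that it approximately commutes with the single element $X_1$. This is where the argument breaks: commutation with one diffuse element in (a submodule of) the coarse bimodule forces exact vanishing (Lemma~\ref{lemma:likeCoarse}(b)), but there is no quantitative version---a single diffuse self-adjoint never gives a spectral gap on $L^2(M)\otimes L^2(M)$, since $X_1\otimes 1-1\otimes X_1$ has $0$ in its continuous spectrum (vectors concentrated near the diagonal almost commute with $X_1$ without being small). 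Moreover, $X_2$ is \emph{not} in the multiplicative domain of your $\zeta_\alpha$ (because $\delta_2(X_2)=1\otimes 1\neq 0$), so the Leibniz step with $X_2$ that you invoke is not available, and for the same reason Peterson's uniform-convergence argument does not apply; the final limit $\zeta_\alpha^\omega(Z)\to Z$ remains unjustified.

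The paper's key additional idea is to combine $\delta_1,\delta_2$ into the single ``free Poincar\'e'' derivation
\[
\delta_P(u)=\sum_{i=1}^2\bigl(\delta_i(u)\#(X_i\otimes 1)-\delta_i(u)\#(1\otimes X_i)\bigr)-[u,1\otimes 1],
\]
which vanishes on \emph{all} of $W^*(X_1,X_2)$ (this is the free Poincar\'e inequality of \cite{dabrowski:Fisher}) while still satisfying $\delta_P^*(1\otimes 1)=\sum_i[X_i,\delta_i^*(1\otimes 1)]\in L^2(M)$. Building $\eta_\alpha$ from $\delta_P$ puts an entire non-amenable subalgebra---$W^*(X_1,X_2)$ is non-$\Gamma$ by \cite{dabrowski:Fisher} since $\Phi^*(X_1,X_2)<\infty$---in the multiplicative domain. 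This yields both the spectral-gap estimate $\|\delta_P\eta_\alpha(Z_m)\|_2\le K\sum_j\|\delta_P\eta_\alpha([Z_m,u_j])\|_2\to 0$ along $\omega$ (using a non-amenability set $\{u_j\}\subset W^*(X_1,X_2)$) and, via \cite[Theorem 3.3]{peterson:propertyT}, the uniform convergence $\eta_\alpha\to\mathrm{id}$ on the asymptotic commutant that you flag as the main obstacle. The identity \eqref{eq:distToScalars} relating $\|Z-E_Q(Z)\|_2^2$ to $\delta_P(Z)$, to $[Z,X_i]$, and to $(E_Q\otimes 1-1\otimes E_Q)\delta_i(Z)$ then closes the argument; the boundedness hypothesis enters exactly where you place it, but applied to $\eta_\alpha(Z_m)$.
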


\begin{proof}
Let $B=W^*(X_1,X_2)$. Set $\hat \delta_k = \delta_k \big|_{\mathbb{C}[X_1,X_2]}$.
Then  
$\hat\delta_k^* (1\otimes 1) \in L^2(B)$.  

Since $B\subset Q'\cap M$, if $u,v\in B$ then $E_Q(u)\in Q'\cap Q$ and $E_Q(uv)=E_Q(vu)$. Thus for any extremal trace $\phi$ on $Q$, $\phi\circ E_Q$ is a trace on $B$, which is unique by factoriality of $B$ and thus does not depend on $\phi$.  Thus $E_Q(u)=\tau(u) 1$ for all $u\in B$.

The assumptions on $\delta_i$ thus guarantee that the restrictions of $\delta_i$ to the algebra generated by $X_1,X_2$ are exactly Voiculescu's free difference quotients.  It follows that $\Phi^*(X_1,X_2)$ is finite.  Thus by \cite[Theorem 13]{dabrowski:Fisher}, we see that $B$ is a factor that does not have property $\Gamma$.  In particular, $B$ is non-amenable.

  For $u\in C$, let $$\delta_P(u) = \sum_{i=1}^2 \delta_i (u) \# (X_i\otimes 1) - \delta_i(u)\#(1\otimes X_i) - [u,1\otimes 1],$$ where $ (a\otimes b)\#(c\otimes d) = ac\otimes db$ (which is well-defined 
  because $B$ commutes with $Q$).  
From the proof of \cite[Lemma 9]{dabrowski:Fisher}, one deduces that if $Z\in D(\overline{\delta_i})\cap M$, $i=1,2$, then 
\begin{eqnarray} \label{eq:distToScalars}
2\Vert Z- E_Q(Z)\Vert_2^2 &=&  \langle \delta_P(Z),[Z,1\otimes 1]\rangle_{L^2(M\otimes M)} 
\\ \nonumber && + \sum_{i=1}^2 \langle [Z,X_i],[Z,\delta_i^*(1\otimes 1)]\rangle_{L^2(M\otimes M)} 
\\ \nonumber && +\sum_{i=1}^2\operatorname{Re} \langle (E_Q\otimes 1-1\otimes E_Q)(\delta_i(Z),[Z,X_i]\rangle_{L^2(M)}.
\end{eqnarray}
By equation (1) in the the proof of the Free Poincare inequality (see \cite{dabrowski:Fisher}), we obtain that $\delta_P$ is a derivation vanishing on the algebra generated by $X_1$ and $X_2$.  Moreover, $$\delta_P^*(1\otimes 1) = \sum_{i=1}^2 [X_i, \delta_i^*(1\otimes 1)],$$ since for any $u\in C$ we have 
\begin{eqnarray*}
\langle \sum_{i=1}^2 [X_i, \delta_i^*(1\otimes 1)],u\rangle &=& \sum_{i=1}^2 \langle 1\otimes 1, \delta_i([X_i,u])\rangle 
\\ &=& \sum_{i=1}^2 \langle 1\otimes 1, [1\otimes 1,u]\rangle + \langle X_i\otimes 1-1\otimes X_i,\delta_i (u)\rangle 
\\ &=& \langle 1\otimes 1, \delta_P(u)\rangle.
\end{eqnarray*}
It follows that $\delta_P$ is closable and $W^*(X_1,X_2)$ lies in the domain of the closure; moreover, $\overline{\delta_P}(u)=0$ for any $u\in W^*(X_1,X_2)$.  

Let now $\Delta_P = \delta_P^* \overline{\delta_P}$ and let $\eta_\alpha = \alpha (\alpha + \Delta_P)^{-1}$ be completely positive contractions.

Since $Q$ is finite-dimensional, $L^2(Q)\subset (L^2(Q)\otimes L^2(Q))^p$ so that 
\begin{multline*}
L^2(M)\otimes _Q L^2(M) = L^2(M)\otimes_Q L^2(Q)\otimes_Q L^2(M) \\
\subset ((L^2(M)\otimes_Q L^2(Q) \otimes L^2(Q))\otimes _Q L^2(M))^p  = (L^2(M)\otimes L^2(M))^p.
\end{multline*}

Since $W^*(X_1,X_2)$ is non-amenable, it contains a non-amenablity set, we may thus find unitaries $u_1,\dots,u_k \in W^*(X_1,X_2)$ so that 
$$
\Vert \delta_P \eta_\alpha (Z_m)\Vert_2 \leq K \sum_{j=1}^k \Vert [\delta_P \eta_\alpha (Z_m),u_j]\Vert_2.
$$

Applying \eqref{eq:distToScalars} to $\eta_\alpha(Z_m)$ instead of $Z$ gives us the estimate
\begin{multline*}
2\Vert \eta_\alpha(Z_m) - E_Q(\eta_\alpha(Z_m))\Vert_2 \leq 2K 
\sum_{j=1}^k \Vert [\delta_P(\eta_\alpha(Z_m)),u_j]\Vert_2 \Vert Z_m \Vert_\infty 
\\ +\sum_{i=1}^2 \Vert [\eta_\alpha (Z_m),X_i]\Vert_2 \Vert Z_m\Vert_\infty \left (
2 \Vert \delta_i^*(1\otimes 1)\Vert_2 + 4 \Vert (E_Q\otimes 1)\circ \bar\delta_i\Vert_{M\to L^2(M)}\right).
\end{multline*}

However, since $\{X_i\}_{i=1}^2$ (and so also $\{u_j\}_{j=1}^k$) are in the multiplicative domain of $\eta_\alpha$, we see that $\Vert [\eta_\alpha(Z_m),X_i] \Vert_2 \to 0$ and also $\Vert [\delta_P (\eta_\alpha(Z_m)),u_i] \Vert_2 = \Vert \delta_P (\eta_\alpha ([Z_m,u_i] ))\Vert_2 \to 0$.

By the proof of  \cite[Theorem 3.3]{peterson:propertyT},  since $W^*(X_1, X_2)\subset M$ is a non-amenable subfactor, it follows that $\eta_\alpha$  converges uniformly on its asymptotic commutant.  In particular, $\eta_\alpha$ converges uniformly on $Z_m$.  We may thus take $\alpha\to \infty$ to obtain the desired conclusion.
\end{proof}

We now apply this Proposition to the case of algebras arising from a planar algebra $P$ with $\delta > 1$.  Assume that $P$ is finite-depth of depth $\leq k$.  Graphically, this means: $$
\begin{array}{c}
\begin{tikzpicture}[scale=.75]
 \draw[Box] (0,0) rectangle (1,1); \node at (.5,.5) {$x$};
 \draw[thickline](0,0.5)--node[rcount,scale=.75]{$2k+2$}++(-1.5, 0);
 \draw[thickline](0.5,1)--++(0,.5);
 \draw[thickline](0.5,0)--++(0,-.5);
 \draw[thick](1,0.6)--++(0.5,0);
 \draw[thick](1,0.4)--++(0.5,0);
\end{tikzpicture}
\end{array}
 \in \textrm{span}\left\{
 \begin{array}{c}
\begin{tikzpicture}[scale=.75]
\draw[Box] (0,0) rectangle (1,1); \node at (.5,.5) {$x'$};
 \draw[thickline](0,0.5)--node[rcount,scale=.75]{$k+1$}++(-1.5, 0);
 \draw[thickline](0.5,1)--++(0,.5);
 \draw[thick](1,0.6)--++(1.75,0);
 \draw[Box] (0,-1) rectangle (1,-2); \node at (.5,-1.5) {$x''$};
 \draw[thickline](0,-1.5)--node[rcount,scale=.75]{$k+1$}++(-1.5, 0);
 \draw[thickline](0.5,-2)--++(0,-.5);
 \draw[thick](1,-1.6)--++(1.75,0);
 \draw[Box] (2,0.0) rectangle (3,-1); \node at (2.5,-0.5) {$Q$};
 \draw[thickline](1,0.25) --node[rcount,scale=.75]{$k$}++(1.25,0) arc (90:0:.25);
 \draw[thickline](1,-1.25) --node[rcount,scale=.75]{$k$}++(1.25,0) arc (-90:0:.25);
\end{tikzpicture}
\end{array}\right\}
$$

It follows that the derivation $\partial_{k+1} : Gr_{k+1} \to R_k$ defined
in the proof of Theorem~\ref{thrm:highercomm} is valued in $L^2(Gr_{k+1})
\otimes_{Q} L^2(Gr_{k+1})$, where $Q$ is the algebra generated by all elements of the form
$$
\begin{tikzpicture}[scale=.75]
 \draw[Box] (0,0) rectangle (1,1); \node at (.5,.5) {$P$};
 \draw[thickline](0,0.5)--node[rcount,scale=.75]{$k$}++(-1.5, 0);
 \draw[thickline](1,0.5)--node[rcount,scale=.75]{$k$}++(1.5, 0);
 \draw[thick](-1.5,1.25) -- (2.5, 1.25);
\end{tikzpicture}
$$
We now apply Propostion~\ref{prop:criteriaNonGamma} to $C=Gr_{k+1} \subset M=W^*(Gr_{k+1})$, the finite-dimensional subalgebra $Q$ described above and derivations $\partial_k'$, $\partial_k'' = \partial_k \cdot \Xi''$, where
$$\Xi''_k = \begin{array}{c}
 \begin{tikzpicture}[scale=.5]
\draw[thickline] (0,0) arc (270:90:-0.5 );
\draw[thick] (0,-1.35) arc (-90:-0:1 and .85) ;
\draw[thick] (0,.35) arc (90:0:1 and .85); 

\end{tikzpicture}
 \end{array}\in R_k.
$$
We set
$X_1 = i'(\cup)$, $X_2=i''(\jwcup)$ where $i':Gr_0 \to Gr_{k+1}$, $i'':Gr_1\to Gr_{k+1}$ are the natural inclusions obtained by adding strings at the bottom of a diagram.

It is not hard to see that $\partial_k$ and $\partial_k''$ are co-associative.  It follows from \cite{dabrowski:Fisher} that $(1\otimes E_Q) \circ \partial_k$ and $(1\otimes E_Q)\circ\partial'_k$ are bounded from $M$ to $L^2(M)$.  Since $\partial'$ is a linear combination of $\partial'$ and $\partial$, it follows that it is also bounded.  

Thus Proposition~\ref{prop:criteriaNonGamma} gives us that the asymptotic relative commutant of $X_1,X_2$ in $M_k = W^*(Gr_k)$ for any $k$ strictly bigger than the depth of $P$ is contained in $M_1'\cap M_k$.  Taking compressions, we obtain the same statement for all $k$.

We have thus proved:

\begin{corollary}  Assume that $\Phi^*(\tau)<+\infty$, $\delta > 1$  and that $P$ is finite-depth.  Then for any free ultra-filter $\omega$ and any $k\geq 1$, $W^*(\underbar{$\cup$},\jwcup)'\cap M_{k}^\omega  =  W^*(\underbar{$\cup$},\jwcup)'\cap M_{k}$.  In particular, $W^*(Gr_k)$ does not have property $\Gamma$.
\end{corollary}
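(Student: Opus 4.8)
The plan is to deduce the statement from Proposition~\ref{prop:criteriaNonGamma}. Let $d$ be the depth of $P$ and fix $k>d$; put $M=M_k=W^*(Gr_k)$, $C=Gr_k$, and let $Q$ be the finite-dimensional subalgebra of $Gr_k$ described above (its finite-dimensionality is exactly the finite-depth hypothesis). Let $X_1$ be the image of $\cup$ and $X_2$ the image of $\jwcup$ in $Gr_k$ under the natural inclusions $Gr_0\hookrightarrow Gr_k$ and $Gr_1\hookrightarrow Gr_k$, and take $\delta_1,\delta_2\colon C\to L^2(M)\otimes_Q L^2(M)$ to be $\partial'_k$ and $\partial''_k=\partial_k\cdot\Xi''_k$, each composed with the isometric identification of the relevant sub-bimodule of $R_k$ with $L^2(Gr_k)\otimes_Q L^2(Gr_k)$ coming from $\Xi'_k$ resp.\ $\Xi''_k$, exactly as in Lemma~\ref{lemma:coarseOverA} and Corollary~\ref{cor:FDQ}. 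Note that since $\Phi^*(\tau)<\infty$, Proposition~\ref{prop:restriction} shows the restriction of $\tau$ to $Gr_1(TL)\subset Gr_1(P)$ also has finite free Fisher information, so the conclusions of Corollary~\ref{cor:FDQ} (applied with $P=TL$) are available here.

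The bulk of the proof is the verification of hypotheses (a)--(c) of Proposition~\ref{prop:criteriaNonGamma}. That $X_1,X_2\in Q'\cap C$ and $\delta_i(X_j)=\delta_{i=j}\,1\otimes1$ are the diagrammatic identities of Corollary~\ref{cor:FDQ} transported to level $k$; they follow from $\jw\hskip -1pt\jw=\jw$, $\sidecup\hskip -1pt\jw=0$ and the orthogonality (valid for $\delta>1$) of $\jw$ and $\jones$, together with the fact that the diagrams generating $Q$ slide past the low-level diagrams $X_1,X_2$. For hypothesis (c), since $\Phi^*(\tau)<\infty$ the conjugate variable $\xi$ exists, and the explicit formula for $\partial_k^*$ in the proof of Theorem~\ref{thrm:highercomm} shows that $Gr_k\boxtimes Gr_k^{op}$, in particular $1\otimes1$, lies in $\dom\partial_k^*$ with image in $L^2(Gr_k)$; composing with the contractive bimodule isometries keeps $1\otimes1$ in $\dom\delta_i^*$ with $\delta_i^*(1\otimes1)\in L^2(M)$, and density of the domains yields closability of $\delta_i$. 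Factoriality of $W^*(X_1,X_2)$ holds since $\Phi^*(X_1,X_2)<\infty$ by Corollary~\ref{cor:FDQ}, whence $W^*(X_1,X_2)$ is a (non-$\Gamma$) factor by \cite{dabrowski:Fisher}. Finally, $(1\otimes E_Q)\circ\delta_i$ extends to a bounded map $M\to L^2(M)$ because $\partial_k$ (and hence $\partial''_k$, a linear combination of $\partial_k$ and $\partial'_k$) is co-associative, so the argument of \cite{dabrowski:Fisher} underlying the free Poincar\'e inequality shows $(1\otimes E_Q)\circ\partial_k$ and $(1\otimes E_Q)\circ\partial'_k$ are bounded.

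With all hypotheses verified, Proposition~\ref{prop:criteriaNonGamma} gives $W^*(X_1,X_2)'\cap M_k^\omega\subset Q\subset M_k$. Since $W^*(X_1,X_2)'\cap M_k\subset W^*(X_1,X_2)'\cap M_k^\omega$ always and we have just shown the latter sits inside $M_k$, the two intersections coincide; this is exactly the asserted equality $W^*(\underbar{$\cup$},\jwcup)'\cap M_k^\omega=W^*(\underbar{$\cup$},\jwcup)'\cap M_k$ when $k>d$. For an arbitrary $k\geq1$ we use that $M_k$ is a corner (cut down by a Jones projection) of some $M_{k'}$ with $k'>d$, together with the stability of asymptotic relative commutants, and of property $\Gamma$, under passage to and from corners; this gives the equality for every $k\geq1$. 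The ``in particular'' then follows: $M_k$ is a factor (established just after Theorem~\ref{thrm:highercomm}), so since $X_1,X_2\in M_k$ we get $M_k'\cap M_k^\omega\subset W^*(X_1,X_2)'\cap M_k^\omega=W^*(X_1,X_2)'\cap M_k\subset M_k$, hence $M_k'\cap M_k^\omega\subset M_k'\cap M_k=\C$, so $M_k=W^*(Gr_k)$ does not have property $\Gamma$.

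The step I expect to be the main obstacle is the boundedness clause, i.e.\ that $(1\otimes E_Q)\circ\delta_i$ extends boundedly from $M$ to $L^2(M)$: this is where one genuinely needs the co-associativity (``$\partial$-bimodule'') structure of the diagrammatic free difference quotient and the quantitative estimates of \cite{dabrowski:Fisher}, not just a formal computation; everything else reduces to diagram bookkeeping and invoking the lemmas already established.
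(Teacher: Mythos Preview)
Your proposal is correct and follows essentially the same route as the paper: apply Proposition~\ref{prop:criteriaNonGamma} at a level $k$ strictly above the depth with $Q$ the finite-dimensional diagram subalgebra, $X_1=\cup$, $X_2=\jwcup$, and the derivations $\partial_k',\partial_k''$, invoke co-associativity together with \cite{dabrowski:Fisher} for the boundedness of $(1\otimes E_Q)\circ\delta_i$, and then pass to general $k\geq 1$ by compression. Your write-up is in fact more explicit than the paper's in justifying factoriality of $W^*(X_1,X_2)$ via Corollary~\ref{cor:FDQ} and Proposition~\ref{prop:restriction}, and in spelling out the final non-$\Gamma$ deduction.
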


\section{The 2-cabled Voiculescu trace}\label{sec:2cabletrace}

The Voiculescu trace was introduced in \cite{gjs1}, and has been fundamental in the series of articles \cite{gjs1, jsw, gjs2, gjsz, cjs}.  In this article, however, it will be the 2-cabled Voiculescu trace that plays a central role.  In this section we will further examine this trace, in particular we show that $M_k^{(\tau)}(\mc P)$ is a II$_1$-factor, and compute its isomorphism class when $\mc P$ is finite-depth.

First consider a graph planar algebra $\mc P^\Gamma$, and recall that we have $Gr_0(\mc P^\Gamma) \subset \ms A_\Gamma$ where $\ms A_{\Gamma}$ is generated by a copy of $\C\Gamma_+$ and operators $\{X_{e,f^o}: e,f \in E_+, \; t(e) = t(f)\}$.  

\begin{proposition}
(a) The 2-cabled Voiculescu trace on $Gr_0(\mc P^\Gamma)$ is the restriction of the $l^\infty(\Gamma_+)$-valued distribution on $\ms A_\Gamma$ for which $(X_{e,f^o})$ form an operator-valued circular family with covariance $\mb E_{l^\infty(\Gamma_+)}[X_{e_1,f_1^o}X_{f_2,e_2^o}] = \delta_{(e_1,f_1^o) = (e_2,f_2^o)}\cdot \mu_{s(f_1)} p_{s(e_1)}$. 
\\ (b) If $\Gamma$ is finite then one can take $X_{e,f^o} = p_{s(e)}C_{e,f^o}p_{s(f)}$ where $C_{e,f^o}$ is a scalar-valued free circular family which is freely independent from $\C\Gamma_+$ with respect to the trace which puts weight $(\sum \mu_w)^{-1} \cdot \mu_v$ on $p_v$.  
\\ (c) The 2-cabled Voiculescu traces  $\tau_k$ are positive.
\end{proposition}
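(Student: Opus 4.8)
The plan is to obtain all three statements by specializing to $\nu=S(0,1)$ the analysis of the functionals $\tau^\nu$ carried out above. The only facts about the semicircle law that are needed are that its free cumulants are $\kappa_n(\nu)=\delta_{n,2}$ — so that the planar algebra cumulants of $\tau^\nu=\tau^{S(0,1)}$ satisfy $\kappa^{\mc P^\Gamma}_m=0$ for $m\neq 2$ — and that $S(0,1)^{\boxplus t}=S(0,t)$ is a compactly supported probability measure for \emph{every} $t>0$, so that the semicircle law is $\delta$-times freely divisible for every $\delta>0$.

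\emph{Part (a).} Under the identification of Proposition~\ref{graph-ids}(1), the $l^\infty(\Gamma_+)$-valued moment $E_0\bigl[X_{e_1,f_1^o}\cdots X_{e_n,f_n^o}\bigr]$ is computed by capping the loop $(e_1,f_1^o,\dots,e_n,f_n^o)$ against $T_n=\sum_{\pi\in NC_2(n)}\widetilde\pi$. For a fixed non-crossing pair partition $\pi$, the doubled diagram $\widetilde\pi$, evaluated against this loop with the spin-factor conventions of \cite{gjs1}, vanishes unless the edges paired by $\pi$ match up in the orientation-reversed way — equivalently, unless each pair of $\pi$ links $X_{e_i,f_i^o}$ with $X_{f_i,e_i^o}=X_{e_i,f_i^o}^*$ — in which case it contributes a product of covariances $E_0(X_{e,f^o}X_{f,e^o})$, which by Remark~\ref{normalize} equals $\mu_{s(f)}\cdot p_{s(e)}$. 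This is term-by-term the operator-valued Wick expansion of a circular family over $l^\infty(\Gamma_+)$ with the stated covariance; since an operator-valued $*$-distribution all of whose $l^\infty(\Gamma_+)$-valued free cumulants vanish beyond order two is by definition a circular family, and the $NC_2(n)$-indexed moment formula is precisely that vanishing statement, (a) follows. Existence of a von Neumann algebra carrying such a family is the operator-valued semicircular/circular functor applied to the (manifestly completely positive) covariance just written down; alternatively one may quote the von Neumann algebra produced in the proof of Theorem~\ref{div-trace} with $\nu=S(0,1)$.

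\emph{Part (b).} For $\Gamma$ finite I would realize the family of (a) by compression. Let $\varphi$ be the trace on $\C\Gamma_+$ with $\varphi(p_v)=(\sum_w\mu_w)^{-1}\mu_v$, let $(C_{e,f^o})$ be a scalar circular family with $C_{e,f^o}^*=C_{f,e^o}$, normalized so that $\varphi(C_{e,f^o}C_{f,e^o})=\sum_w\mu_w$, which is $*$-free from $\C\Gamma_+$ in a tracial $W^*$-probability space, and set $X_{e,f^o}=p_{s(e)}C_{e,f^o}p_{s(f)}$. Because each $C_{e,f^o}$ is centered with $\varphi(C_{e,f^o}^2)=0$ and the family is free from $\C\Gamma_+$, the standard operator-valued freeness computation (in particular $E_{\C\Gamma_+}[C\,b\,C^*]=\varphi(b)\,\varphi(CC^*)\,1$ for $b\in\C\Gamma_+$) shows that all $\C\Gamma_+$-valued cumulants of $\{X_{e,f^o}\}$ vanish except the second, with $E_{\C\Gamma_+}(X_{e,f^o}X_{f,e^o})=\varphi(p_{s(f)})\,\varphi(C_{e,f^o}C_{f,e^o})\,p_{s(e)}=\mu_{s(f)}p_{s(e)}$; by (a) this is exactly the family of (a). Since the scalar 2-cabled trace on $Gr_0(\mc P^\Gamma)$ is $\tau_0=\varphi\circ E_0$, composing with $\varphi$ recovers it.

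\emph{Part (c), and the main obstacle.} Since $S(0,1)$ is $\delta$-times freely divisible for every $\delta>0$, Theorem~\ref{div-trace} immediately gives that the 2-cabled Voiculescu trace $\tau^\nu$ is a positive $\mc P$-trace for every subfactor planar algebra $\mc P$ and every $\delta>0$; equivalently, parts (a)--(b) exhibit an $l^\infty(\Gamma_+)$-valued von Neumann probability space into which $Gr_0(\mc P^\Gamma)$ embeds intertwining $E_0$ with the conditional expectation, whence the embedding criterion for positivity (and boundedness) of Section~\ref{sec:freeprob} applies, and for a general $\mc P$ one embeds $\mc P\hookrightarrow\mc P^\Gamma$ and restricts, which is harmless by Corollary~\ref{cor:embeddingTracePositivity}. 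The step I expect to cost the most care is the diagrammatic bookkeeping in (a): identifying the combinatorial definition $T_m=\sum_{\pi\in NC_2(m)}\widetilde\pi$, evaluated in $\mc P^\Gamma$ under the spin-factor normalization, with the operator-valued Wick formula of a circular family amounts to verifying that precisely the adjoint-admissible pairings survive and that each cap produces the correct $\mu$-weight; once this is in place, (b) and (c) are essentially formal.
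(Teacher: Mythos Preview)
Your proposal is correct and follows essentially the same approach as the paper, which is much terser: (a) is attributed to the definition of $\mc P^\Gamma$ together with Remark~\ref{normalize}, (b) is cited as a standard compression-of-circulars fact, and (c) is deduced directly from the positivity of the conditional expectation of the operator-valued circular system already exhibited in (a)--(b) (your second route), rather than via Theorem~\ref{div-trace}.
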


\begin{proof}
The first statement follows from the definition of $\mc P^\Gamma$, see Remark \ref{normalize}.  The second statement is a standard result in free probability on compressed circular systems, see e.g. \cite{nsp}.  The last statement follows from positivity of the conditional expectation associated to these operator-valued semicircular systems.
\end{proof}

\begin{proposition}
If $\delta > 1$, then $M_k^{(\tau)}(\mc P)$ is a II$_1$-factor for each $k \geq 0$.  Moreover, $(M_k^{(\tau)})_{k \geq 0}$ is the Jones tower for the subfactor $M_0^{(\tau)} \subset M_1^{(\tau)}$ and its planar algebra is $\mc P$.
\end{proposition}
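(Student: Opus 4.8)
The plan is to reduce the proposition to the general theory of Section~\ref{sec:freeAnalysis}, the only real work being to check that the $2$-cabled Voiculescu trace $\tau$ has finite free Fisher information.  Positivity of the $\tau_k$ is part~(c) of the preceding proposition (and positivity of the $\tau_k\boxtimes\tau_k^{op}$ then follows from the earlier lemma to that effect); ultimately these rest on Theorem~\ref{div-trace} and the fact that the semicircle law is infinitely freely divisible, hence $\delta$-times freely divisible for every $\delta>0$.  Faithfulness of $\tau$ on each $Gr_k(\mc P)$ can be obtained by embedding $\mc P$ in its graph planar algebra $\mc P^\Gamma$, $\Gamma$ the principal graph: by part~(a) of the preceding proposition the trace on $Gr_0(\mc P^\Gamma)$ is the vacuum trace of an $\ell^\infty(\Gamma_+)$-valued circular family with faithful covariance, which is faithful on the von Neumann algebra it generates, hence on the weakly dense $*$-subalgebra $Gr_k(\mc P^\Gamma)$ and on $Gr_k(\mc P)\subset Gr_k(\mc P^\Gamma)$.

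The key analytic step is the claim $\Phi^*_{\mc P}(\tau)<\infty$.  By Proposition~\ref{prop:restriction} it is enough to bound the free Fisher information of the $2$-cabled Voiculescu trace on $\mc P^\Gamma$.  Under the identifications of Propositions~\ref{graph-ids} and \ref{graph-derivatives}, the free difference quotient $\partial$ on $Gr_1(\mc P^\Gamma)$ is the free Jacobian $\ms J$ of the generators $\{X_{e,f^o}\}$, which by part~(a) of the preceding proposition form an $\ell^\infty(\Gamma_+)$-valued circular family with the explicit covariance $\mb E[X_{e_1,f_1^o}X_{f_2,e_2^o}]=\delta_{(e_1,f_1^o)=(e_2,f_2^o)}\,\mu_{s(f_1)}\,p_{s(e_1)}$.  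Such a family has an operator-valued conjugate system that is an explicit linear combination of the $X_{e,f^o}$, obtained by inverting the covariance; transporting this back through Proposition~\ref{graph-ids} (and keeping track of the normalizations in Remark~\ref{normalize}) produces a conjugate variable $\xi$ that in fact lies in $Gr_1(\mc P^\Gamma)$ --- essentially a scalar multiple of $\corner$ --- so $\Phi^*(\tau^{\mc P^\Gamma})<\infty$, and hence $\Phi^*_{\mc P}(\tau)<\infty$.

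Granting the claim, the rest assembles from the earlier results.  Since $\delta>1$ and $\Phi^*(\tau)<\infty$, Theorem~\ref{thm:factoriality} shows $M_1^{(\tau)}=W^*(Gr_1,\tau)$ is a factor, and Theorem~\ref{thrm:highercomm} with the remark following it shows $M_k^{(\tau)}$ is a factor for every $k\geq1$ and that $M_0^{(\tau)}=e_0M_2^{(\tau)}e_0$ is a factor.  Each $M_k^{(\tau)}$ carries the faithful normal trace from its GNS construction and, for $k\geq1$, contains the element $\cup$, whose spectral measure is non-atomic by Lemma~\ref{lemma:likeCoarse}(a); so $M_k^{(\tau)}$ is diffuse, and a diffuse factor with a faithful normal trace is of type II$_1$ (while $M_0^{(\tau)}$ is a nonzero corner of the II$_1$ factor $M_2^{(\tau)}$).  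Finally, arguing exactly as in \cite{gjs1} (and \cite{cjs}) one verifies that $M_2^{(\tau)}$ is the Jones basic construction of $M_0^{(\tau)}\subset M_1^{(\tau)}$, with the correct trace and Jones projection, and inductively $M_{k+1}^{(\tau)}=\langle M_k^{(\tau)},e_k\rangle$; thus $(M_k^{(\tau)})_{k\geq0}$ is the Jones tower of $M_0^{(\tau)}\subset M_1^{(\tau)}$, and by Corollary~\ref{cor:standard} its planar algebra is $\mc P$.

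The main obstacle is the claim $\Phi^*_{\mc P}(\tau)<\infty$: although ``a circular family has finite free Fisher information'' is morally immediate, making precise the dictionary between the diagrammatic $\partial$ of Section~\ref{sec:freeAnalysis} and the operator-valued free difference quotient of $\{X_{e,f^o}\}$, and exhibiting $\xi$ explicitly in $Gr_1$, forces one to unwind the normalizations of Proposition~\ref{graph-ids} and Remark~\ref{normalize} with care.  The identification of $(M_k^{(\tau)})$ as the Jones tower is standard but is a matter of following the bookkeeping in \cite{gjs1,cjs} rather than being formally immediate.
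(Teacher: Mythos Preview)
Your proposal is correct, and the overall architecture---reduce to $\Phi^*(\tau)<\infty$ and then invoke Theorem~\ref{thm:factoriality}, Theorem~\ref{thrm:highercomm} and Corollary~\ref{cor:standard}---is exactly what the paper does.  The difference is in how you establish $\Phi^*(\tau)<\infty$.  The paper simply asserts that $\corner$ \emph{is} the conjugate variable for $\tau$, which is an immediate diagrammatic check: for the $2$-cabled Voiculescu trace the only nonzero planar cumulant is $\kappa_2^{\mc P}=\dcup$, so the right-hand side of the conjugate-variable identity (the sum $\sum_{k+m=n-1}\delta^{-1}T_m\otimes T_k$ applied to $x$) is reproduced on the left by taking $\xi=\corner$ and using $T_{n+1}=\sum_k(\textrm{pair first two strings to strings }2k{+}1,2k{+}2)\cdot T_{k}\cdot T_{n-1-k}$, the standard recursion for sums of doubled Temperley--Lieb diagrams.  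No graph embedding, no operator-valued covariance inversion, no normalization bookkeeping is needed.

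Your route---embed in $\mc P^\Gamma$, identify $\partial$ with the Jacobian of the $\{X_{e,f^o}\}$, use that an operator-valued circular family has an explicit conjugate system, then pull back via Proposition~\ref{prop:restriction}---also works and has the virtue of making contact with classical free probability, but it is longer and, as you yourself note in your final paragraph, forces you to track the constants in Proposition~\ref{graph-ids} and Remark~\ref{normalize}.  That ``main obstacle'' disappears entirely if you verify the conjugate-variable equation directly in $Gr_1(\mc P)$ for $\xi=\corner$; this is perhaps the cleanest illustration of why the $2$-cabled setup is well adapted to the diagrammatic calculus.
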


\begin{proof}
It is immediate to verify that $\tikz{\draw (0,0) arc(-90:0:.15 and .25); \draw (.5,0) arc(270:180:.15 and .25);}
$
is a conjugate variable for $\tau$.  Thus $\Phi^*(\tau)<\infty$.  Thus we can apply the results of the previous sections to deduce the conclusion of the theorem.
\end{proof}

Following almost verbatium the proof in \cite{gjs2} gives us the identification of the algebras $M_k$; we omit the proof.
\begin{proposition}
If $\mc P$ is finite-depth then $M_k^\tau(\mc P) \simeq L\mb F_{t_k}$ where 
\begin{equation*}
t_k = 1 + \delta^{-2k}I(\delta^2 - 1)
\end{equation*}
with $I$ the global index.
\end{proposition}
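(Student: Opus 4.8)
The plan is to transcribe the computation of \cite{gjs2} to the present situation, the only systematic change being that circular generators replace semicircular ones throughout. As shown above, each $M_k^\tau(\mc P)$ is already known to be a II$_1$ factor (this uses $\delta>1$) and the $M_k^\tau$ form a Jones tower with standard invariant $\mc P$, so only the isomorphism class is at issue. Fix the principal graph $\Gamma=\Gamma_+\cup\Gamma_-$ of $\mc P$; since $\mc P$ has finite depth, $\Gamma$ is finite, with Perron--Frobenius eigenvector $\mu=(\mu_v)_{v\in\Gamma}$, normalized by $\mu_\star=1$, eigenvalue $\delta$, and global index $I=\sum_{v\in\Gamma}\mu_v^{2}$. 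Using the embedding $\mc P\hookrightarrow\mc P^\Gamma$ together with Corollary~\ref{cor:embeddingTracePositivity} and the description of the $2$-cabled trace on $Gr_0(\mc P^\Gamma)\subset\ms A_\Gamma$ obtained above, this trace is realized via the operator-valued circular family $(X_{e,f^o})=(p_{s(e)}C_{e,f^o}p_{s(f)})$, with $(C_{e,f^o})$ a scalar circular family free from $\C\Gamma_+$; this plays the role that the operator-valued semicircular description of the ordinary Voiculescu trace plays in \cite{gjs1,gjs2}. (As a consistency check one may instead pass, via Remark~\ref{2cable-remark}, to $Gr_k(\mc P^c)$ with the ordinary Voiculescu trace and apply \cite{gjs2} directly to the $2$-cabling $\mc P^c$, whose loop parameter is $\delta^{2}$ and whose global index is again $I$.)

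Next, following \cite{gjs2}, I would assemble out of the graded algebras $Gr_k(\mc P)$, $k\ge0$, a single semifinite von Neumann algebra $\mc M$ with a normal semifinite trace $\Tr$: concretely $\mc M$ is the ``graph von Neumann algebra'' of $(\Gamma,\mu,\delta)$, generated over the finite-dimensional abelian algebra $D=l^\infty(\Gamma)$ by a $D$-valued circular family indexed by the edges of $\Gamma$ with covariances prescribed by $\mu$. Exactly as in \cite{gjs2} one checks that $\mc M$ is a II$_\infty$ factor (connectedness of $\Gamma$ enters here, consistently with the factoriality of the $M_k$ established above), that each $M_k^\tau(\mc P)=W^*(Gr_k(\mc P),\tau)$ is a corner $p_k\mc M p_k$, and --- by a direct diagrammatic evaluation of $\Tr$ on the unit of $Gr_k$, using the path count $\sum_{|\gamma|=2m}\mu_{s(\gamma)}\mu_{t(\gamma)}=\delta^{2m}I$ --- that $\Tr(p_k)=\delta^{k}$ once one normalizes $\Tr(p_0)=1$. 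Since $\mc M$ is a factor, its finite corners are mutual amplifications, so $M_k^\tau(\mc P)\cong\bigl(M_0^\tau(\mc P)\bigr)^{(\delta^{k})}$, and it remains only to identify $M_0^\tau(\mc P)$.

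The heart of the argument, and the step I expect to be the main obstacle, is the free-probabilistic identification of $\mc M$ --- equivalently of $M_0^\tau(\mc P)$ --- as an interpolated free group factor. As in \cite{gjs1,gjs2} one decomposes $\mc M$ as an amalgamated free product over $D$ of elementary building blocks (compressions of free circular systems and of full matrix algebras attached to the vertices of $\Gamma$ weighted by $\mu$) and then applies the standard bookkeeping for free products of hyperfinite von Neumann algebras and interpolated free group factors (Dykema's formulae, as used throughout \cite{gjs2}). Carried out with circular in place of semicircular, this yields $M_0^\tau(\mc P)\cong L\mathbb F_{t_0}$ with free dimension $t_0-1=I(\delta^{2}-1)$, the exponent $2$ on $\delta$ being the shadow of circular replacing semicircular and the factor $I$ coming from the path count above. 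Combining this with the amplification of the previous paragraph and the rule $L\mathbb F_s^{(t)}\cong L\mathbb F_{1+(s-1)t^{-2}}$,
\begin{equation*}
M_k^\tau(\mc P)\cong\bigl(L\mathbb F_{1+I(\delta^2-1)}\bigr)^{(\delta^{k})}\cong L\mathbb F_{t_k},\qquad t_k=1+\delta^{-2k}I(\delta^{2}-1).
\end{equation*}
The genuinely delicate points are precisely those of \cite{gjs2}: that nothing beyond $(\Gamma,\mu,\delta)$ survives in the isomorphism class of $\mc M$, and the careful accounting of free dimensions through the amalgamated free-product decomposition. Neither is affected by the passage from semicircular to circular generators, which is why the proof runs ``almost verbatim'' along the lines of \cite{gjs2}.
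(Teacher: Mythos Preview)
Your sketch is correct and matches the paper's approach exactly: the paper simply says ``following almost verbatim the proof in \cite{gjs2} gives us the identification of the algebras $M_k$; we omit the proof,'' so your outline --- embed into $\mc P^\Gamma$, use the operator-valued circular description of the $2$-cabled trace, build the semifinite algebra as in \cite{gjs2}, and run Dykema's free-dimension bookkeeping with circular replacing semicircular --- is precisely what is intended, and in fact provides more detail than the paper itself. Your alternative route via Remark~\ref{2cable-remark} and the $2$-cabling $\mc P^c$ is also a legitimate shortcut, consistent with the paper's own compatibility check in the remark following the proposition.
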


\begin{remark}

In \cite{gjs2} it was shown that when $\mc P$ is finite-depth, for the Voiculescu trace we have $M_k \simeq L\mb F_{r_k}$ with $r_k = 1 + 2I\delta^{-2k}(\delta^2 - 1)$ where $I$ is the global index.  In the infinite-depth case, Hartglass \cite{hart} has shown that $M_k \simeq L\mb F_{\infty}$.  

For the 2-cabled Voiculescu trace, we have shown that $M_k \simeq L\mb F_{t_k}$ with $t_k = 1 + \delta^{-2k}I(\delta^2 - 1)$ when $\mc P$ is finite-depth.  Note that these formulas are compatible with the index 2 inclusion of Remark \ref{2cable-remark}.  
\end{remark}

\section{Free Gibbs states and random matrix models}\label{sec:gibbs}

In this section we will construct the free Gibbs state $\tau_V$ on a finite-depth planar algebra $\mc P$, when the potential $V$ is sufficiently close to the quadratic potential $\frac{1}{2}\dcup$.  Such models have already been studied in \cite{gjsz} as well as \cite{nelson}, the main difference is that here we work with a ``2-cabled'' version which leads to slightly different random matrix models.  However we will use essentially the same methods, which go back to earlier work of Guionnet and Maurel-Segala \cite{gms}.  For this reason, we do not give details of proofs (which are word for word the same as used e.g. in \cite{nelson}), but rather show how to modify the appropriate statements to deal with the 2-cabled situation.

We will consider potentials $V = \frac{1}{2}\dcup +  W$ where $W = \sum t_i \cdot W_i$ with $W_i \in P_{n_i}$, $i = 1,\dotsc,k$.  Recall that $\tau_V$ must satisfy the \textit{Schwinger-Dyson equation}:
\begin{equation}\label{eq:SD}
 \tau_V\bigl[(\corner + \ms DW) \wedge a\bigr] = (\tau_V \otimes \tau_V)[\partial(a)], \qquad (a \in Gr_1(\mc P)).
\end{equation}
In the usual (polynomial) case, diagrammatic formulas for $\tau_V$ go back to the fundamental work of 't~Hooft \cite{hooft} and Br\'{e}zin-Itzykson-Parisi-Zuber \cite{bipz}.  This was recently extended to the planar algebra setting in \cite{gjsz}.  Our formula here will be identical to that of \cite{gjsz}, except that we restrict to diagrams in which the strings have been doubled.  Thus at $t = 0$ we recover the 2-cabled Voiculescu trace, instead of the standard Voiculescu trace used there.  

\begin{theorem}
Let $\tau$ be a positive trace satisfying the Schwinger-Dyson equation with some potential $V$.  Then $\Phi^*(\tau)<\infty$.  Thus if $\delta>1$, then $M^{(\tau)}_k$ are factors, and the standard invariant of the inclusion $M_0^{\tau}\subset M_1^\tau$ is $\mc P$.
\end{theorem}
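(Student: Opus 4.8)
The plan is to recognize that the Schwinger--Dyson equation \eqref{eq:SD} is, \emph{verbatim}, the equation defining the conjugate variable of $\tau$, so that the conjugate variable not only exists but is represented by an honest element of $Gr_1(\mc P)$; the finiteness of $\Phi^*(\tau)$ is then immediate, and the remaining assertions are quotations of the structural results of Section \ref{sec:freeAnalysis}.

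Concretely, I would set $\xi = \corner + \ms D W$ and first check that $\xi \in Gr_1(\mc P)$: by definition $\corner$ is an element of $P_{0,1} \subset Gr_1(\mc P)$, the cyclic gradient $\ms D$ maps $Gr_0(\mc P)$ into $Gr_1(\mc P)$, and $W \in Gr_0(\mc P)$, so $\ms D W \in Gr_1(\mc P)$. Next, reading the right-hand side of \eqref{eq:SD} via the trace $\tau_1 \boxtimes \tau_1^{op}$ on $Gr_1(\mc P) \boxtimes Gr_1(\mc P)^{op}$ (which is what the symbol $\tau_V \otimes \tau_V$ abbreviates there, the inclusion $Gr_1 \otimes Gr_1^{op} \hookrightarrow Gr_1 \boxtimes Gr_1^{op}$ being trace preserving), equation \eqref{eq:SD} becomes $\tau_1(\xi \wedge_1 a) = (\tau_1 \boxtimes \tau_1^{op})(\partial a)$ for every $a \in Gr_1(\mc P)$, which is exactly the defining property of the conjugate variable. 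Hence the conjugate variable exists and equals $\xi \in Gr_1(\mc P) \subset L^2(Gr_1,\tau_1)$, so that $\Phi^*(\tau)^2 = \tau_1(\xi \wedge_1 \xi) = \Vert \xi \Vert_{L^2(Gr_1,\tau_1)}^2 < \infty$. This is the first assertion.

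For the second assertion, assume in addition $\delta > 1$, so that all hypotheses of the structural results of Section \ref{sec:freeAnalysis} are in force. Theorem \ref{thm:factoriality} gives that $M_1 = W^*(Gr_1,\tau)$ is a factor; Theorem \ref{thrm:highercomm} together with the discussion following it gives that $M_k$ is a factor for every $k \geq 1$ and that $M_0 = e_0 M_2 e_0$ is a factor as well; and Corollary \ref{cor:standard} identifies the standard invariant of the inclusion $M_0^{\tau} \subset M_1^{\tau}$ with $\mc P$. This completes the proof.

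I do not expect any genuine obstacle internal to this argument: all of the real work lies in the earlier sections, and the content here is the one-line recognition that \eqref{eq:SD} is the conjugate-variable equation. The only points requiring a little care are the bookkeeping in the second paragraph --- that $\corner + \ms D W$ is a genuine element of $Gr_1(\mc P)$ rather than merely an $L^2$-vector, and that the bilinear functionals $\tau_V\otimes\tau_V$ and $\tau_1\boxtimes\tau_1^{op}$ agree on the relevant expressions --- which is precisely what makes the finiteness of $\Phi^*(\tau)$ fall out for free; and the observation that $\tau$ may be taken faithful as needed by the later results (automatic for a free Gibbs state, and in general forced once $\Phi^*(\tau)<\infty$, e.g.\ since Lemma \ref{lemma:likeCoarse}(a) then makes the spectral measure of $\cup$ non-atomic).
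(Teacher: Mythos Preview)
Your proposal is correct and follows exactly the paper's own argument: the Schwinger--Dyson equation is recognized as the defining relation for the conjugate variable, so $\xi = \ms D V = \corner + \ms D W \in Gr_1(\mc P)$ witnesses $\Phi^*(\tau)<\infty$, after which Theorem~\ref{thm:factoriality}, Theorem~\ref{thrm:highercomm} and Corollary~\ref{cor:standard} finish the job. The paper's proof is the one-line version of what you wrote.
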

\begin{proof}
All that needs to be pointed out is that, by the Schwinger-Dyson equation, $\mathscr{D}V$ is a conjugate variable for $\tau$, so $\Phi^*(\tau)<\infty$. 
\end{proof}

Define a $\mc P$-functional $\tr_t = (T^{(t)}_m)_{m \geq 1}$ by 
\begin{equation*}
\begin{tikzpicture}[scale=.75]
 \draw[Box] (0,0) rectangle (1,1); \node at (.5,.5) {$P$}; \node[marked, below right, scale=.8] at (1,0) {};
 \draw[thickline] (.5,0) -- ++(0,-.5);
 \node[left] at (-.5,0) {$T^{(t)}_m = \displaystyle \sum_{n_1,\dotsc,n_k = 0}^\infty  \displaystyle \sum_{P \in P(n_1,\dotsc,n_k;m)} \prod_{i=1}^k \frac{t_i^{n_i}}{n_i!}$};
\end{tikzpicture}
\end{equation*}
where $P(n_1,\dotsc,n_k;m)$ is the set of labelled 2-cabled $m$-tangles with $n_1 + \dotsb + n_k$-input discs, $n_i$ of which are labelled by $W_i$ for $1 \leq i \leq k$, which are topologically connected when including the outer disc.  Here 2-cabled means that that the tangle can be obtained by doubling the strings of some other (unshaded) planar tangle.  

Let $P^c(n_1,\dotsc,n_k;m)$ denote the subcollection of tangles which can be obtained by doubling a (unshaded) planar tangle which remains topologically connected after removing the outer disc.  Then it is not hard to see from Definition \ref{planar_cumulants} that the planar algebra cumulants associated to $\tr_t$ as above are given by
\begin{equation*}
\begin{tikzpicture}[scale=.75]
 \draw[Box] (0,0) rectangle (1,1); \node at (.5,.5) {$P$}; \node[marked, below right, scale=.8] at (1,0) {};
 \draw[thickline] (.5,0) -- ++(0,-.5);
 \node[left] at (-.5,0) {$\kappa^{\mc P}_m(t) = \displaystyle \sum_{n_1,\dotsc,n_k = 0}^\infty  \displaystyle \sum_{P \in P^c(n_1,\dotsc,n_k;m)} \prod_{i=1}^k \frac{t_i^{n_i}}{n_i!}$};
\end{tikzpicture}
\end{equation*}

\begin{theorem}
For $|t_i|$ sufficiently small, $\tr_t$ is a positive $\mc P$-trace and is the unique solution of the Schwinger-Dyson equation.
\end{theorem}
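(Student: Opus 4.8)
The plan is to follow the method of Guionnet--Maurel-Segala \cite{gms}, in the planar-algebra form of \cite{gjsz} and the von Neumann algebra refinement of \cite{nelson}, changing only the bookkeeping from ``planar tangles'' to ``$2$-cabled planar tangles'': every string gets doubled but nothing else in those arguments changes. \emph{Convergence and traciality.} Since $\mc P$ is finite-depth, embed it in a graph planar algebra $\mc P^\Gamma$ with $\Gamma$ finite, so that under the $2$-cabled Voiculescu trace $\cup$, $\corner$ and each $W_i\in P_{n_i}$ act by bounded operators of norms $c$, $c$, $c_i$. A connected $2$-cabled $m$-tangle with $n_i$ discs labelled $W_i$ is the doubling of a connected unshaded $m$-tangle, and the number of the latter grows at most like $C^{\,n_1+\dots+n_k+m}$ (planar maps of bounded degree on the sphere). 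Hence
\[
T^{(t)}_m=\sum_{n_1,\dots,n_k\ge 0}\ \sum_{P\in P(n_1,\dots,n_k;m)}\ \prod_{i=1}^k\frac{t_i^{n_i}}{n_i!}\,P
\]
converges absolutely in $P_m$, uniformly on $\{\,|t_i|\le\varepsilon\,\}$ for $\varepsilon$ small (depending on $c$ and on the $c_i,n_i$), is analytic in $t$, and $T^{(0)}_m$ is the $2$-cabled Voiculescu tensor. Since $P(n_1,\dots,n_k;m)$ is stable under rotating the outer disc, $\tr_t$ is tracial and $\dagger$-invariant; the same estimate applied to $P^c(n_1,\dots,n_k;m)$ shows $\kappa^{\mc P}_m(t)$ is the planar-algebra cumulant of $\tr_t$ of Definition~\ref{planar_cumulants}.

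\emph{Schwinger--Dyson equation.} I would verify \eqref{eq:SD} by the '$t$~Hooft/BIPZ surgery \cite{hooft,bipz,gms,gjsz}. Given $a\in Gr_1(\mc P)$, a connected $2$-cabled tangle computing $\tr_t[(\corner+\ms DW)\wedge a]$ carries a distinguished pair of doubled strings leaving the marked region. Following them: either they return to the outer disc, and cutting along them disconnects the tangle into two connected $2$-cabled pieces, whose sum is $(\tau_V\otimes\tau_V)[\partial(a)]$; or they end on a disc labelled by some $W_i$, which is then absorbed into $a$, and summing over $i$ and over which of the $n_i$ identical discs is hit cancels the factor $1/n_i!$, producing $\tau_V[\ms DW\wedge a]$. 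The Gaussian part $\corner$ and the derivation term are treated exactly as in \cite{gjsz}; doubling the strings does not affect this combinatorics. So $\tr_t$ solves \eqref{eq:SD}.

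\emph{Uniqueness.} Reading \eqref{eq:SD} as a recursion expressing $T_m$ through $T_{m'}$, $m'<m$, and the $t_i$, one checks (a) that any solution obeys a growth bound $\|T_m\|\le R^m$ with $R$ depending only on $c,c_i,\varepsilon$ --- for small $|t_i|$ this recursion is a small perturbation of the Catalan recursion governing the $2$-cabled Voiculescu tensors --- and (b) that on the Banach space of $\mc P$-functionals with norm $\|T\|_R=\sup_m R^{-m}\|T_m\|$ the map $\Psi_t$ implementing that recursion is Lipschitz with constant $<1$ once $|t_i|$ is sufficiently small. Thus the solution satisfying (a) is unique; $\tr_t$ solves \eqref{eq:SD} and, by the convergence estimate above, satisfies (a), so $\tr_t$ is that unique solution.

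\emph{Positivity --- the main obstacle.} The combinatorial formula makes $\tr_t$ a tracial $\mc P$-functional, but positive-definiteness is not visible from it. I would obtain it from the $2$-cabled random matrix model: as in \cite{gms,gjsz}, for each $N$ one forms a multi-matrix integral with density $\propto\exp(-N\,\Tr\,V(\cdot))$ (convergent because, for $|t_i|$ small, $V$ is a small perturbation of the Gaussian), whose $\mc P$-valued empirical distribution $\mathbb{E}[\,\cdot\,]$ converges --- via the topological $1/N^2$-expansion with every propagator doubled --- precisely to $\tr_t$; each finite-$N$ model is a genuine tracial state, so the limit $\tr_t$ is a positive $\mc P$-trace. (Alternatively, once Section~\ref{sec:transport} is in place, positivity is inherited from the $2$-cabled Voiculescu trace through the free monotone transport constructed there.) Combining the four steps gives the theorem; in particular, by the theorem stated just before this one, $\Phi^*_{\mc P}(\tr_t)<\infty$. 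The quantitatively delicate points are the uniform-in-$N$ concentration and topological-expansion estimates of the positivity step and the contraction estimate of the uniqueness step, both identical to \cite{gjsz,nelson}.
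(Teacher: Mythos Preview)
Your proposal is correct and follows essentially the same approach as the paper: the paper itself gives no detailed proof, explicitly stating that the arguments are ``word for word the same as used e.g.\ in \cite{nelson}'' (and \cite{gms,gjsz}) with the sole modification that all tangles are $2$-cabled, which is precisely what you outline. Your sketch of convergence via planar-map counting, the Schwinger--Dyson identity via the 't~Hooft/BIPZ string-surgery, uniqueness via a contraction estimate, and positivity via the random matrix model (or transport) is an accurate summary of those references adapted to the $2$-cabled setting.
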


\begin{corollary}
Let $\mc A_1,\dotsc, \mc A_m$ be unital, graded $*$-subalgebras of $Gr_0(\mc P)$, and suppose that
\begin{equation*}
\begin{tikzpicture}[thick,scale=.5]
\draw[Box] (0,0) rectangle (2,1); \node at (1,.5) {$a_1$}; \node[marked,scale=.8,below] at (1,0) {};
\draw[Box] (3,0) rectangle (5,1); \node at (4,.5) {$a_2$}; \node[marked,scale=.8,below] at (4,0) {};

\draw (.85,1) arc(180:90:.65) --++(2,0) arc(90:0:.65);
\draw (1.15,1) arc(180:90:.5) --++(1.7,0) arc(90:0:.5);

\draw[medthick] (.5,1) arc(0:180:.375) --++(0,-1.25);
\draw[medthick] (1.5,1) arc(180:0:.375 and .25) --++(0,-1.25);
\draw[medthick] (3.5,1) arc(0:180:.375 and .25) --++(0,-1.25);
\draw[medthick] (4.5,1) arc(180:0:.375 and .25) --++(0,-1.25);

\node[right] at (5.5,.75) {$= 0,$};
\end{tikzpicture}
\end{equation*}
whenever $a_j \in \mc A_{i_j}$, $i_1 \neq i_2$, and the thick lines represent an arbitrary number of parallel strings.  Let $W_j \in \mc A_j$ for $1 \leq j \leq m$ and let $V = \frac{1}{2}\dcup + \sum t_i \cdot W_i$.  Then for $|t_i|$ sufficiently small, $(\mc A_j)_{1 \leq j \leq m}$ are free with respect to $\tau_V$.  In particular, these subalgebras are free with respect to the 2-cabled Voiculescu trace.
\end{corollary}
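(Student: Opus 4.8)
The plan is to deduce freeness from Corollary~\ref{cumulant_freeness} together with the tangle expansion of the planar algebra cumulants of $\tr_t$. For $|t_i|$ small enough one has $\tau_V=\tr_t$, so by the formula recorded above its planar algebra free cumulants are $\kappa^{\mc P}_\ell(t)=\sum_{n_1,\dots,n_m\ge 0}\prod_{i=1}^m\frac{t_i^{n_i}}{n_i!}\sum_{P\in P^c(n_1,\dots,n_m;\ell)}P$, where the inner sum runs over $2$-cabled $\ell$-tangles whose input discs are labelled by the $W_i\in\mc A_i$ (with $n_i$ of them labelled by $W_i$) and which stay connected after deleting the outer disc. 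By Corollary~\ref{cumulant_freeness} with $k=0$, to obtain freeness of $\mc A_1,\dots,\mc A_m$ with respect to $\tau_V$ it is enough to fix $\ell$ and $b_1\in\mc A_{i_1}$, $b_2\in\mc A_{i_2}$ with $i_1\neq i_2$, and to show that the planar diagram $D$ obtained by feeding some legs of $b_1$ and $b_2$ into a $\kappa^{\mc P}_\ell(t)$-box and closing up the remaining legs as in the statement of that corollary vanishes.

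Substituting the tangle expansion for the $\kappa^{\mc P}_\ell(t)$-box, I would write $D$ as a finite sum of planar diagrams, each built from the outer disc, input discs labelled by the $W_i$, and the two discs $b_1,b_2$, all glued by one fixed $2$-cabled planar tangle. Fix such a summand $D'$. Once the outer disc is removed, $D'$ is connected (the tangle lies in $P^c$ and the closing-up of Corollary~\ref{cumulant_freeness} attaches both $b_1$ and $b_2$ to it), and its labelled discs carry at least the two distinct labels $i_1\neq i_2$; hence the graph on the internal discs with edges the connecting string bundles is connected but not constantly coloured, so some bundle joins internal discs $A\in\mc A_a$ and $B\in\mc A_b$ with $a\neq b$. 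Since the tangle is $2$-cabled, this bundle is a union of ribbons, and its innermost ribbon connects two consecutive legs of $A$ to two consecutive legs of $B$ by a nested doubled cup---exactly $\kappa_2^{\mc P}$ of the $2$-cabled Voiculescu trace---with all other strings routed off as parallel bundles. This is an instance of the hypothesis, so that sub-diagram is the zero element of the planar algebra, and since $D'$ is obtained from it by a (multilinear) planar-algebra operation, $D'=0$. Summing gives $D=0$, proving the first assertion. For the last sentence put all $t_i=0$, so that $\tau_V=\tau$: then $\kappa^{\mc P}_\ell(0)=0$ for $\ell\neq 2$ and $\kappa_2^{\mc P}(0)$ is the nested doubled cup, so the $\ell=2$ diagram $D$ is literally the hypothesis diagram and vanishes.

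The step I expect to be the main obstacle is the purely diagrammatic matching: verifying that the innermost ribbon really yields the nested doubled-cup pattern of the hypothesis (rather than a parallel pair of strings), that the cyclic positions of the two contracted legs among the others are immaterial (the hypothesis leaves every uncontracted leg free, and $D'$ is recovered by capping these off), and that a bichromatic bundle always exists between two \emph{internal} discs and is not pre-empted by discs touching only the outer boundary. A conceptually different route for the $|t_i|$-small case, granting the $t=0$ case, would be to invoke the free monotone transport of Section~\ref{sec:transport}: since the non-quadratic part $\sum_j t_jW_j$ of the potential is a sum of terms lying in the free subalgebras $\mc A_j$, the transport respects that decomposition and should carry freeness at $t=0$ over to $\tau_V$.
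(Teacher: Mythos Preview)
Your approach is correct and is essentially the same as the paper's: expand $\kappa_\ell^{\mc P}(t)$ via its connected 2-cabled tangle formula and verify the hypothesis of Corollary~\ref{cumulant_freeness} term by term. The paper's own proof is the single sentence ``using the given assumption it is not hard to verify that the hypotheses of Corollary~\ref{cumulant_freeness} are satisfied,'' so you have simply unpacked what that sentence means.

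Your ``main obstacle'' paragraph correctly isolates the only nontrivial points, and they do all work out. The key structural fact you need (and implicitly use) is that in a tangle from $P^c$ \emph{with at least one internal disc}, every external ribbon endpoint is attached to an internal disc --- otherwise a boundary-to-boundary arc would give a second connected component after deleting the outer disc. Hence after gluing in $b_1,b_2$ the internal ribbon graph is genuinely connected, and the bichromatic-edge argument goes through. In the remaining case (no internal discs) connectedness of $P^c$ forces $\ell=2$ and the tangle is exactly the doubled cup, so $b_1$ and $b_2$ are joined directly. Finally, the ``innermost ribbon'' really does yield the hypothesis pattern: a tubular neighbourhood of $A\cup R\cup B$ can be taken (after isotopy) to contain no other discs, and the planar-algebra element inside that neighbourhood is precisely the hypothesis diagram with $a_1=A$, $a_2=B$; since it vanishes, so does $D'$ by multilinearity of the outer tangle. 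Your worry about parity (whether the two legs of $b_1$ enter at a genuine ribbon pair) is resolved by noting that in the proof of Corollary~\ref{cumulant_freeness} the configurations that actually arise always have the thick bundles consisting of whole pairs, so the two thin strings land on a $(2j{-}1,2j)$ pair of the 2-cabled boundary.

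The transport alternative you sketch at the end is a genuinely different route and would also work for small $|t_i|$, but it is not needed here.
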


\begin{proof}
Consider the formula above for the planar algebra cumulants associated to $\tau_V$.  Using the given assumption it is not hard to verify that the hypotheses of Corollary \ref{cumulant_freeness} are satisfied, and the result follows.
\end{proof}

We briefly mention that there is also an associated random matrix model, which creates 2-cabled traces in the large-$N$ limit.  The only difference compared to~\cite{gjsz} is that the blocks of the matrix are labeled by pairs of edges $(e,f^o)$ rather than by a single edge as in~\cite{gjsz}.  We leave the details to the reader.

\section{Free monotone transport}\label{sec:transport}

In this section we will construct the free monotone transport from the 2-cabled Voiculescu trace to a free Gibbs state $\tau_V$ for $V$ sufficiently close to the quadratic potential $\dcup$.  The proof follows the construction of free monotone transport in the polynomial case given in \cite{gsh} and follows the same idea as  in \cite{nelson}.  

\begin{theorem}
Let $\mc P$ be a finite-depth planar algebra.  Let $V=  \frac{1}{2}\dcup +  W$, with $W=\sum t_i W_i$, $W_i \in \mc P$.  Let $\tau$ be the 2-cabled Voiculescu trace, and for $t_i$ small, let $\tau_V$ be unique $\mc P$-trace satisfying the Schwinger-Dyson equation with potential $V$.  Then there exit trace-preserving isomrophisms $C^*(Gr_k,\tau)\cong C^*(Gr_k,\tau_V)$ and $W^*(Gr_k,\tau)\cong W^*(Gr_k,\tau_V)$.
\end{theorem}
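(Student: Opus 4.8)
The plan is to transplant, essentially word for word, the construction of free monotone transport of \cite{gsh} and its refinement for non-polynomial potentials in \cite{nelson} to the two-cabled planar algebra setting. The key point, stressed in the introduction, is that the two-cabled free differential calculus $\partial$, $\ms D$, $\#$, $\cdot$ of \S\ref{sec:freeAnalysis} is formally identical to the ordinary free difference quotient and cyclic gradient on a finitely generated $*$-algebra, so that all of the estimates and power-series manipulations of \cite{gsh,nelson} transfer verbatim once the set-up is translated. Concretely, I would first pass to the graph planar algebra $\mc P^\Gamma$ of the principal graph $\Gamma$ (finite, since $\mc P$ is finite-depth) and work with the combinatorially-defined two-cabled Gibbs state $\tau_V$ of \S\ref{sec:gibbs} on $\mc P^\Gamma$, which restricts to the $\tau_V$ on $\mc P$. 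By Proposition \ref{graph-ids} and Proposition \ref{graph-derivatives}, $Gr_0(\mc P^\Gamma)$ is the even path algebra $\ms A_\Gamma$ of \S\ref{sec:background}, generated by the finite family $\{X_{e,f^o}\}$, with $\ms D$, $\ms J$, $\#$, $\cdot$ becoming the literal cyclic gradient, free Jacobian and so on; and by the first Proposition of \S\ref{sec:2cabletrace} the $X_{e,f^o}$ form an $l^\infty(\Gamma_+)$-valued circular family under the two-cabled Voiculescu trace $\tau$ --- exactly the data to which operator-valued/quasi-free free transport applies.

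Following \cite{gsh,nelson}, the transport is sought in the form $F = \ms D g$ with $g = \tfrac12\,\dcup + h$, where $h$ lies in a ball of radius $O(\max_i |t_i|)$ in a Banach algebra $\ms A_\Gamma^{(R)}$ of ``analytic'' noncommutative power series in the $X_{e,f^o}$ --- a Haagerup-type completion on which $\partial$, $\ms D$, $\#$ are bounded and on which the relevant norms on $\ms A_\Gamma^{(R)} \otimes \ms A_\Gamma^{(R),\mathrm{op}}$ and $M_N$ thereof are submultiplicative. The requirement that $F$ push $\tau$ forward to $\tau_V$ is, exactly as in \cite{gsh}, equivalent to $g$ solving a free Monge--Amp\`ere equation: $(\corner + \ms D W)\circ F = \ms D g$ modulo a ``logarithmic Jacobian'' correction built from $\ms J\,\ms D g$ via $\#$ and a convergent series for $\log$ in $M_N(\ms A_\Gamma^{(R)} \otimes \ms A_\Gamma^{(R),\mathrm{op}})$; here the cyclic symmetrizer $\ms S$ of \S\ref{sec:freeAnalysis} plays the role of its polynomial analogue, and the characterization of $\tau_V$ by the Schwinger--Dyson equation \eqref{eq:SD} is used throughout. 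For $|t_i|$ small this is a small Lipschitz perturbation of the identity on the space of cyclically symmetric power series, so the implicit function theorem / Picard iteration produces a unique self-adjoint solution $g$, analytic in the $t_i$.

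Given $g$, the map $F = \ms D g$ has positive free Jacobian (it is ``monotone''), so the argument of \cite{gsh,nelson} shows that the $*$-homomorphism $\Phi:\ms A_\Gamma \to \ms A_\Gamma$ determined by $X_{e,f^o}\mapsto F_{e,f^o}$ sends $\tau_V$ to $\tau$; since $F$ is a small perturbation of the identity it has a power-series inverse, so $\Phi$ is an isomorphism. Restricting $\Phi$ to $Gr_0(\mc P)\subset Gr_0(\mc P^\Gamma)$ and propagating it strand-wise through the $\wedge_k$-products yields compatible trace-preserving isomorphisms $C^*(Gr_k,\tau)\cong C^*(Gr_k,\tau_V)$ for every $k$, as in \cite{nelson}; being trace-preserving for the faithful traces $\tau$, $\tau_V$, these extend to $W^*(Gr_k,\tau)\cong W^*(Gr_k,\tau_V)$. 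I expect the only genuine work --- the main obstacle --- to be bookkeeping: checking that the Haagerup/analytic norms on the two-cabled graph path algebra are submultiplicative and that $\partial$, $\ms D$, $\#$ are bounded for them, so that every power-series estimate of \cite{gsh} goes through unchanged, and verifying that the transport constructed on $\mc P^\Gamma$ restricts to an isomorphism of the subalgebras coming from $\mc P$ --- the latter handled as the corresponding step in \cite{nelson}, using that $\Phi$ preserves the planar-algebra grading and the traces.
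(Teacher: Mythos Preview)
Your overall strategy---construct free monotone transport from $\tau$ to $\tau_V$ by solving a free Monge--Amp\`ere equation for $g$ via Picard iteration, then use $\ev_F$ with $F=\ms D g$ as the isomorphism---is exactly the paper's, and the endgame (showing the transport lies in $Gr_1(\mc P)$, invertibility, extension to von Neumann algebras) matches too.

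The one genuine difference is the choice of ambient embedding. You propose to work in the \emph{tracial} path algebra $\ms A_\Gamma$ of Propositions~\ref{graph-ids} and~\ref{graph-derivatives}, where the $X_{e,f^o}$ form an $l^\infty(\Gamma_+)$-\emph{valued} circular family; the paper instead passes to the \emph{non-tracial} algebra $\ms B_\Gamma$ of \S\ref{sec:embedding-nt}, where the $Y_{e,f^o}$ are a \emph{scalar}-valued circular family with respect to the free quasi-free state $\phi$, and then invokes Nelson's non-tracial transport \cite{nelson:qf} as a black box. The equivariance of $\pi,\pi_1,\pi_\boxplus$ (Proposition~\ref{graph-derivatives-nt}) then shows the transport produced by \cite{nelson:qf} actually lies in $C^*(Gr_1(\mc P))$, and the argument of \cite{nelson} shows it restricts to $\mc P\subset\mc P^\Gamma$.

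What each route buys: the paper's route is shorter because it reduces to an off-the-shelf theorem---the operator-valued structure over $l^\infty(\Gamma_+)$ is traded for non-traciality of $\phi$, and \cite{nelson:qf} already handles that. Your route stays tracial but genuinely operator-valued (amalgamated over $l^\infty(\Gamma_+)$); the transport construction of \cite{gsh} is scalar-valued, so you would have to redo the analytic estimates in the amalgamated setting rather than merely cite them. That is more than ``bookkeeping'': the Monge--Amp\`ere fixed-point argument needs the logarithm of $\ms J\ms D g$ to make sense in $M_N(\ms A_\Gamma^{(R)}\otimes_{l^\infty(\Gamma_+)}\ms A_\Gamma^{(R),\mathrm{op}})$, and the trace used in the Jacobian term is now $l^\infty(\Gamma_+)$-valued. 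This can be made to work, but the paper's detour through \cite{nelson:qf} avoids having to write it out.
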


We only sketch the proof, which verbatim repeats the argument in \cite{nelson} (except for the use of the slightly different differential calculus adapted to the 2-cabled situation).  

Let $\Gamma$, $\ms B_\Gamma$, $\phi$, $\pi$, $\pi_1$ and $\pi_\boxplus$ be as in \S\ref{sec:embedding-nt}.    By~\cite{nelson:qf}, for small enough $t_i$ there exists a unique state free Gibbs $\phi_V$ on $\ms B_\Gamma$ corresponding to potential $\pi(V)$.  This state has the same modular group as $\phi$ (and is equal to $\phi$ if $t_i=0$ for all $i$).  The composition of $\phi_V$ and $\pi$ give rise to a trace on $Gr_0$ and thus also on all $Gr_k$;  we for now denote this sequence of traces by $\tau'_V$ .    Since $\pi$, $\pi_1$, $\pi_\boxplus$ are equivariant with respect to free differential calculus, it follows that the $\mc P$-trace $\tau'_V$ also satisfies  the Schwinger-Dyson equation \eqref{eq:SD}, and by uniqueness of its solutions, must be equal to $\tau_V$, the free Gibbs $\mc P$-trace on $Gr_k(\mc P)$.

Let $N=\{(e,f^o): t(e)=t(f)\}$.  Let $F\in W^*(\ms A_\Gamma)^N$ be the transport map constructed in \cite{nelson:qf}; thus $F=(F_{(e,f^o)})_{(e,f^o)\in N}$ and the joint law under $\phi_V$ of $F$ is the free quasi-free state $\phi$.  The map $F$ is constructed as a limit (in a certain norm stronger than the operator norm) of maps $F_k$, obtained by applying a certain iterative procedure, which involves the operators $\#$, $\cdot$, $\ms J$, $\ms D$.  The argument in \cite{nelson} goes through verbatim to show that each $F_k$ belongs to $W^*(Gr_1(\mc P))$ (in fact, in a certain analytic subalgebra of this algebra). We thus conclude that $F\in C^*(Gr_1(\mc P))$.  This means that the change of variables formula map $\operatorname{ev}_F$ (see \S\ref{sec:defOfEv})  satisfies
$$
\tau_V(\operatorname{ev}_F (P)) = \tau (P).
$$

This means that $P\mapsto \operatorname{ev}_Y(P)$ extends to a trace-preserving $*$-homomorphism from $C^*(Gr_0(\mc P),\tau)$ to $C^*(Gr_0(\mc P),\tau_V)$.  Because this isomorphism is trace-preserving, it also extends to the associated von Neumann algebras. 

Arguing exactly as in \cite{nelson}, we can show that there is an inverse map $\hat F$ so that $\ev_{\hat{F}}\circ \ev_F = \textrm{id}$; moreover, $\hat F\in C^*(Gr_1(\mc P))$. It follows that the map $  P\mapsto \operatorname{ev}_Y(P) $ is an isomorphism.

\end{document}